\numberwithin{equation}{subsection}
\newtheorem{thm}{Theorem}[subsubsection]
\newtheorem*{thm*}{Theorem}
\newtheorem{cor}[thm]{Corollary}
\newtheorem*{cor*}{Corollary}
\newtheorem{lem}[thm]{Lemma}
\newtheorem{prop}[thm]{Proposition}
\newtheorem{prop-const}[thm]{Proposition-Construction}
\newtheorem*{conjecture*}{Conjecture}
\newtheorem*{princ*}{Principle}
\newtheorem{introthm}{Theorem}
\theoremstyle{remark}
\newtheorem{rem}[thm]{Remark}
\newtheorem{example}[thm]{Example}
\newtheorem{defin}[thm]{Definition}
\newtheorem{notation}[thm]{Notation}
\newtheorem*{question*}{Question}
\newtheorem{convention}[thm]{Convention}
\newcommand{\presub}[1]{\prescript{}{#1}}
\newcommand{\xar}[1]{\xrightarrow{#1}}
\newcommand{\rar}[1]{\xar{#1}}
\newcommand{\isom}{\rar{\simeq}}
\newcommand{\into}{\hookrightarrow}
\newcommand{\eps}{\epsilon}
\newcommand{\Loc}{\on{Loc}}
\newcommand{\bDelta}{\mathbf{\Delta}}
\newcommand{\bA}{{\mathbb A}}
\newcommand{\bB}{{\mathbb B}}
\newcommand{\bC}{{\mathbb C}}
\newcommand{\bD}{{\mathbb D}}
\newcommand{\bG}{{\mathbb G}}
\newcommand{\bH}{{\mathbb H}}
\newcommand{\bL}{{\mathbb L}}
\newcommand{\bM}{{\mathbb M}}
\newcommand{\bO}{{\mathbb O}}
\newcommand{\bQ}{{\mathbb Q}}
\newcommand{\bR}{{\mathbb R}}
\newcommand{\bV}{{\mathbb V}}
\newcommand{\bZ}{{\mathbb Z}}
\newcommand{\sC}{{\EuScript C}}
\newcommand{\sD}{{\EuScript D}}
\newcommand{\sE}{{\EuScript E}}
\newcommand{\sF}{{\EuScript F}}
\newcommand{\sG}{{\EuScript G}}
\newcommand{\sH}{{\EuScript H}}
\newcommand{\sI}{{\EuScript I}}
\newcommand{\sK}{{\EuScript K}}
\newcommand{\sL}{{\EuScript L}}
\newcommand{\sM}{{\EuScript M}}
\newcommand{\sN}{{\EuScript N}}
\newcommand{\sO}{{\EuScript O}}
\newcommand{\sP}{{\EuScript P}}
\newcommand{\sS}{{\EuScript S}}
\newcommand{\sV}{{\EuScript V}}
\newcommand{\sY}{{\EuScript Y}}
\newcommand{\sZ}{{\EuScript Z}}
\newcommand{\fg}{{\mathfrak g}}
\newcommand{\fn}{{\mathfrak n}}
\newcommand{\fp}{{\mathfrak p}}
\newcommand{\on}{\operatorname}
\newcommand{\ol}[1]{\overline{#1}{}}
\newcommand{\ul}{\underline}
\newcommand{\mathendash}{\text{\textendash}}
\newcommand{\Ker}{\on{Ker}}
\newcommand{\End}{\on{End}}
\newcommand{\Hom}{\on{Hom}}
\newcommand{\Aut}{\on{Aut}}
\newcommand{\Spec}{\on{Spec}}
\newcommand{\id}{\on{id}}
\newcommand{\ad}{\on{ad}}
\newcommand{\Ad}{\on{Ad}}
\newcommand{\ev}{\on{ev}}
\newcommand{\ind}{\on{ind}}
\newcommand{\Rep}{\mathsf{Rep}}
\newcommand{\gr}{\on{gr}}
\newcommand{\act}{\on{act}}
\newcommand{\actson}{\curvearrowright}
\renewcommand{\dot}{\bullet}
\newcommand{\Fun}{\on{Fun}}
\newcommand{\vph}{\varphi}
\newcommand{\vareps}{\varepsilon}
\newcommand{\Vect}{\mathsf{Vect}}
\newcommand{\Fil}{\mathsf{Fil}\,}
\newcommand{\fil}{F}
\newcommand{\Gr}{\on{Gr}}
\newcommand{\Spr}{\on{Spr}}
\newcommand{\Whit}{\mathsf{Whit}}
\newcommand{\Bun}{\on{Bun}}
\newcommand{\LocSys}{\on{LocSys}}
\newcommand{\Op}{\on{Op}}
\newcommand{\IC}{\on{IC}}
\renewcommand{\mod}{\mathendash\mathsf{mod}}
\newcommand{\DGCat}{\mathsf{DGCat}}
\renewcommand{\lim}{\on{lim}}
\newcommand{\Ind}{\mathsf{Ind}}
\newcommand{\Pro}{\mathsf{Pro}}
\newcommand{\TwoHom}{\mathsf{Hom}}
\newcommand{\TwoEnd}{\mathsf{End}}
\newcommand{\heart}{\heartsuit}
\newcommand{\Oblv}{\on{Oblv}}
\newcommand{\Av}{\on{Av}}
\newcommand{\Perf}{\mathsf{Perf}}
\newcommand{\Coh}{\mathsf{Coh}}
\newcommand{\IndCoh}{\mathsf{IndCoh}}
\newcommand{\QCoh}{\mathsf{QCoh}}
\newcommand{\Lie}{\on{Lie}}
\newcommand{\Maps}{\sM aps}
\renewcommand{\o}[1]{\mathring{#1}}
\renewcommand{\subset}{\subseteq}
\renewcommand{\supset}{\supseteq}
\newcommand{\coeff}{\on{coeff}}
\newcommand{\Poinc}{\sP oinc}
\newcommand{\enh}{\on{enh}}
\newcommand{\Ran}{\on{Ran}}
\newcommand{\Shv}{\mathsf{Shv}}
\newcommand{\temp}{\on{temp}}
\newcommand{\cusp}{\on{cusp}}
\newcommand{\at}{\on{anti}\!\mathendash\!\on{temp}}
\newcommand{\xat}{x\mathendash\on{anti}\!\mathendash\!\on{temp}}
\newcommand{\xtemp}{x\mathendash\on{temp}}
\newcommand{\Nilp}{\sN ilp}
\newcommand{\Higgs}{\on{Higgs}}
\newcommand{\Kos}{\on{Kos}}
\newcommand{\constr}{\on{constr}}
\newcommand{\Irr}{\on{Irr}}
\newcommand{\sfe}{\mathsf{e}}
\newcommand{\Mir}{\on{Mir}}
\newcommand{\crit}{\on{crit}}
\newcommand{\KL}{\mathsf{KL}}
\newcommand{\irreg}{\on{irreg}}
\newcommand{\Girreg}{G\mathendash\irreg}
\newcommand{\dR}{\on{dR}}
\newcommand{\hol}{\on{hol}}
\renewcommand{\SS}{\on{SS}}
\newcommand{\CC}{\on{CC}}
\newcommand{\level}{\mathendash\on{lvl}}
\newcommand{\xlevel}{\on{lvl},x}
\newcommand{\biggg}{\bBigg@{4}}
\newcommand{\Biggg}{\bBigg@{5}}
\date{\today}
\begin{document}

\frenchspacing

\setlength{\epigraphwidth}{0.4\textwidth}
\renewcommand{\epigraphsize}{\footnotesize}

\title{Non-vanishing of geometric Whittaker coefficients for reductive groups}

\author{Joakim F\ae rgeman}

\address{The University of Texas at Austin, 
Department of Mathematics, 
PMA 8.100, 2515 Speedway Stop C1200, 
Austin, TX 78712}

\email{joakim.faergeman@utexas.edu}

\author{Sam Raskin}

\address{The University of Texas at Austin, 
Department of Mathematics, 
PMA 8.100, 2515 Speedway Stop C1200, 
Austin, TX 78712}

\email{sraskin@math.utexas.edu}

\maketitle

\begin{abstract}

We prove that cuspidal automorphic $D$-modules have non-vanishing Whittaker
coefficients, generalizing known results
in the geometric Langlands program from $GL_n$ to general reductive groups.
The key tool is a microlocal interpretation of Whittaker coefficients.

We establish various exactness properties in the geometric Langlands
context that may be of independent interest. Specifically, we show
Hecke functors are $t$-exact on the category of tempered $D$-modules,
strengthening a classical result of Gaitsgory (with different hypotheses) 
for $GL_n$. We also show that Whittaker coefficient functors are $t$-exact
for sheaves with nilpotent singular support. An additional consequence
of our results is that the 
tempered, restricted geometric Langlands conjecture must
be $t$-exact. 

We apply our results to show that for suitably irreducible local systems, 
Whittaker-normalized Hecke eigensheaves
are perverse sheaves that are irreducible on each connected component
of $\Bun_G$.

 \end{abstract}

\setcounter{tocdepth}{2}
\tableofcontents

\section{Introduction}\label{s:intro}

\subsection{A mystery}

Although this paper is concerned with automorphic sheaves and not
with automorphic forms, 
our motivation comes from phenomena more easily witnessed in the latter context.
Therefore, we begin our story on the upper half plane.

\subsubsection{Background on modular forms}\label{sss:modular}

We start with some elementary recollections about modular forms
(say: holomorphic, of level $1$, without nebentypus, and of arbitrary even weight).

Recall that such a modular form is a holomorphic function $f$
on the upper half plane $\bH$, satisfying a family of functional equations.
Among these is the relation $f(\tau+1) = f(\tau)$ for $\tau \in \bH$. 
Moreover, as a function on the analytic punctured disc
function on: 
\[
\bH/\{\tau \sim \tau+1\} \overset{\tau \mapsto \exp(2\pi i \cdot \tau)}{\simeq} \{0<|q|<1\}
\]

\noindent there is a requirement that $f(q)$ extend to a holomorphic 
function over the puncture at $q = 0$.
It follows that $f(q)$ can be expanded as a power series:
\[
f(q) = \underset{n\geq 0}{\sum} a_n q^n.
\]

\noindent We remind that the coefficients $a_n$ in the $q$-expansion are the fundamental 
numerical invariants in the theory of modular forms.

\begin{rem}\label{r:cusp-fm}

Recall that $f$ is a \emph{cusp form} if $a_0 = 0$. 
It is manifest that for a non-zero cusp form $f$, there exists 
an $n \geq 1$ such that $a_n \neq 0$. More generally, non-constant
modular forms have some $a_n \neq 0$ for $n \geq 1$.

\end{rem}

\subsubsection{Ad\`elic interpretation}

It is not our purpose to review the construction of functions on 
ad\`elic groups from modular forms. However, we briefly state 
the outcomes.

We let $G = PGL_2$ and let $N = \bG_a$ denote the radical of its standard Borel
and $T = \bG_m$ be its standard Cartan.

\begin{itemize}

\item For $f$ as above, there is an associated function $\widetilde{f}$
on $G(\bA_{\bQ})$. 

\item The function $\widetilde{f}$ is invariant under the left action of 
$G(\bQ)$ and the right action of $G(\bA_{\bQ}^{\on{int}})$, where
$\bA_{\bQ}^{\on{int}} \coloneqq \widehat{\bZ} \subset \bA_{\bQ}$ is the
subring of integral ad\`eles.

\item The coefficient $a_0$ of $f$ is the \emph{constant term}:\footnote{More
conceptually, the \emph{constant term} of $f$ should be thought of as a function on 
$T(\bQ)\backslash T(\bA_{\bQ})/T(\bA_{\bQ}^{\on{int}}) \simeq \bR^{>0}$. It
happens to be constant in the holomorphic case; but this good fortune
does not occur for non-holomorphic modular forms, where one needs to consider
the constant term as a function on $\bR^{>0}$.}
\[
a_0 = \int_{N(\bQ)\backslash N(\bA_{\bQ})} \widetilde{f}(\gamma) d\gamma.
\]

\item The coefficient $a_n$ of $f$ is \emph{essentially} 
the \emph{Whittaker coefficient}:
\begin{equation}\label{eq:an}
a_n \sim \int_{N(\bQ)\backslash N(\bA_{\bQ})} \widetilde{f}(\gamma\alpha_n) \cdot 
\psi(-\gamma) d\gamma.
\end{equation}

\noindent 
Here $\alpha_n \in T(\bA_{\bQ}^{\on{fin}}) = \bA_{\bQ}^{\on{fin},\times}$
is $n$ (considered as a finite id\`ele, living in $PGL_2(\bA_{\bQ})$), 
and $\psi$ is the standard character of $\bA_{\bQ} = N(\bA)$ vanishing on $\bQ$
and $\widehat{\bZ}$. 
The symbol $\sim$ indicates that we have omitted a normalizing factor of
Archimedean nature;
see \cite{deligne-gl2} Proposition 2.5.3.2 or \cite{gelbart-automorphic}
Lemma 3.6 for precise assertions.

\end{itemize}

We refer to \cite{deligne-gl2} and \cite{gelbart-automorphic}  
for detailed derivations of the above dictionary.

\subsubsection{Generalization to reductive groups}

The notions from the previous section make sense for general 
reductive groups $G$ over global fields $F$.
There are automorphic forms, and they have constant terms indexed
by proper parabolic subgroups of $G$. A cusp form is one
with vanishing constant terms.

Similarly, there are Whittaker coefficients. When our automorphic forms
are unramified at finite places, these coefficients
are\footnote{This is not quite accurate; we can get away with it
only because of the simplicity of holomorphic modular forms. 
One needs to also allow the insertion
of elements of the group at infinite places in general; this is 
serious for Maass forms,
or automorphic forms for other groups. The easiest correction is to consider
Whittaker \emph{functions} on $G(\bA)$ rather than mere Whittaker \emph{coefficients}
(which are values of the Whittaker function at particular ad\`elic points; 
the above formula gives the values when we put the identity at the infinite place of $\bQ$). 

To simplify the exposition (particularly 
since we will eventually be concerned with everywhere unramified automorphic forms/
sheaves over function fields, we do not further emphasize this (important) point.
}
indexed by divisors on\footnote{The 
scare quotes indicate that for a global field of positive characteristic,
we take the corresponding smooth projective curve.} ``$\Spec$" of the ring
of integers of the global field, where these divisors take values in the
set $\check{\Lambda}^+$ of dominant coweights for $G$.\footnote{For instance,
for $G = PGL_2$, $\check{\Lambda}^+ = \bZ^{\geq 0}$. Note that 
a $\bZ^{\geq 0}$-valued (i.e., effective) divisor on $\Spec(\bZ)$ 
is equivalent data to a number $n \geq 1$: $D = \sum k_p [p]$ corresponds to 
$n = \prod p^{k_p}$.}

Then there is a natural question, attempting to generalize 
the naive observation from \S \ref{sss:modular}:

\begin{question*}

For a non-zero cusp form $f$ on $G(\bA_F)$, is some Whittaker
coefficient of $f$ non-zero?

\end{question*}

The easy argument from \S \ref{sss:modular} can readily be
adapted to the ad\`elic setting to give a positive answer for $(P)GL_2$. 
This argument adapts more generally to $GL_n$: 
this is related to the strong multiplicity one
theorem, and uses the special \emph{mirabolic} subgroup of $GL_n$.

However, the answer is \emph{no} for general $G$. It fails already for $SL_2$ 
for silly\footnote{Namely, the torus does not act transitively on the set of
characters.} reasons, and it fails for $GSp_4$ for serious reasons
(see \cite{howe-ps}). There is a conjecture
due to Shahidi that a 
tempered $L$-packet of automorphic representations has a unique
representative with non-zero Whittaker coefficients 
(see the discussion in the introduction to \cite{shahidi-arthur}). 

Roughly speaking, one can think of this failure as the source
of the many complications in the theory of automorphic forms
for reductive groups beyond $GL_n$, and for much of our ignorance
in the subject.

\subsubsection{A vague statement of the problem}

Broadly, the problem we consider is: \emph{where do Whittaker coefficients of 
automorphic forms come from for $G \neq GL_n$}?
In this paper, we
completely settle the corresponding problem in the setting of
\emph{global geometric Langlands} in characteristic $0$.

We describe the geometric context in more detail below.
But here, we state one long-standing motto: \emph{there
are no $L$-packets in geometric Langlands}. There are 
various ways of arriving at this conclusion, but one is simply 
that it is forced by the geometric Langlands conjectures, 
as reviewed below. 

So we may formulate the mystery stated above: why are there
no $L$-packets geometrically? This question has been the subject
of speculation in the geometric Langlands community for some time
now, with many possible answers having been suggested. 
The purpose of this paper is to provide a first definite answer this question. 

\subsection{The geometric setting}

We now survey the role of Whittaker coefficients in geometric Langlands
and state some of our main results.

\subsubsection{Notation}

We work over a field $k$ of characteristic zero. We fix $X$ a smooth,
geometrically connected projective curve over $k$.\footnote{In the body of
the paper, we assume at times that $X$ admits a $k$-point $x \in X(k)$.
This is a lazy crutch; our main theorems are verifiable after finite
degree field extensions. Therefore, in the body of the paper, 
we sometimes allow ourselves to ignore the fact that
this is a genuine additional hypothesis on $X$ if $k$ is not algebraically closed.}

We let $G$ be a split reductive group over $k$ with Langlands dual group 
$\check{G}$. We let $B$ be a Borel in $G$ with unipotent radical $N$.
We let e.g. $\Bun_G$ denote the moduli stack of $G$-bundles
on $X$, and $\LocSys_{\check{G}}$ the moduli stack of $\check{G}$-bundles on $X$
with connections, i.e., \emph{de Rham} $\check{G}$-local systems on $X$.

\subsubsection{The geometric Langlands conjecture (after Beilinson-Drinfeld,
Arinkin-Gaitsgory, and Gaitsgory)}

Recall the statement of the geometric Langlands conjecture of
Beilinson-Drinfeld (given in this form by Arinkin-Gaitsgory \cite{arinkin-gaitsgory}):
\[
\bL_G:D(\Bun_G) \simeq \IndCoh_{\Nilp_{\on{spec}}}(\LocSys_{\check{G}}) 
\simeq \Ind(\Coh_{\Nilp_{\on{spec}}}(\LocSys_{\check{G}})).
\]

\noindent Here we refer to \cite{arinkin-gaitsgory} for discussion of
the right hand side; we simply say that 
$\Coh_{\Nilp_{\on{spec}}}(\LocSys_{\check{G}}) \subset \Coh(\LocSys_{\check{G}})$
is a certain subcategory of the DG (derived) category of bounded complexes of coherent
sheaves; we are denoting by $\Nilp_{\on{spec}} \subset T^*[-1] \LocSys_{\check{G}}$
the \emph{spectral global nilpotent cone} considered in \cite{arinkin-gaitsgory}.

\subsubsection{}

There are many compatibilities the above equivalence is supposed to satisfy;
see \cite{dennis-laumonconf} for an overdetermined list.
Here is a key one, called the \emph{Whittaker normalization}.

There is a functor:
\[
\coeff:D(\Bun_G) \to \Vect
\]

\noindent of \emph{first\footnote{We are ambivalent about the indexing
here, but use this terminology in the introduction. See Remark \ref{r:first?}.}
Whittaker coefficient}; we remind that
in the categorical framework, vector spaces are considered analogues of numbers
in the classical setting.
This functor is a precise geometric
analogue of the integral \eqref{eq:an} for $n = 1$. Roughly speaking,
one pulls back to $\Bun_N$, tensors with an Artin-Schreier/exponential
sheaf (relative to a non-degenerate character), 
and then pushes forward to a point; we refer to \S \ref{s:coeff-backround}
for details (including normalizations on cohomological shifts).

Then the diagram:
\[
\begin{tikzcd}
D(\Bun_G) 
\arrow[rr,"\overset{\bL_G}{\simeq}"] 
\arrow[dr,"\coeff",swap]
&&
\IndCoh_{\Nilp_{\on{spec}}}(\LocSys_{\check{G}})
\arrow[dl,"{\Gamma(\LocSys_{\check{G}},-)}"]
\\
&
\Vect
\end{tikzcd}
\]

\noindent is supposed to commute.

\begin{rem}

For an irreducible local system $\sigma \in \LocSys_{\check{G}}(k)$, 
let $\delta_{\sigma}$ denote the skyscraper sheaf at this point.
There is an object $\Aut_{\sigma} \in D(\Bun_G)$ corresponding to it,
which is the corresponding automorphic eigensheaf. The Whittaker
normalization here should yield an isomorphism $\coeff(\Aut_{\sigma}) \simeq k
\in \Vect$, pinning down all ambiguity of the choice of eigensheaf.
This may be compared to the classical setting of modular forms,
where one normalizes a cuspidal eigenform by requiring $a_1 = 1$.

\end{rem}

\subsubsection{}

In the geometric setting, there are additional Whittaker coefficients,
analogous to the $a_n$ for other $n$'s. The reader may turn to 
\S \ref{s:coeff-backround} for the construction of functors
$\coeff_D:D(\Bun_G) \to \Vect$ indexed by $\check{\Lambda}^+$-valued
divisors $D$ on $X$; for a smarter (and more conceptual) construction, see
\cite{dennis-laumonconf} \S 5.8.

Below, we describe an alternative construction that is more easily stated. 

Gaitsgory has shown \cite{generalized-vanishing} that there
is a canonical action of $\QCoh(\LocSys_{\check{G}})$ on 
$D(\Bun_G)$ refining the Hecke action; this is the 
\emph{spectral decomposition} of the automorphic category
$D(\Bun_G)$.

It follows that there is a unique $\QCoh(\LocSys_{\check{G}})$-linear functor
functor:
\[
\coeff^{\enh}:D(\Bun_G) \to \QCoh(\LocSys_{\check{G}})
\]

\noindent fitting into a commutative diagram:
\[
\begin{tikzcd}
D(\Bun_G)
\arrow[rr,"\coeff^{\enh}"]
\arrow[drr,"\coeff"]
&&
\QCoh(\LocSys_{\check{G}})
\arrow[d,"\Gamma"]
\\
&&
\Vect.
\end{tikzcd}
\]

\noindent We provide details in \S \ref{ss:coeff-enh}. As in \emph{loc. cit}.,
$\coeff^{\enh}$ ``knows" all Whittaker coefficients of automorphic 
sheaves simultaneously (while also encoding reciprocity laws between them).

We see that the geometric Langlands equivalence must fit into a commutative
diagram:
\[
\begin{tikzcd}
D(\Bun_G) 
\arrow[rr,"\overset{\bL_G}{\simeq}"] 
\arrow[dr,"\coeff^{\enh}",swap]
&&
\IndCoh_{\Nilp_{\on{spec}}}(\LocSys_{\check{G}})
\arrow[dl,"\Psi"]
\\
&
\QCoh(\LocSys_{\check{G}})
\end{tikzcd}
\]

\noindent where the functor $\Psi$ is \emph{almost} an equivalence
(see \cite{indcoh}, \cite{arinkin-gaitsgory}).

\subsubsection{A strategy for proving the geometric Langlands equivalence}\label{sss:strategy}

The functor $\Psi$ displayed above is fully faithful on compact
objects (it induces the embedding 
$\Coh_{\Nilp_{\on{spec}}}(\LocSys_{\check{G}}) \subset \Coh(\LocSys_{\check{G}})
\subset \QCoh(\LocSys_{\check{G}})$).

Therefore, proving the geometric Langlands equivalence 
amounts to showing:

\begin{itemize}

\item $\coeff^{\enh}$ is fully faithful on the subcategory
$D(\Bun_G)^c \subset D(\Bun_G)$ of
compact objects.

\item $\coeff^{\enh}$ maps compact objects of $D(\Bun_G)$ onto
$\Coh_{\Nilp_{\on{spec}}}(\LocSys_{\check{G}})$.

\end{itemize}

In \cite{dennis-laumonconf}, Gaitsgory outlines a strategy for proving
these claims for $GL_n$ (cf. below), with a strategy that should
adapt for general reductive $G$ assuming some knowledge of Whittaker
coefficients here. 
The ideas are complicated, involving degenerate Whittaker
coefficients, Eisenstein series, and Kac-Moody representations.
However, the basic idea in the strategy is that stated above.

\begin{rem}

The above strategy may be compared to Soergel's bimodule theory 
via the analogies:

\begin{center}
\begin{tabular}{ |c|c| } 
\hline
 Soergel theory & Geometric Langlands \\
 \hline 
The functor $\bV$ & The functor $\coeff$ \\
Endomorphismensatz & The existence of $\coeff^{\enh}$ \\
Struktursatz & Fully faithfulness of $\coeff^{\enh}$ on $D(\Bun_G)^c$ \\
 \hline
\end{tabular}
\end{center}

We remark that the bottom right entry of this table remains conjectural
(beyond $GL_n$, see below). 

\end{rem}

\subsubsection{The $GL_n$ case}

It is known\footnote{In geometric Langlands, this style of argument
has a long lineage that we do not survey here. We refer to \cite{fgv-gl} as
one key example.} 
(cf. \cite{dennis-laumonconf} ``Quasi-Theorem" 8.2.10.
\cite{dario-extended}) that $\coeff^{\enh}$ is fully faithful 
on the category $D_{\cusp}(\Bun_{GL_n})$ 
of cuspidal $D$-modules \emph{in the $GL_n$ case}.
The argument imitates the proof of the multiplicity one theorem for $GL_n$
for automorphic forms, going through the
mirabolic subgroup.

\subsubsection{The general case}

However, as for automorphic forms, we have been unable to prove
anything about Whittaker coefficients for general reductive groups $G$.

This failure has stood as a point of some concern. For instance, number theorists
often express consternation that the geometric situation is conjectured to be
so different from the arithmetic situation, where cuspidal automorphic
representations commonly are non-generic. One  
imagines that if geometric Langlands fails, it fails because
the nice predictions regarding Whittaker coefficients 
are incompatible with some pathological example for automorphic sheaves.

Our first main theorem states that this does not occur:
 
\begin{introthm}\label{it:cusp-cons}

The functor:
\[
D_{\cusp}(\Bun_G) \subset D(\Bun_G) \xar{\coeff^{\enh}} \QCoh(\LocSys_{\check{G}})
\]

\noindent is conservative. That is, if $\sF \in D_{\cusp}(\Bun_G)$
with $\coeff^{\enh}(\sF) = 0$, then 
$\sF = 0$.

\end{introthm}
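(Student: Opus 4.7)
The plan is to reduce to perverse cuspidal sheaves and then exploit the microlocal interpretation of $\coeff$ advertised in the abstract. Since cuspidal $D$-modules lie in $\Shv_{\Nilp}(\Bun_G)$ (by Drinfeld--Gaitsgory) and $\coeff$ is $t$-exact on $\Shv_{\Nilp}(\Bun_G)$ (as stated in the introduction), I first argue that $\coeff^{\enh}$ is also $t$-exact on $D_{\cusp}(\Bun_G)$: composing $\coeff^{\enh}$ with $\sE \otimes \Gamma(\LocSys_{\check{G}}, -)$ for varying $\sE \in \QCoh(\LocSys_{\check{G}})$ recovers all geometric Whittaker coefficients $\coeff_D$, each of which is $t$-exact. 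Vanishing of $\coeff^{\enh}(\sF)$ then descends to each perverse cohomology of $\sF$ (which remains cuspidal), so we may assume $\sF$ is perverse and cuspidal.

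The second and key step is the microlocal interpretation of Whittaker coefficients: for a perverse $\sG \in \Shv_{\Nilp}(\Bun_G)$, $\coeff(\sG)$ should compute (up to a shift) the multiplicity of the characteristic cycle $\CC(\sG)$ along the ``regular nilpotent'' (Whittaker) Lagrangian component of $\Nilp \subset T^*\Bun_G$. Combined with the $\QCoh(\LocSys_{\check{G}})$-linearity of $\coeff^{\enh}$, the hypothesis implies $\coeff(T_V \cdot \sF) = 0$ for every Hecke functor $T_V$, realized via the Beilinson--Drinfeld spectral action. Since cuspidal sheaves are tempered, the introduction's $t$-exactness of Hecke on tempered sheaves ensures that each $T_V \cdot \sF$ is still perverse in $\Shv_{\Nilp}(\Bun_G)$, and hence $\CC(T_V \cdot \sF)$ has vanishing multiplicity along the Whittaker Lagrangian for every $V$.

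To conclude, I would use cuspidality to argue that $\CC(\sF)$ itself must be empty. The characteristic cycle of a cuspidal perverse sheaf is supported on Lagrangians in $\Nilp$ which are not of Eisenstein origin (i.e., not coming from reductions to proper parabolics, by the singular support characterization of cuspidality). These ``non-Eisenstein'' Lagrangians should be precisely those swept out by translates of the Whittaker Lagrangian under the Hecke action: varying $V$ and the Ran-point of application rotates the Whittaker Lagrangian through a transitive family. The vanishing from the previous paragraph therefore forces $\CC(\sF) = 0$, and a perverse sheaf with empty characteristic cycle is zero.

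The hard part will be establishing the microlocal interpretation in the second step. Unwinding the definition of $\coeff$---pullback along $\Bun_N \to \Bun_G$, twist by an Artin--Schreier/exponential sheaf, then pushforward to a point---and showing that the resulting functor reads off precisely the multiplicity of $\CC$ along the Whittaker Lagrangian is the central technical input. The non-properness of $\Bun_N \to \Bun_G$ and the infinite-type, stacky nature of $\Bun_G$ make singular-support bookkeeping delicate, and this is where the principal new work of the paper should lie. A secondary subtlety is the argument that Hecke translates exhaust the non-Eisenstein components of $\Nilp$, which will need input from the geometry of the global nilpotent cone and the spectral decomposition.
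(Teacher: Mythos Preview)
Your overall architecture is close to the paper's, and you correctly anticipate the two technical pillars: the microlocal interpretation of $\coeff$ (the index formula plus $t$-exactness on $\Shv_{\Nilp}$) and $t$-exactness of Hecke functors on the tempered category. The gap is in your third step.

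You invoke a ``singular support characterization of cuspidality'' to assert that the characteristic cycle of a cuspidal perverse sheaf is supported only on ``non-Eisenstein'' Lagrangians, and then that these are exactly the Hecke orbit of the Whittaker Lagrangian $\Nilp^{\Kos}$. Neither statement is available. There is no result saying that cuspidality forces the characteristic cycle off of any particular set of components of $\Nilp$; a priori, a nonzero cuspidal perverse sheaf could have its entire characteristic cycle inside the irregular locus $\Nilp_{\irreg}$, in which case your argument has no traction. This is precisely the obstruction that makes the problem hard beyond $GL_n$.

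The paper fills this gap with a separate theorem: any object of $\Shv_{\Nilp}(\Bun_G)$ with singular support contained in $\Nilp_{\irreg}$ is \emph{anti-tempered}. Combined with Beraldo's result (which you cite) that cuspidal objects are tempered, this yields: a nonzero cuspidal sheaf must have singular support meeting the generically regular locus $\o{\Nilp}$. The proof of this step is not geometric in the way you might hope; it passes through Iwahori level structures and Beilinson--Bernstein localization to reduce to a theorem of Losev on associated varieties of $U(\fg)$-modules. You do not anticipate this ingredient, and without it the argument does not close.

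The logical flow then runs contrapositive to yours. Rather than arguing that $\CC(\sF)$ lives in the Hecke orbit of $\Nilp^{\Kos}$ and showing all those multiplicities vanish, the paper argues: if $\sF$ is nonzero cuspidal then $\SS(\sF)$ meets some component $\o{\Nilp}^{\check{\lambda}}$; a single explicit Hecke functor then forces $\Nilp^{\Kos} \subset \SS(V^{\check{\lambda}} \star \sF)$ via an affine Springer fiber computation; and the index formula gives $\coeff(V^{\check{\lambda}} \star \sF) \neq 0$, hence $\coeff^{\enh}(\sF) \neq 0$. Note this only needs singular support (containment of a component), not characteristic-cycle multiplicities under Hecke, which would be more delicate to control in your direction.

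A smaller point: your claim that $\coeff^{\enh}$ is $t$-exact, argued by testing against the various $\coeff_D$, is not justified without knowing those functors jointly detect the $t$-structure on the target. The paper never asserts $t$-exactness of $\coeff^{\enh}$ itself; it works directly with the scalar-valued $\coeff$ and $\coeff_D$, and only uses their $t$-exactness on $\Shv_{\Nilp}$.
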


\begin{rem}

Applying the definition of cuspidal $D$-modules (and the
existence of the left adjoint Eisenstein functors $\on{Eis}_!$),
the above is equivalent to the assertion that $D(\Bun_G)$
is generated under colimits by Eisenstein series $D$-modules for proper
parabolic subgroups and Poincar\'e series $D$-modules;
we refer to \cite{dennis-laumonconf} for the definitions.

In other words, Theorem \ref{it:cusp-cons} asserts that the
geometric representation theorist's favorite methods for producing
automorphic $D$-modules are in fact exhaustive.

\end{rem}

\begin{rem}

To match our knowledge in the $GL_n$ case, we would want to know that
the functor of Theorem \ref{it:cusp-cons} is fully faithful;
this paper does not settle that question.

\end{rem}

\subsubsection{Tempered $D$-modules}\label{ss:temp-conventions}

In fact, we prove a stronger result that Theorem \ref{it:cusp-cons};
it is more technical to state, but optimal.

Fix a point $x \in X(k)$. Using derived Satake, Arinkin-Gaitsgory
defined a subcategory:
\[
D(\Bun_G)^{\at} \subset D(\Bun_G)
\]

\noindent of \emph{anti-tempered} objects; the terminology is taken
from \cite{dario-ramanujan} \S 2. The embedding here admits
a left adjoint. There is a certain quotient category $D(\Bun_G)^{\temp}$.

A priori, the above definitions depend on the point $x \in X(k)$; in
\cite{independence}, we showed the category is actually independent of
this choice in a strong sense; this justifies omitting $x$ from the notation.

According to Arinkin-Gaitsgory, under geometric Langlands, the quotient
$D(\Bun_G)^{\temp}$ should identify with the quotient
$\QCoh(\LocSys_{\check{G}})$ of $\IndCoh_{\Nilp_{\on{spec}}}(\LocSys_{\check{G}})$.

It is straightforward to see that $\coeff^{\enh}$ factors through
$D(\Bun_G)^{\temp}$. Therefore, by the logic of \S \ref{sss:strategy}, 
one expects the induced functor:
\[
D(\Bun_G)^{\temp} \to \QCoh(\LocSys_{\check{G}})
\] 

\noindent to be an equivalence; this is the \emph{tempered geometric
Langlands} conjecture.

We prove:

\begin{introthm}\label{it:temp-cons}

The above functor
\[
\coeff^{\enh}:
D(\Bun_G)^{\temp} \to \QCoh(\LocSys_{\check{G}})
\]

\noindent is conservative.

\end{introthm}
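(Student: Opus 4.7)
The plan is to exploit the microlocal interpretation of Whittaker coefficients (the key technical input emphasized in the abstract) together with the two $t$-exactness statements in the abstract, turning the question of conservativity into a characteristic-cycle argument.

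Fix $\sF \in D(\Bun_G)^{\temp}$ with $\coeff^{\enh}(\sF) = 0$; we must show $\sF = 0$. Since $\coeff^{\enh}$ is $\QCoh(\LocSys_{\check{G}})$-linear, its vanishing on $\sF$ propagates under the spectral action of $\QCoh(\LocSys_{\check{G}})$ on $D(\Bun_G)$. Postcomposing with global sections and varying the module spectral functor (or, equivalently, inserting all possible Hecke modifications), we deduce that the Whittaker coefficient $\coeff_D(\sF)$ vanishes for every $\check{\Lambda}^+$-valued divisor $D$ on $X$.

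First I would reduce to the perverse heart. Tempered objects have nilpotent singular support by the refinement of the Arinkin--Gaitsgory bound used throughout the paper, and the two $t$-exactness results (Hecke functors are $t$-exact on $D(\Bun_G)^{\temp}$, and each $\coeff_D$ is $t$-exact on $\Shv_{\Nilp}(\Bun_G)$) together imply that every perverse cohomology sheaf $H^i(\sF)$ remains tempered, still has nilpotent singular support, and is still annihilated by every $\coeff_D$. It therefore suffices to prove that a perverse sheaf $\sF \in \Shv_{\Nilp}(\Bun_G)^{\temp}$ with all $\coeff_D(\sF) = 0$ is zero.

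The key step is then the microlocal one. The irreducible components of the global nilpotent cone $\Nilp \subset T^*\Bun_G$ that can contribute to the singular support of a tempered sheaf are, in a natural way, labeled by $\check{\Lambda}^+$-valued divisors $D$; and the microlocal interpretation of Whittaker coefficients built earlier in the paper identifies $\coeff_D(\sF)$, up to a non-zero normalization that is uniform on a dense open of the corresponding component, with the multiplicity of the characteristic cycle $\CC(\sF)$ along that component. Because temperedness rules out any contribution from the zero-section component of $T^*\Bun_G$ (non-zero lisse objects on $\Bun_G$ are anti-tempered for non-abelian $G$, cf.\ \cite{dario-ramanujan}), the hypothesis forces every coefficient of $\CC(\sF)$ to vanish, whence $\SS(\sF) = \emptyset$ and $\sF = 0$.

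The main obstacle is the microlocal identification in the third paragraph: making precise, and sufficiently uniform across all components, the claim that $\coeff_D$ detects the multiplicity of $\CC(\sF)$ along the component labeled by $D$. This is precisely the ``microlocal interpretation of Whittaker coefficients'' advertised in the abstract, and it is what replaces, for general reductive $G$, the mirabolic argument available only for $GL_n$. Once that identification is in hand, together with the $t$-exactness inputs, conservativity on $D(\Bun_G)^{\temp}$ follows formally.
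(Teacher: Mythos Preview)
Your overall shape is right (reduce to perverse constructible nilpotent sheaves, then argue via characteristic cycles), but two of the load-bearing claims are incorrect as stated, and a third key step is missing.

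First, tempered objects of $D(\Bun_G)$ do \emph{not} automatically have nilpotent singular support; there is no ``Arinkin--Gaitsgory bound'' forcing this. The reduction from $D(\Bun_G)^{\temp}$ to $\Shv_{\Nilp}(\Bun_G)^{\temp}$ in the paper is a genuine step using \cite{agkrrv1}: one shows that for every field extension $k'/k$ and every point $\sigma \in \LocSys_{\check{G}}(k')$, the fiber $D(\Bun_G)^{\temp} \otimes_{\QCoh(\LocSys_{\check{G}})} \Vect_{k'}$ coincides with the nilpotent version, and then uses that this collection of fibers detects vanishing.

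Second, and more seriously, the paper does \emph{not} prove that $\coeff_D$ computes the multiplicity of $\CC(\sF)$ along ``the component labeled by $D$.'' The index formula (Theorem \ref{t:index}) only identifies the Euler characteristic of the \emph{first} coefficient $\coeff(\sF)$ with $\pm c_{\Kos,\sF}$, the multiplicity at the single Kostant component $\Nilp^{\Kos}$. Moreover the irreducible components of $\o{\Nilp}$ are indexed by certain coweights, not by $\check{\Lambda}^+$-valued divisors, and $\Nilp$ has many further components inside $\Nilp_{\irreg}$. Since $\coeff_D = \coeff(\sV^D \star -)$ and Hecke convolution genuinely moves singular support, there is no direct formula expressing $\coeff_D(\sF)$ as a coefficient of $\CC(\sF)$.

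What the paper does instead is supply two intermediate results you omit. Theorem \ref{t:at} shows that $\Shv_{\Nilp_{\irreg}}(\Bun_G) \subset \Shv_{\Nilp}(\Bun_G)^{\at}$; this is much stronger than your zero-section remark and is proved by reduction (via parahoric level structures and Beilinson--Bernstein) to a theorem of Losev on associated varieties. Then Theorem \ref{t:hecke-to-kostant} shows that whenever $\SS(\sF)$ meets $\o{\Nilp}$, some Hecke functor $\sV^D \star -$ produces a sheaf whose singular support contains $\Nilp^{\Kos}$; this uses an explicit affine-Springer-fiber computation and isolated-point estimates for singular support under pushforward. Only then does the index formula at $\Nilp^{\Kos}$, combined with $t$-exactness of $\coeff$, force $\coeff_D(\sF) \neq 0$. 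Your proposal collapses these two steps into an unproved (and, as far as the paper shows, unavailable) assertion that each $\coeff_D$ directly sees its own component.
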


This result appears as Theorem \ref{t:temp-cons} in the body of the paper. 

By \cite{dario-ramanujan}, the composition: 
\[
D_{\cusp}(\Bun_G) \subset D(\Bun_G) \to  
D(\Bun_G)^{\temp}
\]

\noindent is fully faithful (more precisely, Beraldo shows
$D_{\cusp}(\Bun_G)$ is left orthogonal to $D(\Bun_G)^{\at}$).

Therefore, Theorem \ref{it:temp-cons} implies Theorem \ref{it:cusp-cons}.
We focus our attention on the latter result in the remainder of
this introduction.

\subsection{Informal overview}

Our work contains other new, intermediate results of independent interest; 
we detail them later in the introduction. 
First, we informally describe their context, and the
overall setting for our proof of Theorem \ref{it:temp-cons}. 

\subsubsection{}

First, the recent work \cite{agkrrv1} provides a new set of tools for studying
$D(\Bun_G)$. In effect, \S 20 of \emph{loc. cit}. reduces understanding
$D(\Bun_G)$ to studying a certain subcategory $\Shv_{\Nilp}(\Bun_G)$
where all the objects are (colimits of) \emph{holonomic $D$-modules
with regular singularities}. These are the objects the Riemann-Hilbert
correspondence is concerned with. One may use additional sheaf-theoretic
tools to study them, following e.g. \cite{kashiwara-kawai}, \cite{ginzburg-inventiones},
and \cite{ks}; broadly speaking, these additional tools are parts of
\emph{microlocal geometry}.
 
\subsubsection{Irregular singular support}

Let us return to the case where $G = PGL_2$. Recall from 
Remark \ref{r:cusp-fm} that modular forms with vanishing Whittaker
coefficients are constant. In the geometric setting,
one similarly can show that $D(\Bun_{PGL_2})^{\at}$ consists
of objects with constant cohomologies, equivalently (by simple-connectivity
in this case), with lisse cohomologies.

One can say this differently: $D(\Bun_{PGL_2})^{\at}$ is exactly the
category of $D$-modules with singular support in the zero section.

Our starting point is the idea that this should generalize: for general $G$,
$D(\Bun_G)^{\at}$ should be the category with \emph{irregular} singular
support. 

At the very least, we obtain a similar result in the nilpotent setting:

\begin{introthm}\label{it:at-irreg}

The category $\Shv_{\Nilp}(\Bun_G)^{\at}$ coincides
with $\Shv_{\Nilp_{\irreg}}(\Bun_G)$, the subcategory of objects
with \emph{irregular nilpotent singular support}.

\end{introthm}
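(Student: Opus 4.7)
The plan is to match both conditions --- anti-temperedness and irregular nilpotent singular support --- to a common microlocal statement about the action of the spherical Hecke category at a point $x \in X(k)$. By the independence-of-point results of our prior work, the subcategory $\Shv_{\Nilp}(\Bun_G)^{\at}$ does not depend on the choice of $x$; fix one. Since $\sF \in \Shv_{\Nilp}(\Bun_G)$ has singular support contained in $\Nilp$, its behavior at $x$ is encoded by the fiber of $\SS(\sF)$ over $x$, which lives in the nilpotent cone of $\fg^*$. The theorem reduces to the following local assertion: the Hecke action of $\Sph_{G,x}$ on $\sF$ factors through the anti-tempered quotient of $\Sph_{G,x}$ if and only if the fiber of $\SS(\sF)$ over $x$ avoids the regular nilpotent orbit; varying $x$ then globalizes this to the condition $\SS(\sF) \subseteq \Nilp_{\irreg}$.

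The central input is a microlocal--Hecke dictionary. For $\sF \in \Shv_{\Nilp}(\Bun_G)$, convolution with a spherical Hecke sheaf $\sH_V$ at $x$ acts on singular support through the classical symbol geometry: the action on $\SS(\sF)|_x \subset \fg^*$ is governed by the symbol of $\sH_V$ under the geometric Satake equivalence. Using derived Satake to identify $\Sph_{G,x}$ with coherent sheaves on the appropriate spectral stack, the regular locus of the $\fg^*$-valued nilpotent cone matches the regular locus on the spectral side, and the tempered quotient of $\Sph_{G,x}$ is precisely the localization at this regular locus. Consequently, anti-temperedness of $\sF$ at $x$ translates into the statement that regular-symbol convolutions annihilate $\sF$, equivalently that $\SS(\sF)|_x$ avoids the regular nilpotent orbit.

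Putting this together, $\sF \in \Shv_{\Nilp}(\Bun_G)^{\at}$ if and only if $\SS(\sF)$ avoids the regular nilpotent locus fiber-by-fiber over $X$, if and only if $\sF \in \Shv_{\Nilp_{\irreg}}(\Bun_G)$. The main obstacle is the rigorous setup of the microlocal--Hecke dictionary in the middle step: one must match the action of spherical Hecke symbols on singular support with the spectral decomposition defining temperedness. I expect this to lean on the microlocal sheaf theory developed in \cite{agkrrv1} together with a classical-limit analysis of derived Satake, both of which provide the necessary identification between the classical symbol of the spherical Hecke category and the relevant loci inside the global nilpotent cone.
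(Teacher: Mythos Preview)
Your proposal has a genuine gap: the ``microlocal--Hecke dictionary'' you invoke is essentially the entire content of the theorem, and you have not established it. Two specific problems:

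First, your description of the tempered quotient of $\sH_x^{\on{sph}}$ as ``localization at the regular locus'' conflates the automorphic and spectral sides. Under derived Satake, temperedness is governed by the difference between $\IndCoh$ and $\QCoh$ on the spectral stack $\Omega_0\check{\fg}/\check{G}$; this is a singular-support condition on the \emph{spectral} nilpotent cone in $\check{\fg}$, not a regularity condition on the \emph{automorphic} nilpotent cone in $\fg$. There is no direct mechanism by which the classical symbol of a spherical Hecke sheaf detects the regular-versus-irregular locus of $\SS(\sF)\subset\Nilp$, and you have not supplied one.

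Second, the phrase ``$\SS(\sF)|_x$'' is not well-defined as stated: there is no map $\Higgs_G\to X$. One can make sense of it via the moment map $T^*\Bun_G^{\sK_1\level}\to\fg$ at a parahoric level (as the paper does), but even then the implication runs only one way: if $\SS(\sF)\subset\Nilp_{\irreg}$ then the image under this moment map lands in $\fg_{\irreg}$, but the converse fails, since the Higgs field could be irregular at $x$ yet regular elsewhere. So your pointwise criterion cannot be bidirectional at a single $x$.

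The paper's proof is entirely different and considerably more indirect. The inclusion $\Shv_{\Nilp_{\irreg}}(\Bun_G)\subset\Shv_{\Nilp}(\Bun_G)^{\at}$ is obtained by reducing (via parahoric level structure and the baby Whittaker characterization of anti-temperedness) to a statement about $D$-modules on the finite flag variety, then passing through Beilinson--Bernstein to $\fg$-modules, and finally invoking Losev's theorem on associated varieties of faithful $U(\fg)_0$-modules. The reverse inclusion is deduced only \emph{after} proving the main conservativeness theorem for $\coeff^{\enh}$: if $\SS(\sF)\cap\o{\Nilp}\neq\emptyset$, one Hecke-translates $\sF$ so that $\Nilp^{\Kos}$ lies in its singular support, then uses the index formula and $t$-exactness of $\coeff$ to conclude $\coeff^{\enh}(\sF)\neq 0$, whence $\sF\notin\Shv_{\Nilp}(\Bun_G)^{\at}$. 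The authors explicitly remark that they could not find a direct geometric argument of the sort you outline, and that it would be desirable to have one.
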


We will refine this result in our later discussion. But, roughly speaking,
the overall strategy is to connect both (anti-)temperedness and Whittaker
coefficients to microlocal properties of sheaves, thereby proving 
Theorem \ref{it:temp-cons}.

\subsection{Results for nilpotent sheaves}

We obtain some striking results for Whittaker coefficients
of sheaves with nilpotent singular support, that we 
describe presently.  

\subsubsection{Some motivating geometry}

Recall that there is a characteristic polynomial map $\chi:\fg/G \to \fg//G$;
here the left hand side is the stack quotient and the right hand
side is the GIT quotient. The nilpotent cone $\sN$ is characterized
by the formula $\sN/G = \chi^{-1}(0)$.
The Kostant slice defines a section of $\chi$. Clearly the Kostant slice
intersects $\sN/G$ in a single point.

The above story has a global analogue. The role of $\fg/G$ is played
by $\Higgs_G = T^*\Bun_G$, the space of Higgs bundles for $G$.
The role of $\fg//G$ is played by the Hitchin base, while $\chi$ is
replaced by the Hitchin fibration.
The role of $\sN/G$ is played by the \emph{global nilpotent cone} 
$\Nilp \subset T^*\Bun_G$, which is by definition the zero fiber
of the Hitchin fibration. The Kostant slice admits a global analogue
(called $\fg\mathendash 0$-opers in \cite{hitchin} \S 3.1.14).

Therefore, the global Kostant slice intersects
$\Nilp$ at a distinguished point, which we label
$f^{\on{glob}}$ in \S \ref{sss:kost-defin}.
One can show that this point lies in the smooth
locus of $\Nilp$; therefore, there is a unique irreducible 
component $\Nilp^{\Kos}$ of $\Nilp$ containing $f^{\on{glob}}$.

We remark that when the genus of the curve is $>1$, 
the Kostant slice and $\Nilp$ are both Lagrangians in $T^*\Bun_G$.

\subsubsection{}

Finally, we make one more remark, connecting the 
above to Whittaker coefficients. 

To state it precisely,
we recall that it is not quite $\Bun_N$ that appears in the definition 
of $\coeff$, but a twisted form of $\Bun_N$ that we denote $\Bun_N^{\Omega}$
in \S \ref{s:coeff-backround}.
This form of $\Bun_N^{\Omega}$ has a canonical map 
$\psi:\Bun_N^{\Omega} \to \bA^1$, which is the \emph{Whittaker character}.
This defines a Lagrangian $d\psi:\Bun_N^{\Omega} \to T^*\Bun_N^{\Omega}$.
We may compose this Lagrangian with the natural Lagrangian
correspondence between $T^*\Bun_N^{\Omega}$ and $T^*\Bun_G$;
tracing the definitions, the resulting Lagrangian in $T^*\Bun_G$
is the global Kostant slice. 

In this sense, we may view the global Kostant slice as a microlocal
shadow of the functor $\coeff$; physicists would say that the
global Kostant slice is the \emph{brane} corresponding to the functor
$\coeff$.  

\subsubsection{Statement of the main result}

We can now formulate:

\begin{introthm}\label{it:micro}

The cohomologically shifted functor of first Whittaker coefficient:
\[
\coeff[\dim \Bun_G]:\Shv_{\Nilp}(\Bun_G) \to \Vect
\]

\noindent is $t$-exact and commutes with Verdier duality.
Moreover, for constructible $\sF \in \Shv_{\Nilp}(\Bun_G)$,
the Euler characteristic of $\coeff[\dim \Bun_G](\sF)$ 
equals the order of the characteristic cycle at $\Nilp^{\Kos}$.

\end{introthm}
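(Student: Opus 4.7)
The plan is to reinterpret $\coeff[\dim \Bun_G]$ microlocally and reduce the theorem to a transversal intersection computation at the unique intersection point $f^{\on{glob}}$ of the global Kostant slice with $\Nilp$. Concretely, I would write $\coeff$ as the pull-push
\[
\coeff(\sF) \;=\; p_*\bigl(\pi^!(\sF) \otimes e^\psi\bigr)[\mathrm{shift}],
\]
where $\pi:\Bun_N^\Omega \to \Bun_G$ is the tautological map, $e^\psi$ is the exponential $D$-module built from the Whittaker character $\psi:\Bun_N^\Omega \to \bA^1$, and $p:\Bun_N^\Omega \to \on{pt}$; the cohomological shift is normalized exactly so that the net effect on $t$-structures is the claimed $[\dim \Bun_G]$.

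Next, I would analyze singular support. Tensoring with $e^\psi$ translates $\SS(\pi^!\sF)$ by the graph of $d\psi$, and the Lagrangian correspondence $T^*\Bun_G \leftarrow \Bun_N^\Omega \times_{\Bun_G} T^*\Bun_G \to T^*\Bun_N^\Omega$ identifies the resulting locus in $T^*\Bun_N^\Omega$ with $\SS(\sF) \cap (\text{global Kostant slice})$ inside $T^*\Bun_G$. By the discussion preceding the theorem, this intersection is set-theoretically the single point $f^{\on{glob}}$, and $f^{\on{glob}}$ sits in the smooth locus of the component $\Nilp^{\Kos}$, where it is cut out transversally by the Kostant slice. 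In particular, for $\sF \in \Shv_{\Nilp}(\Bun_G)$ the sheaf $\pi^!(\sF)\otimes e^\psi$ has support-proper-behaved singular support over $\on{pt}$, so $p_*$ admits a stationary-phase interpretation.

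I would then invoke a microlocal index/Morse-theoretic computation (in the spirit of Kashiwara--Schapira and Ginzburg) to show that $p_*\bigl(\pi^!(\sF)\otimes e^\psi\bigr)$, after the prescribed shift, is concentrated in cohomological degree zero whenever $\sF$ is perverse, with Euler characteristic equal to the local intersection number of $CC(\sF)$ with the Kostant slice at $f^{\on{glob}}$. Transversality together with the fact that no other component of $\Nilp$ meets the Kostant slice implies that this local intersection number is exactly the multiplicity of $[\Nilp^{\Kos}]$ in $CC(\sF)$. Perversity of $\sF$ makes all coefficients of $CC(\sF)$ non-negative; combined with the single-degree concentration this yields both the $t$-exactness on $\Shv_{\Nilp}(\Bun_G)^\heartsuit$ and the Euler characteristic formula at once. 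Verdier duality compatibility is then formal: the Kostant slice is Verdier-self-dual as a microlocal brane, $\pi^!$ and $p_*$ intertwine with duality up to the chosen shift, and $e^\psi$ is self-dual up to the symmetry $\psi \leftrightarrow -\psi$, which does not change the intersection with the Kostant slice.

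The main obstacle I anticipate is executing the stationary-phase argument on the infinite-type stack $\Bun_N^\Omega$: one must rule out contributions from ``infinity,'' i.e., show that the only microlocal contribution to $p_*$ comes from $f^{\on{glob}}$ and not from some limit in $\Bun_N^\Omega$. I expect to handle this using the non-degeneracy of $\psi$ (which forces acyclicity away from the critical locus through contracting $\bG_m$-actions along the $N$-fibers of $\pi$, à la the standard Whittaker acyclicity arguments), together with a non-characteristic deformation reducing the global computation to the transverse local intersection at $f^{\on{glob}}$.
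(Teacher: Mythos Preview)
Your proposal is the ``direct microlocal'' strategy: identify $\coeff[\dim\Bun_G]$ with the microstalk at $f^{\on{glob}}$ via a stationary-phase argument, and read off $t$-exactness, Verdier compatibility, and the index formula simultaneously. This is \emph{not} how the paper proceeds, and the paper is explicit about why: the authors write that they ``were unable to prove directly that the Whittaker coefficients of nilpotent sheaves \emph{actually} are microstalks,'' and instead establish the three conclusions separately by quite different means.

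For the index formula, your outline is close in spirit to what the paper does: the paper uses filtered $D$-modules to compute $\gr_{\bullet}\coeff(\sF)$ via the Kostant correspondence and the single-point intersection $\Nilp \times_{T^*\Bun_G} \Kos_G^{\on{glob}} = \{f^{\on{glob}}\}$, obtaining the Euler characteristic as the multiplicity at $\Nilp^{\Kos}$. So for this part your approach and the paper's are cousins.

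The gap is in $t$-exactness and Verdier duality. Your key step is ``invoke a microlocal index/Morse-theoretic computation \ldots\ to show that $p_*(\pi^!(\sF)\otimes e^\psi)$ is concentrated in degree zero.'' This is precisely the step the paper could not carry out; they even remark that a good filtration with $\gr_{\bullet}\sF$ flat at $f^{\on{glob}}$ would give an easy proof, but they do not know how to produce one. The obstacle you flag (contributions from infinity on the non-quasi-compact $\Bun_N^{\Omega}$) is real, and ``contracting $\bG_m$-actions along the $N$-fibers'' is not enough to control amplitude on its own: the standard Whittaker acyclicity arguments give vanishing, not degree concentration, and Kashiwara--Schapira microstalk exactness is a local statement on a manifold, not something that applies verbatim to a pushforward from an infinite-type stack to a point. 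Your Verdier argument (``the Kostant slice is self-dual as a brane, so it's formal'') presupposes the microstalk identification you have not established.

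What the paper actually does for these two points is entirely different. It uses Lin's theorem, an identity $(\coeff\otimes\id)(\Delta_!\,\underline{\sfe}_{\Bun_G}) \simeq \Poinc_![-2\dim\Bun_G]$ proved via Vinberg/Zastava geometry, together with miraculous duality from \cite{agkrrv2}, to obtain $\coeff \simeq \coeff_![-2\dim\Bun_G]$ on $\Shv_{\Nilp}$; Verdier compatibility follows immediately. For $t$-exactness, the paper first proves it for $\coeff_D$ with $D$ sufficiently large by a naive amplitude estimate combined with the $\coeff\simeq\coeff_!$ comparison, and then bootstraps to $D=0$ using the Casselman--Shalika formula and the separately proved $t$-exactness of Hecke functors on the tempered quotient. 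None of this machinery appears in your outline. (The paper does note that Nadler--Taylor have since given a topological proof along the lines you sketch; if you want to pursue your route, that is the reference to consult.)
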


We hope the geometry described above adequately has 
motivated this result.
It is obtained by combining Theorem \ref{t:index}, 
Theorem \ref{t:coeff-exact}, and Theorem \ref{t:coeff-!}
in the body of the text.

\subsubsection{A more refined picture}

It can be made more precise as follows. Suppose $\sY^{\on{an}}$ 
is a complex manifold, $\Lambda \subset T^*\sY^{\on{an}}$ is a closed, conical
holomorphic Lagrangian; let $\Lambda^{\on{sm}} \subset \Lambda$ be the smooth locus.

Kashiwara-Schapira\footnote{This assertion is difficult to track
in the stated form. A close result is \cite{ks} Corollary 7.5.7,
as well as the subsequent remark.} 
\cite{ks} associate to a 
sheaf $\sF$ on $\sY^{\on{an}}$ with singular support in $\Lambda$
a certain local system $\mu_{\Lambda}(\sF)$ on $\Lambda^{\on{sm}}$,
which is a form of the \emph{microlocalization} of $\sF$.

The fibers of $\mu_{\Lambda}(\sF)$ at points of $\Lambda^{\on{sm}}$
are called \emph{microstalks}, and may be computed by suitably
transverse vanishing cycles of $\sF$. In particular, formation 
of microstalks is $t$-exact (up to shift) and commutes with Verdier duality.
In addition, the Euler characteristics 
of these fibers are the degrees of the characteristic cycle of $\sF$
at the given point.

Therefore, our motivation is that for sheaves with nilpotent singular
supports (which, we remind, are automatically regular singular, so have
Betti cousins), $\coeff$ is the microstalk at $f^{\on{glob}}$.

\begin{rem}

This idea is quite natural, and indeed, when we began discussing this
work with others, we learned 
that it had been considered some time ago by 
others: Drinfeld advertised the idea some 20 years ago, and 
Nadler advertised it some 10 years ago. We are not aware of any recorded
source for it. 

We understand that David Nadler and Jeremy Taylor
have a proof of this precise assertion, directly proving that 
$\coeff$ is computed by the microstalk at $f^{\on{glob}}$, using topological methods;
this is in contrast to our methods, which use special properties
of the automorphic setting.\footnote{Since the first draft of this
paper was circulated, a preprint of their work has appeared: see 
\cite{nadler-taylor}.}

\end{rem}

\begin{rem}

We were unable to prove directly that the Whittaker coefficients
of nilpotent sheaves \emph{actually} are microstalks. What we prove
is Theorem \ref{it:micro} as stated, which informally asserts that
the coefficient functor has all the same good properties as the microstalk
functor would.

Although the geometric picture described above is quite simple and feel
general, we use the specific tools of geometric representation theory; 
specifically, we use a (still forthcoming) result of Kevin Lin.
See \S \ref{s:coeff-exact} for details.

\end{rem}

\subsubsection{Comparison to arithmetic ideas}

We note that similar principles have appeared previously in 
harmonic analysis. See \cite{rodier-characters} \S IV Remarque 2 (see also
\cite{moeglin-waldspurger}). The assertion is that
(under favorable hypotheses), the multiplicity of the Fourier transform of a character
of a representation of a $p$-adic reductive group 
at the regular nilpotent orbit is the dimension
of its Whittaker model. 

It is quite enticing to better understand this analogy more precise.

\subsection{A result for Hecke functors}

We now state another intermediate result of independent interest.

\subsubsection{Background}

Recall that Hecke functors are not $t$-exact on $\Bun_G$.
For instance, a Hecke functor for a representation $V$ 
acting on the constant sheaf of $\Bun_G$ (which is in a single 
degree by smoothness of $\Bun_G$) tensors this constant
sheaf with a \emph{cohomologically sheared} version of $V$. 

However, Hecke functors are not too far from exact either.
For instance, one classically expects cuspidal perverse eigensheaves
for irreducible local systems (and see Theorem \ref{it:eigensheaf} below).
By definition, Hecke functors
transform such objects by tensoring with a (classical) vector space.

\subsubsection{Statement of the result}

The above discussion is suggestive of what the obstruction to exactness is
in general:

\begin{introthm}\label{it:hecke}

\begin{enumerate}

\item There is a unique $t$-structure on $D(\Bun_G)^{\temp}$
for which the projection $D(\Bun_G) \to D(\Bun_G)^{\temp}$
is $t$-exact.

\item\label{i:it-hecke-2} Let $V \in \Rep(\check{G})^{\heart}$ be a representation.
Then for $x \in X$, the induced Hecke functor:
\[
D(\Bun_G)^{\temp} \to D(\Bun_G)^{\temp}
\]

\noindent is $t$-exact.

\item More generally, for $V$ as above, the \emph{parametrized} Hecke
functor:
\[
H_V:D(\Bun_G)^{\temp} \to D(\Bun_G)^{\temp} \otimes D(X)
\]

\noindent is $t$-exact (up to shift by $1 = \dim X$).

\end{enumerate}

\end{introthm}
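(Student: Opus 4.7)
For \emph{Part (1)}, uniqueness is immediate from the defining requirement: if the projection $\pi \colon D(\Bun_G) \to D(\Bun_G)^{\temp}$ is $t$-exact, then the connective objects of $D(\Bun_G)^{\temp}$ are forced to be the images of $D(\Bun_G)^{\leq 0}$. For existence, it suffices to show that the anti-tempered subcategory $D(\Bun_G)^{\at}$ is closed under the truncation functors $\tau^{\leq 0}$ and $\tau^{\geq 0}$, so that the $t$-structure on $D(\Bun_G)$ descends to the Verdier quotient. In the nilpotent setting this is tautological from Theorem~\ref{it:at-irreg}: $\Shv_{\Nilp}(\Bun_G)^{\at}$ equals $\Shv_{\Nilp_{\irreg}}(\Bun_G)$, cut out by a closed conical singular-support condition, hence automatically closed under truncations (since $\SS(\tau^{\leq 0}\sF) \subseteq \SS(\sF)$). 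One then bootstraps from nilpotent sheaves to all of $D(\Bun_G)$ using the reduction of \cite{agkrrv1} \S 20, together with the fact that anti-temperedness is preserved by the twisting/monodromy operations appearing in that reduction.

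For \emph{Parts (2) and (3)}, I would deduce (2) from (3) by restricting the parametrized Hecke functor to $x \in X$, which is $t$-exact up to the shift $\dim X = 1$. The strategy for (3) is to identify, on the tempered quotient, the Hecke convolution with $\Sat(V)$ with the naive spectral action of the tautological vector bundle $\ell(V)$ on $\LocSys_{\check{G}} \times X$, through the spectral decomposition of $D(\Bun_G)$ as a $\QCoh(\LocSys_{\check{G}})$-module category from \cite{generalized-vanishing}. The full Hecke convolution on $D(\Bun_G)$ differs from this naive action by derived-Satake shearing contributions, coming from the $\Sym(\check{\fg}[-2])$ part of the spherical Hecke category; these corrections are anti-tempered by construction and therefore vanish after quotienting to $D(\Bun_G)^{\temp}$. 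Once this identification is in place, $t$-exactness (up to shift by $1$) follows because tensoring with a locally free sheaf is exact on $\QCoh$, while the shift encodes the standard perverse normalization in the $D(X)$-direction.

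The \emph{main obstacle} is rigorously identifying the Hecke convolution with the naive spectral action of $\ell(V)$ on the tempered quotient, i.e., proving that the derived-Satake shearing corrections really are all anti-tempered. This requires delicate compatibilities between derived Satake, the perverse Satake embedding $\Rep(\check{G}) \hookrightarrow \Sph_x$, and the spectral decomposition, as well as checking that the $t$-structure on $D(\Bun_G)^{\temp}$ produced in part~(1) is sufficiently compatible with the $\QCoh(\LocSys_{\check{G}})$-module structure that $t$-exactness of the vector-bundle action can be transferred across the spectral identification.
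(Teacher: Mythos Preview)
Your proposal has two genuine gaps, one logical and one mathematical.

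\textbf{Circularity in Part (1).} You invoke Theorem~\ref{it:at-irreg} to show $\Shv_{\Nilp}(\Bun_G)^{\at}$ is closed under truncations. But in the paper's logic, Theorem~\ref{it:at-irreg} is deduced \emph{from} the main results, including Theorem~\ref{it:temp-cons} and Lemma~\ref{l:cons-kos}; the latter relies on the $t$-exactness of $\coeff$ (Theorem~\ref{t:coeff-exact}), whose proof in Step~2 explicitly uses Theorem~\ref{t:hecke-exact}. So you are using a downstream consequence to prove an upstream input. One direction of Theorem~\ref{it:at-irreg}, namely $\Shv_{\Nilp}^{\at}\subset\Shv_{\Nilp_{\irreg}}$, is available early (Theorem~\ref{t:at}), but you need the \emph{reverse} inclusion to conclude that truncations of anti-tempered objects remain anti-tempered, and that reverse inclusion is not available without the present theorem. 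The paper avoids this entirely: its proof of Part~(1) is purely local and uses nothing about $\Bun_G$. It identifies $\sC^{G(O),\xat}$ with the kernel of the Whittaker averaging $\Av_!^{\psi}:\sC^{G(O)}\to\Whit^{\leq 1}(\sC)$ (via \cite{dario-ramanujan}), then shows by a direct amplitude estimate (Lemma~\ref{l:sph-av-exact}) that this averaging is $t$-exact up to shift; the kernel of a $t$-exact functor is automatically closed under truncations and subobjects.

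\textbf{Gap in Parts (2)--(3).} You correctly note that on the tempered quotient the spherical Hecke action factors through $\QCoh$ of the spectral side, so the action of $V\in\Rep(\check{G})^{\heart}$ is by tensoring with a vector bundle $\sE_{V,x}$ on $\LocSys_{\check{G}}$. But you then assert $t$-exactness because ``tensoring with a locally free sheaf is exact on $\QCoh$.'' This is exactness for the $t$-structure on $\QCoh(\LocSys_{\check{G}})$, not for the $t$-structure on $D(\Bun_G)^{\temp}$ that you constructed in Part~(1), which descends from $D(\Bun_G)$. There is no a priori compatibility between these; indeed, that compatibility is precisely the content of \S\ref{ss:intro-temp-gl}, which the paper deduces \emph{from} Theorem~\ref{it:hecke}, not conversely. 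You flag this yourself as the ``main obstacle,'' and it is fatal to the approach as stated. The paper's argument is again local and direct: using the geometric Casselman--Shalika formula (Proposition~\ref{p:baby-cs}), one rewrites $\Av_!^{\psi}(V^{\check\lambda}\star\sF)$ as a single $*$-averaging $\Av_*^{\psi^{\check\lambda}}(\sF)$ up to shift, whose amplitude is controlled by elementary estimates; since the induced functor $\sC^{G(O),\xtemp}\to\Whit^{\leq 1}(\sC)$ is $t$-exact and conservative, right $t$-exactness of $V^{\check\lambda}\star-$ follows, and left $t$-exactness comes by duality with $V^{\vee}$.
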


See Theorem \ref{t:hecke-exact-loc} and Theorem \ref{t:hecke-exact-curve}
in the body of the paper.

The proofs of the first two statements are quite direct,
but seem not to have been previously observed.
The third is a minor variant, except it relies on the
independence of point in the definition of temperedness
(in other words: in \eqref{i:it-hecke-2}, it is important
that the implicit point $x \in X(k)$ in the definition of
the tempered category be taken to be the same as where the Hecke
functors are taken).
The argument from \cite{independence} works for
$\Shv_{\Nilp}(\Bun_G)$ in the $\ell$-adic setting, but 
we are not sure how to adapt it to the full category
$\Shv(\Bun_G)$ in this setting. If so, our methods would
yield a proof of Theorem \ref{it:hecke} in the $\ell$-adic case as well.

\begin{rem}\label{r:vanishing}

Besides the $\ell$-adic issue raised above, our argument
provides an alternative to \cite{vanishing} \S 2.12.
We highlight that the construction in \emph{loc. cit}. 
applies only for $GL_n$, and is the major technical point
in that paper. 

More explicitly: the main result of \cite{vanishing} is the
formation of a certain quotient
of $D(\Bun_{GL_n})$ with certain favorable properties, including
that Hecke functors act exactly on it. The construction in \emph{loc. cit}.
does not make sense for general $G$. We have provided an alternative
(genuinely different) construction of a quotient category with
the same favorable properties. Moreover, our arguments are
substantially more direct than those in \cite{vanishing}. 

With that said, our argument in \cite{independence} uses
Gaitsgory's \emph{generalized vanishing conjecture} from
\cite{generalized-vanishing}. As explained in \cite{generalized-vanishing},
this result immediately implies the vanishing conjecture 
considered in \cite{vanishing}. For this reason, 
we cannot say that we have found a better understanding of the
(not generalized) vanishing conjecture, only of the intermediate results
used in \cite{vanishing}.

\end{rem}

\subsection{A remark on tempered Langlands}\label{ss:intro-temp-gl}

We now describe a surprising consequence of our work
for the geometric Langlands equivalence.

Roughly speaking, one is commonly taught that
geometric Langlands is an equivalence of \emph{derived} categories,
not abelian categories. We explain that this is in some sense
wrong; \emph{most} of
geometric Langlands can actually be understood as an equivalence
of \emph{abelian} categories.

\subsubsection{}

It is well-known that the geometric Langlands equivalence 
is emphatically not exact. 

First, in geometric class field theory, 
one finds $D(\Bun_{\bG_m}) \simeq \QCoh(\LocSys_{\bG_m})$.
The functor is a variant on Fourier-Mukai for abelian varieties;
experimentally, one finds the latter is far from exact.

Second, for non-abelian $G$, the constant sheaf on $\Bun_G$
should map to an object of $\IndCoh_{\Nilp_{\on{spec}}}(\LocSys_{\check{G}})$
concentrated in cohomological degree $-\infty$ (i.e., in degrees 
$\leq -n$ for all $n$).

However, one expects some exactness properties. For instance,
there are supposed to exist \emph{perverse} eigensheaves for irreducible
local systems (and see Theorem \ref{it:eigensheaf});
these correspond (up to shift) to skyscraper sheaves at smooth, 
irreducible points of $\LocSys_{\check{G}}$.

\subsubsection{}\label{sss:temp-gl-exact-intro}

We recall the setting of \emph{restricted} geometric Langlands
considered in \cite{agkrrv1}.

Here we expect an equivalence:
\[
\Shv_{\Nilp}(\Bun_G) \simeq 
\IndCoh_{\Nilp_{\on{spec}}}(\LocSys_{\check{G}}^{\on{restr}}).
\]

\noindent The space $\LocSys_{\check{G}}^{\on{restr}}$ is defined
as in \emph{loc. cit}.

The tempered analogue should instead find an equivalence:
\[
\Shv_{\Nilp}(\Bun_G)^{\temp} \overset{\bL_G^{\temp}}{\simeq} 
\QCoh(\LocSys_{\check{G}}^{\on{restr}})
\]

\noindent We claim that our results imply this (conjectural) equivalence must be $t$-
exact up to shift. Perhaps more concretely, this means that the composition:
\[
\Shv_{\Nilp}(\Bun_G) \simeq 
\IndCoh_{\Nilp_{\on{spec}}}(\LocSys_{\check{G}}^{\on{restr}})
\xar{\Psi}
\QCoh(\LocSys_{\check{G}}^{\on{restr}})
\]

\noindent should be $t$-exact.

Indeed, let $x \in X(k)$ be a point; this defines
a $\check{G}$-torsor on $\LocSys_{\check{G}}^{\on{restr}}$.
By \cite{agkrrv1} Theorem 1.4.5, the total space of this
torsor is a union of ind-affine formal schemes by an action of $\check{G}$.

It follows that its functor $\Gamma_!$ to $\Vect$ (considered in \cite{agkrrv1}
\S 7) is $t$-exact, and that an object 
$\sG \in \QCoh(\LocSys_{\check{G}}^{\on{restr}})$ lies
in degree $0$ if and only if $\Gamma_!(\sG \otimes \sE_{V,x}) \in \Vect$ is
in degree $0$ for all $V$; here $\sE_{V,x}$ is the vector
bundle on $\LocSys_{\check{G}}^{\on{restr}}$ defined by the pair $(V,x)$.

On the other hand, for $\sF \in D(\Bun_G)^{\temp,\heart}$,
we expect $\Gamma_!(\bL_G^{\temp}(\sF) \otimes \sE_{V,x})
\simeq \coeff(H_{V,x} \star \sF)$; the latter lies purely
in degree $\dim \Bun_G$ by Theorem \ref{it:micro} and
Theorem \ref{it:hecke}, so $\bL_G^{\temp}(\sF)$ must 
lie in degree $\dim \Bun_G$ as well. 

\begin{rem}

We similarly expect that the tempered \emph{Betti} Langlands
equivalence conjecture in \cite{bz-n-betti-gl} is $t$-exact;
this corresponds to fact that the Betti moduli stack of
local systems is the quotient of an affine scheme by 
an action of $\check{G}$. 

\end{rem}

\subsection{Application to Hecke eigensheaves}

We apply our results to show the following result, which can be
thought of as an unconditional realization of the philosophy of 
\S \ref{ss:intro-temp-gl}.

\begin{introthm}\label{it:eigensheaf}

Let $\sigma$ be an irreducible $\check{G}$-local system on $X$.

\begin{enumerate}

\item Any Whittaker-normalized\footnote{See Remark \ref{r:whit-norm} for our
precise convention.}
Hecke eigensheaf $\sF_{\sigma}$ with
eigenvalue $\sigma$ is perverse. Moreover, at least if $k$ is algebraically
closed, Whittaker-normalized Hecke eigensheaves exist.

\item If $\sigma$ is \emph{very irreducible} in the sense of \S \ref{sss:very-irred},
the restriction of $\sF_{\sigma}$ to each connected component of $\Bun_G$
is irreducible.

\end{enumerate}

\end{introthm}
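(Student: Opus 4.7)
The plan is to combine Theorems~\ref{it:temp-cons}, \ref{it:at-irreg}, \ref{it:micro}, and \ref{it:hecke} to fix both the cohomological degree and the simple structure of $\sF_\sigma$. For existence when $k$ is algebraically closed, I would invoke a known production of a Hecke eigensheaf with eigenvalue $\sigma$ (e.g.\ via $\bL_G(\delta_\sigma)$ in cases where the functor is established, or via Beilinson--Drinfeld in the oper case, or via \cite{arinkin-gaitsgory}); Whittaker normalization is then achieved by rescaling via $\coeff$, which is non-zero on a tempered Hecke eigensheaf by Theorem~\ref{it:temp-cons}.

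For perversity in part~(1), the first step is to show $\sF_\sigma \in \Shv_{\Nilp}(\Bun_G)^{\temp}$. Nilpotency of the singular support for Hecke eigensheaves at irreducible eigenvalues is standard, and by Theorem~\ref{it:at-irreg} the antitempered summand is precisely the piece with singular support in $\Nilp_{\irreg}$; the eigensheaf condition at $\sigma$ forces the singular support of $\sF_\sigma$ to meet $\Nilp^{\Kos}$ and to avoid $\Nilp_{\irreg}$, so the antitempered summand vanishes. Next, I would argue that $\sG := H^i(\sF_\sigma[\dim \Bun_G]) = 0$ for $i \neq 0$ in the tempered $t$-structure of Theorem~\ref{it:hecke}(1). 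Indeed, $\sG$ is tempered and $\sigma$-Hecke-eigen, so $H_{V,x}\sG \simeq \sG \otimes V|_\sigma$ for every $V \in \Rep(\ld{G})^\heart$ and $x \in X$. By $t$-exactness of $\coeff[\dim \Bun_G]$ on $\Shv_{\Nilp}(\Bun_G)$ (Theorem~\ref{it:micro}) and Whittaker normalization, $\coeff(\sG) = H^i(\coeff(\sF_\sigma[\dim \Bun_G])) = 0$, whence $\coeff(H_{V,x}\sG) = \coeff(\sG) \otimes V|_\sigma = 0$ for all $V, x$. Because $\sigma$ is a smooth point of $\LocSys_{\ld{G}}$ and $\sG$ is $\sigma$-Hecke-eigen, this vanishing forces $\coeff^{\enh}(\sG) = 0$; conservativity (Theorem~\ref{it:temp-cons}) then gives $\sG = 0$.

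For part~(2), I would apply Theorem~\ref{it:micro} component-by-component. Since $f^{\on{glob}}$ lies above a single connected component $\alpha_0 \subset \Bun_G$, the Euler-characteristic formula gives $\chi(\coeff[\dim \Bun_G](\sF_\sigma|_{\Bun_G^{\alpha_0}})) = 1$, and vanishes for $\alpha \neq \alpha_0$. If $\sF_\sigma[\dim \Bun_G]|_{\Bun_G^{\alpha_0}}$ were reducible, the additivity of the characteristic cycle (with nonnegative integer multiplicities summing to $1$) would force at least one simple perverse constituent to have multiplicity $0$ at $\Nilp^{\Kos}$, and hence vanishing Whittaker coefficient by Theorem~\ref{it:micro}. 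Very irreducibility of $\sigma$ (as in \S\ref{sss:very-irred}) ensures every nonzero Hecke-equivariant subquotient of $\sF_\sigma$ is again a tempered $\sigma$-eigensheaf, hence has non-vanishing Whittaker coefficient by Theorem~\ref{it:temp-cons}: contradiction. Thus $\sF_\sigma[\dim \Bun_G]|_{\Bun_G^{\alpha_0}}$ is irreducible, and irreducibility on each remaining component supporting $\sF_\sigma$ follows by transporting along Hecke modifications $H_{V,x}$, which permute components while acting on $\sF_\sigma$ by tensoring with the vector space $V|_\sigma$.

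The main technical obstacle is the singular-support control in part~(1): showing that $\sF_\sigma$ misses $\Nilp_{\irreg}$ is a microlocal assertion about Hecke eigensheaves at irreducible eigenvalues which does not follow formally from the eigensheaf property, and without it Theorem~\ref{it:at-irreg} cannot be applied to rule out an antitempered summand. A secondary subtlety in part~(2) is extracting from very irreducibility the precise statement that every nonzero Hecke-equivariant subquotient of $\sF_\sigma$ remains a $\sigma$-eigensheaf; this should follow from $\End(\sigma) = k$ together with compatibility of the Hecke action with subquotients, but the precise packaging will require care.
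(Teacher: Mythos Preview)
Your perversity argument, once temperedness is established, is essentially the paper's: $t$-exactness plus conservativeness of $\coeff^{\enh}$ kills the off-degree cohomology. But the route to temperedness is a real gap, as you yourself flag. The paper does \emph{not} argue microlocally that $\SS(\sF_\sigma)$ avoids $\Nilp_{\irreg}$; instead it uses that any object of $D_{\widehat{\sigma}}(\Bun_G)$ is cuspidal (Corollary~\ref{c:irred->cusp}, a consequence of the Braverman--Gaitsgory Hecke property of Eisenstein series, Theorem~\ref{t:bg-eis}) and then invokes Beraldo's result that cuspidal implies tempered. This is the missing ingredient in your part~(1).

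Your approach to irreducibility has a more serious gap than you indicate. A simple perverse constituent of $\sF_\sigma|_{\Bun_G^{\alpha_0}}$ is \emph{not} a Hecke-equivariant subobject of $\sF_\sigma$: Hecke functors move between connected components, so such a constituent carries no eigensheaf structure at all, and Theorem~\ref{it:temp-cons} gives you nothing. Very irreducibility of $\sigma$ does not help here; it controls $\Aut(\sigma)$, not the abelian-categorical subquotients of $\sF_\sigma$ as a perverse sheaf. The paper avoids this entirely by a length count (Lemma~\ref{l:length}): since $\coeff^{\enh}_{\widehat{\sigma}}[\dim\Bun_G]$ is $t$-exact and conservative, one has $\ell(\sF_\sigma) \leq \ell(k_\sigma)$, and for very irreducible $\sigma$ (say $G$ semi-simple) $\ell(k_\sigma) = |Z_{\check{G}}| = |\pi_1^{\on{alg}}(G)| = |\pi_0(\Bun_G)|$. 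Combined with the easy lower bound that each component restriction is nonzero, this forces each $\sF_\sigma|_{\Bun_G^c}$ to be simple. The reductive case is handled by working modulo $\check{\Lambda}_{Z_G^\circ}$ (\S\ref{sss:red}). Your characteristic-cycle argument would need to be rebuilt around $\coeff^{\enh}$ rather than $\coeff$ to recover this.
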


The reader will find this result as Theorem \ref{t:eigensheaf}; we also
refer to Remark \ref{r:jh-genl} where it is noted that Whittaker-normalized
eigensheaves are semi-simple for any (possibly not very) irreducible local system.

We note that this answers an old question. Namely, from the point of view of the
categorical geometric Langlands conjectures, it is not clear why 
eigensheaves should be perverse or irreducible. We show that this
follows from exactness (and conservativeness) properties of the Whittaker functor.

We remark that the existence of Hecke eigensheaves stated above is
proved via opers and is disjoint from the methods of our paper; we have
relegated the argument to Appendix \ref{a:localization}. Our contributions
are more to the structure of (normalized) eigensheaves, and we include the
existence argument for the sake of completeness.

\subsection{Outline of the argument}

We now outline our argument for Theorem \ref{it:temp-cons}.
The details are provided in \S \ref{s:finale}.

\subsubsection{}

First, we reduce to the corresponding statement
for $\Shv_{\Nilp}(\Bun_G)$ using the technology of
\cite{agkrrv1}.

\subsubsection{}

Let $\o{\Nilp} \subset \Nilp$ denote the open 
of \emph{generically regular} nilpotent Higgs bundles.

We prove:

\begin{introthm}\label{it:at}

Any object $\sF\in \Shv_{\Nilp}(\Bun_G)$ that does not lie in 
$\Shv_{\Nilp}(\Bun_G)^{\at}$ has $\SS(\sF) \cap \o{\Nilp} \neq \emptyset$.

\end{introthm}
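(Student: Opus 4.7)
I will prove the contrapositive: assume $\SS(\sF) \subseteq \Nilp\setminus\o{\Nilp}$ and deduce $\sF \in \Shv_{\Nilp}(\Bun_G)^{\at}$. Since $\Shv_{\Nilp}(\Bun_G)^{\at}$ is the left orthogonal of the tempered quotient, this amounts to showing the image $\sF^{\temp}$ in $\Shv_{\Nilp}(\Bun_G)^{\temp}$ vanishes; by \cite{independence} the tempered structure can be detected using the Hecke action at any chosen point $x \in X(k)$.

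The first step is a microlocal observation I would read off of Theorem \ref{it:micro}: because the generic point of the Kostant component $\Nilp^{\Kos}$ is $f^{\on{glob}}\in\o{\Nilp}$, any $\sG\in\Shv_{\Nilp}(\Bun_G)$ with $\SS(\sG)\cap\o{\Nilp}=\emptyset$ has characteristic cycle multiplicity zero at $\Nilp^{\Kos}$, so Theorem \ref{it:micro} gives $\chi(\coeff[\dim\Bun_G](\sG))=0$. Using $t$-exactness to reduce to perverse $\sG$, where the shifted coefficient is concentrated in a single degree, one concludes $\coeff(\sG)=0$ identically.

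The second step propagates this through the Hecke action. The singular support of $H_{V,x}\star\sF$ is controlled by the standard moment-map correspondence on $T^*\Bun_G$: away from the fiber over $x$ the correspondence is the identity, and ``generically regular'' is an open condition on $X$ for a fixed nilpotent Higgs field, so for $x$ in a dense open of $X$ the hypothesis $\SS(\sF)\subseteq \Nilp\setminus\o{\Nilp}$ persists to $\SS(H_{V,x}\star\sF)\subseteq \Nilp\setminus\o{\Nilp}$ for every $V\in\Rep(\check{G})^{\heart}$. Applying the first step yields $\coeff(H_{V,x}\star\sF)=0$ for all such $V$ and all such $x$. Using $\QCoh(\LocSys_{\check{G}})$-linearity of $\coeff^{\enh}$, together with the fact that the Hecke eigenbundles $\sE_{V,x}$ of \S \ref{ss:intro-temp-gl} generate $\QCoh(\LocSys_{\check{G}})$, this rephrases as $\coeff^{\enh}(\sF^{\temp})=0$.

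The main obstacle is the final step: concluding $\sF^{\temp}=0$ from $\coeff^{\enh}(\sF^{\temp})=0$ is precisely the conservativity of Theorem \ref{it:temp-cons}, into which Theorem \ref{it:at} is itself an input, so one cannot invoke it directly. To break the circularity I would complement the argument with a structural description of $\Shv_{\Nilp}(\Bun_G)^{\at}$ directly in terms of singular support: for instance, showing that the subcategory of objects with $\SS\subseteq \Nilp_{\irreg}=\Nilp\setminus\o{\Nilp}$ is generated under colimits by Eisenstein series $\on{Eis}_!(-)$ from proper parabolics, via an induction on the stratification of $\Nilp$ by parabolic type and a singular-support computation for Eisenstein series. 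Combined with the (expected) fact --- dual to Beraldo's that cuspidals are tempered --- that such Eisenstein series are anti-tempered, this gives the required containment $\Shv_{\Nilp_{\irreg}}(\Bun_G)\subseteq \Shv_{\Nilp}(\Bun_G)^{\at}$ without invoking \ref{it:temp-cons}. Carrying out this inductive reduction, and justifying the parabolic-Eisenstein anti-temperedness input, is, I expect, the technical core of the argument.
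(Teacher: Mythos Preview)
You correctly identify the central difficulty: your first two steps reduce to $\coeff^{\enh}(\sF)=0$, and then you need conservativeness of $\coeff^{\enh}$ on the tempered category, which is exactly Theorem~\ref{it:temp-cons}; since Theorem~\ref{it:at} is an input to Theorem~\ref{it:temp-cons}, the argument is circular. Unfortunately, your proposed fix does not work. The assertion that Eisenstein series from proper parabolics are anti-tempered is false: for instance, the compactified principal Eisenstein sheaf $\on{Eis}_{!*}$ has $\coeff(\on{Eis}_{!*})\neq 0$ (cf.\ the shift computation in \S\ref{s:coeff-exact}), so it is not anti-tempered. More conceptually, under geometric Langlands anti-tempered objects correspond to the kernel of $\Psi:\IndCoh_{\Nilp_{\on{spec}}}\to\QCoh$, which has nothing to do with being an Eisenstein series. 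There is also no evident mechanism by which the stratification of $\Nilp$ by parabolic type would exhibit $\Shv_{\Nilp_{\irreg}}(\Bun_G)$ as generated by Eisenstein series.

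The paper's proof takes a completely different route that avoids the global functor $\coeff$ altogether. Anti-temperedness at $x$ is characterized \emph{locally}: by \cite{dario-ramanujan}, $\sF\in\Shv_{\Nilp}(\Bun_G)^{\xat}$ if and only if the (baby) Whittaker $!$-averaging functor $\sC^{G(O)}\to\Whit^{\leq 1}(\sC)$ kills $\sF$, where $\sC=D(\Bun_G^{\xlevel})$. The paper then passes to parahoric level structure $\Bun_G^{\sK_1\level}$, observes that irregular singular support is preserved under Hecke convolution (a Nadler--Yun type argument), and that on $\Bun_G^{\sK_1\level}$ the moment map for the residual finite-dimensional $G$-action sends irregular ramified Higgs fields to $\fg_{\irreg}$. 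This reduces to a purely finite-dimensional statement: on any $G$-stack $\sY$, the finite Whittaker averaging $\Av_!^\psi:\Shv_{\Girreg}(\sY)^{B^-}\to D(\sY)^{N,\psi}$ vanishes. The latter is proved (Theorem~\ref{t:g-irreg}) by further reducing to $\sY=G$, applying Beilinson--Bernstein localization to translate into a statement about $\fg$-modules with associated variety in $\sN_{\irreg}$, and invoking Losev's theorem \cite{losev-holonomic} on associated varieties together with Kostant's faithfulness result. The key conceptual point you are missing is that the relevant Whittaker functor here is the \emph{local} one at $x$ (acting ``on the right'' in the adelic picture), not the global coefficient functor $\coeff$; this is what decouples Theorem~\ref{it:at} from Theorem~\ref{it:temp-cons}.
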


This result is our Theorem \ref{t:at}. The proof reduces to a parallel statement
for the flag variety of the finite dimensional group $G$. 
We translate this to a statement about Lie algebra representations
via Beilinson-Bernstein. Finally, we apply a theorem of Losev \cite{losev-holonomic}
regarding associated varieties of $\fg$-modules (and proved
using ideas reminiscent of microlocal differential operators).

We wish to be clear: this is not the proof of the theorem 
written in The Book (in the sense of Erd\H{o}s); our argument
is not geometric. It would be
far better to have a proof that relies only on standard
properties of singular support and adapts to the $\ell$-adic setting.

\subsubsection{}

Next, we have:

\begin{introthm}\label{it:trans}

Suppose $\sF\in \Shv_{\Nilp}(\Bun_G)$ satisfies 
$\SS(\sF) \cap \o{\Nilp} \neq \emptyset$. Then for a
point $x \in X(k)$, there exists a representation 
$V^{\check{\lambda}} \in \Rep(\check{G})^{\heart}$
such that $\Nilp^{\Kos} \subset \SS(H_{V^{\check{\lambda}},x} \star \sF)$.

\end{introthm}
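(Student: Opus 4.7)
The plan is to combine a microlocal estimate for Hecke functors with an explicit local analysis at a point of $\o\Nilp$.

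First, I recall that for $V \in \Rep(\check{G})^{\heart}$, the Hecke functor $H_{V,x}$ is pull-push along the Hecke correspondence $\Bun_G \xleftarrow{p_1} \Hecke_{G,x}^V \xrightarrow{p_2} \Bun_G$ twisted by the Satake sheaf $\Sat(V)$. A standard microlocal estimate then bounds $\SS(H_{V,x}\star\sF)$ by the $p_2$-image of $\SS(\sF)$ transported through the natural Lagrangian correspondence on cotangent bundles induced by $p_1, p_2$ (intersected with $\SS(\Sat(V))$). Crucially, this bound is attained on a Zariski-dense subset when one works at the level of characteristic cycles, where multiplicities cannot cancel. A cotangent direction at a point $(\cE_1, \cE_2, \alpha) \in \Hecke_{G,x}^V$ is given (via Beauville-Laszlo) by a pair of Higgs fields $(\phi_1, \phi_2)$ on $\cE_1, \cE_2$ agreeing on $X \setminus x$ via $\alpha$.

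Next, I fix $\eta = (\cE, \phi) \in \SS(\sF) \cap \o\Nilp$ and choose $x \in X(k)$ at which $\phi|_x$ is regular nilpotent in $\fg_\cE|_x$; such an $x$ exists by definition of $\o\Nilp$, after a finite base change if needed (cf.\ the footnote in \S\ref{s:intro}). The key local input is that for a sufficiently dominant $\check{\lambda}$, the Hecke orbit of type $\check{\lambda}$ through $(\cE, \phi)$ at $x$ meets $\Nilp^{\Kos}$. Concretely, a $\check{\lambda}$-modification acts on $\phi|_{\widehat{x}}$ by $G(\!(t)\!)$-conjugation of type $\check{\lambda}$; since any two regular nilpotent sections of $\fg \otimes \widehat{\cO}_x$ are $G(\!(t)\!)$-conjugate and sufficiently dominant $\check{\lambda}$ realize all such conjugations, I can arrange the modified Higgs field $\phi'$ to coincide locally at $x$ with the Kostant form of a global oper, placing $(\cE', \phi') \in \Nilp^{\Kos}$.

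Finally, letting the modification vary over the Hecke fiber above $\eta$, the image $p_2(p_1^{-1}(\eta))$ sweeps out a constructible subset meeting $\Nilp^{\Kos}$ in a dense open; irreducibility of $\Nilp^{\Kos}$ together with closedness and conicity of $\SS$ then yields $\Nilp^{\Kos} \subseteq \SS(H_{V^{\check{\lambda}},x} \star \sF)$. The main obstacle is the realization half of the microlocal bound: the contribution from $\eta$ to $\SS(H_{V^{\check{\lambda}},x} \star \sF)$ might a priori cancel against other contributions to the characteristic cycle. I would resolve this by working with characteristic cycles directly (where multiplicities are non-negative, hence cannot cancel), or via a microstalk argument identifying the microlocal contribution of $\eta$ along the Lagrangian correspondence and using that the transport to $\Nilp^{\Kos}$ meets a smooth point of the correspondence, at which the cotangent map is an immersion.
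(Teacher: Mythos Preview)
Your proposal has two genuine gaps, one geometric and one microlocal.

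\textbf{The geometric issue.} You choose $x$ to be a point where $\phi$ is \emph{regular}. But a Hecke modification at $x$ only changes $\phi$ over the formal disc at $x$; away from $x$, the modified Higgs field $\phi'$ agrees with $\phi$. In particular, the discrepancy divisor of $\phi'$ away from $x$ equals that of $\phi$. Since you picked $x$ where $\phi$ already has zero discrepancy, all of the discrepancy of $\phi$ lies away from $x$ and survives in $\phi'$. So unless your original $\eta$ was already in $\Nilp^{\Kos}$, no modification at such an $x$ can land in $\Nilp^{\Kos}$. The paper does the opposite: it uses that a whole irreducible component $\o{\Nilp}^{\check{\mu}}$ lies in $\SS(\sF)$, and then chooses a \emph{specific} representative in that component whose discrepancy is concentrated entirely at the chosen point $x$. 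Only then can a single Hecke modification at $x$ kill the discrepancy and reach $\Nilp^{\Kos}$.

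\textbf{The microlocal issue.} Your resolution of the ``main obstacle'' is not adequate. The standard estimate for $\SS$ of a pushforward is an upper bound; to get a lower bound you need more. Saying ``characteristic cycles have non-negative multiplicities, hence cannot cancel'' is not enough: the cycle-theoretic transport along the Lagrangian correspondence does not in general compute $\CC$ of the pushforward unless the relevant fibers are isolated (or some transversality holds). The paper invokes exactly this: the isolated-pair criterion (\cite{agkrrv1} Theorem 20.1.3, stated here as Theorem~\ref{t:isolated-ss}) gives a lower bound on $\SS(f_{*,dR}(\sF))$ provided the preimage of $\SS(\sF)$ in the correspondence is zero-dimensional at the point in question. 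The heart of the paper's argument is then a precise affine Springer fiber computation (Proposition~\ref{p:spr}) showing that the relevant intersection $\ol{\Gr}_G^{\check{\lambda}} \cap \Spr_{\ol{\check{\lambda}}}^{\vph}$ is a \emph{single reduced point}, which is exactly what verifies the isolated-pair hypothesis. Your sketch contains no analogue of this zero-dimensionality check, and without it the lower bound does not follow.
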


This result appears in our work as Theorem \ref{t:hecke-to-kostant}. 
The proof uses basic geometry of the global nilpotent cone and standard
properties of singular support. 

\subsubsection{}

Finally, we deduce the claim as follows. 

Suppose $\sF\in \Shv_{\Nilp}(\Bun_G)$ does not lie
in $\Shv_{\Nilp}(\Bun_G)^{\at}$. We need to show that
$\coeff^{\enh}(\sF) \neq 0$. 

By Theorem \ref{it:at}, we have 
$\SS(\sF) \cap \o{\Nilp} \neq \emptyset$. 
By definition, it suffices to show that 
$\coeff(H_{V^{\check{\lambda}},x} \star \sF) \neq 0$
for some $V^{\check{\lambda}}$. Therefore, by Theorem \ref{it:trans},
we are reduced to showing that $\coeff(\sF) \neq 0$
when $\Nilp^{\Kos} \subset \SS(\sF)$.

But now the argument follows from Theorem \ref{it:micro}:
by $t$-exactness, we are reduced to the case where $\sF$ is
perverse (i.e., constructible and concentrated in cohomological degree $0$).
In that case,
the Euler characteristic of $\coeff(\sF)$ equals the degree of 
$\CC(\sF)$ at $\Nilp^{\Kos}$, which is non-zero by assumption.

\subsubsection{}

We remark that Theorem \ref{it:at-irreg} follows easily, and therefore
do not prove it in the body of the paper.
Here is the outline:
\begin{itemize}

\item $\Shv_{\Nilp}(\Bun_G)^{\at} \subset \Shv_{\Nilp_{\irreg}}(\Bun_G)$
by Theorem \ref{it:at}.

\item $\Shv_{\Nilp_{\irreg}}(\Bun_G)$ lies in 
$\Ker(\coeff)$ by Lemma \ref{l:cons-kos}.
Therefore, as the category $\Shv_{\Nilp_{\irreg}}(\Bun_G)$ 
is stable under Hecke functors (cf. \S \ref{sss:hecke-irreg}),
so $\Shv_{\Nilp_{\irreg}}(\Bun_G)$ lies in $\Ker(\coeff_D)$ for
each $D$. It follows formally that 
$\Shv_{\Nilp_{\irreg}}(\Bun_G) \subset \Ker(\coeff^{\enh})$. 

\item $\Ker(\coeff^{\enh})$ equals (and in particular, is contained
in) $D(\Bun_G)^{\at}$ by Theorem \ref{it:temp-cons}.

\end{itemize}

\subsubsection{Regarding the $\ell$-adic setting}

We expect analogues of each of the theorems above to hold for
the setting of $\ell$-adic sheaves considered in \cite{agkrrv1}.
In particular, we believe our overall strategy is the right one.

However, for each\footnote{The first two parts of Theorem \ref{it:hecke} are 
an exception.}
of the above theorems, we at some point in the argument use
specifics of $D$-modules, particularly regular holonomic $D$-modules/Betti
perverse sheaves. This is most egregious for Theorem \ref{it:at},
but is true at some point for every one of these results.

\subsection{Acknowledgements}

We are happy to thank Dima Arinkin, Sasha Beilinson, David Ben-Zvi, 
Dario Beraldo, Gurbir Dhillon, Vladimir Drinfeld, 
Pavel Etingof, Tony Feng, Dennis Gaitsgory, Tom Gannon, David Kazhdan, Kevin Lin,
Ivan Losev, Victor Ginzburg,
Sergey Lysenko, Ivan Mirkovic, David Nadler, 
Nick Rozenblyum, Yiannis Sakellaridis, Will Sawin,
Jeremy Taylor, and Yasha Varshavsky
for generously sharing their ideas, for related collaborations, and
for their encouragement.

S.R. was supported by NSF grant DMS-2101984.

\section{Notation}\label{s:notation}

In this section, we set up some notation that will be used throughout the
paper.

\subsection{Categories}

We freely use the language of $\infty$-categories and higher algebra,
cf. \cite{higheralgebra}, \cite{sag}, \cite{gr-i}, \cite{gr-ii}.

We understand DG categories as $k$-linear stable $\infty$-categories.
We let $\DGCat_{cont}$ denote the category of presentable (in particular:
cocomplete) DG categories with morphisms being continuous DG functors.
We freely use Lurie's symmetric monoidal structure $\otimes$ on
$\DGCat_{cont}$, and the associated duality formalism.

\subsection{Categories of sheaves}

Here we set up our notation and conventions for sheaves. We refer to
\cite{toy-model} Appendix A and \cite{agkrrv1} Appendices D and E for details
and proofs of various assertions.

\subsubsection{$D$-modules}

For a prestack $\sY$ locally almost of finite type, we let 
$D(\sY)$ denote the DG category of $D$-modules on $\sY$, 
defined as in \cite{gr-i}. For a map $f:\sY \to \sZ$, 
we let $f^!:D(\sZ) \to D(\sY)$ denote the corresponding pullback functor.
If $f$ is ind-representable, we let
$f_{*,dR}:D(\sY) \to D(\sZ)$ denote the pushforward functor.

Where defined, we let $f_!$ (resp. $f^{*,dR}$) denote the left adjoints
to these functors.

\subsubsection{}\label{sss:loc-cpt}

For $\sY$ an algebraic stack and $\sF \in D(\sY)$, we say
that $\sF$ is \emph{locally compact} if for every
affine scheme $S$ and every smooth map $f:S \to \sY$,
$f^!(\sF) \in D(S)$ is compact.

We remind that compact objects of $D(\sY)$ are 
locally compact, but the converse does not hold. For example, the constant
sheaf on $\bB \bG_m$ is locally compact but not compact; the same
applies for the constant sheaf on a non-quasi-compact scheme.
More generally, any constructible object (defined as below) is locally compact.

\subsubsection{Ind-constructible sheaves}

For $S$ an affine scheme of finite type, 
we let $\Shv(S)^c \subset D(S)$ denote the
subcategory of compact objects that are holonomic with regular singularities.
We then let $\Shv(S) = \Ind(\Shv(S)^c)$; this is a full subcategory
of $D(S)$.

For $\sY$ an algebraic stack, we let 
$\Shv(\sY) \coloneqq \lim_{S \to \sY} \Shv(S)$ and
let $\Shv(\sY)^{\on{constr}} \coloneqq \lim_{S \to \sY} \Shv(S)^c$.
In both circumstances, the limits are taken over affine schemes
$S$ mapping to $\sY$ and the implied functors are upper-$!$ functors.
Standard arguments allow us to replace the limit by that over
the subcategory of $S$'s mapping \emph{smoothly} to $\sY$.
It follows that $\Shv(\sY)$ has a natural $t$-structure,
and $\Shv(\sY)^{\on{constr}}$ is closed under truncations.

We refer to objects of $\Shv(\sY)$ as \emph{ind-constructible sheaves} on $\sY$
and objects of $\Shv(\sY)^{\on{constr}}$ as \emph{constructible sheaves}
on $\sY$. 

As in \cite{agkrrv1} \S F.2.5, we have a well-defined Verdier duality
equivalence $\Shv(\sY)^{\on{constr},\on{op}} \simeq \Shv(\sY)^{\on{constr}}$
that we denote $\bD^{\on{Verdier}}$.

\begin{rem}

Our usage of the notation $\Shv$ 
here is slightly different than e.g. in \cite{agkrrv1}, where
it is meant to express a certain ambivalence about the specific choice
of sheaf theory. We work in the context of $D$-modules in characteristic $0$, 
so do not share the ambivalence of \emph{loc. cit}. With that said,
the notation is similar in spirit.

\end{rem}

\subsubsection{Singular support}

When $\sY$ is an algebraic stack and 
$\Lambda \subset T^*\sY$ a closed, conical 
subscheme, we let
$D_{\Lambda}(\sY) \subset D(\sY)$ denote the full subcategory of
sheaves with singular support in $\Lambda$. 

Similarly, we let $\Shv_{\Lambda}(\sY) \subset \Shv(\sY)$
denote the corresponding full subcategory. 

We again refer to \cite{toy-model} \S A.3 for definitions.

For $\sF \in D(\sY)$, we let $\SS(\sF) \subset T^*\sY$ denote
the singular support of $\sF$, which is ind-closed in $T^*\sY$;
cf. \cite{agkrrv1} \S H.1.2. On occasion, for $\sF$ constructible,
we let $\CC(\sF)$ denote the characteristic cycle of $\sF$.

\subsection{Lie theory}

\subsubsection{} Throughout the paper, $G$ denotes a split reductive group
over $k$. We choose opposing Borel subgroups $B,B^- \subset G$ with
$B \cap B^- = T$ a fixed Cartan. We let $N$ (resp. $N^-$) denote the
unipotent radical of $B$ (resp. $B^-$).

We let $\Lambda$ denote the lattice of weights of $G$ and let
$\check{\Lambda}$ denote the lattice of coweights.\footnote{Our convention
here is opposite to \cite{agkrrv1}.} 
For $\check{\lambda} \in \check{\Lambda}$ and $\mu \in \Lambda$, we
let $(\check{\lambda},\mu) \in \bZ$ denote the pairing of the two.
We let $\Lambda^+ \subset \Lambda$ denote the subset of
dominant weights, and similarly for $\check{\Lambda}^+$.

We let $\sI_G$ denote the set of nodes for the Dynkin diagram of $G$.
For $i \in \sI_G$, we let $\alpha_i$ denote the corresponding
simple root.

We let $2\rho \in \Lambda$ denote the sum of the positive roots,
and similarly for $2\check{\rho} \in \check{\Lambda}$.

We let $\check{G}$ denote the Langlands dual group of $G$, considered
as an algebraic group over $k$.

\subsubsection{} We let $\fg_{\irreg} \subset \fg$ denote the reduced closed 
subscheme consisting of irregular elements. 

We let $\sN \subset \fg$ denote the nilpotent cone. 
We let $\sN_{\irreg} \coloneqq \sN \cap \fg_{\irreg}$ denote the subscheme
of irregular nilpotent elements. We let $\o{\sN} \subset \sN$
denote the open complement to $\sN_{\irreg}$, which parametrizes of
regular nilpotent elements.

\subsection{Higgs bundles}

\subsubsection{}

We remind that a \emph{Higgs bundle} (on $X$, for the group $G$) is a pair
$(\sP_G,\vph)$ where $\sP_G$ is a $G$-bundle and 
$\vph \in \Gamma(X,\fg_{\sP_G} \otimes \Omega_X^1)$.

Recall that Higgs bundles form an algebraic stack $\Higgs_G$, which 
can be written as a mapping stack:
\[
\Higgs_G \coloneqq
\Maps(X,\fg/{G \times \bG_m}) \underset{\Maps(X,\bB \bG_m)}{\times}
\{\Omega_X^1\}.
\]

We remind that our choice\footnote{More canonically, 
$T^*\Bun_G$ identifies with the variant of $\Higgs_G$ with 
$\fg^{\vee}$ replacing $\fg$ everywhere. For example, this applies
as well to possibly non-reductive affine algebraic groups.} 
of $\kappa_0$ induces an isomorphism:
\[
T^*\Bun_G \simeq \Higgs_G,
\]

\noindent which we take for granted in the sequel.

\subsubsection{Globalization}\label{sss:globalization}

For $\Lambda \subset \fg$ closed, conical, and stable under the $G$-action, 
it is convenient to denote:
\[
\Higgs_{G,\Lambda} \coloneqq 
\Maps(X,\Lambda/{G \times \bG_m}) \underset{\Maps(X,\bB \bG_m)}{\times}
\{\Omega_X^1\}.
\]

\noindent Clearly $\Higgs_{G,\Lambda}$ forms a closed substack of
$\Higgs_G$.

In the special case $\Lambda = \sN$, we let:
\[
\Nilp \coloneqq \Higgs_{G,\sN}.
\]

\noindent To reiterate: 
\[
\Nilp \subset \Higgs_G = T^*\Bun_G \text{ \emph{is the global nilpotent cone}.} 
\]

\noindent (We highlight this in part in acknowledgement that the notation does 
not make it easy for the reader to remember which of $\sN$ and $\Nilp$ has
to do with $\fg$ and which has to do with $\Higgs_G$.)

\subsection{Global nilpotent cone}

We now establish some notation relating to $\Nilp$.

\subsubsection{Irregular nilpotent Higgs bundles}

We define:
\[
\Nilp_{\irreg} \coloneqq \Higgs_{G,\sN_{\irreg}} \subset \Nilp.
\]

\noindent This stack parametrizes \emph{irregular nilpotent} Higgs bundles.
Clearly $\Nilp_{\irreg} \subset \Nilp$ is a closed substack.

\begin{example}

For $G = GL_2$, we have $\Nilp_{\irreg} = \Bun_G$, which 
is embedded in $\Higgs_G$ as the zero section.

\end{example}

\subsubsection{Generically regular nilpotent Higgs bundles}

We let:
\[
\o{\Nilp} \subset \Nilp
\]

\noindent denote the open complement to $\Nilp_{\irreg}$.
This is the stack of \emph{generically regular} Higgs bundles.
(We use this terminology because a point $\vph \in \Nilp$ lies
in $\o{\Nilp}$ if and only if there
is a dense open $U \subset X$ over which $\vph$ is regular nilpotent.)

\begin{example}

For $G = GL_2$, $\o{\Nilp}$ parametrizes pairs $(\sE,\vph)$ where
$\sE$ is a rank $2$ vector bundle and $\vph:\sE \to \sE \otimes \Omega_X^1$
is a non-zero Higgs field with $\vph^2 = 0$.

\end{example}

\subsubsection{Mapping stack notation}

Let $\sY$ be a stack and let $\o{\sY} \subset \sY$ be an open substack. 

We let $\Maps_{\on{nondeg}}(X,\sY \supset \o{\sY})$ denote the
prestack with $S$-points given by maps:
\[
y:X_S \coloneqq X \times S \to \sY
\]

\noindent such that there exists an open $U \subset X_S$ 
such that:
\begin{itemize}

\item $U \subset X_S$ is schematically dense.\footnote{I.e., 
for $U \subset Z \subset X_S$ with $Z \subset X_S$ closed, we necessarily
have $Z = X_S$.}

\item $U \to S$ is a (necessarily flat) cover.

\item $y|_U$ factors through $\o{\sY}$.

\end{itemize}

(See e.g. \cite{cpsi} \S 2.9 and \cite{schieder} \S 2.2.1, where similar
constructions are discussed.)

\subsubsection{A Springer construction}\label{sss:springer-desc}

In the above notation, we now clearly have:
\[
\o{\Nilp} = \Maps_{\on{nondeg}}(X,\sN/G \supset \o{\sN}/G).
\]

Let $\o{\fn} \subset \fn$ denote the open subscheme 
of elements of $\fn$ that are regular as elements of $\fg$.
For the reader's convenience, we remind that if we choose 
negative simple root vectors $f_i \in \fn^-$ (for $i \in \sI_G$) and let
$f_i^{\vee} \coloneqq \kappa_0(f_i,-):\fn \to \bA^1$ denote the
corresponding projection onto the simple root space, then
$\o{\fn} = \cap_{i \in \sI_G} \{f_i^{\vee} \neq 0\}$.

We now form:
\[
\o{\widetilde{\Nilp}} \coloneqq \Maps_{\on{nondeg}}(X,\o{\fn}/B \subset \fn/B).
\]

\noindent Observe that there are canonical maps:
\[
\begin{tikzcd}
\o{\widetilde{\Nilp}} 
\arrow[d] 
\arrow[r] 
&
\Bun_B
\arrow[r]
&
\Bun_T
\\
\o{\Nilp}.
\end{tikzcd}
\]

For a coweight $\check{\lambda}$, we let:
\[
\o{\widetilde{\Nilp}}^{\check{\lambda}} \subset 
\o{\widetilde{\Nilp}}
\]

\noindent denote the inverse image of\footnote{Explicitly, $\Bun_T^{\check{\lambda}}$
is the component of $\Bun_T$ parametrizing
$T$-bundles $\sP_T$ where $\deg(\sP_T^{\mu}) = (\mu,\check{\lambda})$
for each weight $\mu$; here $\sP_T^{\mu}$ is the line bundle (aka $\bG_m$-bundle)
induced by the map $\mu:T \to \bG_m$.}
$\Bun_T^{\check{\lambda}}$ along the top map.

It is easy to see that the projection:
\[
\o{\widetilde{\Nilp}}^{\check{\lambda}} \to \o{\Nilp}
\]

\noindent is a locally closed embedding. We therefore
abuse notation in letting $\o{\Nilp}^{\check{\lambda}}$ denote
the corresponding locally closed subscheme of $\o{\Nilp}$.
Note that every field-valued point of $\o{\Nilp}$ lifts
to $\o{\widetilde{\Nilp}}$ because $\fn/B \to \sN/G$ is 
proper and induces an isomorphism $\o{\fn}/B \isom \o{\sN}/G$;
therefore, the various $\o{\Nilp}^{\check{\lambda}}$ define 
a stratification of $\o{\Nilp}$.

The following result describes the geometry in more detail:

\begin{prop}\label{p:nilp-reg}

Let $\check{\lambda} \in \check{\Lambda}$ be a coweight.

\begin{enumerate}

\item $\o{\Nilp}^{\check{\lambda}}$ is smooth of dimension $\dim \Bun_G$.

\item Suppose $(\alpha_i,\check{\lambda})+2g-2 < 0$ for some
$i \in \sI_G$. Then: 
\[
\o{\Nilp}^{\check{\lambda}} = \emptyset.
\]

\item Suppose $(\alpha_i,\check{\lambda})+2g-2 \geq 0$ for all $i \in \sI_G$.
Then $\o{\Nilp}^{\check{\lambda}}$ is non-empty and connected.

\end{enumerate}

\end{prop}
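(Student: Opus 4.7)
The plan is to work through the identification $\o{\Nilp}^{\check{\lambda}} = \o{\widetilde{\Nilp}}^{\check{\lambda}}$ as the stack of pairs $(\sP_B,\vph)$, where $\sP_B$ is a $B$-bundle of degree $\check{\lambda}$ and $\vph \in H^0(X,\fn_{\sP_B} \otimes \Omega^1_X)$ is generically regular. The essential structural input is the $B$-equivariant surjection $\fn \twoheadrightarrow \fn/[\fn,\fn] = \bigoplus_{i \in \sI_G} \fg_{\alpha_i}$, which globalizes to a surjection of vector bundles $\fn_{\sP_B} \twoheadrightarrow \bigoplus_i L_{\alpha_i}$ with $L_{\alpha_i} := \sP_T^{\alpha_i}$ a line bundle of degree $(\alpha_i,\check{\lambda})$. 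Generic regularity of $\vph$ is equivalent to the non-vanishing of each of its simple-root projections.

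Part (2) is then immediate: if $(\alpha_{i_0},\check{\lambda}) + 2g-2 < 0$, the line bundle $L_{\alpha_{i_0}} \otimes \Omega^1_X$ has negative degree, so $H^0$ vanishes, forcing the $\alpha_{i_0}$-component of any $\vph$ to be zero. For part (1), I would compute the tangent complex at $(\sP_B,\vph)$ as $R\Gamma(X, K^{\bullet})$, where $K^{\bullet} = [\fb_{\sP_B} \xrightarrow{[\cdot,\vph]} \fn_{\sP_B} \otimes \Omega^1_X]$ is placed in degrees $-1, 0$. The crucial Lie-algebraic input is that for a regular nilpotent $\vph$ the map $[\cdot,\vph]\colon \fb \to \fn$ is surjective: its kernel $\fg^{\vph} \cap \fb$ has dimension $r = \dim\fb - \dim\fn$. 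Consequently the induced map of sheaves on $X$ is surjective away from the zero-dimensional non-regular locus of $\vph$, so its cokernel is torsion; since torsion sheaves on a curve have vanishing $H^1$, a short diagram chase yields $\bH^1(X, K^{\bullet}) = 0$, eliminating obstructions and giving smoothness. A Riemann--Roch computation then yields $\chi(R\Gamma(X, K^{\bullet})) = (g-1)\dim G = \dim\Bun_G$.

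For part (3), non-emptiness is a direct construction. Pick effective divisors $D_i \subset X$ of degree $d_i := (\alpha_i,\check{\lambda}) + 2g-2 \geq 0$, set $L_i := \cO_X(D_i) \otimes \Omega^{-1}_X$, and use divisibility of $\Pic^0(X)$ to produce a $\sP_T \in \Bun_T^{\check{\lambda}}$ with $\sP_T^{\alpha_i} \simeq L_i$. Taking $\sP_B := \sP_T \times^T B$, so that $\fn_{\sP_B}$ splits as $\bigoplus_\alpha L_\alpha$, and letting $\vph := \sum_i \vph_i$ with $\vph_i$ the canonical non-vanishing section of $L_i \otimes \Omega^1_X = \cO_X(D_i)$, produces a generically regular Higgs field, so the stratum is non-empty.

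Connectedness is the main obstacle. My approach is to use the projection $\pi\colon \o{\Nilp}^{\check{\lambda}} \to \Bun_B^{\check{\lambda}}$; the base is connected because $B$ is. Over each $\sP_B$ the fiber is the complement in the vector space $H^0(X, \fn_{\sP_B} \otimes \Omega^1_X)$ of a finite union of linear subspaces (one per simple root), hence connected whenever it is non-empty. The remaining task is to show that the locus $U \subset \Bun_B^{\check{\lambda}}$ of $\sP_B$'s with non-empty fiber is itself connected; I would approach this by isolating the open dense sublocus of $\Bun_B^{\check{\lambda}}$ on which the relevant $R^1\pi_*$'s vanish, so that the fibers of $\pi$ over that sublocus assemble into a genuine vector bundle minus finitely many hyperplane subbundles, and then combining a semi-continuity argument with the connectedness of $\Bun_B^{\check{\lambda}}$ to reduce the claim for $U$ to this generic situation.
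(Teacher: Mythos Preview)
The paper does not give its own proof; it cites \cite{hitchin} \S 2.10.3, with a footnote sketching the dictionary. Your arguments for (1), (2), and non-emptiness in (3) are correct and in the same spirit as that reference; in particular your tangent-complex computation for smoothness is exactly the right mechanism.

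Your connectedness argument in (3), however, has a genuine gap. The proposed reduction to an ``open dense sublocus of $\Bun_B^{\check{\lambda}}$ where the relevant $R^1\pi_*$'s vanish'' breaks down when some $d_i := (\alpha_i,\check{\lambda}) + 2g-2$ is small. Indeed, if $H^1(X,\fn_{\sP_B}\otimes\Omega_X^1)=0$ then the surjection $\fn_{\sP_B}\twoheadrightarrow L_{\alpha_i}$ forces $H^1(X,L_{\alpha_i}\otimes\Omega_X^1)=0$, whence $h^0(L_{\alpha_i}\otimes\Omega_X^1)=d_i+1-g$; so whenever $d_i\le g-1$ (which the hypothesis $d_i\ge 0$ certainly permits) either your open locus is empty or the fibers over it are. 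In that regime the locus $U$ of $\sP_B$'s with non-empty fiber is a proper constructible subset whose connectedness your semi-continuity sketch does not address.

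The reference instead works over $\Bun_T$: the auxiliary space (their $M_C$) records only the simple-root components of $\vph$ and fibers over $\Bun_T^{\check{\lambda}}$ with fiber $\prod_i\big(\Gamma(X,\sP_T^{\alpha_i}\otimes\Omega_X^1)\setminus 0\big)$. A clean way to see that this is connected is to map further to $\prod_i X^{(d_i)}$ by taking zero divisors; this map is surjective with fibers that are torsors under a connected group (the same divisibility-of-$\Pic^0$ argument you used for non-emptiness handles this), and the target is a product of symmetric powers of $X$, hence connected.
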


This result is proved\footnote{In fact, the result in \cite{hitchin}
is formulated in greater generality: it allows for non-regular nilpotent
elements as well. We remark for the sake of comparison 
that in the notation of \cite{hitchin}
\S 2.10.3, $Y_C = Y_C^{*}$ for the regular nilpotent conjugacy class.
Similarly, in this regular nilpotent case, the space 
$M_C$ maps to $\Bun_T$; its fiber over $\sP_T \in \Bun_T^{\check{\lambda}}$
is $\prod_{i \in \sI_G} \Gamma(X,\sP_T^{\alpha_i} \otimes \Omega_X^1) \setminus 0$;
in particular, the fibers are empty or connected, and the condition of 
some fiber being non-empty is exactly the numerical condition given in the proposition.
} 
in \cite{hitchin} \S 2.10.3.

It follows from the proposition that the irreducible components
of $\o{\Nilp}$ are exactly the closures of the strata
$\o{\Nilp}^{\check{\lambda}}$. Therefore, we have an injective map:
\begin{equation}\label{eq:irr-nilp}
c:\Irr(\o{\Nilp}) \into \check{\Lambda}
\end{equation}

\noindent where $\Irr(\o{\Nilp})$ is the set of irreducible components
of $\o{\Nilp}$. We let $\check{\Lambda}^{\on{rel}} \subset \check{\Lambda}$
denote the image of this map (the notation abbreviates \emph{relevant}); 
explicitly, $\check{\lambda} \in \check{\Lambda}^{\on{rel}}$ if
and only if $(\alpha_i,\check{\lambda}) \geq 2-2g$ for all $i \in \sI_G$.

\begin{example}\label{e:nilp-gl2}

Suppose $G = GL_2$. Then for a coweight $\check{\lambda} = (d_1,d_2) \in \bZ^2$, 
$\o{\Nilp}^{\check{\lambda}}$ parametrizes short exact sequences 
$0 \to \sL_1 \to \sE \to \sL_2 \to 0$
plus a non-zero map $\ol{\vph}:\sL_2 \to \sL_1 \otimes \Omega_X^1$ where
$\deg \sL_i = d_i$; the corresponding Higgs field is:
\[
\sE \to \sL_2 \to \sL_1 \otimes \Omega_X^1 \into \sE\otimes \Omega_X^1.
\]

\noindent We remark that $\sL_1 = \Ker(\vph)$ and 
$\sL_2  = \on{Image}(\vph) \otimes \Omega_X^{1,\otimes -1}$ can be recovered from the
generically regular nilpotent Higgs field $\vph$.

\end{example}

\subsubsection{Invariants}\label{sss:discrepancy} 

For later reference, we attach two numerical data to a field-valued
point $(\sP_G,\vph) \in \o{\Nilp}$. 

First, we let: 
\[
c_1(\sP_G,\vph) \in \pi_1^{\on{alg}}(G) 
\coloneqq \check{\Lambda}/\bZ\bDelta
\]

\noindent denote the first Chern class of $\sP_G$
(cf. \cite{hitchin} \S 2.1.1). We
explicitly say: there is no dependence on $\vph$, and this
invariant behaves well in moduli (it is locally constant on $\Bun_G$).

Second, we define the \emph{discrepancy divisor}
of $(\sP_G,\vph)$, which is a $\check{\Lambda}_{G^{\ad}}^+$-valued divisor 
$\on{disc}(\sP_G,\vph)$ on
$X$, (for $\check{\Lambda}_{G^{\ad}}$ being 
coweights of the adjoint group $G^{\ad}$ of $G$) as follows. 
First, as above, by generic regularity (and the valuative criterion
of properness), this point lifts uniquely to a map 
$X \to \fn/(B \times \bG_m)$, where the underlying
map $X \to \bB \bG_m$ is given by the canonical bundle. 
We have a projections $\fn/B \times \bG_m \to \bA^1/\bG_m$
corresponding to projections to simple coroot spaces.
By assumption, each induced map:
\[
X \to \bA^1/\bG_m
\]

\noindent sends the generic point of $X$ to the open point of $\bA^1/\bG_m$.
Equivalently, we have obtained ($\bZ$-valued) 
effective divisors $\on{disc}_i(\sP_G,\vph)$ on $X$ 
for each $i \in \sI_G$. As 
the map: 
\[
\check{\Lambda}_{G^{\ad}} \xar{\check{\lambda} \mapsto 
(\check{\lambda},\alpha_i)_{i \in \sI_G}} \oplus_{i \in \sI_G} \bZ
\]

\noindent is an isomorphism, we see that there is a unique
divisor $\on{disc}(\sP_G,\vph)$ as above
such that $(\on{disc}(\sP_G,\vph),\alpha_i) = \on{disc}_i(\sP_G,\vph)$.

Finally, we observe that for $(\sP_G,\vph) \in \o{\Nilp}^{\check{\lambda}}$,
we can explicitly compute the above invariants. Specifically, 
$c_1(\sP_G,\vph)$ is the image of $\check{\lambda}$ in $\pi_1^{\on{alg}}(G)$.
Second, we have:
\begin{equation}\label{eq:disc-rho}
\deg(\on{disc}(\sP_G,\vph)) = \overline{\check{\lambda}} + (2g-2)\check{\rho} \in 
\check{\Lambda}_{G^{\ad}}^+.
\end{equation}

\noindent Here $\overline{\check{\lambda}} \in \check{\Lambda}_{G^{\ad}}$ is the
image of $\check{\lambda}$ under the natural map $\check{\Lambda} \to 
\check{\lambda}_{G^{\ad}}$.

\begin{rem}\label{r:disc-c1}

As $\check{\Lambda} \to \pi_1^{\on{alg}}(G) \times \check{\Lambda}_{G^{\ad}}$
is injective,\footnote{It is an isomorphism on tensoring with $\bQ$, and
$\check{\Lambda}$ is of course torsion-free.} we can recover
the invariant $\check{\lambda}$ of $(\sP_G,\vph)$ from $c_1(\sP_G,\vph)$
and $\deg(\on{disc}(\sP_G,\vph))$.

\end{rem}

\begin{rem}

The discrepancy divisor 
is a more natural indexing tool that $\check{\lambda}$ itself.
For example, for $\check{\lambda} \in \check{\Lambda}$ to be relevant
is equivalent to saying $\overline{\check{\lambda}} + (2g-2)\check{\rho}$
is a dominant coweight for $G^{\ad}$. 

\end{rem}

\begin{rem}\label{r:disc-lifts}

Observe that $\deg(\on{disc}(\sP_G,\vph))$ always lies in the
image of $\check{\Lambda} \to \check{\Lambda}_{G^{\ad}}$;
indeed, this follows from \eqref{eq:disc-rho} as
$(2g-2)\check{\rho} = (g-1)\cdot 2\check{\rho}$ lifts. 

\end{rem}

\begin{example}

In the setting of Example \ref{e:nilp-gl2}, $c_1(\sE,\vph)$ is the degree
of $\sE$, while the discrepancy divisor is the divisor of zeroes of 
the map $\ol{\vph}$.

\end{example}

\subsubsection{Everywhere regular Higgs fields and the Kostant component}\label{sss:kost-defin}

We let $\o{\Nilp}^{\on{reg}}$ denote the mapping space:
\[
\Maps(X,\o{\sN}/G).
\]

\noindent Clearly $\o{\Nilp}^{\on{reg}} \subset \o{\Nilp}$ is open.

Let $Z_f \subset G$ denote the stabilizer subgroup of 
some regular nilpotent $f \in \o{\sN}$. Because $G$ acts
transitively on $\o{\sN}$, we have 
$\o{\Nilp}^{\on{reg}} = \Bun_{Z_f}$. Clearly the center
$Z_G$ of $G$ embeds into $Z_f$; recall that $Z_G$ maps isomorphically 
onto the reductive quotient of $Z_f$; therefore, $\o{\Nilp}^{\on{reg}}$
is smooth and its connected components are 
$\pi_0(\o{\Nilp}^{\on{reg}}) = \Hom(\bG_m,Z_G) \subset \check{\Lambda}$.

The Kostant slice defines a base-point $f^{\on{glob}} \in \o{\Nilp}^{\on{reg}}$.
We let $\Nilp^{\Kos} \subset \o{\Nilp}^{\on{reg}}$ denote the corresponding
connected component. Explicitly, we have:
\[
\Nilp^{\Kos} = \o{\Nilp}^{(2-2g)\check{\rho}}.
\]

The point $f^{\on{glob}}$ is given by the $G$-bundle $\sP_G^{\on{can}}$
(induced from $\Omega^{\otimes\frac{1}{2}}$ via $-2\check{\rho}:\bG_m \to T \to G$)
with its natural regular nilpotent Higgs bundle.

\begin{rem}

The distinguished component $\Nilp^{\Kos}$ plays an outsized role in this work.

\end{rem}

\begin{rem}

We remark that $(\sP_G,\vph) \in \o{\Nilp}$ lies in 
$\o{\Nilp}^{\on{reg}}$ if and only if $\on{disc}(\sP_G,\vph) = 0$.
It additionally lies in $\o{\Nilp}^{\Kos}$ if 
$c_1(\sP_G,\vph) = c_1(-\check{\rho}(\Omega))$

\end{rem}

\subsection{Tempered $D$-modules}

\subsubsection{Local formalism}

Fix $x \in X(k)$ a point. Let $\sH_x^{\on{sph}}$ denote the spherical
Hecke category based at this point.

For $\sD$ be a module category for $\sH_x^{\on{sph}}$. 
We refer to \cite{dario-ramanujan} \S 2 for a definition of
the categories $\sD^{\xat}$ and $\sD^{\xtemp}$; our notation 
is the same as \cite{dario-ramanujan} \S 2.4.1, except that we
include the dependence on the point $x$ in the notation.
We remind that $\sD^{\xat} \subset \sD$ is a certain full
subcategory, and the embedding admits a left adjoint.
Then $\sD^{\xtemp}$ is the quotient $\sD/\sD^{\xat}$ in $\DGCat_{cont}$;
the projection $\sD \to \sD^{\xtemp}$ admits a fully faithful left adjoint.

\subsubsection{Global setting}

The above discussion applies in particular for $\sD = D(\Bun_G)$.

The main result of \cite{independence} asserts that $D(\Bun_G)^{\xat}$
and $D(\Bun_G)^{\xtemp}$ are independent of the choice of $x$ (in a strong sense).
Therefore, we often write $D(\Bun_G)^{\at}$ and $D(\Bun_G)^{\temp}$ to
indicate this category. 

On the other hand, we sometimes include the point $x$ in the
notation when we are performing a particular manipulation at the point. 

We refer to objects of $D(\Bun_G)^{\at}$ as \emph{anti-tempered} $D$-modules
on $\Bun_G$, and objects of $D(\Bun_G)^{\temp}$ as \emph{tempered} $D$-modules
on $\Bun_G$. 

Although we can think of $D(\Bun_G)^{\temp}$ as a subcategory of
$D(\Bun_G)$ (via the left adjoint referenced above), we generally consider
it rather as a quotient category. Roughly speaking, the quotient
functor is better behaved.

\subsection{Normalizations regarding exponential sheaves and characters}

\subsubsection{}

We let $\exp \in D(\bA^1)$ denote the exponential $D$-module, normalized
to live in cohomological degree $-1$, i.e., the same degree as 
the dualizing sheaf $\omega_{\bA^1}$. This object is a multiplicative sheaf
with respect to upper-$!$ functors.

\subsubsection{}

At various points in the text, we consider characters $\psi:N \to \bG_a$, or
loop group analogues with $N$ replaced by $\o{I}$ (the radical of
Iwahori) or the loop group $N(K)$. These are always assumed to be
\emph{non-degenerate} in the appropriate sense. Precisely:

\begin{itemize}

\item For $N$, the map $\Lie(\psi):\fn \to k$ should send each Chevalley generator
$e_i \in \fn$ ($i \in \sI_G$) to a non-zero number.

\item For $\o{I}$, the map $\o{I} \to \bG_a$ should be the composition
of the previous non-degenerate character $N \to \bG_a$ with
the projection $\o{I} \to N$ (so this is non-degenerate in the
standard Whittaker sense, but not the affine Kac-Moody sense).

\item For $N(K)$, the map $N(K) \to \bG_a$ should have conductor $0$ and
be non-degenerate in the standard sense.

\end{itemize}

For $\sY$ with a $G$-action, we write $D(\sY)^{N,\psi} \subset D(\sY)$ to mean
the category of $D$-modules that are twisted $N$-equivariant
against $\psi^!(\exp)$; similarly for $\o{I}$ and $N(K)$.

\part{Singular support and temperedness}

\section{Irregular singular support in finite dimensions}\label{s:irreg-finite}

In this section, we study a version of irregular singular support 
for $G$-spaces. Our main result is Theorem \ref{t:g-irreg}, which 
we prove by reduction to \cite{losev-holonomic}.

This material is used in \S \ref{s:irreg-bun-g}, and may safely 
be skipped at the first pass.

\subsection{$G$-irregularity}

\subsubsection{}\label{sss:g-irreg}

Let $\sY$ be an algebraic stack locally of finite type and
equipped with a $G$-action.

We obtain a moment map $\mu:T^*\sY \to \fg^{\vee} \simeq \fg$.
We define: 
\[
D_{\Girreg}(\sY) \coloneqq D_{\mu^{-1}(\fg_{\irreg})}(\sY).
\]

\noindent In other words, a $D$-module $\sF$ lies in 
$D_{\Girreg}(\sY)$ if for every point $(y,\xi) \in \SS(\sF)$,
$\mu(y,\xi)$ is an irregular element of $\fg$.

We let $\Shv_{\Girreg}(\sY) \coloneqq \Shv(\sY) \cap D_{\Girreg}(\sY)$.

\begin{rem}

We use the notation $\Girreg$ here rather than simply $\irreg$
to reserve the latter for the \emph{global} context considered
in \S \ref{s:irreg-bun-g}.

\end{rem}

\subsubsection{}

We can now state:

\begin{thm}\label{t:g-irreg}

Let $\sY$ be an algebraic stack locally of finite type with a $G$-action. 
Then the Whittaker averaging
functor:

\[
\Shv_{\Girreg}(\sY)^{B^-} \to D(\sY)^{B^-} \xar{\Av_!^{\psi}} D(\sY)^{N,\psi} 
\]

\noindent is identically zero (on $\Shv_{\Girreg}(\sY)^{B^-}$).

\end{thm}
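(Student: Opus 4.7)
The plan is to reduce Theorem~\ref{t:g-irreg} to Losev's theorem \cite{losev-holonomic} on associated varieties of $\fg$-modules via Beilinson--Bernstein localization. Since both the singular support condition and the vanishing of $\Av_!^{\psi}$ can be tested smooth-locally, I would first reduce to the case where $\sY = S$ is a smooth quasi-compact scheme of finite type equipped with a $G$-action. Furthermore, since $\Shv_{\Girreg}(S)^{B^-}$ is generated under colimits by its compact objects (which, being in $\Shv$, are constructible regular holonomic $D$-modules), it suffices to establish the vanishing on such compact generators.

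The next step is to translate the situation to Lie-theoretic data. A compact object of $\Shv_{\Girreg}(S)^{B^-}$ can be described, via Beilinson--Bernstein-style localization, as a finitely generated module $M$ over $\Gamma(S, D_S)$ carrying a compatible strongly $B^-$-equivariant $\fg$-action through the moment map $\fg \to \Gamma(S, D_S)$. Under this dictionary, the condition $\SS(\sF) \subseteq \mu^{-1}(\fg_{\irreg})$ is exactly the condition that the associated variety of $M$, viewed as a $\cU(\fg)$-module through the moment map, is contained in $\fg_{\irreg}$. In parallel, the Whittaker averaging $\Av_!^{\psi}\sF$ is computed, at the level of the underlying $\cU(\fg)$-module, by the (derived) Whittaker coinvariants functor $M \mapsto \bigl(M \otimes k_{\psi}\bigr)_{\fn}$, up to a cohomological shift; this is a standard avatar of the fact that $(N, \psi)$-equivariance from a $B^-$-equivariant source is controlled by the open Bruhat cell $B^- N \subset G$.

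Finally, I would invoke Losev's theorem: for a finitely generated holonomic $\cU(\fg)$-module whose associated variety is contained in the irregular locus $\fg_{\irreg}$, the Whittaker coinvariants against a non-degenerate character $\psi$ vanish. Combined with the translation above, this gives $\Av_!^{\psi}\sF = 0$ on compact objects of $\Shv_{\Girreg}(S)^{B^-}$, hence on the whole category by continuity.

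The main obstacle is the second step: carefully setting up the Beilinson--Bernstein dictionary so that (i) the singular support condition on $\sF$ genuinely matches the associated variety condition on the corresponding $\cU(\fg)$-module, and (ii) the geometric Whittaker averaging $\Av_!^{\psi}$ matches the algebraic Whittaker functor up to a controlled shift. In both comparisons one must be precise about the moment map, the passage between weak and strong equivariance, and the derived nature of the constructions on both sides; the descent from general algebraic stacks to smooth schemes also has to be checked to interact properly with these equivariance structures.
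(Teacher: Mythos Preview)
Your overall strategy --- reduce to a Lie-algebra statement and then appeal to Losev --- matches the paper's, but the proposed reduction has a genuine gap. You want a Beilinson--Bernstein dictionary over an arbitrary smooth $G$-scheme $S$, translating $\SS(\sF)\subset\mu^{-1}(\fg_{\irreg})$ into an associated-variety condition on a $U(\fg)$-module and translating $\Av_!^{\psi}$ into algebraic Whittaker coinvariants. No such dictionary exists in that generality: global sections $\Gamma(S,-)$ need not be exact or conservative, the resulting $U(\fg)$-module need not be finitely generated (let alone finite length), and the Borho--Brylinski comparison $\SS(M)=\mu(\SS(\sF))$ is specific to the flag variety. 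The paper's fix is to reduce further \emph{before} invoking any Lie theory: pull back along the action map $\act:G\times\sY\to\sY$ (so $G$ acts only on the first factor), then restrict to a fiber over each point of $\sY_0$, using Kashiwara--Schapira singular-support estimates for non-smooth pullbacks to verify that $G$-irregularity is preserved. Only once one is on the flag variety $B^-\backslash G$ does the honest Beilinson--Bernstein equivalence with $\fg\mod_0$ apply, and only there does Borho--Brylinski give the needed associated-variety bound.

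A second issue: what you attribute to Losev is not his theorem. Losev's result (Theorem~\ref{t:losev} in the paper) says that a \emph{faithful} $U(\fg)_0$-module has associated variety meeting $\o{\sN}$; it says nothing directly about Whittaker coinvariants. The paper combines this with Kostant's faithfulness of the Whittaker module $\bM_0^{\psi}$ to deduce the intermediate Theorem~\ref{t:lie-g-irreg}: if $M\in\fg\mod_0^{\heart}$ is finite length with $\SS(M)\subset\sN_{\irreg}$, then $\Loc_0^{\psi}(M)=0$ in $D(G)^{N,\psi}$. Note also that the target here is $D(G)^{N,\psi}$, not $\Vect$; what must vanish is a $(N,\psi)$-equivariant $D$-module on $G$, and the argument goes through the annihilator ideal of $M$ and the vanishing of $A\mod^{N,\psi}$ for $A=U(\fg)_0/I$, not through a coinvariant computation.
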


\subsection{A statement for $\fg$-modules}

Below, we fix a $G$-invariant isomorphism $\fg \simeq \fg^{\vee}$
for convenience. 

\subsubsection{Singular support}

Let $M \in \fg\mod^{\heart}$ be finitely generated.
Recall that we may choose a good filtration on $M$
(relative to the PBW filtration on $U(\fg)$);
the reduced support of its associated graded is 
a well-defined closed conical subscheme 
$\SS(M) \subset \fg^{\vee} \simeq \fg$,
which we call the \emph{singular support} of $M$. 
We remind that this construction is often instead called the
\emph{associated variety} of $M$.

\subsubsection{}

Let $Z(\fg) \subset U(\fg)$ denote the center, 
and let $\chi_0:Z(\fg) \to k$ be the homomorphism defined
by the trivial representation of $\fg$.\footnote{In what follows,
all our results about $\fg$ generalize to the case where
$\chi_0$ is replaced by any central character 
$\chi_{\lambda}:Z(\fg) \to k$. We use $\chi_0$ to simplify the
notation, and to focus on our case of interest.}
Let $U(\fg)_0 = U(\fg)/U(\fg) \cdot \Ker(\chi_0)$,
so $U(\fg)_0$ is quotient of $U(\fg)$ whose modules
of those $\fg$-modules with the same central character as the trivial 
representation. We sometimes use the notation $\fg\mod_0$ to denote
the DG category of $U(\fg)_0$-modules.

Recall that $U(\fg)_0$ carries a filtration induced from
the PBW filtration, and $\gr_{\dot} U(\fg)_0 = \Gamma(\sN,\sO_{\sN})$.
Therefore, for any $M \in \fg\mod_0^{\heart}$ finitely 
generated, we have $\SS(M) \subset \sN \subset \fg$.

\subsubsection{Whittaker localization}\label{sss:whit-loc}

Note that:
\[
\TwoHom_{G\mod}(\fg\mod,D(G)^{N,\psi}) \simeq D(G)^{(N,\psi),(G,w)}
= \fg\mod^{N,\psi} \simeq Z(\fg)\mod
\]

\noindent where in the last equality we have used
Skryabin's theorem. We let:
\[
\Loc^{\psi}:\fg\mod \to D(G)^{N,\psi}
\]

\noindent denote the $G$-equivariant functor corresponding to 
$Z(\fg) \in Z(\fg)\mod$ under the above identification.
It is not hard to see that $\Loc^{\psi}$ is $t$-exact up to 
shift, but we do not use this result below.

We also use a variant of this construction; 
let:
\[
\Loc_0^{\psi}:\fg\mod_0\to D(G)^{N,\psi}
\]

\noindent denote the composition:
\[
\fg\mod_0 \to \fg\mod \xar{\Loc^{\psi}} D(G)^{N,\psi}.
\]

\noindent Equivalently, this is the $G$-equivariant functor
corresponding to:
\[
k \in \Vect \simeq \fg\mod_0^{N,\psi} \simeq 
\TwoHom_{G\mod}(\fg\mod_0,D(G)^{N,\psi}).
\]

\subsubsection{}

We will show:

\begin{thm}\label{t:lie-g-irreg}

Suppose $M \in \fg\mod_0^{\heart}$ is a finite length
module with $\SS(M) \subset \sN_{\irreg}$.\footnote{Here we
remind that $\SS(M)$ is by definition reduced; so e.g.,
such an inclusion can be checked on field-valued points.}
Then $\Loc_0^{\psi}(M) = 0 \in D(G)^{N,\psi}$.

\end{thm}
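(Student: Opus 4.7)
The plan is to reduce the sheaf-theoretic vanishing $\Loc_0^{\psi}(M) = 0$ to the Lie-algebraic vanishing of the classical Whittaker vectors $M^{N,\psi}$, and then to invoke the main result of \cite{losev-holonomic}.

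First, I would use the universal property of $\Loc_0^{\psi}$. By construction in \S\ref{sss:whit-loc}, $\Loc_0^{\psi}$ is the $G$-equivariant functor corresponding to the generator $k \in \Vect \simeq \fg\mod_0^{N,\psi}$ under Skryabin's identification
\[
\TwoHom_{G\mod}(\fg\mod_0, D(G)^{N,\psi}) \simeq \fg\mod_0^{N,\psi} \simeq \Vect.
\]
In particular, the right-translation $(G,w)$-invariants of $\Loc_0^{\psi}(M)$ recover the classical space $M^{N,\psi} \in \Vect$. Because the $\chi_0$-central-character component of $D(G)^{N,\psi}$---in which the image of $\Loc_0^{\psi}$ sits---is generated as a $G$-module category by its $(G,w)$-invariants (this being the categorical form of Skryabin's equivalence), the equation $\Loc_0^{\psi}(M) = 0$ is equivalent to $M^{N,\psi} = 0$. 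Since $M \in \fg\mod_0^{\heart}$ is finite length and the Whittaker functor is $t$-exact, there is no distinction between the derived and classical versions of $M^{N,\psi}$, and we may work with the latter.

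Second, I would appeal to Losev. For a finite length module $M \in \fg\mod_0^{\heart}$, the associated variety $\SS(M)$ lies in $\sN$, and the main output of \cite{losev-holonomic} that we need is that the dimension of $M^{N,\psi}$ is controlled by the multiplicity of $M$ along the open regular nilpotent orbit $\o{\sN}$; equivalently, $M^{N,\psi} \neq 0$ forces $\SS(M) \cap \o{\sN} \neq \emptyset$. The hypothesis $\SS(M) \subset \sN_{\irreg}$ rules this out, so $M^{N,\psi} = 0$, and combined with the reduction above we obtain $\Loc_0^{\psi}(M) = 0$.

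The main obstacle is the input from \cite{losev-holonomic}: this is a genuinely deep comparison between classical associated varieties and quantum Whittaker invariants, proved via microlocal techniques for $U(\fg)$-modules rather than by any direct geometric argument. By contrast, the first step is essentially formal, modulo the standard fact that, within the trivial-central-character block, the Whittaker category $D(G)^{N,\psi}$ is generated as a $G$-category by its Whittaker vectors; care is only needed to confirm the $t$-exactness that lets us reduce to the classical $M^{N,\psi}$.
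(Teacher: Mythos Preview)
Your Step 1 contains a genuine error: the asserted equivalence $\Loc_0^{\psi}(M)=0 \Leftrightarrow M^{N,\psi}=0$ is false, and the needed direction ($M^{N,\psi}=0 \Rightarrow \Loc_0^{\psi}(M)=0$) already fails for $\fg=\fs\fl_2$. Take $M=M(-2)$, the antidominant (hence simple) Verma module at central character $\chi_0$. In the basis $\{f^n v_{-2}\}_{n\ge 0}$ the operator $e-\psi(e)$ is upper-triangular with $-\psi(e)\neq 0$ on the diagonal, so it is an isomorphism; thus the derived Whittaker invariants $M(-2)^{N,\psi}$ vanish. On the other hand, using the identification $\Loc_0^{\psi}\simeq \Av_!^{\psi}\circ \Loc_{\on{BB}}$ (this is exactly the diagram the paper writes down when reducing Theorem \ref{t:g-irreg} to Theorem \ref{t:lie-g-irreg}), $M(-2)$ localizes to a skyscraper $\delta_p$ on $B^-\backslash G\simeq \bP^1$, and $\Av_!^{(N,\psi)}$ applied to its pullback to $G$ is supported on the open set $NB^-\,g_p$ and is visibly nonzero. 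So $\Loc_0^{\psi}(M(-2))\neq 0$ while $M(-2)^{N,\psi}=0$. The underlying mistake is the claim that $(\Loc_0^{\psi}(M))^{(G,w)}=M^{N,\psi}$: the functor $\Loc_0^{\psi}$ is built by \emph{acting} by the weakly equivariant object corresponding to $\bM_0^{\psi}$, which is a tensor-type construction, and composing with the right adjoint $(-)^{(G,w)}$ produces the monad applied to that object, not $\Hom(\bM_0^{\psi},-)$. A more structural way to see the failure: the kernel of a $G$-equivariant functor such as $\Loc_0^{\psi}$ is a $G$-stable subcategory of $\fg\mod_0$, whereas $\{M:M^{N,\psi}=0\}$ is not (twisting $\bM_0^{\psi}$ by $g\notin B$ changes the character $\psi$).

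The paper's argument avoids this trap by never attempting to compute $\Loc_0^{\psi}(M)$ directly. Instead it reduces to $M$ simple, invokes Losev in the form ``faithful $U(\fg)_0$-module $\Rightarrow$ $\SS\cap\o\sN\neq\emptyset$'' to conclude that the annihilator $I=\Ann(M)$ is nonzero, and then passes to the quotient $A=U(\fg)_0/I$. The key categorical step is that $A\mod^{N,\psi}=0$: any nonzero object there would be a nonzero $(N,\psi)$-equivariant $U(\fg)_0$-module killed by $I\neq 0$, contradicting Corollary \ref{c:whit-faithful} (Kostant + Skryabin). Then the identification $\TwoHom_{G\mod}(A\mod, D(G)^{N,\psi})\simeq A\mod^{N,\psi}=0$ forces the composite $A\mod \to \fg\mod_0 \xrightarrow{\Loc_0^{\psi}} D(G)^{N,\psi}$ to vanish, and $M$ lies in the image of $A\mod$. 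Note also that the input you attribute to Losev in Step 2 (``$M^{N,\psi}\neq 0 \Rightarrow \SS(M)\cap\o\sN\neq\emptyset$'') is not the statement the paper extracts from \cite{losev-holonomic}; it happens to be derivable from Kostant's simplicity and faithfulness of $\bM_0^{\psi}$ together with the paper's Theorem \ref{t:losev}, but you did not supply that argument, and in any case it cannot rescue Step 1.
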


\begin{rem}

Probably the theorem is true for finitely generated modules,
not simply finite length ones. The restriction to 
finite length modules corresponds to our (probably overkill)
reference to \cite{losev-holonomic}.  

\end{rem}

\subsubsection{Preliminary comments}

Our proof of Theorem \ref{t:lie-g-irreg} relies on some results
about primitive ideals. We collect some results from the literature
here.

First, we have the following theorem of Losev:
\begin{thm}[Losev, \cite{losev-holonomic}]\label{t:losev}

Let $M \in \fg\mod_0^{\heart}$ be a faithful $U(\fg)_0$-module;
i.e., suppose that the map $U(\fg)_0 \to \End_{\Vect^{\heart}}(M)$
is injective. Then $\SS(M) \cap \o{\sN} \neq \emptyset$. 

\end{thm}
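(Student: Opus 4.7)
The plan is to deduce the result from the standard identity relating the singular support of a module to the associated variety of its annihilator. Specifically, for a finitely generated $U(\fg)_0$-module $M$ equipped with a good filtration, I would invoke the inclusion
\[
V\bigl(\Ann_{U(\fg)_0}(M)\bigr) \;\subset\; \overline{G \cdot \SS(M)} \;\subset\; \sN,
\]
where $V(\Ann M)$ denotes the vanishing locus in $\sN = \on{Spec} \gr U(\fg)_0$ of the principal symbol ideal $\gr \Ann(M)$. The reverse inclusion is automatic since $V(\Ann M)$ is $G$-stable and contains $\SS(M)$, so in fact equality holds. Since $M$ is faithful, $\Ann_{U(\fg)_0}(M) = 0$, and hence $V(\Ann M) = \sN$; therefore $\overline{G \cdot \SS(M)} = \sN$.

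The conclusion is then immediate. Because $\fg$ is reductive, $\sN$ is the irreducible variety $\overline{\o{\sN}}$, and $\sN_{\irreg} = \sN \setminus \o{\sN}$ is a proper closed $G$-invariant subvariety. If $\SS(M)$ were disjoint from $\o{\sN}$, then $\SS(M) \subset \sN_{\irreg}$, giving $\overline{G \cdot \SS(M)} \subset \sN_{\irreg} \subsetneq \sN$, a contradiction. Hence $\SS(M) \cap \o{\sN} \neq \emptyset$.

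The serious step is the inclusion $V(\Ann M) \subset \overline{G \cdot \SS(M)}$. For irreducible $M$, this is the classical Borho--Kraft / Gabber / Ginzburg theorem identifying, up to $G$-saturation, the associated variety of a simple $\fg$-module with the associated variety of its annihilator; the geometric input is Gabber's involutivity theorem, which forces $\SS(M)$ to be coisotropic in $\sN$, together with Joseph's analysis of primitive ideals. The reduction of the general finite length case to the irreducible case is essentially formal: one writes $\Ann(M) = \bigcap_i \Ann(L_i)$ over the composition factors and uses $V\bigl(\bigcap_i J_i\bigr) = \bigcup_i V(J_i)$ together with additivity of singular support across short exact sequences. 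This bookkeeping is presumably the reason the hypothesis restricts to finite length rather than merely finitely generated modules. I expect the irreducible-module input to be the main obstacle — in effect, the theorem repackages the Borho--Kraft--Gabber chain of results for the specific quotient $U(\fg)_0$, and invoking the Losev technology streamlines this packaging.
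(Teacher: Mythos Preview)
The paper does not prove this statement: it cites it as a special case of Theorem~1.1(1) in Losev's paper, and in the remark immediately following, the authors say they believe a more elementary argument should exist but ``were unable to'' find one. So there is no argument in the paper to compare your proposal against.

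Your proposal is precisely the natural elementary route, and likely what the authors attempted. The difficulty lies in the inclusion $V(\Ann M) \subset \overline{G \cdot \SS(M)}$ for simple $M$. This is established by Borho--Brylinski for simple modules in category~$\mathcal{O}$ and by Vogan for Harish-Chandra modules, but for an \emph{arbitrary} simple $U(\fg)_0$-module I am not aware of a classical proof, and the statement is essentially equivalent to what one is trying to show. Gabber's involutivity theorem only asserts that $\SS(M)$ is coisotropic for the Kirillov--Kostant Poisson structure: this bounds $\dim(\SS(M)\cap\mathcal{O}')$ for each orbit $\mathcal{O}'$ that $\SS(M)$ actually meets, but does not force $\SS(M)$ to meet the \emph{open} orbit of $V(\Ann M)$. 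Joseph's irreducibility theorem only identifies $V(\Ann M)$ as a single orbit closure. These ingredients together do not visibly close the gap, which is presumably why the authors defer to Losev's microlocal machinery. That said, in the paper's eventual application the module arises from the flag variety via Beilinson--Bernstein and hence lies in category~$\mathcal{O}$, so your approach via Borho--Brylinski would suffice for what is ultimately needed.

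One small correction: the theorem as stated does not assume finite length --- only faithfulness (with finitely generated implicit so that $\SS(M)$ is defined). The finite-length hypothesis you refer to appears in the downstream theorem on $\Loc_0^{\psi}$, not here.
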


Indeed, this is a special case of 
\cite{losev-holonomic} Theorem 1.1 (1). 

\begin{rem}

In fact, it is a \emph{very} special case. It should 
be possible to obtain a more elementary argument for 
Theorem \ref{t:losev} (in our special case), but we were unable to 
do so.

\end{rem}

\subsubsection{}

Next, let $\bM_0^{\psi} \in \fg\mod_0^{\heart}$ be the object
corresponding to:
\[
k \in \Vect \simeq \fg\mod_0^{N,\psi} \subset \fg\mod_0.
\]

\noindent Explicitly, we have:
\[
\bM_0^{\psi} = \ind_{\fn}^{\fg}(\psi) \underset{Z(\fg)}{\otimes} k.
\]

\noindent Here $\psi \in \fn\mod^{\heart}$ denotes the 1-dimensional
module defined by the character $\psi$, and we consider
$k$ as a $Z(\fg)$-module via $\chi_0$.

We recall the following basic fact:

\begin{prop}[Kostant, \cite{kostant-whittaker} Theorem D]\label{p:whit-faithful}

$\bM_0^{\psi}$ is a faithful $U(\fg)_0$-module.

\end{prop}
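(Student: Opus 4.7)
The plan is to follow Kostant's classical argument in \cite{kostant-whittaker}. Write $\bM^{\psi} := \ind_{\fn}^{\fg}(\psi)$ for the universal Whittaker module, so $\bM_0^{\psi} = \bM^{\psi} \otimes_{Z(\fg)} k$. The strategy is to construct a good filtration on $\bM_0^{\psi}$ whose associated graded makes faithfulness manifest.

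The central input is a structural theorem for $\bM^{\psi}$: equip $U(\fg)$ with the Kazhdan filtration built from the principal $\sl_2$-grading on $\fg$, so that the character $\psi : \fn \to k$ is compatible with the filtration. This induces a good filtration on $\bM^{\psi}$, and the key computation (essentially Theorem D of \cite{kostant-whittaker}, reflecting the geometry of the Kostant slice $f + \fg^e \subset \fg$) is that
\[
\gr \bM^{\psi} \cong k[\fg]
\]
as $\gr U(\fg) \cong k[\fg]$-modules, in such a way that $\gr Z(\fg) \cong k[\fg]^G$ acts through the standard multiplication $k[\fg]^G \subset k[\fg]$. Kostant's harmonic decomposition $k[\fg] \cong k[\fg]^G \otimes_k H$ then gives that $\gr \bM^{\psi}$ is free over $k[\fg]^G$, and by a standard lifting argument $\bM^{\psi}$ is itself free (in particular, flat) over $Z(\fg)$.

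Granted this, the $Z(\fg)$-flatness of $\bM^{\psi}$ implies that the induced filtration on $\bM_0^{\psi} = \bM^{\psi} \otimes_{Z(\fg)} k$ satisfies
\[
\gr \bM_0^{\psi} \cong k[\fg] \otimes_{k[\fg]^G} k \cong k[\sN],
\]
the coordinate ring of the nilpotent cone. The action of $U(\fg)_0$ is filtration-preserving, and at the associated graded level becomes the multiplication action of $\gr U(\fg)_0 \cong k[\sN]$ on $k[\sN]$, which is tautologically faithful. Since the filtration on $U(\fg)_0$ is exhaustive and bounded below, any nonzero $u \in U(\fg)_0$ has a nonzero principal symbol $\ol{u} \in \gr U(\fg)_0 \cong k[\sN]$, and $\ol{u}$ acts nontrivially on $\gr \bM_0^{\psi}$; therefore $u$ acts nontrivially on $\bM_0^{\psi}$. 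This yields the required embedding $U(\fg)_0 \hookrightarrow \End_k(\bM_0^{\psi})$.

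The main obstacle is the structural identification $\gr \bM^{\psi} \cong k[\fg]$. The subtlety is that in the naive PBW filtration one only recovers $k[\fb^-]$ as associated graded; the Kazhdan refinement is essential, and the calculation parallels the fact that the Kostant slice is a section of $\fg \to \fg//G$ meeting every regular adjoint orbit in exactly one point. This is the substantive content of Kostant's original argument, and any self-contained proof must confront it.
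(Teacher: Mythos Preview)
The paper does not prove this proposition; it simply cites Kostant's Theorem D. Your sketch, however, contains a genuine error. The identification $\gr\bM^{\psi}\cong k[\fg]$ cannot hold for any filtration: by PBW one has $\bM^{\psi}=U(\fg)\otimes_{U(\fn)}k_\psi\cong U(\fb^-)$ as a vector space, so its associated graded has the size of $\Sym(\fb^-)$, not $\Sym(\fg)$, and no refinement of the filtration changes this. With the Kazhdan filtration one actually obtains $\gr\bM^{\psi}\cong k[f+\fb]$ (with $k[\fg]$ acting by restriction of functions to this affine subspace), and hence $\gr\bM_0^{\psi}\cong k[(f+\fb)\cap\sN]=k[N\cdot f]$. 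This is the coordinate ring of a closed subvariety of $\sN$ of dimension $\dim\fn$, half the dimension of $\sN$, so the action of $k[\sN]=\gr U(\fg)_0$ by restriction has a large kernel. Thus faithfulness at the associated graded level simply fails, and your final deduction has no force.

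Kostant's actual argument is not an associated-graded computation on the module. He establishes a structural decomposition of $U(\fg)$ adapted to the $\psi$-twisted left action of $\fn$ (the precursor to identifying the principal finite $W$-algebra with $Z(\fg)$), and reads off the annihilator statement from that. If you want a geometric argument, the missing idea is $G$-equivariance on the ring side: the annihilator is a two-sided ideal, hence $\Ad(G)$-stable, so under the PBW filtration its symbol ideal in $k[\sN]$ is $G$-stable and contained in the ideal of a subvariety meeting the regular orbit, which forces the symbol ideal (and hence the annihilator) to vanish. But this requires tracking the symbol of the two-sided annihilator itself, and does not reduce to the module-level computation you outlined.
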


We deduce:

\begin{cor}\label{c:whit-faithful}

Any non-zero object of $\fg\mod_0^{(N,\psi),\heart}$ is a faithful
$U(\fg)_0$-module.

\end{cor}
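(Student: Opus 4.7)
The plan is to reduce to Proposition \ref{p:whit-faithful} using the Skryabin-type equivalence recalled in \S\ref{sss:whit-loc}. Recall that the discussion there identifies $\fg\mod^{N,\psi} \simeq Z(\fg)\mod$, and imposing the central character $\chi_0$ on both sides yields an equivalence $\fg\mod_0^{N,\psi} \simeq \Vect$ that by construction sends $\bM_0^\psi$ to the generator $k \in \Vect$. Skryabin's theorem moreover gives that the Whittaker-invariants functor is exact on abelian categories, so this equivalence is t-exact and restricts to an equivalence of abelian categories
\[
\fg\mod_0^{(N,\psi),\heart} \;\simeq\; \Vect^\heart
\]
still sending $\bM_0^\psi$ to $k$.

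Given this, for any nonzero $M \in \fg\mod_0^{(N,\psi),\heart}$, its image under the equivalence is some nonzero vector space $V \simeq k^{(I)}$, and the equivalence then promotes this to a $\fg$-module isomorphism $M \simeq \bigoplus_{i \in I} \bM_0^\psi$. Taking annihilators in $U(\fg)_0$ yields
\[
\Ann_{U(\fg)_0}(M) \;=\; \bigcap_{i \in I} \Ann_{U(\fg)_0}(\bM_0^\psi) \;=\; \Ann_{U(\fg)_0}(\bM_0^\psi),
\]
and the right-hand side vanishes by Proposition \ref{p:whit-faithful}. Hence $M$ is faithful, as claimed.

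No serious obstacle arises here: the essential content is Kostant's faithfulness statement (already quoted as Proposition \ref{p:whit-faithful}), and the role of the Skryabin-type equivalence is purely structural, namely to force that \emph{every} nonzero object of $\fg\mod_0^{(N,\psi),\heart}$ is (noncanonically) a direct sum of copies of the single module $\bM_0^\psi$, and therefore inherits faithfulness from it.
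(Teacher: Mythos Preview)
Your proof is correct and follows essentially the same approach as the paper: the paper simply states that by Skryabin's equivalence any such nonzero object has the form $V \otimes \bM_0^{\psi}$ for some nonzero vector space $V$, and then appeals to Proposition \ref{p:whit-faithful}. Your version spells out the same reduction in slightly more detail.
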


Indeed, by Skryabin's equivalence, such an object has the form $V \otimes \bM_0^{\psi}$
for some non-zero vector space $V$, so Proposition \ref{p:whit-faithful} yields
the assertion.

\subsubsection{Proof of the theorem for Lie algebras}

With the preliminary results above completed, we are now in a 
position to prove the theorem.

\begin{proof}[Proof of Theorem \ref{t:lie-g-irreg}]

\step We are obviously reduced to the case where
$M$ is a simple module (with $\SS(M) \subset \sN_{\irreg}$). 

Let $I \subset U(\fg)_0$ be the
two-sided ideal annihilating $M$. 
By Losev's theorem (Theorem \ref{t:losev}), $I \neq 0$. 
Set $A = U(\fg)_0/I$; this is a classical associative algebra.

\step Note that $A$ receives a Lie algebra homomorphism $i:\fg \to A$
via $\fg \to U(\fg) \to U(\fg)_0 \to A$. Moreover, the
adjoint\footnote{I.e., the action defined by the formula $\xi \star a \coloneqq
[i(\xi),a]$ for $\xi \in \fg$ and $a \in A$.} 
action of $\fg$ on $A$ 
integrates to an action of $G$ on $A$; indeed, this follows from
the corresponding property of $U(\fg)$.

In other words, $A$ receives a canonical Harish-Chandra datum for
$G$.

It follows that there is a (strong) $G$-action on $A\mod$ such that
the forgetful functor $A\mod \to \fg\mod_0$ is $G$-equivariant.

\step 

Next, we claim that $A\mod^{N,\psi} = 0$.

It suffices to check this at the abelian categorical level
(the $t$-structure on $A\mod^{N,\psi}$ is obviously separated).

Any object of $A\mod^{\heart}$ maps to a non-faithful module
of $U(\fg)_0$ (as $I \neq 0$). Therefore, any 
object of $A\mod^{N,\psi,\heart}$ maps to 
an object of $\fg\mod_0^{N,\psi,\heart}$ that is
not faithful as a $U(\fg)_0$-module; by Corollary \ref{c:whit-faithful},
the object must be zero.

\step Finally, we note that (as in \S \ref{sss:whit-loc}),
we have:
\[
\TwoHom_{G\mod}(A\mod,D(G)^{N,\psi}) \simeq A\mod^{N,\psi} = 0.
\]

\noindent Therefore, the composition:
\[
A\mod \to \fg\mod_0 \xar{\Loc_0^{\psi}} D(G)^{N,\psi}
\]

\noindent is zero. 

As $M \in \fg\mod_0^{\heart}$ lifts to $A\mod^{\heart}$ by definition, 
the result follows.

\end{proof}

\subsection{Proof of Theorem \ref{t:g-irreg}}

We now prove the theorem. We proceed by steps.

Below, we fix $\sF \in \Shv_{\Girreg}(\sY)^{B^-}$.

\subsubsection{Reduction to the quasi-compact case}

Note that the stack $\sY/G$ is a union of its 
quasi-compact open substacks. Replacing $\sY$ by the preimage
of each such open, we are reduced to the case where
$\sY$ itself is quasi-compact.

\subsubsection{Reduction to the simple case}

First, recall from \cite{bbm} (see also \cite{b-function} Appendix A) 
that the functor:
\[
\Av_!^{\psi}:D(\sY)^{B^-} \to D(\sY)^{N,\psi}
\]

\noindent is $t$-exact up to shift. Therefore, as 
$\Shv_{\Girreg}(\sY)^{B^-} \subset D(\sY)^{B^-}$ is closed under
truncations, we may assume $\sF \in \Shv_{\Girreg}(\sY)^{B^-,\heart}$.

Moreover, as $\Shv_{\Girreg}(\sY)^{B^-,\heart} \subset \Shv(\sY)^{B^-,\heart}$
is stable under taking subobjects, we may assume $\sF$ is also perverse
(i.e., ``small") by quasi-compactness of $\sY$. In particular, $\sF$ has finite length. 
Finally, we are clearly reduced to the case where 
$\sF \in \Shv_{\Girreg}(\sY)^{B^-,\heart}$ is simple; 
we explicitly remark that it is equivalent to say that
it is simple in $D(\sY)^{B^-,\heart}$.

\subsubsection{Reduction to the case where $\sY = G \times \sY_0$ where
$G$ acts trivially on $\sY_0$}

Consider $G \times \sY$ as acted on by $G$ via the action on the
first factor alone. 
Next, we have the $G$-equivariant map:
\[
\begin{tikzcd}
G \times \sY \arrow[r,"\act"] & \sY.
\end{tikzcd} 
\]

\noindent As $\act$ is smooth, $\act^![-\dim G]$ 
is $t$-exact, conservative, preserves simples, and maps 
$\Shv_{\Girreg}(\sY)$ to $\Shv_{\Girreg}(G \times \sY)$.
By $G$-equivariance, $\act^!$ commutes with finite Whittaker averaging.

Therefore, we are clearly reduced to case where $\sY$ has the stated form.

\subsubsection{Reduction to the case $\sY = G$}

Next, observe that $\Av_!^{\psi}(\sF)$ is a compact, holonomic 
object (because $\sF$ is).
Therefore, it suffices to check that for every field extension
$k^{\prime}/k$ of finite degree and every pair 
$(g,y) \in (G \times \sY_0)(k^{\prime})$, the
$!$-restriction of $\Av_!^{\psi}(\sF)$ to $(g,y)$ vanishes.
Up to a finite extension fo the ground field, we may assume $k^{\prime} = k$;
we do so in what follows to simplify the notation. 

Let $y \in \sY_0(k)$ be fixed. Let $i_y:\Spec(k) \to \sY_0$ be
the corresponding map. The map $G \xar{\id \times i_y} G \times \sY_0$
is obviously $G$-equivariant, so we have:
\[
(\id\times i_y)^!\Av_!^{\psi}(\sF) = 
\Av_!^{\psi}(\id\times i_y)^!(\sF) \in D(G)^{N,\psi}. 
\] 

Now we claim:
\begin{lem}\label{l:irreg-functoriality}

The object $(\id\times i_y)^!(\sF) \in \Shv(G)^{B^-}$ lies
in $\Shv_{\Girreg}(G)^{B^-}$. 

\end{lem}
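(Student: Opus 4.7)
The plan is to reduce the assertion to a direct estimate of the singular support of $(\id \times i_y)^!\sF$. I will combine two inputs: the standard Kashiwara--Schapira bound on the singular support of a $!$-pullback along a closed embedding, and an explicit calculation of the moment map for the $G$-action on $G \times \sY_0$. The statements about $B^-$-equivariance and membership in $\Shv(G)$ are formal and I will dispense with them first.

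For the formal part, note that in the current setup $G$ (and hence $B^-$) acts on $\sY_0$ trivially, so $\id \times i_y : G \to G \times \sY_0$ is $B^-$-equivariant. Hence $(\id \times i_y)^!$ preserves $B^-$-equivariance. Since $i_y : \Spec(k) \to \sY_0$ is (after possibly extending $k$) a point, $\id \times i_y$ is locally closed and representable, so $!$-pullback preserves regular holonomicity, which shows the pullback lies in $\Shv(G)^{B^-}$.

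The key step is the singular support calculation. The Kashiwara--Schapira estimate for a closed embedding $i:Z \hookrightarrow X$ gives
\[
\SS(i^!\sF) \; \subseteq \; \pi^\circ\bigl(Z \times_X \SS(\sF)\bigr),
\]
where $\pi^\circ : T^*X|_Z \to T^*Z$ is the projection killing conormal directions. Applied to $\id \times i_y$, writing $T^*(G \times \sY_0)|_{G \times \{y\}} = T^*G \times T_y^*\sY_0$, this says
\[
\SS((\id \times i_y)^!\sF) \; \subseteq \; \bigl\{(g,\xi_G) \in T^*G \,\big|\, \exists \xi_y \in T_y^*\sY_0,\ (g,y,\xi_G,\xi_y) \in \SS(\sF)\bigr\}.
\]
On the other hand, because $G$ acts on $G \times \sY_0$ only through its action on the first factor (and trivially on $\sY_0$), the moment map $\mu : T^*(G \times \sY_0) \to \fg$ factors as $\mu(g,y,\xi_G,\xi_y) = \mu_G(g,\xi_G)$, where $\mu_G$ is the moment map for $G$ acting on itself. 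So if $(g,\xi_G) \in \SS((\id \times i_y)^!\sF)$, there exists $\xi_y$ with $(g,y,\xi_G,\xi_y)\in\SS(\sF)\subset \mu^{-1}(\fg_{\irreg})$ by the assumption $\sF \in \Shv_{\Girreg}(\sY)$; therefore $\mu_G(g,\xi_G)\in\fg_{\irreg}$, i.e., $(g,\xi_G) \in \mu_G^{-1}(\fg_{\irreg})$, as required.

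The only step requiring any care is to apply the singular support estimate for $!$-pullback in the stacky setting (working smooth-locally on $\sY_0$ and using that the relevant computation takes place on an affine chart around $y$), but this is entirely routine; I do not expect any serious obstacle. The moment map identity is immediate from the fact that the moment map is additive under products of Hamiltonian actions and vanishes for the trivial action on $\sY_0$.
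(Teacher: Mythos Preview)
Your proposal is correct and follows essentially the same approach as the paper. Both arguments rest on (i) the observation that since $G$ acts trivially on $\sY_0$, the moment map for $G\times\sY_0$ factors through the projection to $T^*G$, so that $\mu^{-1}(\fg_{\irreg}) = \mu_G^{-1}(\fg_{\irreg}) \times T^*\sY_0$, and (ii) the Kashiwara--Schapira bound on singular support under pullback; the paper simply packages step (ii) as a standalone proposition about $(\id\times f)^!$ preserving $\Shv_{\Lambda\times T^*\sZ_2}$ before specializing to $f=i_y$.
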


Clearly this result follows from the general result:

\begin{prop}[Kashiwara-Schapira]

Let $\sZ_1$ and $\sZ_2$ be smooth schemes, and let
$\Lambda \subset T^*\sZ_1$ be closed and conical. Let $f:\sZ_3 \to \sZ_2$
be a given map with $\sZ_3$ smooth (but $f$ possibly non-smooth).

Then the functor: 
\[
(\id \times f)^!:\Shv(\sZ_1 \times \sZ_2) \to 
\Shv(\sZ_1 \times \sZ_3)
\]

\noindent maps $\Shv_{\Lambda \times T^*\sZ_2}(\sZ_1 \times \sZ_2)$ to 
$\Shv_{\Lambda \times T^*\sZ_3}(\sZ_1 \times \sZ_3)$.

\end{prop}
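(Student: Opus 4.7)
The plan is to deduce the proposition from the standard Kashiwara--Schapira estimate for singular support under inverse image: for any morphism $g : W \to V$ of smooth schemes, one has
\[
\SS(g^!\sG) \subset dg\bigl(\pi^{-1}(\SS(\sG))\bigr),
\]
where $\pi : W \times_V T^*V \to T^*V$ is the projection and $dg : W \times_V T^*V \to T^*W$ is the pullback on covectors (cf.\ \cite{ks}, Proposition~5.4.4). The cited reference is formulated for ordinary topological constructible sheaves; for the $D$-module version needed here, I would pass through Riemann--Hilbert, using that $\Shv$ on a smooth scheme consists of ind-regular-holonomic $D$-modules, where singular support agrees with the Kashiwara--Schapira singular support of the associated perverse sheaves.

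Apply this estimate to $g = \id \times f : \sZ_1 \times \sZ_3 \to \sZ_1 \times \sZ_2$. Using the product decomposition $T^*(\sZ_1 \times \sZ_2) = T^*\sZ_1 \times T^*\sZ_2$, the fiber product $(\sZ_1 \times \sZ_3) \times_{\sZ_1 \times \sZ_2} T^*(\sZ_1 \times \sZ_2)$ identifies canonically with $T^*\sZ_1 \times (\sZ_3 \times_{\sZ_2} T^*\sZ_2)$. Under this identification, the projection $\pi$ sends $(\eta, x_3, \xi)$ to $(\eta, \xi)$ and the map $dg$ sends $(\eta, x_3, \xi)$ to $(\eta, (df)^{*}\xi)$; crucially, both maps act as the identity on the $T^*\sZ_1$ factor.

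Hence, if $\sF \in \Shv_{\Lambda \times T^*\sZ_2}(\sZ_1 \times \sZ_2)$, then for any $(\eta, x_3, \xi) \in \pi^{-1}(\SS(\sF))$ we have $(\eta, \xi) \in \Lambda \times T^*\sZ_2$, so in particular $\eta \in \Lambda$. Therefore $dg(\eta, x_3, \xi) = (\eta, (df)^{*}\xi) \in \Lambda \times T^*\sZ_3$. This shows $dg(\pi^{-1}(\SS(\sF))) \subset \Lambda \times T^*\sZ_3$, and invoking the Kashiwara--Schapira bound gives $\SS((\id \times f)^!\sF) \subset \Lambda \times T^*\sZ_3$, as required.

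There is essentially no substantive obstacle: the proposition is a direct application of the functoriality of singular support under $!$-pullback, and the product structure of $\id \times f$ makes the estimate automatic, since the $T^*\sZ_1$-component of any covector is preserved by both $\pi$ and $dg$. The one point requiring care is transporting the Kashiwara--Schapira estimate into the regular holonomic $D$-module setting, but this is standard via Riemann--Hilbert and is anyway only needed in the ``mild'' form of an upper bound on singular support.
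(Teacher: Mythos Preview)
Your proposal is correct and follows essentially the same approach as the paper: both apply the Kashiwara--Schapira pullback estimate (the paper cites \cite{ks} Corollary~6.4.4 rather than Proposition~5.4.4, but the content is the same) and observe that for a product map $\id \times f$, the induced correspondence acts as the identity on the $T^*\sZ_1$ factor, so the constraint $\eta \in \Lambda$ is preserved.
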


\begin{proof}

This follows immediately from the estimates\footnote{We fill in some
details in the reference here, referring to \cite{ks} for notation.

In our setting, for $\sG \in \Shv_{\Lambda \times T^*\sZ_2}(\sZ_1 \times \sZ_2)$,
recall that \cite{ks} Corollary 6.4.4 (ii) bounds $\SS((\id \times f)^!(\sG))$
by something denoted $(\id \times f)^{\sharp}( \Lambda \times T^*\sZ_2)$.
It is straightforward to see $(\id \times f)^{\sharp}( \Lambda \times T^*\sZ_2)
\subset \Lambda \times T^*\sZ_2$; e.g., one can reduce to the
case where $f$ is a closed embedding and then the description 
from \cite{ks} Remark 6.2.8 (i) is convenient.}
on singular support of pullbacks from \cite{ks} Corollary 6.4.4.

\end{proof}

\begin{rem}

One may also appeal to \cite{ginzburg-inventiones} in place 
of \cite{ks}. In addition, we remark that the proof of 
\cite{bellamy-ginzburg} Proposition 2.1.2 essentially 
(up to a quasi-projectvity assumption) shows more generally
that $\Shv_{\Girreg}$ is preserved under sheaf-theoretic operations 
for $G$-equivariant maps. Indeed, \emph{loc. cit}. considers
a similar setting to ours, but with $0 \in \fg$ replacing
$\sN_{\irreg} \subset \fg$; more generally, the argument in \emph{loc. cit}.
applies for any closed, conical, $G$-equivariant subscheme
of $\fg$.

\end{rem}

\begin{rem}

We remind that the results from \cite{ks} and \cite{ginzburg-inventiones} 
fail for general holonomic $D$-modules; regularity is crucial assumption.

\end{rem}

We now continue with the reduction: by Lemma \ref{l:irreg-functoriality},
$(\id\times i_y)^!(\sF) \in \Shv_{\Girreg}(G)^{B^-}$; therefore,
if we know the result for $\sY_0 = \Spec(k)$, we obtain
$\Av_!^{\psi}(\id\times i_y)^!(\sF) = 0$, which yields the claim for
general $\sY_0$ by our earlier discussion.

\subsubsection{Proof for $\sY_0 = \Spec(k)$}

Roughly speaking, the idea is to reduce to Theorem \ref{t:lie-g-irreg}
via Beilinson-Bernstein.

We remind the setting: $\sF \in D(B^-\backslash G)^{\heart}$ 
is a simple\footnote{At this point, 
regularity, and even holonomicity, are no longer essential hypotheses.}
$D$-module on the flag variety with singular support in:
\[
\widetilde{\sN}_{\irreg} \coloneq \widetilde{\sN} \underset{\sN}{\times} \sN_{\irreg}
\subset \widetilde{\sN} \simeq T^*(B^-\backslash G).
\]

\noindent where $\widetilde{\sN}$ is the Springer resolution of the
nilpotent cone. We wish to show that:
\[
\Av_!^{\psi}(\sF) \in D(G)^{N,\psi}
\]

\noindent is zero (where $(N,\psi)$ invariants are taken for the left action).

We have a diagram:
\[
\begin{tikzcd}
D(B^-\backslash G)
\arrow[d,swap,"{\Gamma(B^-\backslash G,-)}"]
\arrow[dr,"\Av_!^{\psi}"]
\\
\fg\mod_0
\arrow[r,"\Loc^{\psi}"] 
& 
D(G)^{N,\psi}. 
\end{tikzcd}
\]

\noindent that commutes up to a cohomological shift. 
Indeed, by $G$-equivariance of the functors involved,
it suffices to check that the images of the $\delta$ $D$-module
at the base point of the flag variety are mapped to the
same object (up to shift), and this is straightforward. 

Now for our $\sF$, let $M \coloneqq \Gamma(B^-\backslash G,\sF) \in \fg\mod_0$.
By the above, it suffices to show that $\Loc^{\psi}(M) = 0$.

By Beilinson-Bernstein localization \cite{bb-loc}, 
$M \in \fg\mod_0^{\heart}$ is simple. Therefore, 
by Theorem \ref{t:lie-g-irreg}, it suffices to show that
$\SS(M) \subset \sN_{\irreg}$.
This is a 
standard compatibility: by the Corollary in \S 1.9 of \cite{borho-brylinski}, 
$\SS(M)$ is the image of $\SS(\sF)$ along the projection map:
\[
\widetilde{\sN} \to \sN.
\]

\section{Irregular singular support on $\Bun_G$}\label{s:irreg-bun-g}

\subsection{Formulation of the main result}\label{ss:d-irreg-defin}

Let $D_{\irreg}(\Bun_G) \coloneqq D_{\Higgs_{G,\fg_{\irreg}}}(\Bun_G)$;
see \S \ref{sss:globalization} for the notation.
We similarly have $D_{\Nilp_{\irreg}}(\Bun_G) = \Shv_{\Nilp_{\irreg}}(\Bun_G)$.

The purpose of this section is to prove:

\begin{thm}\label{t:at}

Any object of $\Shv_{\Nilp_{\irreg}}(\Bun_G)$ is anti-tempered.

\end{thm}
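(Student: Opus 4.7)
The plan is to derive Theorem~\ref{t:at} from Theorem~\ref{t:g-irreg} by comparing the global moduli $\Bun_G$ with a local $G$-space at a chosen point $x \in X$. By \cite{independence}, anti-temperedness is independent of the point, so I fix once and for all a point $x \in X(k)$.

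First, I would realize $\Bun_G$ as the quotient $\sY/G(\sO_x)$, where $\sY$ parametrizes $G$-bundles on $X$ equipped with a trivialization on the formal disc at $x$; in practice one works with a finite-level approximation $\sY_n$, parametrizing $G$-bundles with level structure at $x$ of order $n$ and equipped with its $G(\sO_x/t^n)$-action, with $n$ large enough relative to the singular-support data at hand. Under the identification $T^*\Bun_G = \Higgs_G$, the moment map for the $G(\sO_x)$-action on $T^*\sY$ is (after trivializing $\Omega_X^1$ near $x$) the Taylor expansion of the Higgs field at $x$, whose composition with evaluation at $x$ is simply $\vph \mapsto \vph(x) \in \fg$.

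For $\sF \in \Shv_{\Nilp_{\irreg}}(\Bun_G)$, let $\widetilde{\sF} \in D(\sY)^{G(\sO_x)}$ be its smooth pullback. By assumption and the standard compatibility of singular support with smooth pullback, every covector in $\SS(\widetilde{\sF})$ has underlying Higgs field $\vph$ lying in $\sN_{\irreg}$, so the image under the moment map (evaluated on the Lie algebra of the finite Borel $B^- \subset G \hookrightarrow G(\sO_x)$) lies in $\fg_{\irreg}$. Hence $\widetilde{\sF} \in \Shv_{\Girreg}(\sY)^{B^-}$ after restricting equivariance along $B^- \hookrightarrow G \hookrightarrow G(\sO_x)$. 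Theorem~\ref{t:g-irreg} then yields $\Av_!^{\psi}(\widetilde{\sF}) = 0$ in $D(\sY)^{N,\psi}$.

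It remains to translate this vanishing into anti-temperedness. Here I would invoke the characterization (following Arkhipov-Bezrukavnikov and as used in \cite{dario-ramanujan}) that $D(\Bun_G)^{\xat}$ is the kernel of the Iwahori-Whittaker functor at $x$; passing to the level-$x$ presentation $\sY$ and recalling that the Iwahori character is induced from a non-degenerate finite Whittaker character on $B^- \subset I$, this Iwahori-Whittaker functor is computed by the composite $D(\Bun_G) \to D(\sY)^{B^-} \xrightarrow{\Av_!^{\psi}} D(\sY)^{N,\psi}$. The vanishing of this composite on $\sF$ is exactly what we showed, and so $\sF$ is anti-tempered. The principal obstacle is precisely this last identification: rigorously matching anti-temperedness at $x$ with vanishing of the finite Whittaker averaging on the local model, which requires unpacking Bezrukavnikov-style descriptions of tempered versus anti-tempered blocks of $\Sph_G$ and verifying that the Iwahori-Whittaker test descends to the finite Borel after the level passage. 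The other ingredient — compatibility of singular support under the smooth map $\sY \to \Bun_G$ with the moment map for the local $G$-action — is standard (cf.~\cite{ks}), though some care is needed because $\sY$ is of infinite type and one must truncate to $\sY_n$ depending on $\sF$.
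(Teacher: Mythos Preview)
Your high-level strategy — reduce to Theorem~\ref{t:g-irreg} via a finite-dimensional $G$-action on a level space — is the same as the paper's. But the execution has a genuine gap, and it lies exactly where you flag ``the principal obstacle.''

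The moment map computation is wrong. For the naive embedding $G \hookrightarrow G(\sO/t^n)$ as constants, the moment map $T^*\sY_n \to \fg^\vee$ is given by the \emph{residue} of the polar part of the Higgs field at $x$, not by $\vph(x)$. For covectors pulled back along the smooth map $\sY_n \to \Bun_G$, the Higgs field is regular at $x$ and the residue vanishes. So $\widetilde{\sF}$ lies in $\Shv_{\Girreg}(\sY_n)^{B^-}$ for \emph{every} $\sF \in \Shv(\Bun_G)$, not just those with irregular singular support. If the last step (identifying finite Whittaker vanishing on $\sY_n$ with anti-temperedness) were correct, you would have proved that every nilpotent sheaf is anti-tempered, which is false. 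The proposed identification of the Iwahori-Whittaker test with naive finite Whittaker averaging on $\sY_n$ therefore cannot hold.

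What the paper does instead: the baby Whittaker characterization of anti-temperedness (Lemma~\ref{l:baby}, equation~\eqref{eq:at-ker}) involves the $\check{\rho}(t)$-\emph{twisted} subgroups $\o{I}_1$, $I_1^-$, $\sK_1 = \Ad_{-\check{\rho}(t)}(\text{first congruence})$, so that $\o{I}_1/\sK_1 = N$ and $I_1^-/\sK_1 = B^-$. The relevant functor $D(\Bun_G) \to D(\Bun_G^{I_1^-\level})$ is not smooth pullback: it is a Hecke-type convolution (forgetting to $I$, then $\Av_*$ to $I_1^-$). One therefore cannot use the naive smooth-pullback estimate for singular support; the paper invokes a Nadler--Yun type argument (Proposition~\ref{p:parahoric-irreg}) to show this convolution sends $\Shv_{\Nilp_{\irreg}}(\Bun_G)$ into $\Shv_{\irreg}(\Bun_G^{I_1^-\level})$ in the ramified sense. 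Only then does the moment map for the finite $G$-action on $\Bun_G^{\sK_1\level}$ become nontrivial: it is $\vph \mapsto t\Ad_{\check{\rho}(t)}(\vph) \bmod t$, which lands ramified irregular Higgs fields in $\fg_{\irreg}$, and Theorem~\ref{t:g-irreg} applies. The $\check{\rho}$-twist and the Hecke-preservation-of-irregularity step are the missing ingredients; without them the argument collapses as above.
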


We will prove this result by reduction to 
Theorem \ref{t:g-irreg}.

Throughout, we remind that we have fixed a point $x \in X(k)$.
We let $K$ denote the field of Laurent series based at $x$ and
$O \subset K$ the subring of Taylor series. We sometimes use
$t$ for a coordinate at $x$. We let e.g.
$G(K)$ and $G(O)$ denote the loop and arc groups for $G$.
We freely use the formalism of (strong) loop group actions 
on objects of $\DGCat_{cont}$.

\begin{rem}

Using the full results of this paper, we were able to show
$D_{\irreg}(\Bun_G) \subset D(\Bun_G)^{\at}$. We conjecture
that this is an equivalence for any $G$.
This is a folklore statement for $G = PGL_2$; see \cite{dario-ramanujan}
Corollary 4.2.6 for a variant of the assertion in this case.

\end{rem}

\subsection{Anti-temperedness and Whittaker averaging}\label{ss:at-whit}

We have the following general characterization of anti-temperedness.

Suppose the loop group $G(K)$ acts on $\sC \in \DGCat_{cont}$. Recall that we have
the anti-tempered subcategory $\sC^{G(O),\xat} \subset \sC^{G(O)}$, cf. \S \ref{ss:temp-conventions}. 

By \cite{dario-ramanujan} Theorem 1.4.8, we have:

\begin{lem}\label{l:at-whit}

$\sC^{G(O),\xat}$ is the kernel of the Whittaker $!$-averaging functor:
\[
\sC^{G(O)} \xar{\Av_!^{\psi}} \Whit(\sC).
\]

\end{lem}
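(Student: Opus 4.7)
The plan is to deduce the lemma by factoring $\Av_!^{\psi}$ through the tempered quotient and proving the two halves separately: (i) $\sC^{G(O),\xat}$ is contained in the kernel of $\Av_!^{\psi}$, and (ii) the induced functor $\sC^{G(O),\xtemp} \to \Whit(\sC)$ is conservative. Together these identify the kernel of $\Av_!^{\psi}$ with the kernel of the projection to the tempered quotient, which by definition is $\sC^{G(O),\xat}$.

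For (i), the key observation is that $\Av_!^{\psi}$ is $\sH_x^{\sph}$-linear (where $\sH_x^{\sph}$ acts on $\Whit(\sC)$ through its convolution action on the ambient category $\sC$, which preserves Whittaker invariants up to shift). Thus it suffices to show that $\Whit(\sC)$ is tempered as an $\sH_x^{\sph}$-module, i.e., the anti-tempered idempotent $e^{\xat}$ acts by zero. This is a manifestation of the geometric Casselman--Shalika formula: under derived Satake, the action of $\sH_x^{\sph}$ on $\Whit(\sC)$ factors through the Kostant-type quotient of the spectral Hecke category, on which the anti-tempered idempotent is by construction zero. Applied fiberwise in $\sC$, this forces every anti-tempered object to lie in $\Ker(\Av_!^{\psi})$.

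For (ii), one constructs a one-sided inverse to $\Av_!^{\psi}$ after passing to the tempered quotient. The natural candidate is the adjoint averaging $\Av_*^{\psi,-}$ (or the coefficient functor paired with convolution by an appropriate kernel, e.g.\ the Whittaker sheaf times the global Kostant section data coming from derived Satake). The composite of $\Av_!^{\psi}$ with its putative inverse, restricted to $\sC^{G(O),\xtemp}$, is $\sH_x^{\sph}$-linear; therefore it is determined by its value on the monoidal unit, and one identifies this value with the tempered projector itself by another application of derived Satake plus a standard bar-resolution argument. Conservativity follows.

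The main obstacle is step (ii): the identification of the relevant composition with the tempered projector uses genuinely nontrivial input, namely the derived Satake equivalence together with the Casselman--Shalika-type computation of the Whittaker category of the affine Grassmannian. This is precisely the content of \cite{dario-ramanujan} Theorem 1.4.8, which we invoke rather than reprove; any self-contained argument would need to reconstruct Beraldo's framework of tempered/anti-tempered decomposition from scratch.
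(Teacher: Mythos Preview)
Your proposal is correct and takes essentially the same approach as the paper: both rely on \cite{dario-ramanujan} Theorem 1.4.8. The paper simply cites this result directly without further commentary, whereas you have additionally sketched the two-step structure (anti-tempered objects lie in the kernel; the induced functor on the tempered quotient is conservative) underlying Beraldo's argument.
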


\begin{rem}

We make a brief philosophical point. Ultimately, we are interested
in Whittaker coefficients of tempered $D$-modules on $\Bun_G$. 
If we think heuristically of $\Bun_G$ as a double quotient
$G(k(X))\backslash G(\bA)/G(\bO)$, this involves integrating
on the \emph{left} with respect to $N(\bA)$ (twisted by a character, of course).
On the other hand, temperedness involves the derived Satake action, 
which occurs \emph{on the right}. Therefore, when we apply the
present lemma to $\sC = D(\Bun_G^{\on{level},x})$, the Whittaker averaging
in question should be thought of as occurring on the \emph{right}.
Moreover, the Whittaker integral on the right occurs at a single point,
while the one on the left involves all points simultaneously. 

Ultimately, the reader may think that singular support translates
between left and right Whittaker conditions.

\end{rem}

\subsection{Baby Whittaker}\label{ss:baby-at}

Next, we record the following well-known result.

Let $I \subset G(O)$ (resp. $I^-$) denote the Iwahori subgroup corresponding to 
$B \subset G$ (resp. $B^- \subset G$). Let $\o{I}$ denote the prounipotent radical
of $I$. Let $\o{I}_1 \coloneqq \Ad_{-\check{\rho}(t)}(\o{I})$ 
and let $I_1^- \coloneqq \Ad_{-\check{\rho}(t)}(I^-)$. We abuse
notation in letting $\psi$ denote the restriction of the canonical character
of $N(K)$ to $\o{I}_1$. Finally, we let $\sK_1$ denote $\Ad_{-\check{\rho}(t)}$
applied to the first congruence subgroup of $G(O)$; note that
$\sK_1 = \o{I}_1 \cap I_1^-$. Note that
$\o{I}_1/\sK_1 = N$ and $I_1^-/\sK_1 = B^-$.

Following \cite{whit}, for $\sC \in G(K)\mod$, we use the notation
$\Whit(\sC) \coloneqq \sC^{N(K),\psi}$ and 
$\Whit^{\leq 1}(\sC) \coloneqq \sC^{\o{I}_1,\psi}$. 
We let $\iota_{1,\infty}^!:\Whit(\sC) \to \Whit^{\leq 1}(\sC)$
denote the $*$-averaging functor.

Finally, we can state:

\begin{lem}\label{l:baby}

Let $\sC \in G(K)\mod$ be given. Then the functor
$\iota_{1,\infty}^!:\Whit(\sC) \to \Whit^{\leq 1}(\sC)$
admits a fully faithful left adjoint $\iota_{1,\infty,!}$; moreover, there is a 
commutative diagram:
\[
\begin{tikzcd}[column sep = small]
\sC^{G(O)} 
\arrow[rr,"\Oblv"] 
\arrow[drrrrr,swap,"\Av_!^{\psi}"]
&&
\sC^{I} \arrow[rr,"\Av_*"]
&&
\sC^{I_1^-}
\arrow[rr,equals]
&&
(\sC^{\sK_1})^{B^-}
\arrow[rr,"\Av_!^{\psi}"]
&&
(\sC^{\sK_1})^{N,\psi}
\arrow[rr,equals]
&&
\Whit^{\leq 1}(\sC)
\arrow[dlllll,"\iota_{1,\infty,!}"]
\\
&&&&&\Whit(\sC).
\end{tikzcd}
\]

\end{lem}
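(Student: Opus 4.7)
The first assertion is essentially the main construction of \cite{whit}. One realizes $\Whit(\sC)$ as the filtered colimit in $\DGCat_{cont}$ of the categories $\Whit^{\leq n}(\sC) \coloneqq \sC^{\Ad_{-n\check\rho(t)}(\o{I}), \psi}$, with transition functors $\iota_{n, n+1, !}$ defined as left adjoints to the natural $!$-averaging maps $\iota_{n, n+1}^!$. The key point established in \emph{loc. cit.} is that each $\iota_{n, n+1, !}$ is fully faithful, via an explicit cleanness argument that exploits the character non-degeneracy of $\psi$. The composite $\iota_{1,\infty,!} : \Whit^{\leq 1}(\sC) \to \Whit(\sC)$ is therefore the colimit of fully faithful functors, hence itself fully faithful in $\DGCat_{cont}$, and by construction it is a left adjoint to $\iota_{1,\infty}^!$.

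For the commutative diagram, I plan to split the claim into two pieces: (i) the top composition $F : \sC^{G(O)} \to \Whit^{\leq 1}(\sC)$ agrees with $\iota_{1,\infty}^! \circ \Av_!^\psi$; and (ii) the Whittaker averaging $\Av_!^\psi : \sC^{G(O)} \to \Whit(\sC)$ factors through the essential image of $\iota_{1,\infty,!}$. Taken together, these yield $\Av_!^\psi \simeq \iota_{1,\infty,!} \circ \iota_{1,\infty}^! \circ \Av_!^\psi \simeq \iota_{1,\infty,!} \circ F$, which is precisely the assertion of the lemma. For (i), a direct manipulation of averaging functors suffices: one breaks up $\Av_!^\psi$ into its constituent pieces by first passing to Iwahori-invariance along $\Oblv$, then moving from $I$ to $I_1^-$ via $*$-averaging (which is legitimate since $I$ and $I_1^-$ have a common subgroup of finite codimension in each inside $G(K)$), then applying the finite Whittaker averaging along the $\sK_1$-quotient $I_1^-/\sK_1 = B^-$ to reach $(\sC^{\sK_1})^{N,\psi} = \Whit^{\leq 1}(\sC)$, and finally identifying this stepwise factorization with $\iota_{1,\infty}^! \circ \Av_!^\psi$ at the end.

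The main obstacle is (ii), which is a genuine support/cleanness statement and not formal category theory. I would prove it by reducing, via $G(K)$-equivariance and continuity, to the universal case $\sC = D(G(K))$; this is permissible because every $G(K)$-equivariant category is generated under the action by objects from the universal case. For $\sC = D(G(K))$ the claim reduces to a computation on $\Gr_G = G(K)/G(O)$: it suffices to show that the Whittaker $!$-averaging of the $\delta$-sheaf at the base point lies in $\Whit^{\leq 1}(D(G(K)))$, which is a standard cleanness computation reflecting the fact that $\Ad_{-\check\rho(t)}$ contracts the $N(K)$-orbit of the base point into the distinguished $\o{I}_1$-orbit where $\psi$ is already defined. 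This geometric input -- ultimately traceable to the combinatorics of $\check\rho$ and the Iwahori decomposition of $G(K)$ -- is the only non-formal ingredient of the argument.
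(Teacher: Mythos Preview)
Your treatment of the first assertion matches the paper's: both cite \cite{whit} (specifically Theorem 2.3.1 there) for the existence and full faithfulness of $\iota_{1,\infty,!}$.

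For the commutative diagram, your approach is considerably more involved than the paper's, and your step (i) is not adequately justified. The paper's proof is essentially one line: the diagram follows from the fact that $\Av_*:\sC^{I^-} \to \sC^{I_1^-}$ is an equivalence (with inverse the natural $!$-averaging). Concretely, one may route the first two maps of the top row through $\sC^{I^-}$ rather than $\sC^{I}$ (both compute the same $*$-averaging from $\sC^{G(O)}$ to $\sC^{I_1^-}$); then the right adjoint of the full top-then-down composite is, by transitivity of $*$-averaging together with the stated equivalence, just $\Av_*^{G(O)}:\Whit(\sC) \to \sC^{G(O)}$. Hence the composite is its left adjoint $\Av_!^{N(K),\psi}$. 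No cleanness argument on $\Gr_G$ is needed.

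By contrast, your plan makes (ii) the ``main obstacle'' and proposes a geometric cleanness computation on $\Gr_G$; this is correct in spirit but unnecessary. More importantly, your (i) has a gap: what you describe as ``direct manipulation'' is really just a description of the top row $F$, not a proof that $F$ agrees with $\iota_{1,\infty}^! \circ \Av_!^{\psi}$. These two functors mix $!$- and $*$-averages in different orders, and there is no purely formal identity making them coincide; establishing (i) on its own would require an input of the same strength as the paper's equivalence (or your (ii)). Once one has the equivalence $\Av_*:\sC^{I^-} \isom \sC^{I_1^-}$, the whole diagram is formal, and your two-step decomposition becomes a detour.
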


\begin{proof}

The existence of the fully faithful functor $\iota_{1,\infty,!}$ is
a special case of \cite{whit} Theorem 2.3.1. 
The existence of the commutative diagram is standard:
it follows from the fact that the functor
$\Av_*:\sC^{I^-} \to \sC^{I_1^-}$ is an equivalence (with inverse
the natural $!$-averaging functor). 

\end{proof}

Combining this result with Lemma \ref{l:at-whit}, we find that
$\sC^{G(O),\xat}$ is the kernel of the composition:
\begin{equation}\label{eq:at-ker}
\sC^{G(O)} \xar{\Av_*} \sC^{I_1^-} \xar{\Av_!^{\psi}} \sC^{\o{I}_1,\psi}.
\end{equation}

\subsection{Parahoric bundles}

Let $P \subset G(K)$ be a compact open subgroup. Let
$\Bun_G^{P\level} \coloneqq \Bun_G^{\on{level},x}/P$. For example,
$\Bun_G^{G(O)\level} = \Bun_G$. We remark that $\Bun_G^{P\level}$ is always
an Artin stack locally almost of finite type.

\begin{rem}

We care about exactly four cases: when $P = G(O)$, $I^-$, $I_1^-$,
or $\sK_1$. If $\check{\rho}$ is an (integral) coweight for $G$, e.g. the subgroups
$I^-$ and $I_1^-$ are conjugate, so we can identify
$\Bun_G^{I^-\level}$ and $\Bun_G^{I_1^-\level}$; in general, the latter
can be thought of as a twisted form of the former, with only a mild technicality
separating them.

\end{rem}

\subsubsection{Ramified Higgs bundles}

Let $P$ be a compact open subgroup as above.
Let $(\sP_G,\tau) \in \Bun_G^{P\level}$ be a point (the notation indicates
that $\sP_G$ is a $G$-bundle on $X \setminus x$ and $\tau$ is a reduction 
of the $G(K)$-bundle $\sP_G|_{\o{\sD}_x}$ to $P$).

The standard Serre duality argument shows that the cotangent space
$T_{(\sP_G,\tau)}^* \Bun_G^{P\level}$ consists of Higgs bundle
$\vph \in \Gamma(X\setminus x,\fg_{\sP_G} \otimes \Omega_X^1)$
satisfying the condition: 

\[
(\sP_G,\tau,\vph|_{\o{\sD}_x}) 
\in \Lie(P)^{\perp}/P \subset \fg((t))dt/P = \fg((t))^{\vee}/P.
\]

\noindent We refer to such data as a \emph{($P$-)ramified Higgs bundles}.

\subsubsection{}\label{sss:ram-irreg}

We say that such a ramified Higgs bundle is \emph{irregular} if the Higgs bundle 
$(\sP_G|_{X\setminus x},\vph)$ on $X\setminus x$ is so. Note that
irregular ramified Higgs bundles form a closed conical
substack of $T^*\Bun_G^{P\level}$.
Therefore, we obtain the categories $D_{\irreg}(\Bun_G^{P\level})$
and $\Shv_{\irreg}(\Bun_G^{P\level})$ as in \S \ref{ss:d-irreg-defin}.

\subsubsection{Compatibility with the Hecke action}\label{sss:hecke-irreg}

Now suppose $P_1,P_2 \subset G(K)$ are a pair of parahoric subgroups.
We can form the category $D(P_1\setminus G(K)/P_2)$ of
$(P_1,P_2)$-equivariant $D$-modules on $G(K)$. 
There is a natural convolution action:
\[
D(P_1 \backslash G(K)/P_2) \otimes D(\Bun_G^{P_2\level}) \to 
D(\Bun_G^{P_1\level}).
\]

\begin{prop}\label{p:parahoric-irreg}

The above convolution functor induces a functor:
\[
D(P_1 \backslash G(K)/P_2) \otimes D_{\irreg}(\Bun_G^{P_2\level}) \to 
D_{\irreg}(\Bun_G^{P_1\level}).
\]

\end{prop}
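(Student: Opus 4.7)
The plan is to realize the convolution action as push--pull along a Hecke correspondence and to observe that irregularity, being a condition on the Higgs bundle restricted to $X\setminus x$, is automatically preserved by each step. Form the Hecke correspondence $\on{Hk}^{P_1,P_2}$ parametrizing data $(\sP_G^1,\tau_1,\sP_G^2,\tau_2,\alpha)$ with $(\sP_G^i,\tau_i)\in \Bun_G^{P_i\level}$ and $\alpha:\sP_G^1|_{X\setminus x}\simeq \sP_G^2|_{X\setminus x}$; it admits projections $\pi_i:\on{Hk}^{P_1,P_2}\to \Bun_G^{P_i\level}$ and a relative position map $q:\on{Hk}^{P_1,P_2}\to P_1\backslash G(K)/P_2$. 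Up to normalizing shifts the convolution action is realized as
\[
K\star\sF \simeq \pi_{1,*}\bigl(q^!(K)\overset{!}{\otimes}\pi_2^!(\sF)\bigr).
\]

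The key cotangent observation is that, via the gluing $\alpha$, the pullbacks $\pi_1^*$ and $\pi_2^*$ on cotangent spaces produce the \emph{same} Higgs field on $X\setminus x$ (only the pole behavior at $x$ may differ between $\pi_1^*$ and $\pi_2^*$). On the other hand, cotangent vectors pulled back along $q$ are ramified Higgs fields supported at $x$ and vanishing on $X\setminus x$. Consequently, if $\sF\in D_{\irreg}(\Bun_G^{P_2\level})$, the Kashiwara--Schapira pullback estimate shows that $\pi_2^!(\sF)$ has singular support consisting of ramified Higgs bundles whose restriction to $X\setminus x$ is irregular; tensoring with $q^!(K)$ only adds singular codirections supported at $x$ and hence preserves irregularity on $X\setminus x$ (the sum of an irregular Higgs field on $X\setminus x$ and one that vanishes there remains irregular); and the pushforward estimate for $\pi_{1,*}$ delivers output whose singular support is again irregular on $X\setminus x$, which by construction lies in $D_{\irreg}(\Bun_G^{P_1\level})$.

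The main technical obstacle is making these Kashiwara--Schapira estimates rigorous in the ind-stack setting: $\on{Hk}^{P_1,P_2}$ is pro-infinite-dimensional and $q$ is not a morphism of ordinary finite-type schemes. I would bypass this by reducing to generators: compact objects of $D(P_1\backslash G(K)/P_2)$ are supported on finite unions of double cosets, and over such a truncation the Hecke correspondence becomes a finite-type geometric object on which the classical singular support formalism applies directly. Alternatively, since irregularity is by construction a condition on the Higgs bundle restricted to $X\setminus x$, and any Hecke modification at $x$ tautologically leaves $\sP_G|_{X\setminus x}$ unchanged (and hence preserves the restriction of any Higgs field), one may argue at the level of characteristic cycles of simple constituents and sidestep the full infinite-dimensional cotangent machinery.
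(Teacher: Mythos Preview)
Your approach is essentially the same as the paper's: the paper does not give a self-contained argument but simply refers to \cite{toy-model} Theorem B.5.2 and \S B.6.6, noting that the proof there (for $\Lambda=\sN$) works verbatim for any closed $G$-invariant conical $\Lambda\subset\fg$, in particular $\Lambda=\fg_{\irreg}$. That argument is exactly the push--pull along the Hecke correspondence with a cotangent computation showing that the Higgs field on $X\setminus x$ is unchanged, which is precisely your key observation.

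Two small corrections to your write-up. First, ``Kashiwara--Schapira'' is the wrong attribution here: those estimates are for constructible/regular holonomic sheaves, whereas the proposition is stated for all of $D(\Bun_G^{P_2\level})$. What you actually need are the standard $D$-module singular support bounds, which hold for smooth pullback (both $\pi_i$ are formally smooth, being fibrations in homogeneous spaces for $G(K)$) and for \emph{proper} pushforward. Second, and relatedly, you do not identify where the parahoric hypothesis enters: the pushforward estimate along $\pi_1$ requires ind-properness of its fibers $G(K)/P_2$, which is exactly the condition that $P_2$ be parahoric. The paper makes this explicit (``only $P_2$ needs to be parahoric; we need ind-properness of $G(K)/P_2$ to control the singular support of the convolution''). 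Your ``alternative'' via characteristic cycles of simple constituents is too vague to stand on its own and is unnecessary once the ind-properness point is in place.
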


In other words, in the parahoric setting, convolution preserves
irregular singular support.

The proof of the proposition is identical to the proof of
the Nadler-Yun theorem in \cite{toy-model} Theorem B.5.2;
we particularly refer to \emph{loc. cit}. \S B.6.6. We remark
that the argument\footnote{To be clear: we mean when the point is
fixed, as in \S B.6.6 in \emph{loc. cit}. What follows there, regarding what
is denoted $\xi_X$ in \emph{loc. cit}. and concerns variation in the point $x$, 
crucially uses nilpotence. But this is irrelevant for our purpose.}
in \emph{loc. cit}. uses nothing about nilpotence. To be completely
explicit: the argument in \cite{toy-model} applies
for any closed $G$-invariant conical subscheme $\Lambda \subset \fg$, where in 
\emph{loc. cit}. $\Lambda = \sN$, and for us here, $\Lambda = \fg_{\irreg}$.

Finally, we remark that only $P_2$ needs to be parahoric; we need
ind-properness of $G(K)/P_2$ to control the singular support of the convolution.

\subsection{Mixing the ingredients}

We now prove Theorem \ref{t:at}.

\subsubsection{Step 1}

By \eqref{eq:at-ker}, our goal is to show that the composition:
\[
\Shv_{\Nilp_{\irreg}}(\Bun_G) \subset D(\Bun_G) \to D(\Bun_G^{I_1^-\level}) = 
D(\Bun_G^{\sK_1\level})^{B^-}
\to D(\Bun_G^{\sK_1\level})^{N,\psi}
\]

\noindent is zero. 

By Proposition \ref{p:parahoric-irreg}, the first functor maps
$\Shv_{\Nilp_{\irreg}}$ into $\Shv_{\irreg}(\Bun_G^{I_1^-\level})$.
(The right hand side may be replaced with $\Shv_{\Nilp_{\irreg}}$ just as well,
but this level of precision is not needed below.)

\subsubsection{Step 2}

Consider $\Bun_G^{\sK_1\level}$ as an algebraic stack acted on by
$G$. The moment map:
\[
\mu:T^*\Bun_G^{\sK_1\level}
\to \fg^{\vee} \simeq \fg
\]

\noindent sends a ramified Higgs bundle $(\sP,\tau,\vph)$
to $t\Ad_{\check{\rho}(t)}(\vph) \on{mod }t$, where we note that
$\vph \in t^{-1}\Ad_{-\check{\rho}(t)}\fg[[t]]dt/\sK_1$.
Because irregularity is a closed condition, we see that any 
irregular $\vph$ maps into $\fg_{\irreg}$.

Therefore, it follows that we have an embedding:
\[
\Shv_{\irreg}(\Bun_G^{\sK_1\level}) \subset \Shv_{\Girreg}(\Bun_G^{\sK_1\level}).
\]

\noindent Here for clarity, we say that the left hand 
side is defined in \S \ref{sss:ram-irreg}, and the
right hand side is defined using the $G$-action as in \S \ref{sss:g-irreg}.

\subsubsection{Step 3}

We now conclude the argument.

By the above, we have a composition:
\[
\Shv_{\Nilp_{\irreg}}(\Bun_G) \to \Shv_{\irreg}(\Bun_G^{\sK_1^-\level})^{B^-} 
\subset  \Shv_{\Girreg}(\Bun_G^{\sK_1^-\level})^{B^-} 
\to
D(\Bun_G^{\sK_1\level})^{N,\psi}
\]

\noindent and the latter functor is zero by Theorem \ref{t:g-irreg}.
This concludes the argument.

\part{Microlocal properties of Whittaker coefficients}

\section{Background on coefficient functors}\label{s:coeff-backround}

In this section, we review the classical geometric theory
of Whittaker coefficients from \cite{fgv} and establish notation.
This section houses no new results.

\subsection{Moduli spaces}

\subsubsection{}

We fix $\Omega_X^{\frac{1}{2}}$ a square root of the canonical sheaf on $X$.
We obtain $\check{\rho}(\Omega_X^1) \coloneqq (2\check{\rho})(\Omega_X^{\frac{1}{2}}) 
\in \Bun_T$.

We let $\Bun_N^{\Omega}$ denote the fiber product:
\[
\Bun_N^{\Omega} \coloneqq \Bun_B \underset{\Bun_T}{\times} \Spec(k)
\]

\noindent where $\Spec(k) \to \Bun_T$ is $\check{\rho}(\Omega_X^1)$.

\subsubsection{}

There is a standard character:
\[
\psi:\Bun_N^{\Omega} \to \bA^1.
\]

\noindent We refer to \cite{fgv} \S 4.1.3 for its definition.

\subsubsection{}

We also denote the canonical projection by:

\[
\fp:\Bun_N^{\Omega} \to \Bun_G.
\]

\subsubsection{}

More generally, let $D$ be a $\check{\Lambda}$-valued divisor on $X$.

We have a corresponding point $\sO_X(D) \in \Bun_T$: it is characterized
by the fact that for every weight $\lambda:T \to \bG_m$, 
$\lambda(\sO_X(D)) = \sO_X(\lambda(D))$, where we note that
$\lambda(D)$ is a usual ($\bZ$-valued) divisor on $X$.

For a $T$-bundle $\sP_T$, we let $\sP_T(D)$ denote the image of
$(\sP_T,\sO_X(D))$ under the multiplication 
$\Bun_T \times \Bun_T \to \Bun_T$.

We let $\Bun_N^{\Omega(D)}$ denote the fiber product:
\[
\Bun_N^{\Omega(D)} \coloneqq \Bun_B \underset{\Bun_T}{\times} \Spec(k)
\]

\noindent where this time we use the $T$-bundle 
$\check{\rho}(\Omega_X^1)(D)$.

\subsubsection{}

For $D$ a $\check{\Lambda}^+$-valued divisor, there is a canonical character:
\[
\psi_D:\Bun_N^{\Omega(-D)} \to \bA^1.
\]

\noindent We again refer to \cite{fgv} \S 4.1.3 for its definition.

\subsubsection{}

In this context, we let: 
\[
\fp_D:\Bun_N^{\Omega(-D)} \to \Bun_G
\]

\noindent denote the canonical projection. 

\subsubsection{Large divisors}\label{sss:large-div}

For later use, we record the following bounds.

\begin{defin}

We say $D$ is \emph{sufficiently large} if $\deg(D) \in \check{\Lambda}^+$
satisfies:
\begin{equation}\label{eq:deg-big}
(\deg(D),\alpha) > g-1
\end{equation}

\noindent for every positive\footnote{More economically,
this condition for $\alpha$ simple obviously implies it for 
$\alpha$ positive.} root $\alpha>0$.
(Here $g$ is the genus of $X$.)

\end{defin}

By reduction to the case of $\bG_a$, it is immediate that:

\begin{lem}\label{l:large-div}

For $D$ sufficiently large, $\Bun_N^{\Omega(-D)}$ is an affine scheme.

\end{lem}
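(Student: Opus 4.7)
The plan is to exploit the unipotent structure of $N$ to reduce the claim to iterated assertions about line bundles on $X$, as the parenthetical hint to ``reduce to the case of $\bG_a$'' suggests. By construction of the fiber product, $\Bun_N^{\Omega(-D)}$ identifies with the moduli stack $\Bun_{\sN}$ of torsors on $X$ for the twisted unipotent group scheme $\sN \coloneqq \sP_T \times^T N$, where $\sP_T \coloneqq \check{\rho}(\Omega_X^1)(-D)$ and $T$ acts on $N$ by conjugation.

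I would then filter $N$ using its $T$-stable lower central series $N = N^{(0)} \supset N^{(1)} \supset \cdots \supset N^{(r)} = 1$; each successive quotient decomposes $T$-equivariantly as a direct sum of root subgroups $\fg_\alpha \simeq \bG_a$ over positive roots $\alpha$ of fixed height. Twisting by $\sP_T$ converts these into direct sums of the line bundles
\[
\sL_\alpha \coloneqq \alpha(\sP_T) = \Omega_X^{(\alpha,\check{\rho})}(-\alpha(D)),
\]
viewed additively, and yields a tower
\[
\Bun_\sN \to \Bun_{\sN/\sN^{(1)}} \to \cdots \to \Bun_{\sN/\sN^{(r)}} = \{\on{pt}\}
\]
in which (by centrality of each graded piece inside the relevant quotient) the fibers of each successive map are torsors for the group stack $\prod_{\on{ht}(\alpha)=k}\Bun_{\sL_\alpha}$. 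Each $\Bun_{\sL_\alpha}$ is the classifying stack of $\sL_\alpha$-torsors on $X$: its isomorphism classes are parametrized by $H^1(X,\sL_\alpha)$ and its automorphism group at any point is $H^0(X,\sL_\alpha)$; thus $\Bun_{\sL_\alpha}$ is an affine space of dimension $\dim_k H^1(X,\sL_\alpha)$ exactly when $H^0(X,\sL_\alpha)=0$. Granted the vanishing of $H^0(X,\sL_\alpha)$ for every positive root $\alpha$, the conclusion is then immediate: the automorphism groups along the tower are all trivial, each stratum is an affine bundle with affine-space fiber, and $\Bun_N^{\Omega(-D)}$ emerges as an iterated affine bundle over a point, hence an affine scheme.

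The main (and really the only) obstacle in this plan is the cohomological step, namely verifying $H^0(X,\sL_\alpha) = 0$ for every positive root $\alpha$ under the stated numerical hypothesis on $D$. For simple roots, this is a direct degree computation carried out via Serre duality applied to $\sL_\alpha = \Omega_X^{(\alpha,\check{\rho})}(-\alpha(D))$; the case of a general positive root $\alpha$ reduces to the simple case using the additivity of both $(\alpha, \check{\rho})$ and $(\alpha, \deg D)$ when writing $\alpha$ as a non-negative integer combination of simple roots. Everything else in the argument is purely structural.
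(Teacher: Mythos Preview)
Your approach is exactly the reduction to $\bG_a$ the paper intends: filtering $\sN = \sP_T \times^T N$ by the lower central series, identifying the graded pieces with the line bundles $\sL_\alpha = \Omega_X^{(\alpha,\check\rho)}(-\alpha(D))$, and reducing affineness to the vanishing of $H^0(X,\sL_\alpha)$ for all positive roots is correct and is all the paper's one-line proof is gesturing at.

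There is, however, a numerical gap hiding behind your phrase ``direct degree computation carried out via Serre duality.'' For a simple root $\alpha_i$ one has $\sL_{\alpha_i} = \Omega_X(-\alpha_i(D))$, of degree $2g-2-(\alpha_i,\deg D)$, and $H^0$ vanishes only when this degree is negative, i.e., when $(\alpha_i,\deg D) > 2g-2$. Serre duality merely rewrites $H^0(X,\sL_{\alpha_i})^\vee$ as $H^1(X,\sO_X(\alpha_i(D)))$, which again vanishes only under that same bound. The paper's stated threshold $(\alpha,\deg D) > g-1$ is not sufficient for $g\geq 2$: for instance, on a genus $2$ curve with $\alpha_i(D)$ linearly equivalent to a canonical divisor one gets $\sL_{\alpha_i}\simeq\sO_X$, which has nonzero $H^0$. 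This appears to be a slip in the paper's definition of ``sufficiently large'' --- the correct bound for simple roots is $> 2g-2$, and since the applications only require \emph{some} large enough $D$, nothing downstream is affected. With the corrected bound, your additivity argument for non-simple roots goes through verbatim (since $(\alpha,\check\rho)=\operatorname{ht}(\alpha)$ and both pairings are linear in $\alpha$), and the proof is complete.
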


\subsection{Coefficient functors}

\subsubsection{The primary Whittaker coefficient}

We define the \emph{primary} Whittaker coefficient functor is the functor:
\[
\coeff:D(\Bun_G) \to \Vect
\]

\noindent defined by:
\[
\sF \mapsto 
C_{\dR}\big(\Bun_N^{\Omega},\fp^!(\sF) \overset{!}{\otimes} \psi^!(\exp)\big)
[-\dim  \Bun_N^{\Omega}].
\]

\begin{rem}

We include the above shift in the definition to make various formulae
work out more nicely. 

\end{rem} 

\begin{rem}\label{r:first?}

The above functor is usually called the \emph{first} Whittaker coefficient.
The terminology is borrowed from modular forms, where the above
corresponds to the term $a_1$, cf. \S \ref{sss:modular}. This multiplicative 
normalization
can be confusing in the geometric context, where \emph{zeroth} Whittaker
coefficient would be the more natural convention (since we index
by divisors rather than their norms).

\end{rem}

\subsubsection{Other Whittaker coefficients}

Now suppose $D$ is a $\check{\Lambda}^+$-valued divisor on 
$X$. Define the functor:
\[
\coeff_D: D(\Bun_G) \to \Vect
\]

\noindent by the formula:
\[
\sF \mapsto 
C_{\dR}\big(\Bun_N^{\Omega(-D)},\fp_D^!(\sF) \overset{!}{\otimes} \psi_D^!(\exp)\big)
[-\dim \Bun_N^{\Omega(-D)}].
\]

\subsection{The Casselman-Shalika formula}\label{ss:cs}

We now recall the primary classical result in the subject.

Let $D$ be a $\check{\Lambda}^+$-valued divisor on $X$ as above. 
There is an associated object $\sV^D$ of $\Rep(\check{G})_{\Ran}$;
if $D = \sum \check{\lambda}_i \cdot x_i$ for some finite
set of distinct points $x_i$, then 
$\sV^D = \oplus V^{\check{\lambda}_{x_i}} \otimes  \delta_{x_i}$.

We recall the following result from \cite{fgv}, which is a
geometric analogue\footnote{See \cite{fgkv} for more discussion 
of the relationship between the results of \cite{fgv} and \cite{casselman-shalika}.} 
of the Casselman-Shalika formula from \cite{casselman-shalika}.

\begin{thm}[Frenkel-Gaitsgory-Vilonen]\label{t:cs}

There is a canonical isomorphism of functors:
\[
\coeff_D \simeq \coeff(\sV^D \star -): D(\Bun_G) \to \Vect.
\]

\end{thm}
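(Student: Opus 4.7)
My plan is to compare both sides against a common geometric stack, using the Hecke correspondence, and then reduce everything to a \emph{local} Casselman--Shalika statement inside geometric Satake.

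First I would unwind the Hecke action: writing
$$\sV^D \star \sF \simeq h^{\leftarrow}_!\bigl((h^{\rightarrow})^!(\sF) \overset{!}{\otimes} \IC_{\sV^D}\bigr),$$
where $h^{\leftarrow}, h^{\rightarrow}:\Hecke_G^{D} \to \Bun_G$ are the two legs of the Hecke correspondence parametrizing $G$-bundles with a modification supported on $D$, and $\IC_{\sV^D}$ is the sheaf produced by geometric Satake. By base change against $\fp:\Bun_N^{\Omega} \to \Bun_G$, the left-hand side $\coeff(\sV^D \star \sF)$ becomes the $\dR$-cohomology of $(\Bun_N^{\Omega} \times_{\Bun_G} \Hecke_G^{D})$ with coefficients in the pullback of $\sF$, tensored with the pullbacks of $\psi^!(\exp)$ and of $\IC_{\sV^D}$, and shifted appropriately.

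Second, I would identify this fiber product with a Zastava-type stack. The key geometric observation is that a generic $N$-reduction of $\sP_G$ together with a modification $\sP_G \dashrightarrow \sP_G'$ supported at $D$ canonically determines, via the Pl\"ucker picture, a generic $N$-reduction of $\sP_G'$; however, the induced $T$-bundle is twisted by $D$, so the natural target is $\Bun_N^{\Omega(-D)}$ rather than $\Bun_N^{\Omega}$. One then checks that under this identification the Whittaker character $\psi$ pulls back to $\psi_D$ on the open substratum, and that the non-open substrata contribute zero by acyclicity of Artin--Schreier sheaves pushed forward along $\bG_a$-fibrations.

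Third, the main step is to show that the pushforward of $\IC_{\sV^D}$ along the projection from this Zastava-type stack to $\Bun_N^{\Omega(-D)}$, after tensoring with the Whittaker character, is canonically the dualizing sheaf (up to the indicated shift). This is factorizing in the divisor $D$, so by an inductive/factorization argument it reduces to the case $D = \check{\lambda} \cdot x$ for a single point $x$ and a single dominant coweight $\check{\lambda}$. In that local case, the assertion is precisely the geometric Casselman--Shalika identity: the $(N(K), \psi)$-averaging of the spherical IC sheaf for $V^{\check{\lambda}}$ on $\Gr_G$ is 1-dimensional, supported on the semi-infinite orbit $S^{\check{\lambda}}$, and concentrated in a single degree matching the normalization.

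The hard part will be this local Casselman--Shalika input. It requires controlling the intersection $S^{\check{\mu}} \cap \overline{\Gr}_G^{\check{\lambda}}$ for all $\check{\mu} \le \check{\lambda}$ and arguing that the Whittaker averaging kills every stratum with $\check{\mu} \ne \check{\lambda}$; the vanishing uses that on the non-open strata the semi-infinite orbit contains a non-trivial $\bG_a$-subgroup on which $\psi$ is non-trivial, so the Artin--Schreier pushforward vanishes. Once this local statement is established, factorization in $\sV^D \in \Rep(\check{G})_{\Ran}$ globalizes the comparison over all of $X$, and the resulting isomorphism of sheaves on $\Bun_N^{\Omega(-D)}$ yields, after taking $\dR$-cohomology, the desired identification $\coeff(\sV^D \star \sF) \simeq \coeff_D(\sF)$ functorially in $\sF$.
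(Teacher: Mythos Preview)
Your approach is sound and is essentially a sketch of the proof of the input theorem from \cite{fgv}, but the paper takes a more packaged route. Rather than base-changing the Hecke correspondence against $\fp:\Bun_N^{\Omega} \to \Bun_G$ and analyzing the resulting Zastava-type fiber product, the paper works on Drinfeld's compactification $\ol{\Bun}_N^{\Omega,\infty\cdot \bold{x}}$, which contains both $\Bun_N^{\Omega}$ and $\Bun_N^{\Omega(-D)}$ as locally closed substacks via embeddings $\jmath$, $\jmath_D$, and on which the Hecke category acts directly. The key input is then cited from \cite{fgv} Theorems~3 and~4: an isomorphism
\[
\sV^{D,\vee} \star \jmath_{*,dR}(\psi^!(\exp))[-\dim \Bun_N^{\Omega}] \;\simeq\; \jmath_{D,*,dR}(\psi_D^!(\exp))[-\dim \Bun_N^{\Omega(-D)}]
\]
of sheaves on this compactified stack. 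The desired identity of functors then follows by a short chain of projection formulas and the adjunction between $\sV^D$ and $\sV^{D,\vee}$ acting on the $\overset{!}{\otimes}$-pairing. Your local Casselman--Shalika computation on $\Gr_G$ and the factorization argument are precisely the ingredients that go into the cited \cite{fgv} theorems, so you are re-deriving what the paper treats as a black box. The advantage of the paper's route is brevity and that the Hecke action is manifestly defined on the compactification; the advantage of yours is self-containment. One caution worth flagging in your write-up: the Hecke modification of a bundle with an honest $N$-reduction need not carry an honest $N$-reduction, only a generalized one---this is exactly why the paper passes to $\ol{\Bun}_N^{\Omega,\infty\cdot \bold{x}}$---so your treatment of the ``non-open strata'' would need to be made precise in that language.
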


\begin{proof}

For the sake of completeness, we include an argument deducing
this result from \cite{fgv}.

As above, suppose $D = \sum_{i=1}^n \check{\lambda}_i \cdot x_i$.
Let $\bold{x}$ denote the collection of points $x_i$.
As in \cite{fgv}, we have usual the 
ind-stack:\footnote{It is defined in \cite{fgv} \S 2.3, where it would be 
denoted $\presub{\bold{x},\infty}{\Bun}_N^{\check{\rho}(\Omega_X^1)}$.}
\[
\ol{\Bun}_N^{\Omega,\infty\cdot \bold{x}}.
\]

\noindent
There is a natural map $\fp_{\infty\cdot \bold{x}}: 
\ol{\Bun}_N^{\Omega,\infty\cdot \bold{x}} \to \Bun_G$.
There is a natural Hecke action of $\Rep(\check{G})^{\otimes n}$
on $D(\ol{\Bun}_N^{\Omega,\infty\cdot \bold{x}})$ compatible
with the Hecke action on $\Bun_G$ (corresponding to the
points $x_1,\ldots,x_n$). We also note that $\sV^D$ can
evidently be considered as an object of $\Rep(\check{G})^{\otimes n}$.

There are natural locally closed embeddings:
\[
\begin{gathered}
\jmath:\Bun_N^{\Omega} \to \ol{\Bun}_N^{\Omega,\infty\cdot \bold{x}} \\
\jmath_D:\Bun_N^{\Omega(-D)} \to \ol{\Bun}_N^{\Omega,\infty\cdot \bold{x}}
\end{gathered}
\]

\noindent compatible with the maps to $\Bun_G$.

Let $\sV^{D,\vee} = \sV^{-w_0(D)}$ 
be the dual to $\sV^D$ in $\Rep(\check{G})^{\otimes n}$.
By \cite{fgv} Theorem 4 and Theorem 3 (2), we have:\footnote{In fact,
there is a sign choice to be made once and for all in defining the Hecke
action on $D(\Bun_G)$; roughly speaking, the difference is whether
we think of the Hecke category as acting naturally on the left or right,
where in the latter case we use the inversion automorphism to change
a right module structure to a left module structure. 
We have implicitly pinned down this choice in the statement of the formula.
By contrast, the reader who looks in \cite{fgv} will find 
statements for each of the two possible choices.}
\[
\sV^{D,\vee} 
\star \jmath_{*,dR}(\psi^!(\exp))[-\dim \Bun_N^{\Omega}] \simeq 
\jmath_{D,*,dR}(\psi_D^!(\exp))[-\dim \Bun_N^{\Omega(-D)}].
\]

Therefore, for $\sF \in D(\Bun_G)$, we obtain:
\[
\begin{gathered}
\coeff_D(\sF) \coloneqq 
C_{\dR}\big(\Bun_N^{\Omega(-D)},\fp_D^!(\sF) \overset{!}{\otimes} \psi_D^!(\exp)\big)
[-\dim \Bun_N^{\Omega(-D)}] = \\
C_{\dR}\big(\ol{\Bun}_N^{\Omega,\infty\cdot \bold{x}},
\fp_{\infty\cdot \bold{x}}^!(\sF) \overset{!}{\otimes} 
\jmath_{D,*,dR}(\psi_D^!(\exp))\big) 
[-\dim \Bun_N^{\Omega(-D)}] = \\
C_{\dR}\Big(\ol{\Bun}_N^{\Omega,\infty\cdot \bold{x}},
\fp_{\infty\cdot \bold{x}}^!(\sF) \overset{!}{\otimes} 
\big(\sV^{D,\vee} \star \jmath_{*,dR}(\psi^!(\exp))\big)\Big)
[-\dim \Bun_N^{\Omega}]
= \\
C_{\dR}\Big(\ol{\Bun}_N^{\Omega,\infty\cdot \bold{x}},
\big(\sV^{D} \star \fp_{\infty\cdot \bold{x}}^!(\sF)\big) \overset{!}{\otimes} 
\jmath_{*,dR}(\psi^!(\exp))\Big)
[-\dim \Bun_N^{\Omega}] = \\
C_{\dR}\big(\ol{\Bun}_N^{\Omega,\infty\cdot \bold{x}},
\fp_{\infty\cdot \bold{x}}^!(\sV^D \star \sF) \overset{!}{\otimes} 
 \jmath_{*,dR}(\psi^!(\exp))\big)
[-\dim \Bun_N^{\Omega}] = \\
C_{\dR}\big(\Bun_N^{\Omega},
\fp^!(\sV^D \star \sF)\big) \overset{!}{\otimes} 
 \jmath_{*,dR}(\psi^!(\exp))\big)
[-\dim \Bun_N^{\Omega}] \eqqcolon 
\coeff(\sV^D \star \sF).
\end{gathered}
\]

\end{proof}

\begin{rem}

The above assertions admit natural generalizations where the divisors
are along to vary in moduli, even over Ran space; we omit the statement
here as we do not need it.

\end{rem}

\subsection{More notation}

In the remainder of this section, we briefly introduce more notation.

\subsubsection{The Poincar\'e sheaf}

We let $\Poinc_! \in D(\Bun_G)$ be the object corepresenting 
$\coeff$. Explicitly, we have:
\begin{equation}\label{eq:poinc-!}
\Poinc_! = \fp_!((-\psi)^*(\exp[-2]))[\dim \Bun_N^{\Omega}]
\end{equation}

\noindent In other words, we take the character sheaf (or its inverse) 
on $\Bun_N^{\Omega}$,
cohomologically normalized to be perverse,
and $!$-push it forward to $\Bun_G$; this makes sense by holonomicity.

\begin{convention}

We use the subscript $!$ to remind that a lower-$!$ functor appears
in the formation of $\Poinc_!$. We use similar notation in related
settings without further mention. We remark that $\coeff$ would
be denoted $\coeff_*$ in this regime; we omit the $*$ for brevity, given
how often the $\coeff$ functor is used in this work.

\end{convention}

\subsubsection{The $!$-coefficient functor}

Let $D_{\hol}(\Bun_G) \subset D(\Bun_G)$ 
denote the category of (ind-)holonomic $D$-modules on $\Bun_G$,
i.e., $D$-modules $\sF \in D(\Bun_G)$ such that for
any $\pi:S \to \Bun_G$ with $S$ affine, $\pi^!(\sF) \in D(S)$ is (ind-)holonomic.
We remark that $\Shv_{\Nilp}(\Bun_G) \subset D_{\hol}(\Bun_G)$.

Then we have a functor:
\[
\begin{gathered}
\coeff_!:D_{\hol}(\Bun_G) \to \Vect \\
\sF \mapsto 
C_{\dR,c}\big(\Bun_N^{\Omega},\fp^*(\sF) \overset{*}{\otimes} \psi^*(\exp[-2])\big)
[\dim  \Bun_N^{\Omega}].
\end{gathered}
\]

In other words, $\coeff_!$ is the Verdier conjugate to $\coeff$. That is,
we have:

\begin{lem}\label{l:coeff-!-verdier}

For $\sF \in D_{\hol}(\Bun_G)$ be locally compact
with Verdier dual\footnote{Unlike e.g. \cite{agkrrv2}, we consider
Verdier duality as mapping locally compact $D$-modules on $\Bun_G$ 
to locally compact $D$-modules; it can be computed smooth locally by 
usual Verdier duality on schemes. By contrast, \emph{loc. cit}. considers
a smarter construction, sending compact $D$-modules to compact objects
of $D(\Bun_G)^{\vee}$. The smarter construction from \emph{loc. cit}.
recovers ours after applying the functor 
$\on{Id}^{\on{naive}}:D(\Bun_G)^{\vee} \to D(\Bun_G)$ from 
\emph{loc. cit}.}
$\bD^{\on{Verdier}} \sF \in D_{\hol}(\Bun_G)$, 
we have:
\[
\coeff(\sF) = \coeff_!(\bD^{\on{Verdier}} \sF)^{\vee}.
\]

\end{lem}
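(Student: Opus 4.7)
The plan is to unfold both sides using standard Verdier duality identities and compare. More precisely, I would compute $\coeff_!(\bD^{\on{Verdier}} \sF)$ directly and identify it with $\coeff(\sF)^{\vee}$; the claim then follows by dualizing one more time.

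The computation rests on four compatibilities. (i) For the projection $\fp$, Verdier duality intertwines upper-$!$ and upper-$*$, so $\fp^*(\bD^{\on{Verdier}} \sF) \simeq \bD^{\on{Verdier}}(\fp^!(\sF))$. (ii) In the paper's shift convention for the exponential $D$-module, together with the self-duality of the underlying rank-one character on $\bA^1$, one has $\bD^{\on{Verdier}}(\psi^!(\exp)) \simeq \psi^*(\exp[-2])$; this identity is precisely the reason $\exp[-2]$ appears in the definition of $\coeff_!$. (iii) Verdier duality exchanges internal tensor products: $\bD^{\on{Verdier}}(A \overset{!}{\otimes} B) \simeq \bD^{\on{Verdier}}(A) \overset{*}{\otimes} \bD^{\on{Verdier}}(B)$. (iv) For the structure map $p: \Bun_N^{\Omega} \to \Spec(k)$, Verdier duality swaps $p_{*,dR} = C_{\dR}$ with $p_! = C_{\dR,c}$, and at the level of vector spaces this reads $C_{\dR,c}(\Bun_N^{\Omega}, \bD^{\on{Verdier}}(-)) \simeq C_{\dR}(\Bun_N^{\Omega}, -)^{\vee}$. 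Applying (i)--(iii) to the integrand in the definition of $\coeff_!(\bD^{\on{Verdier}} \sF)$ turns it into $\bD^{\on{Verdier}}(\fp^!(\sF) \overset{!}{\otimes} \psi^!(\exp))$, and then (iv) collapses the whole expression to $\coeff(\sF)^{\vee}$. The shift $[\dim \Bun_N^{\Omega}]$ in $\coeff_!$ and the shift $[-\dim \Bun_N^{\Omega}]$ in $\coeff$ match up correctly because $(-)^{\vee}$ reverses cohomological shifts.

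The only ingredient that is not purely formal is (iv), since the stack $\Bun_N^{\Omega}$ is not quasi-compact. This is where the hypotheses that $\sF$ is locally compact and ind-holonomic play a role: they ensure that $\fp^!(\sF) \overset{!}{\otimes} \psi^!(\exp)$ is an ind-holonomic object on $\Bun_N^{\Omega}$ on which the six-functor formalism is well-behaved and Verdier duality genuinely interchanges $C_{\dR}$ and $C_{\dR,c}$. One can either extract this from the ind-holonomic six-functor package developed in \cite{gr-i}, or, more concretely, exhaust $\Bun_N^{\Omega}$ by its quasi-compact open substacks and pass to the colimit. None of the individual steps is hard; the only thing demanding care is keeping the shift and sign conventions for $\exp$ straight.
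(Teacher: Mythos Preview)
Your proposal is correct and matches the paper's treatment: the paper does not give a separate proof, but simply states the lemma immediately after introducing $\coeff_!$ with the phrase ``In other words, $\coeff_!$ is the Verdier conjugate to $\coeff$,'' treating the identity as essentially built into the definition. Your unpacking via the standard Verdier duality compatibilities (i)--(iv) is exactly the verification one would give if asked to justify that sentence, and your remark about the need for holonomicity and local compactness to make (iv) go through on the non--quasi-compact stack $\Bun_N^{\Omega}$ is the right caveat.
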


\subsubsection{}

Similarly, we have functors:
\[
\coeff_{D,!}: D_{\hol}(\Bun_G) \to \Vect.
\]

The Verdier dual to Theorem \ref{t:cs} asserts:

\begin{cor}\label{c:cs-!}

There is a canonical isomorphism of functors:
\[
\coeff_{D,!} \simeq \coeff_!(\sV^D \star -): D_{\hol}(\Bun_G) \to \Vect.
\]

\end{cor}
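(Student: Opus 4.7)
The plan is to deduce Corollary \ref{c:cs-!} from Theorem \ref{t:cs} by Verdier duality. Two auxiliary ingredients are needed beyond Theorem \ref{t:cs} itself: a divisorial version of Lemma \ref{l:coeff-!-verdier} relating $\coeff_{D,!}$ to $\coeff_D$, and the compatibility of spherical Hecke convolution with Verdier duality on $\Bun_G$.

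First, I would record the divisorial analogue of Lemma \ref{l:coeff-!-verdier}: for locally compact $\sF \in D_{\hol}(\Bun_G)$,
\[
\coeff_D(\sF) \simeq \coeff_{D,!}(\bD^{\on{Verdier}}\sF)^{\vee}.
\]
The proof is identical to that of Lemma \ref{l:coeff-!-verdier}: Verdier duality on $\Bun_N^{\Omega(-D)}$ intertwines $!$- and $*$-operations, and $\psi_D^!(\exp)$ is Verdier dual to $\psi_D^*(\exp[-2])$ up to the cohomological shift absorbed into our normalization of $\coeff_{D,!}$.

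Second, I would verify the Hecke--Verdier compatibility
\[
\bD^{\on{Verdier}}(\sV^D \star \sG) \simeq \sV^D \star \bD^{\on{Verdier}}\sG
\]
for locally compact $\sG \in D_{\hol}(\Bun_G)$. This is essentially built into geometric Satake: the IC sheaves on $\Gr_G$ realizing the action of $\sV^D$ are Verdier self-dual perverse sheaves by Mirkovi\'c--Vilonen, and under the chosen convention for the Hecke action the spectral swap $V^{\check{\lambda}} \leftrightarrow V^{-w_0\check{\lambda}}$ is absorbed, as already noted in the footnote to the proof of Theorem \ref{t:cs}.

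Granted these two inputs, for locally compact $\sF$ the chain
\[
\coeff_{D,!}(\sF) \simeq \coeff_D(\bD^{\on{Verdier}}\sF)^{\vee} \simeq \coeff(\sV^D \star \bD^{\on{Verdier}}\sF)^{\vee} \simeq \coeff_!\bigl(\bD^{\on{Verdier}}(\sV^D \star \bD^{\on{Verdier}}\sF)\bigr) \simeq \coeff_!(\sV^D \star \sF)
\]
applies, in order, the divisorial lemma, Theorem \ref{t:cs}, Lemma \ref{l:coeff-!-verdier}, and the Hecke--Verdier compatibility. To pass from locally compact objects to all of $D_{\hol}(\Bun_G)$, one writes a general ind-holonomic sheaf as a filtered colimit of locally compact ones and uses that both functors preserve filtered colimits. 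The only genuine obstacle is bookkeeping of conventions for Verdier duality and the left/right Hecke action; these are standard but finicky, and were already navigated in the proof of Theorem \ref{t:cs}, so no essentially new geometric input beyond \cite{fgv} and classical Satake self-duality is required.
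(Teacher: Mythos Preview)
Your approach is correct and is essentially what the paper does: the text introduces Corollary \ref{c:cs-!} as ``the Verdier dual to Theorem \ref{t:cs}'' and proves it in one line by citing the good properties of Hecke functors from \cite{agkrrv3} \S 1.2.3, which amounts precisely to the Hecke--Verdier compatibility you spell out. You have simply unpacked the Verdier-duality argument the paper leaves implicit.
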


This follows from the good properties of Hecke functors:
see \cite{agkrrv3} \S 1.2.3.
  
\section{The index formula}\label{s:index}

\subsection{Statement of the theorem}

\subsubsection{Kostant invariant}

Let $\sF \in \Shv_{\Nilp}(\Bun_G)^{\constr}$.

Let $\Irr(\Nilp)$ denote the set of irreducible components of
$\Nilp$. Recall that $\sF$ has a characteristic cycle:
\[
\CC(\sF) = \sum_{\alpha \in \Irr(\Nilp)} c_{\alpha,\sF}[\alpha]
\]

\noindent for $c_{\alpha,\sF} \in \bZ$; here $[\alpha]$ is the class
of the component $\alpha$ in the group of cycles.\footnote{Technically, our
group of cycles is completed here: it is the inverse limit over 
$U$ of free abelian groups
on cycles in $T^*U$ for $U \subset \Bun_G$ a quasi-compact open. In particular,
the infinite sum displayed above has a clear meaning.}

For $\alpha = \Nilp^{\Kos}$, we use the abbreviation: 
\[
c_{\Kos,\sF} \coloneqq c_{\Nilp^{\Kos},\sF} \in \bZ
\]

\noindent for the multiplicity of the characteristic cycle of $\sF$
at the Kostant component (see \S \ref{sss:kost-defin}).

\subsubsection{Index formula}

For $\sF$ constructible as above, we may form $\coeff(\sF) \in \Vect$. Because
$\sF$ is constructible, this object is compact, so it has
a well-defined Euler characteristic $\chi(\coeff(\sF)) \in \bZ$.

The purpose of this section is to prove the following result.

\begin{thm}\label{t:index}

There is a sign $\eps = \eps_{G,X} \in \{1,-1\}$ (depending only on $G$ and 
the genus of the curve $X$) such that for 
$\sF \in \Shv_{\Nilp}(\Bun_G)^{\on{constr}}$, we have the equality of integers:
\[
\chi(\coeff(\sF)) = \eps \cdot c_{\Kos,\sF}. 
\]

Specifically, the sign $\eps$ is:
\[
\vareps = (-1)^{\dim \Bun_G}.
\]

\end{thm}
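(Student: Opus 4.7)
The plan is to invoke the Kashiwara--Dubson microlocal index formula, which for a constructible complex $\sH$ on a smooth variety $Y$ and a sufficiently transverse function $f\colon Y \to \bA^1$ expresses
\[
\chi\big(R\Gamma(Y, \sH \overset{!}{\otimes} f^!\exp)\big) = \CC(\sH) \cdot [\on{graph}(df)]
\]
as an intersection number in $T^*Y$ (up to orientation sign). In our setting I would take $Y = \Bun_N^{\Omega}$, $\sH = \fp^!\sF$, and $f = \psi$, so that the left-hand side computes $\chi(\coeff(\sF))$ up to the cohomological shift built into $\coeff$.

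The crucial geometric input, emphasized already in the introduction, is the identification of the global Kostant slice $\on{Kost}^{\on{glob}} \subset \Higgs_G = T^*\Bun_G$ as the image of $\on{graph}(d\psi) \subset T^*\Bun_N^{\Omega}$ under the Lagrangian correspondence
\[
T^*\Bun_N^{\Omega} \xleftarrow{d\fp} \Bun_N^{\Omega} \underset{\Bun_G}{\times} T^*\Bun_G \to T^*\Bun_G.
\]
This is a direct pointwise computation, expressing the fact that the Whittaker pairing $\fn \otimes (\fg/\fn) \to k$ detects regular nilpotents; the twist by $\check{\rho}(\Omega_X^1)$ is exactly what places the resulting section inside the connected component of $f^{\on{glob}}$. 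Combining this identification with functoriality of characteristic cycles under smooth pullback and Lagrangian pushforward converts the previous display into an intersection number in $T^*\Bun_G$,
\[
\chi(\coeff(\sF)) = \eps \cdot \bigl(\CC(\sF) \cdot [\on{Kost}^{\on{glob}}]\bigr),
\]
for some sign $\eps$ to be determined.

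Since $\SS(\sF) \subset \Nilp$ by hypothesis, and since the Kostant slice meets $\Nilp$ set-theoretically in the single point $f^{\on{glob}} \in \Nilp^{\Kos}$, only the component $\Nilp^{\Kos}$ contributes to the intersection. By Proposition \ref{p:nilp-reg}, $\Nilp^{\Kos}$ is smooth of dimension $\dim\Bun_G$ at $f^{\on{glob}}$, and being a Lagrangian section of the Hitchin base, the Kostant slice meets $\Nilp^{\Kos}$ transversely there; thus the local intersection multiplicity is $1$ and the total intersection number is exactly $c_{\Kos,\sF}$. The sign $\eps = (-1)^{\dim \Bun_G}$ is then obtained by combining the cohomological shift $[-\dim \Bun_N^{\Omega}]$ in the definition of $\coeff$, the degree $-1$ convention for $\exp$, and the orientation arising from the Lagrangian correspondence.

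The main obstacle will be making the microlocal index formula rigorous on $\Bun_N^{\Omega}$, which is neither of finite type nor quasi-compact, so that classical Kashiwara--Schapira does not apply directly. I would handle this via the Casselman--Shalika isomorphism (Theorem \ref{t:cs}): for a sufficiently large dominant divisor $D$, $\Bun_N^{\Omega(-D)}$ is an affine scheme by Lemma \ref{l:large-div}, and $\coeff_D(\sF) = \coeff(\sV^D \star \sF)$. One applies the classical index formula on the finite-type affine scheme $\Bun_N^{\Omega(-D)}$ to obtain the analogous identity for $\sV^D \star \sF$ (with the Kostant slice replaced by the obvious $D$-twisted Lagrangian), and then uses the standard compatibility of geometric Satake with characteristic cycles (together with the Hecke-stability of $\Nilp$) to track how the multiplicity at the Kostant component transforms under convolution by $\sV^D$; since $\sV^D$ acts via an invertible object on $\QCoh(\LocSys_{\check G})$-modules, this transformation can be pinned down so as to recover the stated formula for $\sF$ itself.
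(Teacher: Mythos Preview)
Your overall strategy—apply a Kashiwara--Dubson type index formula and identify the relevant Lagrangian as the global Kostant slice—is sound and is indeed the topological approach pursued in \cite{nadler-taylor}. It is genuinely different from the paper's proof, which instead equips $\sF$ with a good filtration (Lemma \ref{l:good-fil}), uses a Fourier-transform computation (Lemma \ref{l:exp-g_m}) to filter $\coeff(\sF)$, and identifies the associated graded as the restriction of $\gr_{\dot}\sF \in \Coh(\Nilp)$ to the Kostant slice. Since $\Nilp \cap \Kos_G^{\on{glob}}$ is a single (derived) point, this restriction is automatically finite-dimensional, so the non-quasi-compactness of $\Bun_N^{\Omega}$ never enters.

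Your proposed workaround for non-compactness, however, has a real gap. You write that ``$\sV^D$ acts via an invertible object on $\QCoh(\LocSys_{\check G})$-modules''; this is false. The action of $\sV^D$ on $\QCoh(\LocSys_{\check G})$ is tensoring with the vector bundle $\sE_{V^D}$, which has rank $\dim V^D > 1$ whenever $D \neq 0$. So you cannot simply invert the Hecke modification. More concretely: the index formula on the affine scheme $\Bun_N^{\Omega(-D)}$ computes $\chi(\coeff_D(\sF))$ as $\pm c_{\alpha(D),\sF}$ for the component $\alpha(D)$ of $\o{\Nilp}$ containing the $D$-twisted Kostant point $f_D^{\on{glob}}$, and Casselman--Shalika equates this with $\chi(\coeff(\sV^D \star \sF))$. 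To extract $\chi(\coeff(\sF))$ you would need to understand how $c_{\Kos,\sV^D \star \sF}$ relates to the $c_{\alpha,\sF}$ for various $\alpha$, and this relation is governed by the (non-trivial) geometry of the Hecke correspondence on $\Nilp$—essentially the content of Theorem \ref{t:hecke-to-kostant} and Proposition \ref{p:spr}. There is no formal reason this untangles to the single multiplicity $c_{\Kos,\sF}$.

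If you want to make the microlocal approach work, the honest route is to confront the non-compactness directly: show that for $\sF \in \Shv_{\Nilp}(\Bun_G)^{\constr}$ the pullback $\fp^!(\sF)\overset{!}{\otimes}\psi^!(\exp)$ has its twisted cohomology supported on a quasi-compact locus, so that the classical index theorem applies. This is exactly what the nilpotence hypothesis should buy you (and what the filtered proof encodes algebraically via the support of $\gr_{\dot}\sF$), but it requires an argument you have not supplied.
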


\subsection{Proof via filtered $D$-modules}

We now present a second proof of Theorem \ref{t:index}.
The proof is quick and simple, using filtered $D$-modules on stacks.
However, it passes through $D$-modules with filtrations that
are not good, so is unlikely to have an easy analogue in 
other sheaf theoretic settings.

\subsubsection{Filtered $D$-modules on stacks}

We briefly develop the theory here, for lack of a good reference.
Let $\sY$ be a smooth algebraic stack.

Recall from \cite{gr-ii} Example 2.4.3 that there is a canonical
prestack\footnote{It is denotes $(\sY_{\dR})_{\on{scaled}}$ in 
\cite{gr-ii}.} 
$\sY_{\dR,\hbar}$ with a map $\pi:\sY_{\dR,\hbar} \to \bA_{\hbar}^1/\bG_m$
so that $\pi^{-1}(1) \simeq \sY_{\dR}$ and $\pi^{-1}(0) = \bB_{\sY}(T(\sY)_0^{\wedge})$
is the classifying stack for the tangent space of $\sY$ along its zero section.
Filtered $D$-modules on $\sY$ are by definition ind-coherent sheaves on 
$\sY_{\dR,\hbar}$; we denote the category by $\Fil D(\sY)$. 
For a filtered $D$-module 
$\fil_{\dot} \sF \in \Fil D(\sY)$, its underlying 
object $\sF$ is the fiber at the open point $1 \in \bA_{\hbar}^1/\bG_m$,
using the equivalence (or definition) $\IndCoh(\sY_{\dR}) \simeq D(\sY)$.
We may form the associated graded $\gr_{\dot} \sF \in \QCoh(T^*\sY)$
by taking the fiber at $\hbar = 0$ and applying Koszul duality\footnote{Smoothness
of $\sY$ is needed here.}
$\IndCoh(\bB_{\sY}(T(\sY)_0^{\wedge})) \simeq 
\QCoh(T^*\sY)$.\footnote{For a slower introduction to this circle of ideas, 
we refer to \cite{whit} Appendix A.} We remind that $T^*\sY$ is a derived
stack in general. For $\sY$ a smooth scheme, the comparison results
in \cite{gr-ii} show that the above notion corresponds to the usual notion.

For a morphism $f:\sY \to \sZ$, we form the usual correspondence:
\[
\begin{tikzcd}
& 
T^*\sZ \underset{\sZ}{\times} \sY 
\arrow[dl,"Df",swap] 
\arrow[dr,"\pi"]
\\
T^*\sY 
&&
T^*\sZ.
\end{tikzcd}
\]

\noindent Then for $\sF \in \Fil D(\sY)$ (resp. 
$\sG \in \Fil D(\sZ)$), $f_{*,dR}(\sF)$ (resp. $f^!(\sG)$)
inherits a canonical filtration, and there are natural identifications:
\begin{equation}\label{eq:fil-functoriality}
\begin{gathered}
\gr_{\dot} f_{*,dR}(\sF) \simeq \pi_*Df^*(\gr_{\dot} \sF). \\
\gr_{\dot} f^!(\sG) \simeq Df_*\pi^{!,\QCoh}(\gr_{\dot} \sG). \\
\end{gathered}
\end{equation}

\noindent (We remark that as $\sY$ and $\sZ$ are smooth, $\pi$ is 
a quasi-smooth morphism, so $\pi^{!,\QCoh}$ is defined.)

\subsubsection{Good filtrations}\label{sss:good-filt-constr}

We say a filtration $\fil_{\dot} \sF$ on $\sF \in D(\sY)$ is a
\emph{good filtration} if for any $p:U \to \sY$ with 
$p$ smooth and $U$ affine, the induced filtration on $p^!\sF$
is a good filtration (equivalently: the filtration is bounded from
below and $\gr_{\dot} p^! \sF \in \Perf(T^*U)$).
By \eqref{eq:fil-functoriality}, this is equivalent to the filtration being bounded from 
below with $\gr_{\dot} \sF \in \Coh(T^*\sY)$.

Clearly if $\sF$ admits a good filtration, it is locally compact
(cf. \S \ref{sss:loc-cpt}).
Conversely, we have:

\begin{lem}\label{l:good-fil}

Suppose $\sY$ is QCA and $\sF \in D(\sY)^{\heart}$ is locally compact. 
Then $\sF$ admits a good filtration.

\end{lem}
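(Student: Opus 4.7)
The plan is to realize the good filtration as the order filtration induced by a coherent $\sO_\sY$-subsheaf of $\sF$ generating it as a $D_\sY$-module, where $D_\sY$ denotes the sheaf of differential operators on the smooth stack $\sY$. Suppose such an $\sM \subset \sF$ has been found, with $D_\sY \cdot \sM = \sF$. Set $\fil_i \sF \coloneqq \fil_i(D_\sY) \cdot \sM$, i.e., the image of the action map $\fil_i(D_\sY) \otimes_{\sO_\sY} \sM \to \sF$ associated to the order filtration on $D_\sY$. Then $\fil_{-1}\sF = 0$ and the filtration exhausts $\sF$; moreover the associated graded is a quotient of $\gr_\bullet(D_\sY) \otimes_{\sO_\sY} \sM \simeq \pi_* \pi^*\sM$, where $\pi \colon T^*\sY \to \sY$ is the natural projection. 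Since $\sM$ is coherent on $\sY$ and $\pi$ is affine, $\pi^*\sM \in \Coh(T^*\sY)$, so the associated graded is likewise coherent, yielding a good filtration.

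It therefore suffices to produce a coherent generator $\sM$. Since $\sY$ is QCA, $\QCoh(\sY)^\heart$ is generated under filtered colimits by $\Coh(\sY)^\heart$. Viewing $\sF$ as a quasi-coherent $\sO_\sY$-module via the forgetful functor $D(\sY)^\heart \to \QCoh(\sY)^\heart$, we may write $\sF = \colim_\alpha \sM_\alpha$ with $\sM_\alpha \in \Coh(\sY)^\heart$ coherent subsheaves of $\sF$. The goal is to locate an index $\alpha$ for which $D_\sY \cdot \sM_\alpha = \sF$.

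To do this, I would choose a smooth surjection $p \colon U \to \sY$ with $U$ an affine scheme of finite type, which exists by the QCA hypothesis. The local compactness of $\sF$ says $p^!\sF$ is compact in $D(U)$, hence up to a cohomological shift by $\dim p$ a coherent $D_U$-module. Classical $D$-module theory on smooth affine schemes then supplies a coherent $\sO_U$-subsheaf $\sN \subset p^!\sF$ generating it as a $D_U$-module. Writing $p^!\sF \simeq \colim_\alpha p^!\sM_\alpha$ (with shifts suppressed, exactness on hearts following from smoothness of $p$) and invoking coherence of $\sN$, one finds an index $\alpha$ with $\sN \subset p^!\sM_\alpha$. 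Then $D_U \cdot p^!\sM_\alpha = p^!\sF$, so the inclusion $D_\sY \cdot \sM_\alpha \hookrightarrow \sF$ becomes an equality after applying $p^!$; since $p$ is a faithfully flat smooth surjection, the inclusion is already an equality.

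The principal difficulty lies in the last paragraph, specifically in the claim that a coherent generator $\sN$ on the smooth atlas $U$ is eventually contained in the $p^!$-pullback of some coherent $\sM_\alpha \subset \sF$. Smooth locally this is a standard finite-generation argument; globalizing to a QCA stack uses the QCA hypothesis twice — first to guarantee that $\QCoh(\sY)^\heart$ is generated by its coherent objects, and second to reduce to a quasi-compact smooth atlas where classical $D$-module arguments apply directly. Some bookkeeping is also required to handle the cohomological shift by $\dim p$ between the $t$-structures on $\sY$ and on $U$.
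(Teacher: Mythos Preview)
Your argument is correct and follows essentially the same strategy as the paper's proof: find a coherent $\sO_\sY$-module $\sG$ with a surjection $\ind(\sG) \twoheadrightarrow \sF$, then take the induced order filtration. The paper obtains $\sG$ by directly citing \cite{dg-finiteness} Theorem 0.4.5 (which guarantees, for QCA $\sY$, a coherent $\sG$ with $\ind(\sG) \to \sF$ epimorphic on $H^0$), whereas you re-derive this by hand via a smooth affine atlas and a compactness argument. The paper then phrases the construction of the filtration in the abstract language of $\Fil D(\sY) = \IndCoh(\sY_{\dR,\hbar})$, forming the image of $H^0$ of the map $\fil_\bullet \ind(\sG) \to \sF^{\on{cnst}}$ inside $\Fil D(\sY)^{\heart}$; your $\fil_i \sF \coloneqq \fil_i(D_\sY)\cdot \sM$ is the same thing written concretely. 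The paper's route is shorter and avoids having to make sense of $D_\sY$ as a sheaf on a stack (which is why it works in the $\sY_{\dR,\hbar}$ formalism throughout), while your route has the virtue of being self-contained and making the classical picture transparent. One small point to tighten in your write-up: the identification $p^!(D_\sY \cdot \sM_\alpha) = D_U \cdot p^!\sM_\alpha$ deserves a word of justification, e.g.\ via compatibility of $\ind$ with smooth pullback (which is immediate from \eqref{eq:fil-functoriality} in the paper's formalism).
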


\begin{proof}

By \cite{dg-finiteness} Theorem 0.4.5, there exists
$\sG \in \Coh(\sY)^{\heart}$ and a map 
$\ind(\sG) \to \sF \in D(\sY)$ that is an epimorphism on 
$H^0$ (where $\ind:\IndCoh(\sY) \to D(\sY)$ is the $D$-module
induction functor). 
We immediately see that $\ind(\sG)$ admits a good filtration, 
which we denote by $\fil_{\dot} \ind(\sG)$.
By adjunction, we 
then obtain a map: 
\[
\alpha:\fil_{\dot} \ind(\sG) \to \sF^{\on{cnst}} \in \Fil D(\sY)
\]

\noindent where the right hand side denotes $\sF$ with the ``constant" filtration
(informally: $\fil_i \sF = \sF$ for all $i \in \bZ$). 

Now observe that $\Fil D(\sY)$ has a natural $t$-structure
such that the forgetful functor to $\IndCoh(\sY \times \bA_{\hbar}^1/\bG_m)$ is
$t$-exact.\footnote{This $t$-structure may be constructed
directly from the case of smooth schemes. Alternatively, 
one may observe that $\sY \times \bA_{\hbar}^1/\bG_m \to 
\sY_{\dR,\hbar}$ has a connective relative tangent complex; therefore,
by \cite{gr-ii} \S 9, the corresponding monad on 
$\IndCoh(\sY \times \bA_{\hbar}^1/\bG_m)$ is right $t$-exact, and the claim
follows on general grounds.}

We can then form the image of $H^0(\alpha)$ in $\Fil D(\sY)^{\heart}$.
By exactness of the functor $\Fil D(\sY) \to D(\sY)$ (forgetting the filtration),
$H^0(\alpha)$ is a filtration on $\sF$. It is immediate to see that
the induced filtration on $\sF$ is good (and in fact: $\gr_{\dot}\sF$ lies in 
degree 0, i.e., it is a filtration in the \emph{abelian} categorical
sense, not just the derived categorical sense).

\end{proof}

\subsubsection{Twisted de Rham cohomology}

Next, we digress again. 

Suppose $V$ is a finite-dimensional vector space, which 
we consider as a scheme with a $\bG_m$-action.
For $\lambda \in V^{\vee}$, we have a $D$-module
$\lambda^!(\exp)$ on $V$.

\begin{lem}\label{l:exp-g_m}

Suppose $\fil_{\dot} \sF \in \Fil D(V/\bG_m)$ and $\lambda \in V^{\vee}$. 
We abuse notation in also letting $\sF$ denote the $!$-pulled back
$D$-module on $V$. 

Then $C_{\dR}(V,\sF \overset{!}{\otimes} \lambda^!(\exp)) \in \Vect$
has a canonical filtration such that:
\[
\gr_{\dot} C_{\dR}(V,\sF \overset{!}{\otimes} \lambda^!(\exp)) \simeq
\Gamma(V,d\lambda^*\gr_{\dot} \sF).
\]

Moreover, if the filtration on $\sF$ is bounded from below, 
then the filtration on $C_{\dR}(V,\sF \overset{!}{\otimes} \lambda^!(\exp))$
is bounded from below as well.

\end{lem}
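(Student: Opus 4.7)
The plan is to identify $C_{\dR}(V, \sF \overset{!}{\otimes} \lambda^!(\exp))$ with the $!$-stalk at $\lambda$ of a filtered Fourier transform of $\sF$, and then to read off the associated graded using the filtered version of this Fourier equivalence.

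For the unfiltered $D$-modules, a standard base-change argument shows that $i_\lambda^! \mathrm{Four}(\sG) \simeq C_{\dR}(V, \sG \overset{!}{\otimes} \lambda^!(\exp))$ up to a shift independent of $\sG$, where $\mathrm{Four}(\sG) \coloneqq p_{2,*,\dR}(p_1^!(\sG) \overset{!}{\otimes} \mathrm{ev}^!(\exp))$ uses the projections from $V \times V^\vee$ and the evaluation pairing $\mathrm{ev}: V \times V^\vee \to \bA^1$, and $i_\lambda: \mathrm{pt} \to V^\vee$ is the inclusion of $\lambda$. The goal is then to produce a filtered enhancement $\mathrm{Four}: \Fil D(V/\bG_m) \to \Fil D(V^\vee/\bG_m^{\on{op}})$, where $\bG_m^{\on{op}}$ denotes the scaling on $V^\vee$ that makes $\mathrm{ev}$ invariant, whose associated graded is the canonical swap identification $V \times V^\vee \simeq V^\vee \times V$ of cotangents. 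The hypothesis $\sF \in \Fil D(V/\bG_m)$ is essential here: a naive filtered Fourier transform on $\Fil D(V)$ alone does not exist, since swapping factors does not respect standard filtration degrees, but the scaling $\bG_m$-equivariance combines with the Rees $\bG_m$ of the filtration to homogenize the integral kernel.

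Stalking at $\lambda$ via $i_\lambda^!$ and applying the filtered functoriality from \eqref{eq:fil-functoriality} then yields
\[
\gr_\bullet i_\lambda^! \mathrm{Four}(\sF) \simeq \Gamma\bigl(V, (d\lambda)^*\gr_\bullet \sF\bigr),
\]
where $d\lambda: V \to V \times V^\vee$ is the constant section $v \mapsto (v, \lambda)$. Boundedness from below is preserved throughout, since each of $p_1^!$, tensor product with the Rees lift of $\mathrm{ev}^!(\exp)$, $p_{2,*,\dR}$, and $i_\lambda^!$ sends bounded-below filtrations to bounded-below filtrations.

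The main obstacle will be constructing the filtered lift of $\mathrm{ev}^!(\exp)$ as a filtered $D$-module on $(V \times V^\vee)/\bG_m$ whose associated graded is the structure sheaf of the appropriate incidence locus in the cotangent stack; this hinges on carefully combining the two $\bG_m$-actions into a single homogenizing grading (note, for instance, that no filtered $D$-module structure on $\lambda^!(\exp)$ alone can produce the non-zero section $\{\xi = \lambda\}$ at $\hbar = 0$, so the scaling equivariance is genuinely needed). Once this Rees enhancement of the Fourier kernel is in place, the rest of the argument is formal from \eqref{eq:fil-functoriality}.
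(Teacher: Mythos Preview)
Your proposal is correct and follows essentially the same approach as the paper: construct a filtered Fourier transform (using the $\bG_m$-equivariance to homogenize the kernel), observe that its associated graded is the swap identification $T^*V = V \times V^{\vee} = T^*V^{\vee}$, and then take $!$-fibers at $\lambda$ using \eqref{eq:fil-functoriality}. The paper's proof is terser---it simply asserts that the Fourier transform of $\sF$ inherits a canonical filtration with the same associated graded---whereas you have unpacked the role of the $\bG_m$-action in making this work; but the strategy is identical.
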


\begin{proof}

In this setting, it is easy to see that the Fourier transform $\check{\sF}$ of
$\sF$ has a canonical filtration with the same associated graded
as $\sF$, noting $T^*V = V \times V^{\vee} = T^*V^{\vee}$.
Now the result follows from passing to $!$-fibers at $\lambda \in V^{\vee}$
(and applying \eqref{eq:fil-functoriality}).

\end{proof}

\subsubsection{Proof of Theorem \ref{t:index}}

It clearly suffices to show the result when 
$\sF \in \Shv_{\Nilp}(\Bun_G)^{\constr,\heart}$.
In this case, take a good filtration on $\sF$ by applying Lemma \ref{l:good-fil}.

As our filtration is good, $\gr_{\dot}$ is set-theoretically suppported
on $\Nilp$. 

Note that $\Bun_N^{\Omega} \to \Bun_G$ factors through 
$\Bun_N^{\Omega}/T$. Moreover, note that the map
$\psi:\Bun_N^{\Omega} \to \bA^1$ is $\bG_m$-equivariant,
using the action of $\bG_m$ on the source via 
$\check{\rho}$ (remarking that the action of $T$ factors through
an action of $T^{\on{ad}} = T/Z_G$, the Cartan of $G^{\ad}$). 

By \eqref{eq:fil-functoriality} and Lemma \ref{l:exp-g_m}, 
we see that $\coeff(\sF)[\dim \Bun_N^{\Omega}]$ has a canonical filtration bounded
from below such that its associated graded is essentially computed
by composing the correspondences:
\[
\begin{tikzcd}
&
T^*\Bun_G \underset{\Bun_G}{\times} \Bun_N^{\Omega} 
\arrow[dl] 
\arrow[dr]
&& 
\Bun_N^{\Omega}
\arrow[dl,"d\psi",swap]
\arrow[dr]
\\
T^*\Bun_G 
&&
T^*\Bun_N^{\Omega}.
&&
\Spec(k)
\end{tikzcd}
\]

\noindent Here \emph{essentially} means the following:
we are supposed to apply upper-$!$ along the first leftward
arrow and upper-$*$ along the second leftward arrow; however, for the first
arrow, upper-$!$ and upper-$*$ differ by tensoring with a graded line
bundle, so up to this discrepancy, we can compose the correspondences
well by base-change.

The composed correspondence is:
\[
\begin{tikzcd}
& \Kos_G^{\on{glob}} 
\arrow[dl,"\sigma",swap]
\arrow[dr]
\\
T^*\Bun_G
&&
\Spec(k)
\end{tikzcd}
\]
where $\Kos_G^{\on{glob}}$ is the global Kostant section;
indeed, this is essentially the definition of the Kostant section. 
Note that $\Nilp \times_{\Bun_G} \Kos_G^{\on{glob}} = \Spec(k)$ (as derived stacks), mapping to
$\Nilp$ via $f^{\on{glob}} \in \Nilp^{\Kos}$. 

Now suppose $\sG \in \Coh(\Nilp)^{\heart}$.
As $\Nilp^{\Kos}$ is smooth (all of $\o{\Nilp}^{\on{reg}}$ is), the
Euler characteristic of the (derived) fibers of $\sG$ at points
of $\o{\Nilp}$ are constant. In particular, if $\iota:\Nilp \to T^*\Bun_G$
is the embedding, we see that the Euler characteristic 
of $\Gamma(\Kos_G^{\on{glob}},\sigma^*\iota_*\sG)$
is the rank of $\sG$ at the generic point of $\Nilp^{\Kos}$.
More generally, we deduce that for $\sH \in \Coh(T^*\Bun_G)^{\heart}$
set-theoretically supported on $\Nilp$, the Euler 
characteristic of $\Gamma(\Kos_G^{\on{glob}},\sigma^*\sH)$ is the multiplicity
of $\sH$ at the generic point of $\o{\Nilp}^{\on{reg}}$.

As we have a \emph{good} filtration on $\sF$, we see that $\gr_{\dot} \sF$
is set-theoretically supported on $\Nilp \supset \SS(\sF)$.
Applying the definition of characteristic cycle, we now obtain:
\[
\chi(\Gamma(\Kos_G^{\on{glob}},\sigma^*\gr_{\dot} \sF)) = c_{\Kos,\sF}.
\]

\noindent Reincorporating the twist by the graded line bundle
discussed above, we see that:
\[
\chi(\gr_{\dot}\coeff(\sF)) = \eps \cdot c_{\Kos,\sF}.
\]

It follows immediately that the same equation holds 
for $\coeff(\sF)$ itself, completing the argument.

\begin{rem}

If we knew we could choose a good filtration on $\sF$
such that $\gr_{\dot}\sF$ was (a successive extension of) 
sheaves flat at $f^{\on{glob}}$, we would obtain a similarly easy proof of
Theorem \ref{t:coeff-exact}. Unfortunately, we do not know a way to do 
this.\footnote{We tried and
failed to use the filtration of Kashiwara-Kawai from \cite{kashiwara-kawai} for this
purpose, but were not successful.}

\end{rem}

\section{Exactness of tempered Hecke functors}\label{s:hecke-exact}

In this section, we establish exactness for Hecke functors acting on the 
\emph{tempered} automorphic category. 
This material is a sort of digression: Theorem \ref{t:hecke-exact} does
not mention Whittaker coefficients (although they are used in the proof).
The results of this section are
independent of the rest of the paper up to this point. 

\subsection{Statement of the main result}

Fix a point $x \in X$. 
Recall from \cite{independence} that we have the category 
$D(\Bun_G)^{\xtemp}$. There is a natural quotient functor:
\[
p:D(\Bun_G) \to D(\Bun_G)^{\xtemp}
\]

\noindent with a fully faithful left adjoint $p^L$. The main theorem
of \cite{independence} asserts that this data is actually independent of the
point $x \in X$, although we will not need this until the discussion in 
\S \ref{ss:vanishing}. 

We consider the action of $\Rep(\check{G})$ on $D(\Bun_G)$ via
Hecke functors at $x \in X$. For $V \in \Rep(\check{G})$, we let
$\sS_{V,x} \star -$ denote the corresponding endofunctor of $D(\Bun_G)$.

The goal for this section is to prove:

\begin{thm}\label{t:hecke-exact}

\begin{enumerate}

\item There is a unique $t$-structure on $D(\Bun_G)^{\xtemp}$ such that 
$p$ is $t$-exact. 

\item The action of $\Rep(\check{G})^{\heart} \subset \Rep(\check{G})$ 
on $D(\Bun_G)^{\xtemp}$ is by $t$-exact functors. 

\end{enumerate}

\end{thm}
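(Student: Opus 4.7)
The plan is to derive both parts from the testing of anti-temperedness by local Whittaker averaging at $x$, via Lemma~\ref{l:at-whit} and \eqref{eq:at-ker}.

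For part (1), it suffices to show that $D(\Bun_G)^{\xat}$ is stable under the truncation functors $\tau^{\leq 0}, \tau^{\geq 0}$ of the natural $t$-structure on $D(\Bun_G)$, for then the quotient $t$-structure on $D(\Bun_G)^{\xtemp}$ exists, is unique, and makes $p$ $t$-exact. By \eqref{eq:at-ker}, $D(\Bun_G)^{\xat}$ is the kernel of the composition
\[
D(\Bun_G) \xar{\Oblv} \sC^{I} \xar{\Av_*} \sC^{I_1^-} \xar{\Av_!^{\psi}} \sC^{\o{I}_1, \psi}, \qquad \sC = D(\Bun_G^{\on{level},x}).
\]
Each of these functors is $t$-exact up to a fixed cohomological shift: $\Oblv$ is $!$-pullback along the smooth morphism $\Bun_G^{I\level} \to \Bun_G$; $\Av_*$ between the two Iwahori-equivariance categories is expressible via smooth correspondences of finite type; and $\Av_!^{\psi}$ is $t$-exact up to shift by \cite{bbm}, the same fact invoked in the proof of Theorem~\ref{t:g-irreg}. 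Hence the composition is $t$-exact up to shift, and its kernel is closed under truncations.

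For part (2), let $V \in \Rep(\check{G})^{\heart}$. The Hecke endofunctor $V \star -$ of $D(\Bun_G)$ descends to $D(\Bun_G)^{\xtemp}$ since $D(\Bun_G)^{\xat}$ is $\sH_x^{\on{sph}}$-stable by construction. Because the dual $V^{*} \in \Rep(\check{G})^{\heart}$ corresponds to the biadjoint of $V \star -$, it is enough to establish right $t$-exactness of the induced functor $\bar V$ on $D(\Bun_G)^{\xtemp}$. By $t$-exactness of $p$ from part~(1), this amounts to showing: for every $\sF \in D(\Bun_G)^{\heart}$, each cohomology object $H^i(V \star \sF)$ with $i > 0$ is anti-tempered. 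Apply the testing functor $\Phi := \Av_!^{\psi} \circ \Av_* \circ \Oblv$ from the above composition; since $\Phi$ is $t$-exact up to shift, the vanishing $\Phi(H^i(V \star \sF)) = 0$ is equivalent to $H^i(\Phi(V \star \sF)) = 0$. The spherical Hecke action at $x$ commutes with $\Phi$ (for $\Oblv$ and $\Av_*$ this is formal, as spherical sheaves are $G(O)$-bi-equivariant and $I \subset G(O)$; for $\Av_!^{\psi}$ it reflects the fact that Hecke at $x$ descends to the baby Whittaker quotient), yielding $\Phi(V \star \sF) \simeq V \star \Phi(\sF)$. We are thus reduced to showing that $V \star -$ is $t$-exact up to a fixed shift on the baby Whittaker category $\sC^{\o{I}_1, \psi}$.

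This last step is the main obstacle and is the core local statement. Via the local Casselman--Shalika/geometric Satake equivalence, the baby Whittaker model at $x$ is a $\Rep(\check{G})$-module category on which $V \star -$ acts, after cohomological renormalization, by tensoring with the underlying vector space of $V$; this is manifestly $t$-exact. Packaging this local statement rigorously in the $\infty$-categorical setup with global parameters in $\Bun_G$ is the delicate point. An alternative (possibly cleaner) route is to combine the Casselman--Shalika isomorphism $\coeff_D \simeq \coeff \circ (\sV^D \star -)$ of Theorem~\ref{t:cs} with the $t$-exactness of $\coeff_D$ established in Theorem~\ref{it:micro}, testing anti-temperedness of $H^i(V \star \sF)$ via the \emph{global} Whittaker coefficients indexed by divisors, and so bypassing the baby Whittaker altogether.
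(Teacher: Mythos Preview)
Your approach to Part~(1) follows the paper's strategy: the anti-tempered subcategory is the kernel of the (baby) Whittaker averaging functor, and once this functor is $t$-exact up to shift, its kernel is closed under truncations. However, your justification that the three pieces of the composition are each individually $t$-exact up to shift is not correct. The middle functor $\Av_*^{I \to I_1^-}$ does not obviously have bounded amplitude from below: the subgroups $I$ and $I_1^-$ do not lie in a common compact open subgroup of $G(K)$, so the standard lower-bound estimate (\cite{whit} Lemma~B.2.3) does not apply. The paper's Lemma~\ref{l:sph-av-exact} handles this by a genuinely different argument: on objects coming from $\sC^{G(O)}$, the $*$- and $!$-averaging functors $\Av_*^{I \to I_1^-}$ and $\Av_!^{I \to I_1^-}$ differ only by a partially integrable cokernel (via \cite{fg2}), hence agree after applying $\Av_*^{\o{I}_1,\psi}$; one then gets the upper amplitude bound from the $\Av_*$ description and the lower bound from the $\Av_!$ description.

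Part~(2) has a more serious gap. Your commutation claim $\Phi(V \star \sF) \simeq V \star \Phi(\sF)$ presupposes a spherical Hecke action on $\sC^{I_1^-}$ and $\sC^{\o{I}_1,\psi}$ intertwined by the averaging functors, but neither $I_1^-$ nor $\o{I}_1$ is a subgroup of $G(O)$, so no such action exists in the sense you invoke. The paper's Proposition~\ref{p:baby-cs} is the correct substitute: it is not a commutation statement but a Casselman--Shalika identity expressing $\Av_!^{\o{I}_1,\psi}(V^{\check{\lambda}} \star -)$ as (a translate of) a \emph{different} averaging functor $\Av_*^{\psi^{\check{\lambda}}}(-)$ with a $\check{\lambda}$-dependent shift, whose amplitude is then bounded directly. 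Your fallback route via Theorem~\ref{it:micro} is circular: the $t$-exactness of $\coeff_D$ for general $D$ (Theorem~\ref{t:coeff-d-exact}) is proved in the paper \emph{using} Theorem~\ref{t:hecke-exact}; see Step~2 of that proof.
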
 

The results in this section adapt to etale sheaves in positive characteristic
(conditional on derived geometric Satake in that context).

As we discuss in \S \ref{ss:vanishing}, the above result
strengthens the main results of \cite{vanishing}. In fact, our proof 
is dramatically simpler and has clear conceptual meaning;\footnote{By contrast,
the construction of 
$\widetilde{D}(\Bun_{GL_n})$ from \cite{vanishing} is ad hoc, and by its
nature cannot generalize to other reductive groups. Our construction 
produces a different category with the same nice features, and which 
does generalize.}
it turns out the assertion is something purely local, and 
a simple application of known results about derived Satake and the
spherical Whittaker category.

\subsubsection{}

The argument is purely local. Therefore, we largely take
$\sC$ to be a $G(K)$-category throughout this section
(for loops being based at $x$); the application will be
when $\sC = D^!(\Bun_G^{\xlevel})$, where 
$\Bun_G^{\xlevel}$ is the moduli scheme of
$G$-bundles with complete level structure at $x$.

We remind that in this setting, one can speak of a $t$-structure on $\sC$
being \emph{strongly compatible} with the $G(K)$-action; 
see \cite{methods} \S 10. 

\subsection{Averaging from the spherical category}

Suppose $\sC$ is a category with a $G(K)$-action. 
We use the notation of \S \ref{ss:baby-at}.

\begin{lem}\label{l:sph-av-exact}

Suppose that $\sC \in G(K)\mod$ is equipped with a 
$t$-structure strongly compatible with the $G(K)$-action. 
Then the $!$-averaging functor 
$\Av_!^{\psi}[(2\check{\rho},\rho)]:\sC^{G(O)} \to \Whit^{\leq 1}(\sC)$ is $t$-exact.

\end{lem}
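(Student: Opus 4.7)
The plan is to use the factorization of $\Av_!^\psi$ established in Lemma \ref{l:baby},
\[
\sC^{G(O)} \xar{\Oblv} \sC^I \xar{\Av_*} \sC^{I_1^-} \xar{\Av_!^\psi} \sC^{\o{I}_1,\psi},
\]
and to analyze the $t$-exactness of each factor separately. The forgetful functor $\Oblv: \sC^{G(O)} \to \sC^I$ is $t$-exact by strong compatibility of the $t$-structure with the $G(K)$-action: in the relevant normalization, forgetting equivariance along the parahoric inclusion $I \subset G(O)$ (with proper smooth quotient $G(O)/I \simeq G/B$) is $t$-exact. The $*$-averaging $\Av_*: \sC^I \to \sC^{I_1^-}$ is realized as convolution with a kernel supported on the open $(I, I_1^-)$-double coset in $G(K)$, and is $t$-exact up to a cohomological shift equal to the dimension of that open cell. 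Finally, $\Av_!^\psi: (\sC^{\sK_1})^{B^-} \to (\sC^{\sK_1})^{N,\psi}$ is the finite-type Whittaker $!$-averaging, which is $t$-exact up to shift by $\dim N$ by the theorem of Beilinson-Bernstein-Mazur (\cite{bbm}), already invoked in the proof of Theorem \ref{t:g-irreg}.

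It remains to verify that the combined cohomological shift equals $(2\check\rho, \rho)$. The cleanest way to organize this is to observe that, in a universal $G(K)$-category model, the full composition $\Av_!^\psi: \sC^{G(O)} \to \sC^{\o{I}_1, \psi}$ is realized as convolution on $\Gr_G$ with the Whittaker extension sheaf from the $\o{I}_1$-orbit of the base point $e$. A direct computation with root-space decompositions---conjugate $\Lie(\o{I})$ by $\check\rho(t^{-1})$ and intersect with $\fg[[t]]$---yields
\[
\dim\bigl(\o{I}_1 \cdot e \,\subset\, \Gr_G\bigr) \;=\; \sum_{\alpha > 0} (\check\rho, \alpha) \;=\; (2\check\rho,\rho).
\]
Hence the perverse normalization of this convolution kernel introduces precisely the shift $[(2\check\rho,\rho)]$, and strong compatibility of the $t$-structure with the $G(K)$-action ensures that convolution with a perverse kernel is $t$-exact.

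The main obstacle is the careful bookkeeping of shifts: no individual step of the factorization obviously contributes $(2\check\rho,\rho)$, and the correct total only becomes apparent once one tracks the $\check\rho(t)$-twist defining $\o{I}_1$ through each step. The orbit-dimension computation above is the clean invariant that pins down the exact shift and makes the two perspectives (factorization versus kernel convolution) consistent.
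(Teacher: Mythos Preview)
Your orbit-dimension computation is correct, and the factorization from Lemma \ref{l:baby} is indeed the right starting point. However, there is a genuine gap in both of the arguments you offer for $t$-exactness.

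In the factorization approach, you assert that $\Av_*: \sC^I \to \sC^{I_1^-}$ is $t$-exact up to a shift. This is not justified, and in general it is only \emph{right} $t$-exact up to shift: strong compatibility yields $\Av_*$ with amplitude $\leq \dim(I_1^- \cdot I / I)$ (this is what \cite{whit} Lemma B.2.2 gives), but there is no matching lower bound for $\Av_*$ between two compact opens that are in opposite position. Your alternative route has the same defect in different clothing: the assertion that ``convolution with a perverse kernel is $t$-exact'' is not a consequence of strong compatibility. If it were, then e.g.\ spherical Hecke functors would act $t$-exactly on $D(\Bun_G)$, which is false (and is precisely what Theorem \ref{t:hecke-exact} has to work to establish on the \emph{tempered} quotient).

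What is missing is an argument identifying the $!$- and $*$-versions of the relevant averaging, so that one obtains both amplitude bounds simultaneously. The paper does this in two places: for the finite piece it invokes \cite{bbm} (as you do), but for the passage $\sC^I \to \sC^{I_1^-}$ it uses the partial integrability result of \cite{fg2} Lemma 15.1.2, which says that on objects coming from $\sC^{G(O)}$ the cone of the natural map $\Av_!^{I \to I_1^-}[-2\dim(I_1^-\cdot I/I)] \to \Av_*^{I \to I_1^-}$ is killed by subsequent Whittaker averaging. This lets one rewrite the entire composite with $\Av_!$'s to obtain the lower bound, and with $\Av_*$'s to obtain the upper bound, pinning the amplitude at exactly $(2\check{\rho},\rho)$. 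An essentially equivalent fix, closer in spirit to your kernel picture, would be to prove that the character-sheaf extension from the $\o{I}_1$-orbit through the base point of $\Gr_G$ is \emph{clean} (i.e.\ $j_! \simeq j_*$); but that is a nontrivial Casselman--Shalika-type input, and you have not supplied it.
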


\begin{proof}

The proof is standard: we review it here. 

The functor $\Av_*^{I_1^- \to I}:\sC^{I_1^-} \to \sC^I$ is an equivalence,
and in particular, admits a left adjoint, which we denote $\Av_!^{I \to I_1^-}$. 
On general grounds (cf. \cite{whit} Appendix B), 
there is a canonical natural transformation:
\[
\Av_!^{I \to I_1^-}[-2\dim (I_1^-\cdot I/I)] \to \Av_*^{I \to I_1^-}.
\]

\noindent We remark that the displayed dimension is $2(\check{\rho},\rho)+|\Delta^+|$,
where $\Delta^+$ is the set of positive roots for $G$. 

As in \cite{fg2} Lemma 15.1.2, for $\sF \in \sC^{G(O)}$, the cokernel
of the natural map:
\[
\Av_!^{I \to I_1^-}(\sF)[-2\dim (I_1^-\cdot I/I)] \to \Av_*^{I \to I_1^-}(\sF)
\]

\noindent is partially integrable (in the sense of \emph{loc. cit}.); 
in particular, the above map induces an isomorphism on applying
$\Av_*^{\o{I}_1,\psi}$ (cf. \cite{fg2} Proposition 14.2.1).

Now recall from \cite{bbm} (see also \cite{b-function} Appendix A) that we have 
$\Av_!^{\o{I}_1,\psi} = \Av_*^{\o{I}_1,\psi}[2\dim N]$.

By Lemma \ref{l:baby}, the functor in question is a composition:
\[
\sC^{G(O)} \xar{\Oblv} \sC^{I} \xar{\Av_*^{I \to I_1^-}} 
\sC^{I_1^-} \xar{\Av_!^{\o{I}_1,\psi}} \Whit^{\leq 1}(\sC).
\]

\noindent Applying \cite{bbm} as above, this may instead be written
as:
\[
\sC^{G(O)} \xar{\Oblv} \sC^{I} \xar{\Av_*^{I \to I_1^-}} 
\sC^{I_1^-} \xar{\Av_*^{\o{I}_1,\psi}[2\dim N]} \Whit^{\leq 1}(\sC).
\]

By \cite{whit} Lemmas B.2.2-3 and the above, this functor
has amplitude $\leq \dim (I_1^-\cdot I/I) + \dim (\o{I}_1 I_1^-/I_1^-) - 2\dim N = 
2(\check{\rho},\rho)$. 

On the other hand, by the above, we can also rewrite this functor as 
the composition:
\[
\sC^{G(O)} \xar{\Oblv} \sC^{I} \xar{\Av_!^{I \to I_1^-}[-2\dim (I_1^-\cdot I/I)]} 
\sC^{I_1^-} \xar{\Av_!^{\o{I}_1,\psi}} \Whit^{\leq 1}(\sC).
\]

\noindent Because $\Av_!^{I \to I_1^-}$ is 
inverse, hence right adjoint, to $\Av_*^{I_1^- \to I}$, which 
has amplitude $\leq \dim (I I_1^-/I_1^-) = 2(\check{\rho},\rho)+\dim N$ 
(by \cite{whit} Lemma B.2.2),
this $\Av_!$ functor has amplitude $\geq -2(\check{\rho},\rho)-\dim N$.\footnote{This
would be obvious from \cite{whit} Lemma B.2.3, but that result requires
the two subgroups in question to mutually lie in a compact open subgroup,
where \cite{whit} Lemma B.2.2 does not.} 
Applying \cite{whit} Lemma B.2.3 to $\Av_!^{\o{I}_1,\psi}$, we see
that the above functor has amplitude $\geq 2(\check{\rho},\rho)+\dim N-\dim N = 
2(\check{\rho},\rho)$. 

Combined with the above, we find that $\Av_!^{\psi}:\sC^{G(O)} \to \Whit^{\leq 1}(\sC)$
has amplitude exactly $2(\check{\rho},\rho)$.

\end{proof}

\begin{rem}

As in \cite{whit} Appendix B, a $G(K)$-category with strongly compatible
$t$-structure has an induced $t$-structure on $\Whit(\sC)$. It is likely
the case that $\iota_{1,\infty,!}[2(\check{\rho},\rho)]:\Whit^{\leq 1}(\sC) \into \Whit(\sC)$ 
is $t$-exact; this was shown in \emph{loc. cit}. for $\sC = \widehat{\fg}_{\kappa}\mod$,
as was observed there also for $\sC$ being $D$-modules on a reasonable
indscheme (equipped with a dimension theory, to obtain a $t$-structure).
In this case, the above result would simply say that 
$\Av_!^{\psi}: \sC^{G(O)} \to \Whit(\sC)$ is $t$-exact. 

In other words, the use of baby Whittaker rather than full Whittaker 
in the above (and what follows) simply
reflects our ignorance regarding this point.

\end{rem}

\subsection{Construction of the $t$-structure}

Suppose again that $\sC$ is a $G(K)$-category with a strongly compatible 
$t$-structure. 

In this case, we may form $\sC^{G(O)}$ and its tempered quotient
$\sC^{G(O),\xtemp}$. We let $p:\sC^{G(O)} \to \sC^{G(O),\xtemp}$ denote
the canonical projection. We remind that $p$ admits a fully faithful
left adjoint $p^L$.

We observe that $\sC^{G(O)}$ admits a canonical $t$-structure,
characterized by the fact that $\Oblv:\sC^{G(O)} \to \sC$ is $t$-exact.

\begin{prop}\label{p:temp-t}

In the above setting, there is a unique $t$-structure on 
$\sC^{G(O),\xtemp}$ such that the projection $p:\sC^{G(O)} \to \sC^{G(O),\xtemp}$
is $t$-exact.

\end{prop}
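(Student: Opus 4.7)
The plan is to invoke the standard descent criterion for $t$-structures on Verdier quotients: given a quotient $p : \sA \to \sB$ in $\DGCat_{cont}$ with fully faithful left adjoint $p^L$, and a $t$-structure on $\sA$, there exists a (necessarily unique) $t$-structure on $\sB$ for which $p$ is $t$-exact if and only if $\ker(p) \subset \sA$ is preserved by both truncation functors $\tau^{\leq n}$ and $\tau^{\geq n}$. Granting this criterion, uniqueness is automatic (the truncations on the quotient are forced: $\tau^{\leq 0}_{\xtemp} p(X) = p(\tau^{\leq 0} X)$), and the proposition reduces to verifying that $\sC^{G(O),\xat} = \ker(p)$ is closed under the truncation functors of the canonical $t$-structure on $\sC^{G(O)}$ (the one making $\Oblv : \sC^{G(O)} \to \sC$ $t$-exact).

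For this I combine Lemma \ref{l:at-whit} with Lemma \ref{l:baby} to identify $\sC^{G(O),\xat}$ with the kernel of the composite baby Whittaker averaging functor
\[
\Av_!^{\psi}: \sC^{G(O)} \longrightarrow \Whit^{\leq 1}(\sC)
\]
of \eqref{eq:at-ker}. By Lemma \ref{l:sph-av-exact}, this composite, after a cohomological shift by $2(\check{\rho},\rho)$, is $t$-exact with respect to the induced $t$-structure on $\Whit^{\leq 1}(\sC)$ coming from the strong compatibility of the $t$-structure on $\sC$ with the $G(K)$-action. Since the kernel of any $t$-exact functor is automatically closed under $\tau^{\leq n}$ and $\tau^{\geq n}$, the required stability holds, and the proposition follows.

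The only work beyond what has already been assembled is purely formal: to record that the quotient $t$-structure construction goes through (this is standard once one has truncation-stability of the kernel, and amounts to checking the two axioms $(\sC^{G(O),\xtemp,\leq 0}, \sC^{G(O),\xtemp,\geq 1})$ of a $t$-structure using $p$-images of truncations), and to fix the conventions for the $t$-structure on $\Whit^{\leq 1}(\sC)$ as in \cite{whit} Appendix B. I do not foresee any essential obstacle: Lemma \ref{l:sph-av-exact} carries all the real content, and the remainder of the argument is a general nonsense consequence of the recollement picture for the tempered quotient together with $t$-exactness of Whittaker averaging.
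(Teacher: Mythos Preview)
Your proposal is correct and follows essentially the same route as the paper: identify $\sC^{G(O),\xat}$ as the kernel of the baby Whittaker averaging functor via Lemma \ref{l:at-whit} and Lemma \ref{l:baby}, then invoke the $t$-exactness from Lemma \ref{l:sph-av-exact} to conclude that this kernel is stable under truncations, whence the quotient $t$-structure exists. One small point: the descent criterion you state (truncation-stability of the kernel alone) is slightly weaker than what the paper cites from \cite{localization} Lemma 10.2.1, which additionally asks that $\Ker(p)^{\heart}$ be closed under subobjects in $\sC^{G(O),\heart}$; but since the kernel of a $t$-exact functor automatically satisfies this (a subobject of something killed by a $t$-exact functor is again killed), your argument supplies the needed input anyway.
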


\begin{proof}

By \cite{localization} Lemma 10.2.1,\footnote{The cited lemma uses an adjunction
in which the quotient admits a right adjoint, not a left adjoint. However,
the proof in \emph{loc. cit}. works for arbitrary DG categories, not
necessarily cocomplete ones (although it is written in that context). Therefore,
we may safely pass to opposite categories to deduce the claim
(or observe that the argument in \emph{loc. cit}. immediately applies
in the present context).} it suffices
to show that $\Ker(p) = \sC^{G(O),\xat}$ is closed under truncations, and that
the resulting abelian category $\sC^{G(O),\xat,\heart}$ is closed under 
subobjects (cf. \cite{localization} Remark 10.2.2).

By Lemma \ref{l:at-whit}, we have:
\[
\sC^{G(O),\xat} = \Ker\big(\Av_!^{\psi}:\sC^{G(O)} \to \Whit^{\leq 1}(\sC)\big).
\]

\noindent By Lemma \ref{l:sph-av-exact}, $\Av_!^{\psi}$ is $t$-exact up to shift; it 
follows immediately that its kernel is closed under truncations, and the heart of the
kernel is closed under taking subobjects.

\end{proof}

\subsection{More on Whittaker functors}

We continue to assume $\sC$ is acted on by $G(K)$.

Fix a dominant coweight $\check{\lambda}$. In this case,
we can perform two constructions.

\begin{itemize}

\item 

Let $\psi^{\check{\lambda}}:\Ad_{-(\check{\rho}+\check{\lambda})(t)}\o{I} \to \bG_a$ be the composition:
\[
\Ad_{-(\check{\rho}+\check{\lambda})(t)}\o{I} \xar{\Ad_{\check{\lambda}(t)}} \o{I}_1 \xar{\psi} \bG_a.
\]

\noindent (I.e., take a character of conductor $\check{\lambda}$ for $N(K)$ and
apply the corresponding baby Whittaker construction.)

\item 

Take a representation $V^{\check{\lambda}} \in \Rep(\check{G})^{\heart}$.

\end{itemize}

We consider $\sC^{G(O)}$ as acted on by $\Rep(\check{G})$ via Satake.

\begin{prop}\label{p:baby-cs}

In the above setting, there is a canonical commutative diagram:
\begin{equation}\label{eq:cs-local}
\begin{tikzcd}
\sC^{G(O)} 
\arrow[d,equals]
\arrow[rr,"V^{\check{\lambda}} \star -"]
&&
\sC^{G(O)} 
\arrow[rr,"\Av_!^{\o{I}_1,\psi}"]
&&
\sC^{\o{I}_1,\psi} = \Whit^{\leq 1}(\sC) 
\arrow[d,equals]
\\
\sC^{G(O)}
\arrow[rr,"\Av_*^{\psi^{\check{\lambda}}}{[(\check{\lambda},2\rho)]}"]
&&
\sC^{\Ad_{-(\check{\rho}+\check{\lambda})(t)}\o{I},\psi^{\check{\lambda}}}
\arrow[rr,"\overset{\check{\lambda}(t) \cdot -}{\simeq}"]
&&
\sC^{\o{I}_1,\psi} =
\Whit^{\leq 1}(\sC)
\end{tikzcd}
\end{equation}

\end{prop}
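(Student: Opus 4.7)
The plan is to identify both compositions in \eqref{eq:cs-local} with the same kernel in the Whittaker--spherical Hecke bimodule category, and then invoke the geometric Casselman--Shalika formula (in its baby Whittaker incarnation).

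First I would observe that both rows define functors $\sC^{G(O)} \to \Whit^{\leq 1}(\sC)$ that are 2-functorial in $\sC \in G(K)\mod$. By the universality of $D(G(K))$ as a $G(K)$-module category, such natural transformations between the two functors $(-)^{G(O)} \to \Whit^{\leq 1}(-)$ are classified by objects of
\[
D(G(K))^{G(O) \times (\o{I}_1,\psi)} \simeq D(\o{I}_1 \backslash G(K) / G(O))^{\psi},
\]
i.e., by Whittaker--spherical bimodules. It therefore suffices to show that the two kernels assigned to the top and bottom rows of \eqref{eq:cs-local} coincide in this bimodule category.

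Next I would compute each kernel on the universal generator, namely $\delta_{G(O)/G(O)} \in D(\Gr_G)^{G(O)}$. For the top row, $V^{\check{\lambda}} \star \delta_{G(O)/G(O)}$ is the Satake sheaf $\sS^{\check{\lambda}} = \IC_{\Gr_G^{\check{\lambda}}}$ supported on the $G(O)$-orbit through $\check{\lambda}(t) \in \Gr_G$, so the upper kernel is $\Av_!^{\o{I}_1,\psi}(\sS^{\check{\lambda}})$. The geometric Casselman--Shalika theorem (\cite{fgv}, cf. Theorem \ref{t:cs}) identifies this with the clean (shifted) character sheaf on the single $\o{I}_1$-orbit through $\check{\lambda}(t)$ on which $\psi$ is non-degenerate; the requisite dimension count gives a cohomological shift by $(\check{\lambda},2\rho) = \dim \Gr_G^{\check{\lambda}}$. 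For the bottom row, $\Av_*^{\psi^{\check{\lambda}}}(\delta_{G(O)/G(O)})$ is the $*$-extension of the character sheaf $(\psi^{\check{\lambda}})^!(\exp)$ from the orbit of $\Ad_{-(\check{\rho}+\check{\lambda})(t)}\o{I}$ acting on $e$; left translation by $\check{\lambda}(t)$ carries this onto the $\o{I}_1$-orbit through $\check{\lambda}(t)$, and by construction the twisted character $\psi^{\check{\lambda}}$ is precisely $\psi$ after this translation. A direct orbit comparison shows the two supports agree.

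The main obstacle, modulo this reduction to \cite{fgv}, is the careful bookkeeping of shifts and twists: one must verify (i) that the conjugation $\Ad_{\check{\lambda}(t)}$ carries the character $\psi^{\check{\lambda}}$ on $\Ad_{-(\check{\rho}+\check{\lambda})(t)}\o{I}$ exactly to $\psi$ on $\o{I}_1$, which is immediate from the definition of $\psi^{\check{\lambda}}$; (ii) that $\check{\lambda}(t) \cdot G(O) \cdot \check{\lambda}(t)^{-1} \cap \o{I}_1$ gives the correct stabilizer so that the cleanness statement of Casselman--Shalika applies, reducing $*$- and $!$-averaging to the same object up to the shift $[(\check{\lambda},2\rho)]$; and (iii) that the perverse normalization of $\sS^{\check{\lambda}}$, together with the cohomological amplitude computations from Lemma \ref{l:sph-av-exact}, produces precisely this shift. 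Once (i)--(iii) are settled, the two kernels coincide, proving commutativity of \eqref{eq:cs-local} for all $\sC \in G(K)\mod$.
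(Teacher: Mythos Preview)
Your approach is essentially the same as the paper's: both reduce the claim to an identification of convolution kernels in $\Whit^{\leq 1}(D(\Gr_G))$ and then invoke the geometric Casselman--Shalika formula. The paper phrases this more tersely (``both sides are given by convolving with the same sheaf''), while you make the reduction explicit via the universality of $D(G(K))$ in $G(K)\mod$; these are the same argument.

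One correction on references: you cite \cite{fgv} and Theorem~\ref{t:cs} for the Casselman--Shalika step, but Theorem~\ref{t:cs} is the \emph{global} statement on $\Bun_G$, relating $\coeff_D$ to $\coeff(\sV^D \star -)$. What you need here is the \emph{local} baby Whittaker identity on $\Gr_G$ asserting that $\jmath_!^0(\psi_{\o{I}_1}^{0,!}(\exp)) \star \sS_{V^{\check{\lambda}}}$ is clean on the $\check{\lambda}(t)$-orbit with the correct shift; the paper sources this from \cite{abbgm} Theorem~2.2.2 and Corollary~2.2.3. Your points (i)--(iii) are the right bookkeeping, but the cleanness in (ii) and the precise shift in (iii) are exactly what \cite{abbgm} supplies, not Theorem~\ref{t:cs}.
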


\begin{proof}

This result is an easy application of the Casselman-Shalika formula.
Specifically, we will see both sides are given by convolving with the
same sheaf. 

For a coweight $\check{\mu}$, 
let $\jmath^{\check{\mu}}: \o{I}_1\cdot (-\check{\mu})(t) G(O)/G(O) \to \Gr_G$
be the locally closed embedding of the $\o{I}_1$-orbit through 
$(-\check{\mu})(t) \in \Gr_G$.
For $\mu$ dominant, let $\psi_{\o{I}_1}^{\check{\mu},!}(\exp)$ denote the 
character sheaf on this orbit, normalized to lie in the same cohomological
degree as the dualizing sheaf. 

The top line in \eqref{eq:cs-local} is then given by convolution with 
\[
\jmath_!^0(\psi_{\o{I}_1}^{0,!}(\exp))  \star \sS_{V^{\check{\lambda}}} \in 
\Whit^{\leq 1}(D(\Gr_G)).
\]
\noindent for $\sS_{V^{\check{\lambda}}}$ the spherical sheaf corresponding
to $V^{\check{\lambda}}$.

The bottom line in \eqref{eq:cs-local} is given by convolution 
with: 
\[
\jmath_*^{\check{\lambda}}(\psi_{\o{I}_1}^{\check{\lambda},!}(\exp))
[-2\dim \big((\Ad_{(-\check{\lambda}-\check{\rho})(t)} \o{I})\cdot G(O)/G(O)\big)
+(\check{\lambda},2\rho)].
\]

\noindent Here the first summand in the shift appears because we should use constant
sheaves (rather than dualizing sheaves) for $*$-averaging, and the
second summand appears simply because it is in \eqref{eq:cs-local}.
We observe that:
\[
\dim \big((\Ad_{(-\check{\lambda}-\check{\rho})(t)}\o{I}) \cdot G(O)/G(O)\big) = 
(\check{\lambda}+\check{\rho},2\rho)
\]

\noindent so the above may be rewritten as:
\[
\jmath_*^{\check{\lambda}}(\psi_{\o{I}_1}^{\check{\lambda},!}(\exp))
[-(\check{\lambda}+2\check{\rho},2\rho)].
\]

Finally, by the form of the geometric Casselman-Shalika formula 
given in \cite{abbgm} Theorem 2.2.2 and Corollary 2.2.3, we have:
\[
\jmath_!^0(\psi_{\o{I}_1}^{0,!}(\exp))  \star \sS_{V^{\check{\lambda}}}
[-(\check{\rho},2\rho)] \simeq 
\jmath_*^{\check{\lambda}}(\psi_{\o{I}_1}^{\check{\lambda},!}(\exp))
[-(\check{\lambda}+\check{\rho},2\rho)].
\]

\noindent This yields the claim. 

\end{proof}

\subsection{A generalization}

We remind that by construction, the quotient 
$\sC^{G(O),\xtemp} = \sC^{G(O)}/\sC^{G(O),\xat}$ inherits
a (unique) $\Rep(\check{G})$-action for which the projection
$p:\sC^{G(O)} \to \sC^{G(O),\xtemp}$ is $\Rep(\check{G})$-linear.

We now prove:

\begin{thm}\label{t:hecke-exact-loc}

Suppose $G(K)$ acts on $\sC \in \DGCat_{cont}$, and that
$\sC$ is equipped with a $t$-structure that is strongly compatible with this action.

Then for every $V \in \Rep(\check{G})^{\heart}$, the functor:
\[
V \star -:\sC^{G(O),\xtemp} \to \sC^{G(O),\xtemp}
\]

\noindent is $t$-exact with respect to the $t$-structure from Proposition \ref{p:temp-t}.

\end{thm}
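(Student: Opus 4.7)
The plan is to reduce the $t$-exactness of $V \star -$ on the tempered quotient to an amplitude computation that can be handled by the same machinery (baby Whittaker averaging, Casselman-Shalika) already deployed in Lemma~\ref{l:sph-av-exact} and Proposition~\ref{p:baby-cs}.

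\textbf{Step 1: Reduction to irreducibles.} Since $\Rep(\check{G})^{\heart}$ is semisimple, $V$ decomposes as a direct sum of irreducibles $V^{\check{\lambda}}$ for $\check{\lambda} \in \check{\Lambda}^+$, and a direct sum of $t$-exact functors is $t$-exact. So we may assume $V = V^{\check{\lambda}}$.

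\textbf{Step 2: Detect $t$-exactness via Whittaker.} Because $p \colon \sC^{G(O)} \to \sC^{G(O),\xtemp}$ is $\Rep(\check{G})$-linear, $V^{\check{\lambda}} \star -$ on the quotient is induced from the one on $\sC^{G(O)}$. By Lemma~\ref{l:at-whit}, the kernel of $\Av_!^{\psi} \colon \sC^{G(O)} \to \Whit^{\leq 1}(\sC)$ is precisely $\sC^{G(O),\xat}$, so $\Av_!^{\psi}$ factors through a conservative functor $\bar{\Av}_!^{\psi} \colon \sC^{G(O),\xtemp} \to \Whit^{\leq 1}(\sC)$. Since $\Av_!^{\psi}[(2\check{\rho},\rho)]$ is $t$-exact (Lemma~\ref{l:sph-av-exact}) and $p$ is $t$-exact with essentially surjective heart, the induced functor $\bar{\Av}_!^{\psi}[(2\check{\rho},\rho)]$ is itself $t$-exact. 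A conservative, $t$-exact functor reflects $t$-structures in the following sense: if $F \colon \sA \to \sB$ is conservative and $t$-exact and $G \colon \sA \to \sA$ satisfies $F \circ G$ is $t$-exact, then $G$ is $t$-exact, since $F(H^i(Gx)) = H^i(FGx)$ vanishes for $x$ in the heart and $i \neq 0$. Applying this to $F = \bar{\Av}_!^{\psi}[(2\check{\rho},\rho)]$ and $G = V^{\check{\lambda}} \star -$, it suffices to prove that the composition
\[
\sC^{G(O)} \xrightarrow{V^{\check{\lambda}} \star -} \sC^{G(O)} \xrightarrow{\Av_!^{\psi}[(2\check{\rho},\rho)]} \Whit^{\leq 1}(\sC)
\]
is $t$-exact.

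\textbf{Step 3: Identify the composition via Casselman-Shalika.} By Proposition~\ref{p:baby-cs}, this composition is canonically isomorphic to
\[
(\check{\lambda}(t) \cdot -) \circ \Av_*^{\psi^{\check{\lambda}}}\bigl[(\check{\lambda},2\rho) + (2\check{\rho},\rho)\bigr] \colon \sC^{G(O)} \to \sC^{\o{I}_1,\psi}.
\]
The equivalence $\check{\lambda}(t) \cdot -$ is $t$-exact, being translation by a group element. Hence the target reduces to showing that $\Av_*^{\psi^{\check{\lambda}}}\colon \sC^{G(O)} \to \sC^{\Ad_{-(\check{\rho}+\check{\lambda})(t)}\o{I},\psi^{\check{\lambda}}}$ is $t$-exact after shift by $[(\check{\lambda},2\rho)+(2\check{\rho},\rho)]$.

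\textbf{Step 4: Amplitude of twisted $*$-averaging; the main obstacle.} The heart of the argument is a computation analogous to Lemma~\ref{l:sph-av-exact}: factor $\Av_*^{\psi^{\check{\lambda}}}$ as $\Oblv \colon \sC^{G(O)} \to \sC^I$, then a $*$-averaging to the conjugate Iwahori $\Ad_{-(\check\rho+\check\lambda)(t)}I^-$ (still an equivalence after identifying with $\sC^{I^-}$), then $\Av_*^{\Ad_{-(\check\rho+\check\lambda)(t)}\o{I},\psi^{\check{\lambda}}}$. Applying the amplitude estimates \cite{whit} Lemma B.2.2--3 to each step, together with the $\bbm$-style equality $\Av_!^{\psi^{\check\lambda}} = \Av_*^{\psi^{\check\lambda}}[2\dim N]$ on the unipotent radical, and the observation that the two adjacent Iwahori subgroups intersect along $\check\lambda$-shifted relative dimensions, yields the total amplitude $(2\check\rho,\rho) + (\check\lambda,2\rho)$ (with matching upper and lower bounds by the same double-adjoint trick used in the proof of Lemma~\ref{l:sph-av-exact}). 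This is exactly the shift needed to cancel. The delicate point, and the one I expect to require the most care, is bookkeeping the $\check\lambda$-dependent dimensions of the relevant orbits in $\Gr_G$ and verifying the two bounds collapse to an equality; however, this is a direct generalization of the $\check\lambda = 0$ case already carried out in Lemma~\ref{l:sph-av-exact}, so no genuinely new tool is required.

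Combining Steps 1--4 gives the desired $t$-exactness of $V^{\check{\lambda}} \star -$ on $\sC^{G(O),\xtemp}$, completing the proof.
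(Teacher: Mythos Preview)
Your Steps 1--3 coincide with the paper's approach: reduce to irreducibles, use the conservative $t$-exact factorization $\bar{\Av}_!^{\psi}[(2\check\rho,\rho)]$ to detect $t$-exactness, and apply Proposition~\ref{p:baby-cs} to rewrite the composite as $\Av_*^{\psi^{\check\lambda}}[(\check\lambda+\check\rho,2\rho)]$. The upper amplitude bound in Step~4, coming from \cite{whit} Lemma~B.2.2 applied to the orbit $\Ad_{-(\check\rho+\check\lambda)(t)}\o{I}\cdot G(O)/G(O)$ of dimension $(\check\lambda+\check\rho,2\rho)$, is also exactly what the paper does to get right $t$-exactness.

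The gap is in your lower bound. You claim the ``double-adjoint trick'' of Lemma~\ref{l:sph-av-exact} generalizes directly, but that proof relied on a specific fact (\cite{fg2} Lemma~15.1.2): for spherical $\sF$, the cone of $\Av_!^{I\to I_1^-}(\sF)[\cdots]\to\Av_*^{I\to I_1^-}(\sF)$ is partially integrable, hence killed by $\Av_*^{\o{I}_1,\psi}$. This uses the particular relative position of $I$ and $I_1^-$ (both contain $\sK_1$, and $I_1^- I/I$ is the finite flag variety). For your factorization through $I$ and $\Ad_{-(\check\rho+\check\lambda)(t)}I^-$, these subgroups need not sit inside a common parahoric, the analogue of the partial integrability statement is not available, and your parenthetical ``still an equivalence after identifying with $\sC^{I^-}$'' conflates conjugation by $(\check\rho+\check\lambda)(t)$ with $*$-averaging from $\sC^I$. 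So the matching lower bound is not established.

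The paper avoids this entirely: having shown right $t$-exactness of $V^{\check\lambda}\star-$, it observes that for finite-dimensional $V$ the functor $V\star-$ on $\sC^{G(O),\xtemp}$ is right adjoint to $V^\vee\star-$, which is right $t$-exact by the first step; hence $V\star-$ is left $t$-exact. This two-line duality argument replaces your Step~4 lower-bound computation.
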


\begin{proof} 

We treat right and left exactness separately.

\step First, we show that $V \star -:\sC^{G(O),\xtemp} \to \sC^{G(O),\xtemp}$ is 
right $t$-exact.

It suffices to prove this result for irreducible representations.
Therefore, we take $V = V^{\check{\lambda}}$.

Suppose $\sF \in \sC^{G(O),\leq 0}$ is connective. It suffices to show
that $p(V^{\check{\lambda}} \star \sF) \in \sC^{G(O),\xtemp,\leq 0}$.

By Lemma \ref{l:sph-av-exact} and Lemma \ref{l:at-whit}, we have a (necessarily
unique) commutative diagram:
\[
\begin{tikzcd}
\sC^{G(O)} 
\arrow[d,"p"]
\arrow[drr,"{\Av_!^{\psi}[(2\check{\rho},\rho)]}"]
\\
\sC^{G(O),\xtemp} 
\arrow[rr,dotted]
&& 
\Whit^{\leq 1}(\sC)
\end{tikzcd}
\]

\noindent in which (crucially!) the bottom arrow is $t$-exact and conservative.

Therefore, it suffices to show that:
\[
\Av_!^{\psi}(V^{\check{\lambda}} \star \sF)[(2\check{\rho},\rho)] \in \Whit^{\leq 1}(\sC)^{\leq 0}.
\]

By Proposition \ref{p:baby-cs}, we can rewrite this term as:
\[
\Av_*^{\psi^{\check{\lambda}}}(\sF)[(\check{\lambda}+\check{\rho},2\rho)].
\]

\noindent By in this form, the desired estimate 
follows the usual estimates for the amplitude of $\Av_*$ functors: 
see \cite{whit} Lemma B.2.2.

\step 

We now prove left $t$-exactness. It suffices to prove this
for finite dimensional $V$. 
Then the functor $V \star -:\sC^{G(O),\xtemp} \to \sC^{G(O),\xtemp}$ is
right adjoint to $V^{\vee} \star -:\sC^{G(O),\xtemp} \to \sC^{G(O),\xtemp}$.
The latter is right $t$-exact by the above, so the former must be left $t$-exact
as desired.

\end{proof}

Finally, Theorem \ref{t:hecke-exact} follows by taking 
$\sC = D^!(\Bun_G^{\xlevel})$ in Theorem \ref{t:hecke-exact-loc}.

\subsection{Variant for nilpotent sheaves}\label{ss:hecke-exact-nilp}

Note that the spherical Hecke action $\sH_x^{\on{sph}} = D(\Gr_G)^{G(O)} 
\actson D(\Bun_G)$
preserves $\Shv_{\Nilp}(\Bun_G)$. Therefore, we may form:
\[
\Shv_{\Nilp}(\Bun_G)^{\xtemp} \coloneqq 
\Shv_{\Nilp}(\Bun_G) \underset{\IndCoh(\Omega_0\check{\fg}/\check{G})}{\otimes}
\QCoh(\Omega_0\check{\fg}/\check{G}). 
\]

\noindent as for $D(\Bun_G)^{\xtemp}$. The same applies for 
$\Shv_{\Nilp}(\Bun_G)^{\at}$.

By functoriality, we then have a commutative diagram:
\[
\begin{tikzcd}
\Shv_{\Nilp}(\Bun_G)^{\at}
\arrow[rr]
\arrow[d]
&&
D(\Bun_G)^{\at} 
\arrow[d]
\\
\Shv_{\Nilp}(\Bun_G)
\arrow[rr]
\arrow[d,"p"]
&&
D(\Bun_G)
\arrow[d,"p"]
\\
\Shv_{\Nilp}(\Bun_G)^{\temp}
\arrow[rr]
&&
D(\Bun_G)^{\temp}.
\end{tikzcd}
\]

\noindent The horizontal functors are fully faithful e.g. 
because $\Shv_{\Nilp}(\Bun_G) \to D(\Bun_G)$ admits a 
$\sH_x^{\on{sph}}$-linear right adjoint.

We see from Theorem \ref{t:hecke-exact} that $\Shv_{\Nilp}(\Bun_G)^{\at}$
is closed under truncations and subobjects (since this is 
true for $D(\Bun_G)^{\at}$ and $\Shv_{\Nilp}(\Bun_G)$ separately).
Therefore, $\Shv_{\Nilp}(\Bun_G)^{\temp}$ inherits a $t$-structure
for which the projection $p:\Shv_{\Nilp}(\Bun_G) \to \Shv_{\Nilp}(\Bun_G)^{\xtemp}$
is $t$-exact. Then the bottom horizontal arrow above is $t$-exact
(and conservative, being fully faithful) for this $t$-structure.  

From the diagram above and Theorem \ref{t:hecke-exact}, 
we see that Hecke functors are $t$-exact on $\Shv_{\Nilp}(\Bun_G)^{\temp}$
with respect to the above $t$-structure.

\subsection{Relation to Gaitsgory's work for $GL_n$}\label{ss:vanishing}

In this section, we briefly indicate how the above results
can be used to better understand the main results of \cite{vanishing}.

\subsubsection{A variant with moving points}

Recall from \cite{independence} that $D(\Bun_G)^{\xtemp}$ is canonically
independent of the point $x \in X$. We therefore use the
notation $D(\Bun_G)^{\temp}$ instead.

Let $V \in \Rep(\check{G})^{\heart}$ be given. Recall that there
is a Hecke functor:
\[
H_{V,X}:D(\Bun_G) \to D(X \times \Bun_G) = D(X) \otimes D(\Bun_G)
\]

\noindent whose $!$-fibers at points $x \in X$ give the usual
Hecke functors at points.

It is easy to see that the functor $H_{V,X}$ induces a functor:
\[
H_{V,X}^{\temp}:D(\Bun_G)^{\temp} \to D(X) \otimes D(\Bun_G)^{\temp}.
\]

We have the following generalization of Theorem \ref{t:hecke-exact}:

\begin{thm}\label{t:hecke-exact-curve}

The functor
\[
H_{V,X}^{\temp}[-1]:D(\Bun_G)^{\temp} \to D(X) \otimes D(\Bun_G)^{\temp}
\]

\noindent is $t$-exact.

\end{thm}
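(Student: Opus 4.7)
The plan is to extend the strategy of Theorem \ref{t:hecke-exact-loc} to the family over $X$. The crucial new ingredient is the independence-of-point result from \cite{independence}: it ensures the tempered quotient $D(\Bun_G)^{\temp}$ is canonically the same regardless of which point $x \in X(k)$ is used in its definition, which is what allows $H_{V,X}$ to factor uniformly through $D(\Bun_G)^{\temp}$ on both sides, and what lets us compare fibers of the parametrized functor at different points of $X$ via a single $t$-structure on the target.

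First I would equip $D(X) \otimes D(\Bun_G)^{\temp}$ with the tensor product $t$-structure, using the perverse $t$-structure on $D(X)$ and the tempered $t$-structure from Proposition \ref{p:temp-t}. The shift $[-1] = [-\dim X]$ in the statement then accounts for the extra perverse dimension picked up when passing from the spherical Satake sheaf $\sS_V$ on $\Gr_G$ to its parametrized analogue $\sS_{V,X}$ on the Beilinson-Drinfeld Grassmannian.

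To prove $t$-exactness, I would fix a reference point $x_0 \in X(k)$ and post-compose $H_{V,X}^{\temp}[-1]$ with the parametrized analogue of the $t$-exact, conservative functor featuring in Theorem \ref{t:hecke-exact-loc},
\[
\id_{D(X)} \otimes \Av_!^{\psi,x_0}[(2\check{\rho},\rho)]:
D(X) \otimes D(\Bun_G)^{\temp} \to D(X) \otimes \Whit^{\leq 1}(D(\Bun_G)),
\]
where the Whittaker condition is imposed at $x_0$. On the open locus $X \setminus x_0$ the Hecke action at a varying point $x \neq x_0$ tautologically commutes with Whittaker averaging at $x_0$, and the composition is just the parametrized Hecke acting on Whittaker-averaged objects. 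At the single point $x = x_0$, Proposition \ref{p:baby-cs} identifies the composition with $\Av_*^{\psi^{\check{\lambda}}}$ shifted by $(\check{\lambda},2\rho)$. Gluing the two descriptions produces a parametrized $*$-averaging functor on all of $X$, whose $t$-exactness (now including the shift by $[-1]$ coming from the parametrization) is controlled by the standard amplitude estimates in \cite{whit} Appendix B, just as in the final step of the proof of Lemma \ref{l:sph-av-exact}.

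The main obstacle I anticipate is the bookkeeping at the collision point $x = x_0$: one has to verify that the generic behavior over $X \setminus x_0$ and the special behavior at $x_0$ glue coherently into a single family-wise $*$-averaging functor, and that the shift match-up produces exactly $[-\dim X] = [-1]$. This is precisely where the independence-of-point result \cite{independence} is doing essential work (since it allows us to identify the tempered category uniformly as $x$ traverses $X$). Modulo this one technicality, the argument is a genuinely minor family-wise variant of Theorem \ref{t:hecke-exact-loc}.
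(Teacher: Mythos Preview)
Your overall strategy is in the right spirit and matches the paper's terse indication to ``perform the proof of Theorem \ref{t:hecke-exact} over $X$, and apply \cite{independence}''. However, your specific implementation --- fixing the Whittaker point $x_0$ while letting the Hecke point vary --- introduces a dichotomy that does not resolve the way you claim. Over $X \setminus x_0$, the composition $(\id_{D(X)} \otimes \Av_!^{\psi,x_0}) \circ H_{V,X}$ is \emph{not} a $*$-averaging functor: since the Hecke modification at $x \neq x_0$ and the Whittaker averaging at $x_0$ occur at disjoint points, they simply commute, and what you obtain is the parametrized Hecke functor acting on objects of $\Whit^{\leq 1}_{x_0}$. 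There is no Casselman-Shalika simplification available at these non-collision points, so the two descriptions do not glue into a single parametrized $*$-averaging functor as you assert. You would then be stuck needing to know that the parametrized Hecke is $t$-exact on $\Whit^{\leq 1}_{x_0}$, which is essentially the content of the theorem.

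The paper's intended argument is instead to let the Whittaker point vary together with the Hecke point. One reruns the entire local argument (Lemma \ref{l:sph-av-exact}, Proposition \ref{p:temp-t}, Proposition \ref{p:baby-cs}, and the proof of Theorem \ref{t:hecke-exact-loc}) in the factorizable setting over $X$, replacing $\Gr_G$, $G(O)$, $I$, $\o{I}_1$, etc.\ by their Beilinson--Drinfeld analogues over $X$. This produces $t$-exactness of the parametrized Hecke functor on the $X$-family tempered quotient, with the shift by $[-1]$ arising from $\dim X$. The role of \cite{independence} is then precisely to identify that $X$-family tempered quotient with $D(X) \otimes D(\Bun_G)^{\temp}$, where the second factor is tempered at any single fixed point.
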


This follows by performing the proof of Theorem \ref{t:hecke-exact} over
$X$, and applying \cite{independence}. 

\subsubsection{}

We now observe that Theorem \ref{t:hecke-exact-curve} yields
a quotient of $D(\Bun_G)$ with the properties described in 
\cite{vanishing} \S 2.12. Namely, Hecke functors are $t$-exact,
and $D_{\cusp}(\Bun_G) \subset D(\Bun_G)$ is right orthogonal 
to $D(\Bun_G)^{\at}$ by \cite{dario-ramanujan}.

The construction of such a quotient is the main technical input in 
\cite{vanishing}; see the discussion of \emph{loc. cit}. \S 2.13.

Our argument is valid for general reductive groups $G$. Moreover,
the degree restrictions in \emph{loc. cit}. are not necessary here.
Finally, we observe that the quotient we consider here has evident
conceptual meaning in geometric Langlands, which was not the case for
the quotient considered in \cite{vanishing}. 

Finally, we remark that even for $GL_n$, our methods are much 
simpler than those in \cite{vanishing}. 

However, to obtain a result for $\ell$-adic sheaves, one needs
some additional input. First, one needs derived Satake
for $\ell$-adic sheaves (which has been announced by Arinkin-Berzukavnikov).
More seriously, one would need the independence of\footnote{Here,
unlike in the rest of the paper, $\Shv$ denotes $\ell$-adic sheaves,
not regular holonomic $D$-modules.} 
$\Shv(\Bun_G)^{\xtemp}$ of the point $x \in X$; \cite{independence} shows this only 
for $\Shv_{\Nilp}(\Bun_G)$ in the $\ell$-adic context. 

We also refer back to Remark \ref{r:vanishing} for more context on our result.

\section{Whittaker coefficients of nilpotent sheaves}\label{s:coeff-exact}

In this section, we establish favorable properties of Whittaker coefficients
for sheaves with nilpotent singular support.

For our later applications, the main result of this section is:

\begin{thm}\label{t:coeff-exact}

The functor $\coeff[\dim \Bun_G]:\Shv_{\Nilp}(\Bun_G) \to \Vect$ is $t$-exact.

\end{thm}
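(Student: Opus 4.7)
The plan combines the tempered Hecke exactness of \S\ref{ss:hecke-exact-nilp} and the index formula (Theorem~\ref{t:index}) with a microlocal input (forthcoming work of K. Lin). Since $\Shv_{\Nilp}(\Bun_G)$ is closed under truncations in $D(\Bun_G)$, it suffices to show that $\coeff[\dim\Bun_G]$ sends $\Shv_{\Nilp}(\Bun_G)^{\heart}$ into $\Vect^{\heart}$. Moreover $\coeff$ factors through the tempered quotient $\Shv_{\Nilp}(\Bun_G)^{\temp}$ (as it is a factor of $\coeff^{\enh}$), and by \S\ref{ss:hecke-exact-nilp} this quotient carries a $t$-structure such that the projection is $t$-exact and the action of $\Rep(\check{G})^{\heart}$ by Hecke functors is $t$-exact.

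By Casselman--Shalika (Theorem~\ref{t:cs}), for every $\check{\Lambda}^+$-valued divisor $D$ one has $\coeff_D \simeq \coeff \circ (\sV^D \star -)$. Since $\sV^D \star -$ is a $t$-exact equivalence-preserving functor on the tempered quotient, the target $t$-exactness of $\coeff[\dim\Bun_G]$ is equivalent to the assertion that $\coeff_D[\dim\Bun_G]$ is $t$-exact on $\Shv_{\Nilp}(\Bun_G)^{\heart}$ for every such $D$ (with $D = 0$ recovering $\coeff$ itself). Unpacking the definition of $\coeff_D$ as the composition of $\fp_D^!$, $!$-tensoring with $\psi_D^!(\exp)$, and pushing forward to a point, the question becomes one about twisted de Rham cohomology on $\Bun_N^{\Omega(-D)}$. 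For $D$ sufficiently large in the sense of \S\ref{sss:large-div}, the stack $\Bun_N^{\Omega(-D)}$ is an affine scheme (Lemma~\ref{l:large-div}), $\fp_D$ is smooth of the expected relative dimension, and Artin vanishing on the affine scheme furnishes right $t$-exactness up to the appropriate shift; the left $t$-exactness is the nontrivial assertion.

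The left $t$-exactness is supplied by the forthcoming work of K. Lin: the nilpotent singular-support hypothesis on $\sF$ constrains $\SS(\fp_D^!(\sF)) \subset T^*\Bun_N^{\Omega(-D)}$ so that its interaction with the graph of $d\psi_D$ is transverse in the appropriate microlocal sense, and this is converted into $t$-exactness of the twisted integration. Propagation from large $D$ back to arbitrary $D$ (in particular $D = 0$) is then carried out by running the Hecke exactness/Casselman--Shalika reduction from the preceding paragraph in reverse, using that the collection $\{\sV^D \star -\}$ acts $t$-exactly and sweeps through tempered perverse objects. Finally, once single-degree concentration is established, the index formula (Theorem~\ref{t:index}) pins the degree at $0$: for perverse $\sF$ the characteristic-cycle multiplicity $c_{\Kos,\sF}$ is non-negative, so $\chi(\coeff[\dim\Bun_G](\sF)) = (-1)^{\dim\Bun_G}\chi(\coeff(\sF)) = c_{\Kos,\sF} \geq 0$, which is consistent only with concentration in degree $0$. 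The principal obstacle is the microlocal left $t$-exactness, namely the content of Lin's forthcoming result; the remainder of the argument is essentially formal manipulation of Hecke exactness, Casselman--Shalika, and index-theoretic positivity.
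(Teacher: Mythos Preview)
Your overall architecture matches the paper: establish exactness of $\coeff_D[\dim\Bun_G]$ for sufficiently large $D$ using affineness of $\Bun_N^{\Omega(-D)}$, then propagate back to $D=0$ via tempered Hecke exactness and Casselman--Shalika (the paper uses that $\coeff$ is a direct summand of $\coeff\bigl((V^{\check{\lambda}}\otimes V^{-w_0(\check{\lambda})})\star -\bigr) \simeq \coeff_{\check{\lambda}\cdot x}(V^{-w_0(\check{\lambda})}\star -)$). However, there are two genuine problems.

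First, you mischaracterize Lin's input. It is \emph{not} a microlocal transversality statement about $\SS(\fp_D^!(\sF))$ and the graph of $d\psi_D$. Lin's theorem (Theorem~\ref{t:lin}) is an explicit isomorphism of sheaves on $\Bun_G$:
\[
(\coeff\otimes\id)(\Delta_!\,\ul{\sfe}_{\Bun_G}) \;\simeq\; \Poinc_![-2\dim\Bun_G].
\]
The paper uses this, together with miraculous duality and \cite{agkrrv2} (Lemma~\ref{l:nilp-kernel}), to deduce $\coeff \simeq \coeff_![-2\dim\Bun_G]$ on $\Shv_{\Nilp}$ (Theorem~\ref{t:coeff-!}, and more generally Corollary~\ref{c:coeff-!-D}). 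Left $t$-exactness for large $D$ then follows because $\coeff_{D,!}$ has amplitude $\geq -\dim\Bun_G$ by the Verdier-dual of your Artin-vanishing argument. The paper even remarks (after the proof of Theorem~\ref{t:index}) that a direct microlocal/filtered approach of the kind you sketch is \emph{not} known to work.

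Second, your final paragraph invoking the index formula is both unnecessary and incorrect as stated. Once left and right $t$-exactness are established, the proof is complete; Theorem~\ref{t:index} plays no role here. Moreover, ``$\chi\geq 0$ forces concentration in degree $0$'' is not a valid argument: it says nothing when $c_{\Kos,\sF}=0$, and even when $c_{\Kos,\sF}>0$ it does not distinguish degree $0$ from any other even degree. Drop this step entirely.
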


Using \cite{agkrrv2}, we 
deduce this result from a theorem of Kevin Lin; at the time we are writing
this article, Lin's result is not yet publicly 
available.\footnote{Since we originally wrote this article, \cite{nadler-taylor}
has appeared. Their work yields an alternative proof of Theorem \ref{t:coeff-exact}
that is topological in nature and does not use Theorem \ref{t:coeff-exact}.}

\subsection{Around Lin's theorem}

We begin this section by describing 
Lin's result and deducing some immediate consequences of it.

\subsubsection{Formulation}

Below, we let\footnote{The notation is a bit funny; we follow \cite{agkrrv1} in 
letting $\sfe$ be opaque notation for the field $k$, thought of as 
\emph{the field of coefficients for our sheaf theory}. 

We find this notation $\ul{\sfe}_{-}$ a bit more geometrically communicative for constant sheaves
than $\ul{k}_{-}$... although it gets tricky for a point.} 
$\ul{\sfe}_{\Bun_G} \in D(\Bun_G)$ denote the constant sheaf, 
i.e., $\ul{\sfe}_{\Bun_G} = \omega_{\Bun_G}[-2\dim \Bun_G]$.

We let $\Delta = \Delta_{\Bun_G}$ denote the diagonal map 
$\Bun_G \to \Bun_G \times \Bun_G$, and we let 
$\pi_{\Bun_N^{\Omega}}:\Bun_N^{\Omega} \to \Spec(k)$ denote the projection.

\begin{thm}[K. Lin, to appear]\label{t:lin}

There is a canonical isomorphism:
\[
\begin{gathered}
(\coeff \otimes \id)(\Delta_!\ul{\sfe}_{\Bun_G}) 
\coloneqq 
(\pi_{\Bun_N^{\Omega}} \times \id)_{*,dR}
\Big((\fp \times \id)^!(\Delta_!\ul{\sfe}_{\Bun_G})\overset{!}{\otimes} 
p_1^!\psi^!(\exp)\Big)[-\dim \Bun_N^{\Omega}]
\simeq \\
 \Poinc_![-2\dim \Bun_G] \in D(\Bun_G).
 \end{gathered}
\]

\end{thm}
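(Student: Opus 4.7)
The plan is to unwind the left-hand side via $!$-base change and the projection formula, reducing it to a pushforward of the Whittaker character sheaf along $\fp$; then to identify the result with $\Poinc_!$ via \emph{cleanness} of the Poincar\'e sheaf.

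First, the pullback of $\Delta$ along $\fp \times \id$ is the graph $\Gamma_\fp:\Bun_N^{\Omega} \to \Bun_N^{\Omega} \times \Bun_G$, $n \mapsto (n,\fp(n))$. By $!$-base change,
\[
(\fp \times \id)^!(\Delta_!\ul{\sfe}_{\Bun_G}) \simeq \Gamma_{\fp,!}(\fp^!\ul{\sfe}_{\Bun_G}) = \Gamma_{\fp,!}(\omega_{\Bun_N^{\Omega}}[-2\dim\Bun_G]).
\]
The $\otimes_!$-projection formula along $\Gamma_{\fp,!}$, combined with $p_1 \circ \Gamma_\fp = \id$ and the $\otimes_!$-unit property of $\omega_{\Bun_N^{\Omega}}$, moves $p_1^!\psi^!\exp$ inside to produce $\Gamma_{\fp,!}(\psi^!\exp)[-2\dim\Bun_G]$. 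Applying the outer $(\pi_{\Bun_N^{\Omega}} \times \id)_{*,dR}$ and the shift $[-\dim\Bun_N^{\Omega}]$, and identifying the composite pushforward with $\fp_{*,dR}$ via the standard Gaitsgory--Rozenblyum formalism for $D$-modules on Artin stacks, the left-hand side becomes $\fp_{*,dR}(\psi^!\exp)[-\dim\Bun_N^{\Omega} - 2\dim\Bun_G]$.

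The right-hand side unwinds to $\fp_!((-\psi)^*(\exp[-2]))[\dim\Bun_N^{\Omega} - 2\dim\Bun_G]$. Verdier duality identifies $(-\psi)^*(\exp[-2])$ with the Verdier dual of $\psi^!\exp$ and exchanges $\fp_!$ with $\fp_{*,dR}$. The identification of the two sides therefore reduces to the assertion that the canonical comparison
\[
\fp_!((-\psi)^*(\exp[-2])) \longrightarrow \fp_{*,dR}(\psi^!\exp)[-2\dim\Bun_N^{\Omega}]
\]
is an isomorphism --- this is \emph{cleanness} of the Poincar\'e sheaf.

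The hard part is cleanness. The reductions above are essentially routine bookkeeping with the functorialities of $D$-modules on stacks; the genuine geometric input is the statement that the Whittaker character extends sufficiently nondegenerately across the boundary of the Drinfeld compactification $\ol{\Bun}_N^{\Omega}$ to equate the $!$- and $*$-pushforwards along $\fp$ (producing Artin--Schreier-type vanishing on boundary strata). In the regular holonomic $D$-module setting, such cleanness is closely related to the foundational results of \cite{fgv} used in proving the geometric Casselman--Shalika formula (recalled above as Theorem~\ref{t:cs}); I would expect Lin's theorem to amount to a $\Bun_G$-parametrized, functorial formulation of that cleanness, which is precisely the version needed so that the $t$-exactness consequence (Theorem~\ref{t:coeff-exact}) follows formally by base change to stalks.
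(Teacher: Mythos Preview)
Your formal unwinding is correct: base change along the graph of $\fp$, the projection formula, and the identification $(\pi_{\Bun_N^{\Omega}}\times\id)\circ\Gamma_{\fp}=\fp$ (noting $\Gamma_{\fp}$ is a closed embedding, so $\Gamma_{\fp,!}=\Gamma_{\fp,*,dR}$) reduce the left-hand side to $\fp_{*,dR}(\psi^!\exp)$ with the indicated shift. The theorem is then equivalent to an isomorphism of the form $\Poinc_*\simeq\Poinc_!$, and you have correctly isolated that as the content.

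However, you have not proved that content, and your expectation that it follows from the cleanness in \cite{fgv} is not substantiated. The results of \cite{fgv} concern the behavior of Whittaker sheaves on the spaces $\ol{\Bun}_N^{\Omega,\infty\cdot\bold{x}}$ under Hecke modification; they do not directly yield $j_!(\psi^!\exp)\simeq j_{*,dR}(\psi^!\exp)$ on the full Drinfeld compactification $\ol{\Bun}_N^{\Omega}$, which is what the factorization $\fp=\ol{\fp}\circ j$ (with $\ol{\fp}$ proper) would require. The paper itself does \emph{not} prove this theorem --- it is attributed to Lin and cited as forthcoming --- but the remark following the statement sketches Lin's argument, and it is not a stratum-by-stratum Artin--Schreier vanishing. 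Rather, Lin constructs a comparison map, checks it is an isomorphism on the cuspidal part (using that the pseudo-identity and naive identity coincide there, via \cite{strange}), and then verifies it after applying constant term functors, using Zastava geometry and the Vinberg degeneration (via \cite{lin-dl}).

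So: your reduction is the right first step and clarifies what the theorem says, but the ``hard part'' you flag is genuinely hard, and the route you gesture at is not the one taken (nor is it clear it would succeed). What you have is a reformulation, not a proof.
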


\begin{rem}

We highlight that this theorem is a particular isomorphism between
two explicit sheaves on $\Bun_G$. It is in the spirit of
many\footnote{Cf. 
\cite{geometric-eisenstein}, \cite{ic-drinfeld}, \cite{lin-dl},
\cite{schieder}, \cite{sakellaridis-wang}, among others.} 
results on quasi-maps spaces in geometric Langlands.
The proof uses geometry of the Vinberg degeneration (via \cite{lin-dl}) 
and Zastava spaces. Specifically, the argument \emph{(i)} constructs a map,
\emph{(ii)} shows that the map is an isomorphism at the cuspidal level
(using that the pseudo-identity and identity coincide there, cf. 
\cite{strange}), and \emph{(iii)} checks that the map is an isomorphism after 
applying constant term functors, using Zastava geometry (and other tools) 
to study the results. 

\end{rem}

\subsubsection{Derivation from geometric Langlands conjectures}

We now (heuristically) show how Theorem \ref{t:lin} is predicted
by standard compatibilities from geometric Langlands.
We first work up to shifts,\footnote{Unfortunately, the compatibility between
both Eisenstein series and strange duality with Langlands duality 
are often only stated up to shifts (and tensoring with line bundles). 
It is not our purpose here to correct 
that issue in the literature here, which unfortunately leaves the ambiguity in 
shifts at the end. 

Our understanding is that the forthcoming
work of Ben-Zvi--Sakellaridis--Venkatesh will systematically clarify such 
issues, including the precise compatibility between both Eisenstein series 
and miraculous duality with geometric Langlands.}

Specifically, we recall from \cite{strange} \S Conjecture 0.2.3 that:
\[
\Delta_!\ul{\sfe}_{\Bun_G} \in D(\Bun_G) \otimes D(\Bun_G)
\]

\noindent \emph{up to shifts} is supposed to correspond to:
\[
(\Psi_{\Nilp} \otimes \Psi_{\Nilp})\Delta_*^{\IndCoh}(\omega_{\LocSys_{\check{G}}})
\]

\noindent under geometric Langlands. Here we abuse notation in letting
$\Nilp \subset T^*[-1](\LocSys_{\check{G}})$ denote the spectral
global nilpotent cone, as in \cite{arinkin-gaitsgory}. 
We also let $\Psi_{\Nilp}:\IndCoh(\LocSys_{\check{G}}) \to 
\IndCoh_{\Nilp}(\LocSys_{\check{G}})$ denote the natural projection. 

On the other hand, recall that the functor: 
\[
\coeff:D(\Bun_G) \to \Vect
\]

\noindent is supposed to correspond to: 
\[
\Gamma^{\IndCoh}(\LocSys_{\check{G}},-):
\IndCoh_{\Nilp}(\LocSys_{\check{G}}) \to \Vect.
\]

Combining these two assertions, we see that:
\begin{equation}\label{eq:coeff-id-gl}
(\coeff \otimes\id)(\Delta_!\ul{\sfe}_{\Bun_G}) \in D(\Bun_G)
\end{equation}

\noindent should correspond (up to shifts) to:
\[
(\Gamma^{\IndCoh}(\LocSys_{\check{G}},-) \otimes \id)
((\Psi_{\Nilp} \otimes \Psi_{\Nilp})\Delta_*^{\IndCoh}(\omega_{\LocSys_{\check{G}}}))
= \omega_{\LocSys_{\check{G}}} \in \IndCoh_{\Nilp}(\LocSys_{\check{G}}).
\]

We now observe that using the symplectic structure on $\LocSys_{\check{G}}$,
we have: 
\[
\omega_{\LocSys_{\check{G}}} \simeq 
\sO_{\LocSys_{\check{G}}}[\on{v.dim}(\LocSys_{\check{G}})]
\]

\noindent where $\on{v.dim}(\LocSys_{\check{G}})$ is the 
Euler characteristic of the cotangent complex of $\LocSys_{\check{G}}$,
which is $2\dim \Bun_G$.

Now observe that $\coeff$ is corepresented by $\Poinc_!$ while 
$\Gamma^{\IndCoh}(\LocSys_{\check{G}},-)$ is corepresented
by $\sO_{\LocSys_{\check{G}}}$, so these two objects must correspond
to each other under geometric Langlands.

Combining these observations, we find that \eqref{eq:coeff-id-gl} and
$\Poinc_!$ both correspond to $\omega_{\LocSys_{\check{G}}}$ (up to shifts)
under geometric Langlands, so we expect the two to be isomorphic (up to shifts): 
this is the assertion of Lin's theorem.

\begin{rem}

There are various ways to recover the precise shift. First, it is built into
the proof of Lin's theorem, specifically, the construction of the comparison map 
in Theorem \ref{t:lin};
we prefer not to describe the comparison map here and leave it to Lin's forthcoming
work. 

However, assuming Theorem \ref{t:lin} \emph{up to shifts}, 
the precise value is also forced by known results. Specifically, let
$\on{Eis}_{!*} \in D(\Bun_G)$ be the $*$-pushforward of the IC sheaf on 
Drinfeld's compactification $\overline{\Bun}_B$.  
According to \cite{potential} Appendix B (11.18), 
$\coeff_!(\on{Eis}_{!*}) = k[\dim \Bun_G]$.\footnote{The reader who glances
at \cite{potential} Appendix B (11.18) 
will find additional shifts. The shift by $-\dim \Bun_N^{\Omega}$ in 
\emph{loc. cit}. does not appear for us simply because we defined
$\coeff_!$ with a shift by $\dim \Bun_N^{\Omega}$ built in.

There is also a shift by $-\dim\Bun_T$ in \emph{loc. cit}. 
We observe that in the notation of \emph{loc. cit}., we should take
$\mathcal{F} = \IC_{\Bun_T} \in \Shv(\Bun_T)^{\heartsuit}$ to recover our specific
example. Moreover, in \emph{loc. cit}. (11.18), 
the $\lambda = 0$ term involves a $*$-fiber of $\mathcal{F}$ 
at $\check{\rho}(\Omega_X) \in \Bun_T$;
as $\IC_{\Bun_T} = \sfe_{\Bun_T}[\dim \Bun_T]$, 
this yields an additional shift by $\dim \Bun_T$ cancelling the one
appearing in the equation, and ultimately leading the precise value
stated here.}
As $\on{Eis}_{!*}$ is Verdier self-dual, we see that 
$\coeff(\on{Eis}_{!*}) = k[-\dim\Bun_G]$. Finally, it is standard to
see that $\on{Eis}_{!*}$ has nilpotent singular support.

On the other hand, a version of Lin's theorem with an additional
shift (beyond the stated one) by $N \in \bZ$ would ultimately
yield a version of our Theorem \ref{t:coeff-!} with the same shift by $N$
appearing. The only one consistent with the above calculation with 
compactified Eisenstein series is $N = 0$.

\end{rem}

\subsubsection{Relation to miraculous duality}

Recall the Drinfeld-Gaitsgory \emph{miraculous duality} functor
from \cite{strange}:\footnote{In \cite{strange}, this
functor is denoted $\on{Ps-id}_{\Bun_G,!}$. Our notation is
taken instead from \cite{agkrrv2}.}
\[
\Mir:D(\Bun_G)^{\vee} \to D(\Bun_G).
\]

\noindent We remind that \cite{strange} Theorem 0.1.6 asserts that
this functor is an equivalence.

\begin{cor}\label{c:lin-mir}

$\Mir(\coeff) = \Poinc_![-2\dim \Bun_G]$.

\end{cor}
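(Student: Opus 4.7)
The strategy is to observe that this corollary is essentially a direct reformulation of Lin's theorem (Theorem \ref{t:lin}) once one unpacks the definition of the miraculous duality equivalence. By construction (cf.\ \cite{strange}, \cite{agkrrv2}), the functor $\Mir:D(\Bun_G)^{\vee}\to D(\Bun_G)$ is given explicitly by the kernel $\Delta_!\ul{\sfe}_{\Bun_G}\in D(\Bun_G\times\Bun_G)$: under the natural identification of the first tensor factor of $D(\Bun_G\times\Bun_G) \simeq D(\Bun_G) \otimes D(\Bun_G)$ with $D(\Bun_G)^{\vee}$ via the canonical $!$-self-duality on $D(\Bun_G)$, the functor $\Mir$ sends a continuous functor $F\in D(\Bun_G)^{\vee}$ to
\[\Mir(F) \simeq (F\otimes\id)(\Delta_!\ul{\sfe}_{\Bun_G}) \in D(\Bun_G).\]

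Specializing this formula to $F=\coeff$ and invoking Theorem \ref{t:lin} yields
\[\Mir(\coeff) \simeq (\coeff\otimes\id)(\Delta_!\ul{\sfe}_{\Bun_G}) \simeq \Poinc_![-2\dim\Bun_G],\]
which is the desired isomorphism.

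All the nontrivial geometric content of the corollary is concentrated in Lin's theorem, and the passage to $\Mir(\coeff)$ is a purely formal manipulation; there is no real obstacle beyond that input. The only minor technical point worth flagging is to confirm that the conventions on $\Mir$ taken from \cite{agkrrv2} --- in particular the choice of kernel $\Delta_!\ul{\sfe}_{\Bun_G}$ versus $\Delta_!\omega_{\Bun_G}$, which differ by a shift of $[2\dim\Bun_G]$, and the side on which the $!$-self-duality is applied --- match the normalization under which Theorem \ref{t:lin} is stated, so that the shift $[-2\dim\Bun_G]$ appears on the nose on both sides. This is bookkeeping rather than a substantive difficulty.
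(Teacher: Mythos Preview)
Your proof is correct and takes essentially the same approach as the paper: both deduce the corollary from Lin's theorem by unwinding the definition of $\Mir$ as the functor with kernel $\Delta_!\ul{\sfe}_{\Bun_G}$, so that $\Mir(\coeff)=(\coeff\otimes\id)(\Delta_!\ul{\sfe}_{\Bun_G})$. The paper merely makes this formal manipulation a bit more explicit by recording the general fact that $(\lambda\otimes\id)$ applied to the unit of $\sC\otimes\sC^{\vee}$ yields $\lambda$, and by tracking the computation through a commutative diagram.
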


This result is a formal consequence of \ref{t:lin}. 
We review the relevant ideas here.

First, we record the following obvious result.

\begin{lem}\label{l:lambda}

Let $\sC \in \DGCat_{cont}$ be given, and let $\lambda:\sC \to \Vect$
be a functor. Then:
\[
(\lambda \otimes \id):\sC \otimes \sC^{\vee} \to \Vect \otimes \sC^{\vee} = \sC^{\vee}
\]

\noindent maps the unit\footnote{I.e., the object
corresponding to $\id_{\sC}$ under $\sC \otimes \sC^{\vee} \simeq 
\TwoEnd_{\DGCat_{cont}}(\sC)$.} $u_{\sC}$  to $\lambda$.

\end{lem}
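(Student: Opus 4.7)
The plan is to unwind the identifications in the statement using the standard formalism of dualities in $\DGCat_{cont}$. The point is that the assertion really is a tautology once everything is named correctly, and the proof amounts to verifying that the identifications are natural enough to be compatible.

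First I would recall the dualization data giving $\sC \otimes \sC^{\vee} \simeq \TwoEnd_{\DGCat_{cont}}(\sC)$. Concretely, this comes from a unit $u_{\sC} \in \sC \otimes \sC^{\vee}$ and a counit $\varepsilon_{\sC} \colon \sC^{\vee} \otimes \sC \to \Vect$ satisfying the usual triangle identities. Under this duality, a morphism $F \colon \sC \to \sD$ corresponds, via the bijection
\[
\TwoHom_{\DGCat_{cont}}(\sC, \sD) \simeq \sD \otimes \sC^{\vee},
\]
to the object $(F \otimes \id_{\sC^{\vee}})(u_{\sC}) \in \sD \otimes \sC^{\vee}$. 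In the special case $\sD = \sC$ and $F = \id_{\sC}$, this recovers $u_{\sC}$ itself, by definition.

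Next I would apply this identification with $\sD = \Vect$ and $F = \lambda$. This yields that $\lambda \colon \sC \to \Vect$ corresponds to $(\lambda \otimes \id_{\sC^{\vee}})(u_{\sC}) \in \Vect \otimes \sC^{\vee} = \sC^{\vee}$. But by the previous paragraph (with $F = \id_{\sC}$ and the identification $u_{\sC} \leftrightarrow \id_{\sC}$) this object is exactly the image of $u_{\sC}$ under $\lambda \otimes \id$, which is the statement of the lemma.

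The only subtlety, if one chooses to be fastidious, is checking that the identification $\sC \otimes \sC^{\vee} \simeq \TwoEnd_{\DGCat_{cont}}(\sC)$ used in the formation of the unit $u_{\sC}$ agrees with the identification used to interpret $\lambda$ as an object of $\sC^{\vee}$. This is functoriality of the duality, and it is built into the definitions in \cite{gr-i}; no real work is required. I expect no obstacle beyond carefully naming the identifications.
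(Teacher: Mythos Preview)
Your proposal is correct and is exactly the tautological unwinding the paper has in mind; in fact, the paper gives no proof at all, simply recording the lemma as ``obvious.'' Your argument makes explicit that the identification $\TwoHom_{\DGCat_{cont}}(\sC,\sD) \simeq \sD \otimes \sC^{\vee}$ sends $F$ to $(F \otimes \id)(u_{\sC})$, which specialized to $\sD = \Vect$ and $F = \lambda$ is precisely the statement.
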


\begin{proof}[Proof of Corollary \ref{c:lin-mir}]

The defining property of miraculous duality is that the functor:
\[
D(\Bun_G) \otimes D(\Bun_G)^{\vee} \xar{\id \otimes \Mir} 
D(\Bun_G) \otimes D(\Bun_G) \to D(\Bun_G \times \Bun_G)
\]

\noindent sends $u_{D(\Bun_G)}$ to $\Delta_!\ul{\sfe}_{\Bun_G}$.
(We remark that each arrow above is an equivalence.)

We now have the commutative diagram:
\[
\begin{tikzcd}
D(\Bun_G) \otimes D(\Bun_G)^{\vee} 
\arrow[r,"\id \otimes \Mir"]
\arrow[d,"\coeff \otimes \id"]
&
D(\Bun_G) \otimes D(\Bun_G) 
\arrow[r]
&
D(\Bun_G \times \Bun_G)
\arrow[d,"\coeff \otimes \id",swap]
\\
D(\Bun_G)^{\vee}
\arrow[rr,"\Mir"]
&&
D(\Bun_G)
\end{tikzcd}
\]

We calculate the image of $u_{D(\Bun_G)}$ in two ways. By the above,
if we traverse the upper leg of the diagram, we obtain
$\Delta_!\ul{\sfe}_{\Bun_G}$, so Theorem \ref{t:lin} implies 
we obtain $\Poinc_![-2\dim \Bun_G]$ after applying the right arrow.
On the other hand, if we apply the left arrow, Lemma \ref{l:lambda}
implies we obtain $\coeff \in D(\Bun_G)^{\vee}$, which maps to 
$\Mir(\coeff)$ on applying the bottom arrow.

\end{proof}

\subsection{Whittaker coefficients of nilpotent sheaves}

\subsubsection{Comparison of coefficient functors}

We now prove the following assertion:

\begin{thm}\label{t:coeff-!}

There is a canonical isomorphism of functors:
\[
\coeff \simeq \coeff_![-2\dim\Bun_G]:\Shv_{\Nilp}(\Bun_G) \to \Vect.
\]

\end{thm}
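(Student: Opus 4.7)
The plan is to deduce the statement from Corollary \ref{c:lin-mir} via the comparison between miraculous duality $\Mir$ and the naive Verdier-based self-duality $\on{Id}^{\on{naive}}\colon D(\Bun_G)^{\vee}\to D(\Bun_G)$ (cf.\ the footnote to Lemma \ref{l:coeff-!-verdier}) on the subcategory $\Shv_{\Nilp}(\Bun_G)$; this comparison is a main result of \cite{agkrrv2}.

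First, I will reinterpret $\coeff_!$ in terms of the naive self-pairing $\langle \sK, \sF\rangle_{\on{naive}} \coloneqq C_{\dR,c}(\Bun_G, \sK\overset{*}{\otimes}\sF)$ on $D(\Bun_G)$. Unpacking the definition of $\coeff_!$ and applying the projection formula along $\fp$, together with the formula \eqref{eq:poinc-!} for $\Poinc_!$ (and using the symmetry $\psi\leftrightarrow -\psi$, which does not affect the isomorphism class of the Whittaker functor), one obtains a canonical isomorphism $\coeff_!(\sF)\simeq \langle \Poinc_!, \sF\rangle_{\on{naive}}$; that is, $\coeff_!$ corresponds to $\Poinc_!$ under $\on{Id}^{\on{naive}}$. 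On the other hand, Corollary \ref{c:lin-mir} says precisely that $\coeff$ corresponds to $\Poinc_![-2\dim\Bun_G]$ under the miraculous self-duality $\Mir$. Thus $\coeff$ and $\coeff_![-2\dim\Bun_G]$ both correspond to the same object $\Poinc_![-2\dim\Bun_G]\in D(\Bun_G)$, but via two \emph{different} self-dualities of $D(\Bun_G)$.

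To conclude, I invoke the comparison theorem from \cite{agkrrv2} identifying $\Mir$ with $\on{Id}^{\on{naive}}$ after restriction to $\Shv_{\Nilp}(\Bun_G)$. Under this identification, the two representations of $\coeff$ and $\coeff_![-2\dim\Bun_G]$ by the same object $\Poinc_![-2\dim\Bun_G]$ collapse into an isomorphism of the functors themselves, yielding the required natural isomorphism $\coeff\simeq \coeff_![-2\dim\Bun_G]$ on $\Shv_{\Nilp}(\Bun_G)$. The main obstacle is the careful bookkeeping of shifts in the comparison from \cite{agkrrv2}, as well as in the first step—all the sheaf-theoretic shifts must conspire so that precisely the factor $[-2\dim\Bun_G]$ emerges. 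The conceptual heart of the argument, however, is the substantial input from \cite{agkrrv2} comparing the two self-dualities on nilpotent sheaves, coupled with Lin's theorem.
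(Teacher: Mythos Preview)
Your proposal is correct and follows essentially the same route as the paper. The paper's proof applies Lemma \ref{l:nilp-kernel} (quoted from \cite{agkrrv2} Corollary 4.3.7), which is exactly the statement that $\Mir$ and $\on{Id}^{\on{naive}}$ agree on $\Shv_{\Nilp}(\Bun_G)$ in the form you use, then invokes Corollary \ref{c:lin-mir} and unwinds $C_{\dR,c}(\Bun_G,\Poinc_!\overset{*}{\otimes}-)$ via \eqref{eq:poinc-!} and base-change to recover $\coeff_!$; this is precisely your argument with the steps in a slightly different order.
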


Before proving this theorem, we record the following result:

\begin{lem}[\cite{agkrrv2}, Corollary 4.3.7]\label{l:nilp-kernel}

Let $\lambda \in \Shv(\Bun_G)^{\vee}$ be given.\footnote{We can work
with $D$-modules just as well as sheaves here; but in the non-holonomic
case, one would need to remark that $\Mir(\lambda) \overset{*}{\otimes} -$
may take values in the pro-category (although $C_{c,\dR}(\Bun_G,-)$ will
then map the result into $\Vect \subset \Pro\Vect$).}
Then we have a canonical isomorphism:
\[
\lambda|_{\Shv_{\Nilp}(\Bun_G)} \simeq 
C_{c,\dR}(\Bun_G,\Mir(\lambda) \overset{*}{\otimes} -)
\]

\noindent of functors:
\[
\Shv_{\Nilp}(\Bun_G) \to \Vect.
\]

\end{lem}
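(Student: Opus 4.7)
The strategy is to realize both sides of the claimed isomorphism as arising from a common kernel on $\Bun_G \times \Bun_G$, namely $\Delta_! \ul{\sfe}_{\Bun_G}$, so that the identification becomes a compatibility between two different ``integration'' conventions against this kernel.

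First, using the projection formula along $\Delta = \Delta_{\Bun_G}$, rewrite the right-hand side as
\[
C_{c,\dR}(\Bun_G, \Mir(\lambda) \overset{*}{\otimes} \sF) \simeq C_{c,\dR}\bigl(\Bun_G \times \Bun_G,\ \Delta_! \ul{\sfe}_{\Bun_G} \overset{*}{\otimes} (\Mir(\lambda) \boxtimes \sF)\bigr).
\]
This exhibits the right-hand side as the pairing of $\Mir(\lambda) \boxtimes \sF$ against $\Delta_! \ul{\sfe}_{\Bun_G}$ via the $*$-tensor and compactly-supported de Rham cohomology on $\Bun_G \times \Bun_G$. Second, the defining property of miraculous duality, recalled in the proof of Corollary \ref{c:lin-mir} — namely that $(\id \otimes \Mir)(u_{\Shv(\Bun_G)}) \simeq \Delta_! \ul{\sfe}_{\Bun_G}$ — allows one to rewrite the canonical evaluation $\lambda(\sF)$ also as a pairing of $\lambda \otimes \sF$ against the same kernel $\Delta_! \ul{\sfe}_{\Bun_G}$, but now via the tautological duality pairing $\Shv(\Bun_G)^{\vee} \otimes \Shv(\Bun_G) \to \Vect$.

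The content of the lemma is thus to compare these two interpretations of the same kernel. Equivalently, introducing the ``naive pseudo-identity'' functor
\[
\Phi\colon \Shv(\Bun_G) \to \Shv(\Bun_G)^{\vee},\qquad \sG \mapsto \bigl(\sF \mapsto C_{c,\dR}(\Bun_G, \sG \overset{*}{\otimes} \sF)\bigr),
\]
one wants to show that $\Phi$ agrees with $\Mir^{-1}$ after postcomposition with restriction $\Shv(\Bun_G)^{\vee} \to \Shv_{\Nilp}(\Bun_G)^{\vee}$. On all of $\Shv(\Bun_G)$ these two functors are genuinely different — this non-coincidence is the very reason miraculous duality is non-trivial — so the crucial input must come from a special feature of the nilpotent subcategory.

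The hard part is precisely this passage to $\Shv_{\Nilp}(\Bun_G)$. I would appeal to the theory developed in \cite{agkrrv1}: $\Shv_{\Nilp}(\Bun_G)$ is compactly generated, its compact objects are (ind-)holonomic and locally compact, and Verdier duality on $\Shv_{\Nilp}(\Bun_G)$ is well-behaved and compatible with miraculous duality up to an explicit twist. One reduces to checking the identification on a set of compact generators of $\Shv_{\Nilp}(\Bun_G)$; on such objects, passing to Verdier duals converts the $*$-tensor/$C_{c,\dR}$ pairing into the $!$-tensor/$C_{\dR}$ pairing, where it can be matched directly against miraculous duality through the explicit kernel description $\Delta_! \ul{\sfe}_{\Bun_G}$. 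The overall shape of the argument is therefore: reduce to compact objects of $\Shv_{\Nilp}$, pass to Verdier duals to trade $*$-conventions for $!$-conventions, and use the defining kernel description of $\Mir$ to conclude.
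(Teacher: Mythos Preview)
The paper does not give its own proof of this lemma: it is imported wholesale from \cite{agkrrv2}, Corollary 4.3.7, and used as a black box in the proof of Theorem \ref{t:coeff-!}. So there is no in-paper argument to compare against.

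Your outline is broadly correct in shape. You correctly identify that both sides are governed by the kernel $\Delta_! \ul{\sfe}_{\Bun_G}$, that the discrepancy between the two functors is exactly the discrepancy between the miraculous pseudo-identity and the naive pseudo-identity (your $\Phi$ is what \cite{agkrrv2} calls $\on{Id}^{\on{naive}}$ composed with the inverse of $\Mir$), and that the nilpotent hypothesis is where the content enters. This is the right conceptual picture.

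However, your ``hard part'' is where essentially all the work lies, and your plan for it is too vague to count as a proof. Saying you would ``appeal to the theory developed in \cite{agkrrv1}'' and ``reduce to compact generators, pass to Verdier duals'' does not isolate the actual mechanism. The substantive input is the result from \cite{agkrrv2} (not \cite{agkrrv1}) that on $\Shv_{\Nilp}(\Bun_G)$ the miraculous pseudo-identity functor agrees with the naive one --- equivalently, that miraculous duality restricted to $\Shv_{\Nilp}$ is computed by Verdier duality under the self-duality of $\Shv_{\Nilp}(\Bun_G)$. This is a genuine theorem whose proof uses the specific structure of $\Shv_{\Nilp}(\Bun_G)$ established in \cite{agkrrv1} and \cite{agkrrv2} (compact generation by objects with clean extension properties, compatibility of the two duality functors on such objects, etc.). Your plan gestures at this but does not supply it; in effect, the statement you are trying to prove and the statement you are proposing to invoke are essentially the same. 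So the proposal is a correct reformulation rather than a proof.
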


\begin{proof}[Proof of Theorem \ref{t:coeff-!}]

For $\sF \in \Shv_{\Nilp}(\Bun_G)$, Lemma \ref{l:nilp-kernel} yields:
\[
\coeff(\sF) = C_c(\Bun_G, \sF \overset{*}{\otimes} \Mir(\coeff)).
\]

\noindent Applying Corollary \ref{c:lin-mir}, the right hand side is:
\[
C_c(\Bun_G, \sF \overset{*}{\otimes} \Poinc_![-2\dim \Bun_G]).
\]

\noindent Applying the formula \eqref{eq:poinc-!} and base-change,
the right hand side is $\coeff_!(\sF)[-2\dim\Bun_G]$, yielding the claim.

\end{proof}

\subsubsection{}

We deduce:

\begin{cor}\label{c:coeff-duality}

For $\sF \in \Shv_{\Nilp}(\Bun_G)$ locally compact, we have:
\[
\coeff(\bD^{\on{Verdier}} \sF) = \coeff(\sF)^{\vee}[-2\dim\Bun_G].
\]

\noindent That is, $\coeff[\dim \Bun_G]$ commutes with Verdier duality
on $\Shv_{\Nilp}(\Bun_G)$.

\end{cor}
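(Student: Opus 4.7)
The plan is to derive the corollary directly by chaining together Theorem~\ref{t:coeff-!} with the Verdier-duality comparison of coefficient functors in Lemma~\ref{l:coeff-!-verdier}. Both are already available, and no further geometric input is needed.

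First, I would note that for $\sF \in \Shv_{\Nilp}(\Bun_G)$ locally compact, the Verdier dual $\bD^{\on{Verdier}}\sF$ again lies in $\Shv_{\Nilp}(\Bun_G)$ and is again locally compact; this is the standard fact that singular support is preserved under Verdier duality, which holds in the regular holonomic/constructible setting we are working in. In particular, both $\sF$ and $\bD^{\on{Verdier}}\sF$ are in $D_{\hol}(\Bun_G)$, so Lemma~\ref{l:coeff-!-verdier} applies, and both sit inside $\Shv_{\Nilp}(\Bun_G)$, so Theorem~\ref{t:coeff-!} applies as well.

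Next, I would apply Theorem~\ref{t:coeff-!} to the sheaf $\bD^{\on{Verdier}}\sF$ to obtain
\[
\coeff(\bD^{\on{Verdier}}\sF) \simeq \coeff_!(\bD^{\on{Verdier}}\sF)[-2\dim \Bun_G].
\]
Then I would invoke Lemma~\ref{l:coeff-!-verdier} applied to $\sF$, which asserts
\[
\coeff(\sF) \simeq \coeff_!(\bD^{\on{Verdier}}\sF)^{\vee}.
\]
Since $\coeff(\sF)$ is a perfect complex of vector spaces (because $\sF$ is locally compact and constructible, so $\coeff(\sF)$ is compact in $\Vect$), we may dualize back to get $\coeff_!(\bD^{\on{Verdier}}\sF) \simeq \coeff(\sF)^{\vee}$. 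Substituting into the previous display yields
\[
\coeff(\bD^{\on{Verdier}}\sF) \simeq \coeff(\sF)^{\vee}[-2\dim \Bun_G],
\]
which is the desired identity. Rewriting with the shift absorbed as $\coeff[\dim\Bun_G]$, this says that $\coeff[\dim\Bun_G]$ intertwines Verdier duality on $\Shv_{\Nilp}(\Bun_G)^{\on{loc.cpt}}$ with linear duality on $\Vect$.

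There is essentially no obstacle here: Theorem~\ref{t:coeff-!} (which rests on Lin's theorem) is the substantive input, and once it is in hand the corollary reduces to manipulating the two displayed isomorphisms. The only small point to verify is the preservation of nilpotent singular support and local compactness under $\bD^{\on{Verdier}}$, so that both theorems can indeed be quoted as above.
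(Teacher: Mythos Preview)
Your proposal is correct and follows essentially the same approach as the paper, which simply says the corollary follows from Lemma~\ref{l:coeff-!-verdier} and Theorem~\ref{t:coeff-!}. One cosmetic point: you can avoid the double-dualization step (and the accompanying compactness check on $\coeff(\sF)$) by instead applying Lemma~\ref{l:coeff-!-verdier} to $\bD^{\on{Verdier}}\sF$ rather than to $\sF$, giving $\coeff(\bD^{\on{Verdier}}\sF)=\coeff_!(\sF)^{\vee}$ directly, and then applying Theorem~\ref{t:coeff-!} to $\sF$.
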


Indeed, this follows from Lemma \ref{l:coeff-!-verdier} and Theorem \ref{t:coeff-!}.

\subsubsection{Variant with conductor}

Now fix $D$ a $\check{\Lambda}^+$-valued divisor on $X$. 

We have the following generalization of Theorem \ref{t:coeff-!}.

\begin{cor}\label{c:coeff-!-D}

There is a canonical isomorphism of functors:
\[
\coeff_D \simeq \coeff_{D,!}[-2\dim\Bun_G]:\Shv_{\Nilp}(\Bun_G) \to \Vect.
\]

\end{cor}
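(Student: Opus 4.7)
The plan is to chain together the two Casselman--Shalika formulas with Theorem \ref{t:coeff-!}. Concretely, for $\sF \in \Shv_{\Nilp}(\Bun_G)$, I would write
\[
\coeff_D(\sF) \;\simeq\; \coeff(\sV^D \star \sF) \;\simeq\; \coeff_!(\sV^D \star \sF)[-2\dim\Bun_G] \;\simeq\; \coeff_{D,!}(\sF)[-2\dim\Bun_G],
\]
where the first isomorphism is Theorem \ref{t:cs}, the middle one is Theorem \ref{t:coeff-!}, and the last is Corollary \ref{c:cs-!}.

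The only nontrivial point in this chain is the applicability of Theorem \ref{t:coeff-!} to the object $\sV^D \star \sF$. That is, I need to verify that convolution with $\sV^D$ preserves the subcategory $\Shv_{\Nilp}(\Bun_G) \subset D(\Bun_G)$. This is the standard stability of nilpotent singular support under the (spherical) Hecke action at finitely many points of $X$; it is a special case of the Nadler--Yun style estimate for singular supports of convolutions (the same type of argument invoked in Proposition \ref{p:parahoric-irreg} for the irregular setting). Since $\sV^D$ is built from representations of $\check{G}$ inserted at finitely many points of $X$, this applies directly.

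Once that stability is in hand, the corollary falls out immediately by the three-step chain above, with no further calculation. I do not anticipate a genuine obstacle: the work was entirely done in Theorem \ref{t:coeff-!} (i.e., Lin's theorem and miraculous duality), and the Casselman--Shalika formulas are purely the mechanism that transports that identity from the ``primary'' coefficient functor to the one with conductor $D$. The only subtlety worth flagging is a shift/sign audit to confirm that both Casselman--Shalika isomorphisms introduce compatible normalizations, so that the total shift on the right-hand side is exactly $-2\dim\Bun_G$ and no extra shift creeps in from the Hecke functor's $t$-exactness conventions; both Theorem \ref{t:cs} and Corollary \ref{c:cs-!} are stated as equalities of functors with no additional shift, so this audit should be immediate.
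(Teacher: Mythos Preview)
Your proposal is correct and matches the paper's proof exactly: the paper simply says the result is immediate from Theorem \ref{t:coeff-!}, Theorem \ref{t:cs}, and Corollary \ref{c:cs-!}, which is precisely your three-step chain. Your explicit check that $\sV^D \star \sF$ remains in $\Shv_{\Nilp}(\Bun_G)$ is a detail the paper leaves implicit but is indeed needed and follows from the Nadler--Yun singular support estimate, as you note.
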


\begin{proof}

This is immediate from Theorem \ref{t:coeff-!}, Theorem \ref{t:cs}, and 
Corollary \ref{c:cs-!}.

\end{proof}

As with Corollary \ref{c:coeff-duality}, we have:

\begin{cor}\label{c:coeff-d-duality}

For $D$ as above, $\coeff_D[\dim \Bun_G]$ commutes with 
Verdier duality on $\Shv_{\Nilp}(\Bun_G)$.

\end{cor}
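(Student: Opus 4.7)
The plan is to mirror the proof of Corollary \ref{c:coeff-duality}, now for the parametrized coefficient functors $\coeff_D$ and $\coeff_{D,!}$. That proof combined two ingredients: Lemma \ref{l:coeff-!-verdier}, which relates $\coeff$ and $\coeff_!$ by Verdier duality and holds on all locally compact holonomic sheaves, and Theorem \ref{t:coeff-!}, which identifies $\coeff$ with $\coeff_![-2\dim\Bun_G]$ on $\Shv_{\Nilp}(\Bun_G)$. The nilpotent-specific ingredient for the parametrized setting is already available: it is Corollary \ref{c:coeff-!-D}, giving $\coeff_D \simeq \coeff_{D,!}[-2\dim\Bun_G]$ on $\Shv_{\Nilp}(\Bun_G)$. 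It remains only to supply a parametrized version of Lemma \ref{l:coeff-!-verdier}.

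First, I would verify the identity $\coeff_D(\sF) \simeq \coeff_{D,!}(\bD^{\on{Verdier}}\sF)^{\vee}$ for $\sF \in D_{\hol}(\Bun_G)$ locally compact. The definitions of $\coeff_D$ and $\coeff_{D,!}$ involve exactly the same geometric data as $\coeff$ and $\coeff_!$, namely the projection $\fp_D:\Bun_N^{\Omega(-D)} \to \Bun_G$ and the character $\psi_D:\Bun_N^{\Omega(-D)} \to \bA^1$, but with $(\fp,\psi)$ replaced by $(\fp_D,\psi_D)$. Since Verdier duality exchanges $!$ and $*$-operations, and since $\psi_D^!(\exp)$ is Verdier dual, up to the chosen shift conventions, to $\psi_D^*(\exp[-2])$, the argument proving Lemma \ref{l:coeff-!-verdier} goes through verbatim; the shifts by $\pm\dim\Bun_N^{\Omega(-D)}$ built into $\coeff_D$ and $\coeff_{D,!}$ cancel as needed.

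Substituting this identity into Corollary \ref{c:coeff-!-D}, we obtain for $\sF \in \Shv_{\Nilp}(\Bun_G)$ locally compact
\[
\coeff_D(\sF) \;\simeq\; \coeff_{D,!}(\bD^{\on{Verdier}}\sF)^{\vee} \;\simeq\; \coeff_D(\bD^{\on{Verdier}}\sF)^{\vee}[2\dim\Bun_G],
\]
which, after shifting by $\dim\Bun_G$ and taking duals, is exactly the assertion that $\coeff_D[\dim\Bun_G]$ commutes with Verdier duality. No step is substantive beyond the routine verification in the first paragraph.

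As a sanity check, or as an alternative route, one could combine Theorem \ref{t:cs} with Corollary \ref{c:coeff-duality} plus the fact that Hecke convolution with $\sV^D$ preserves $\Shv_{\Nilp}(\Bun_G)$ and commutes with Verdier duality there: Satake sheaves, being IC sheaves of spherical Schubert varieties, are Verdier self-dual, and the Hecke correspondence is ind-proper on one leg and smooth on the other, so convolution commutes with $\bD^{\on{Verdier}}$. Applying this to the formula $\coeff_D \simeq \coeff(\sV^D \star -)$ and invoking Corollary \ref{c:coeff-duality} on $\sV^D \star \sF$ yields the same conclusion. Either path avoids the need for new ideas; the only thing to be careful about is bookkeeping of cohomological shifts so that the final statement lands at $[\dim \Bun_G]$ with no residual discrepancy.
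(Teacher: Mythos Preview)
Your proposal is correct and follows essentially the same approach as the paper: the paper simply says ``As with Corollary \ref{c:coeff-duality},'' meaning one combines the parametrized analogue of Lemma \ref{l:coeff-!-verdier} with Corollary \ref{c:coeff-!-D}, exactly as you do. Your alternative route via Theorem \ref{t:cs} and the Verdier self-duality of Hecke functors is also valid and is in the same spirit as how the paper derives Corollary \ref{c:coeff-!-D} from Theorem \ref{t:coeff-!}.
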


\subsection{Exactness}

We now prove Theorem \ref{t:coeff-exact}. In fact, we prove the 
following generalization.

\begin{thm}\label{t:coeff-d-exact}

For every $D$ a $\check{\Lambda}^+$-valued divisor on $X$, the 
functor:
\[
\coeff_D[\dim \Bun_G]:\Shv_{\Nilp}(\Bun_G) \to \Vect
\]

\noindent is $t$-exact.

\end{thm}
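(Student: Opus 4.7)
The plan is to reduce Theorem \ref{t:coeff-d-exact} to the case $D = 0$ already handled by Theorem \ref{t:coeff-exact}, using the Casselman-Shalika formula together with the $t$-exactness of Hecke functors on the tempered quotient.

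First, I would observe that the primary Whittaker coefficient functor $\coeff$ factors through the tempered quotient: the subcategory $\Shv_{\Nilp}(\Bun_G)^{\at}$ is by construction killed by Whittaker averaging, which at the global level translates into $\coeff$ annihilating anti-tempered objects. Writing $p : \Shv_{\Nilp}(\Bun_G) \to \Shv_{\Nilp}(\Bun_G)^{\temp}$ for the projection, we obtain $\coeff \simeq \overline{\coeff} \circ p$ for a unique continuous $\overline{\coeff}$. By \S\ref{ss:hecke-exact-nilp}, the $t$-structure on $\Shv_{\Nilp}(\Bun_G)^{\temp}$ is set up so that $p$ is $t$-exact and essentially surjective. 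Combined with Theorem \ref{t:coeff-exact}, this forces $\overline{\coeff}[\dim \Bun_G] : \Shv_{\Nilp}(\Bun_G)^{\temp} \to \Vect$ to be $t$-exact.

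Next, I would apply Theorem \ref{t:cs}:
\[
\coeff_D \simeq \coeff(\sV^D \star -).
\]
Writing $D = \sum_i \check{\lambda}_i \cdot x_i$ with the $x_i$ distinct, the object $\sV^D$ is built out of the irreducible representations $V^{\check{\lambda}_i} \in \Rep(\check{G})^{\heart}$ inserted at the $x_i$, and the convolution $\sV^D \star -$ is a composition of the Hecke functors $V^{\check{\lambda}_i} \star_{x_i} -$ (which commute up to canonical isomorphism because the points are distinct). Since the Hecke action preserves $\Shv_{\Nilp}(\Bun_G)$ and commutes with the projection $p$, we get
\[
\coeff_D \simeq \overline{\coeff} \circ (\sV^D \star -) \circ p,
\]
where $\sV^D \star -$ acts on $\Shv_{\Nilp}(\Bun_G)^{\temp}$.

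Finally, by Theorem \ref{t:hecke-exact-loc} (applied in the variant of \S\ref{ss:hecke-exact-nilp}), each individual Hecke functor $V^{\check{\lambda}_i} \star -$ is $t$-exact on $\Shv_{\Nilp}(\Bun_G)^{\temp}$, hence so is their composition $\sV^D \star -$. Thus $\coeff_D[\dim \Bun_G]$ is a composition of three $t$-exact functors ($p$, then $\sV^D \star -$, then $\overline{\coeff}[\dim \Bun_G]$) and is therefore $t$-exact. There is no serious obstacle: the entire content of the generalization to arbitrary $D$ is formal, once Theorem \ref{t:coeff-exact} and the exactness results of \S\ref{s:hecke-exact} are in hand — the former being the real input (via Lin's theorem) and the latter being the key structural fact that tempering converts the defect of exactness of Hecke functors into genuine exactness.
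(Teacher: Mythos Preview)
Your argument is correct and is exactly the paper's own reduction (Step~3 of the proof of Theorem~\ref{t:coeff-d-exact}): once the $D=0$ case is known, one factors $\coeff_D$ through the tempered quotient via Theorem~\ref{t:cs} and uses $t$-exactness of Hecke functors there. The one point you glossed over is that applying Theorem~\ref{t:hecke-exact-loc} at several distinct points $x_i$ simultaneously requires the result of \cite{independence}, namely that $\Shv_{\Nilp}(\Bun_G)^{x_i\text{-}\temp}$ is independent of $i$; the paper flags this explicitly.
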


\begin{proof}

\step\label{st:coeff-exact-1}

First, we prove this result in the \emph{sufficiently large} case,
cf. \S \ref{sss:large-div}.

We begin by naively estimating the amplitude of $\coeff_D$ on 
$D(\Bun_G)$. Decomposing into steps, we observe:

\begin{itemize}

\item The functor $\fp_D^!$ has amplitude 
$\leq \dim \Bun_G-\dim \Bun_N^{\Omega(-D)}$. 

\item Tensoring
with the character is designed to be $t$-exact. 

\item By Lemma \ref{l:large-div}, for $D$ sufficiently large, 
$C_{\dR}(\Bun_N^{\Omega(-D)},-)$ is right $t$-exact.

\item We recall that there is a shift by 
$\dim \Bun_N^{\Omega(-D)}$ in the definition of $\coeff_D$. 

\end{itemize}

\noindent Combining these observations, we see that 
$\coeff_D$ has cohomological amplitude
$\leq \dim \Bun_G$. 

On the other hand, the same reasoning shows that\footnote{One 
can work just as well with $D_{\hol}(\Bun_G)$ here.}
$\coeff_{D,!}:\Shv(\Bun_G) \to \Vect$ has amplitude $\geq -\dim \Bun_G$.

Now the exactness follows from Corollary \ref{c:coeff-!-D}.

\step 

We now prove the result for $D = 0$.

First, we note that $\Poinc_! \in D(\Bun_G)$ lies in the left orthogonal to 
$D(\Bun_G)^{\at}$; indeed, this follows immediately from derived Satake
and the definition of temperedness. 
It follows that the functor $\coeff = \ul{\Hom}(\Poinc_!,-)$ factors
through the projection to $D(\Bun_G)^{\temp}$, i.e., we have:
\[
\begin{tikzcd}
D(\Bun_G) 
\arrow[drr,"\coeff"]
\arrow[d,"p"] \\
D(\Bun_G)^{\temp} 
\arrow[rr,"\widetilde{\coeff}",dotted]
&&
\Vect.
\end{tikzcd}
\]

\noindent The same applies in the presence of a divisor. 

Let $x \in X$ be a point and let 
$\check{\lambda}$ be a coweight with $\check{\lambda}\cdot x$ sufficiently 
large (e.g., $\check{\lambda} = 2n\check{\rho}$ for $n\gg 0$).
The Hecke action of $\Rep(\check{G})$ on $D(\Bun_G)$ below is 
considered at the point $x \in X$.

Clearly the trivial representation 
is a summand of $V^{\check{\lambda}} \otimes V^{-w_0(\check{\lambda})}$. 
Therefore, $\coeff$ is a summand of 
$\coeff((V^{\check{\lambda}} \otimes V^{-w_0(\check{\lambda})})\star -)$.
Therefore, it suffices to show that the latter functor is $t$-exact.
For $\sF \in \Shv_{\Nilp}(\Bun_G)^{\leq 0}$, we have:
\[
\begin{gathered}
\coeff\big((V^{\check{\lambda}} \otimes V^{-w_0(\check{\lambda})})\star \sF\big) = 
\coeff(V^{\check{\lambda}} \star (V^{-w_0(\check{\lambda})}\star \sF)) 
\overset{\text{Thm. \ref{t:cs}}}{=} \\ 
\coeff_{\check{\lambda} \cdot x}(V^{-w_0(\check{\lambda})} \star \sF) = 
\widetilde{\coeff}_{\check{\lambda} \cdot x}(p(V^{-w_0(\check{\lambda})}\star \sF)) = 
\widetilde{\coeff}_{\check{\lambda} \cdot x}(V^{-w_0(\check{\lambda})}\star p(\sF)). 
\end{gathered}
\]

\noindent Now by Theorem \ref{t:hecke-exact} (and \S \ref{ss:hecke-exact-nilp}),
$p(\sF) \in \Shv(\Bun_G)^{\temp,\leq 0}$,
so $V^{-w_0(\check{\lambda})} \star p(\sF) \in \Shv_{\Nilp}(\Bun_G)^{\temp,\leq 0}$ by 
Theorem \ref{t:hecke-exact}. 
It follows from Step \ref{st:coeff-exact-1} that 
$\widetilde{\coeff}_{\check{\lambda}\cdot x}(V^{-w_0(\check{\lambda})}\star p(\sF))$ 
is in degrees $\leq \dim \Bun_G$,
giving right exactness of $\widetilde{\coeff}_{\check{\lambda}\cdot x}(V^{-w_0(\check{\lambda}\cdot x)} \star -)[\dim \Bun_G]$,
so (by the above), right $t$-exactness of $\coeff[\dim \Bun_G]$ as well.

The same logic applies for left $t$-exactness, giving the claim.

\step We now deduce the claim for general $D$. In fact, this is obvious
from the $D = 0$ case, given Theorem \ref{t:hecke-exact} (in the form
of \S \ref{ss:hecke-exact-nilp}, and using \cite{independence} to allow
divisors with support at multiple points) and Theorem \ref{t:cs}.

\end{proof}

\part{Conservativeness of the Whittaker functor}

\section{Regular nilpotent singular support and Hecke functors}\label{s:hecke-to-kostant}

\subsection{Statement of the result}

This section is dedicated to the proof of the following result.

\begin{thm}\label{t:hecke-to-kostant}

Suppose $\sF \in \Shv_{\Nilp}(\Bun_G)$ has the property 
that $\SS(\sF) \cap \o{\Nilp} \neq \emptyset$, i.e., 
$\SS(\sF) \not\subset \Nilp_{\irreg}$.

Then there exists $D$ a $\check{\Lambda}^+$-valued divisor on $X$
such that $\Nilp^{\Kos} \subset \SS(\sV^D \star \sF)$.

\end{thm}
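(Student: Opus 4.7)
The plan is to transport a generic regular-nilpotent Higgs bundle $(\sP_G,\varphi) \in \o{\Nilp}^{\check{\lambda}} \subset \SS(\sF)$ to a point of $\Nilp^{\Kos}$ via a Hecke modification at a carefully chosen $\check{\Lambda}^+$-valued divisor $D$, and then to use standard singular-support estimates for the Hecke correspondence to deduce the desired inclusion.

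First I extract a useful stratum. Since $\SS(\sF) \subset \Nilp$ is closed, conical, and Lagrangian, it is a union of irreducible components of $\Nilp$; combined with Proposition~\ref{p:nilp-reg} and the identification of $\Irr(\o{\Nilp})$ with $\check{\Lambda}^{\on{rel}}$, the hypothesis $\SS(\sF) \cap \o{\Nilp} \neq \emptyset$ forces $\overline{\o{\Nilp}^{\check{\lambda}}} \subset \SS(\sF)$ for some $\check{\lambda} \in \check{\Lambda}^{\on{rel}}$. A generic point of this stratum has discrepancy divisor of total degree $\bar{\check{\lambda}} + (2g-2)\check{\rho}$ in $\check{\Lambda}_{G^{\ad}}^+$ by \eqref{eq:disc-rho}, and relevance of $\check{\lambda}$ is equivalent to $\check{\lambda} - (2-2g)\check{\rho} \in \check{\Lambda}^+$ being dominant.

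Second I construct $D$. At the Higgs level (via the Springer description of \S\ref{sss:springer-desc}), a Hecke modification at $y$ by a dominant coweight $\check{\mu}$ shifts the underlying coweight by $-\check{\mu}$ (in the standard spherical convention) and modifies the local discrepancy at $y$ by $\bar{\check{\mu}}$. Since $\check{\lambda} - (2-2g)\check{\rho}$ is dominant, I can write it as a sum $\sum_y \check{\mu}_y$ of dominant coweights $\check{\mu}_y \in \check{\Lambda}^+$ supported on a divisor $D = \sum_y \check{\mu}_y \cdot y$, where the points $y$ and the coweights $\check{\mu}_y$ are tuned so that (i) at each point in the original discrepancy support, $\bar{\check{\mu}}_y$ matches and thus cancels the local discrepancy, and (ii) at any auxiliary points one uses central cocharacters (so $\bar{\check{\mu}}_y = 0$, introducing no new discrepancy). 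Remark~\ref{r:disc-lifts}, together with the freedom to scale $D$ by a large integer and to place auxiliary terms, ensures that the combinatorial conditions on lifting $\check{\Lambda}_{G^{\ad}}^+$-data to $\check{\Lambda}^+$ are satisfiable.

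The heart of the proof is then the transport at the level of singular support. The Hecke correspondence for $D$ lifts to a correspondence of Higgs moduli $\Higgs_G \xleftarrow{\pr_1} \Higgs_{\Hecke^D} \xrightarrow{\pr_2} \Higgs_G$ parameterising pairs of Higgs bundles agreeing off the support of $D$. Kashiwara-Schapira estimates yield $\SS(\sV^D \star \sF) \subset \pr_{1,*}\pr_2^{-1}\SS(\sF)$; on the open locus of generically regular Higgs bundles, the fibers of the correspondence are essentially finite (the centraliser of a regular nilpotent is only the centre), so the upper bound is saturated at the generic point of the relevant stratum. Combined with the construction of $D$ in step two, this places a generic point of $\Nilp^{\Kos}$ inside $\SS(\sV^D \star \sF)$; since the latter is contained in $\Nilp$ and is itself a union of irreducible components of $\Nilp$, irreducibility of $\Nilp^{\Kos}$ forces the whole component to be contained in $\SS(\sV^D \star \sF)$. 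The main obstacle is precisely this last step: turning the Kashiwara-Schapira upper bound into a lower bound at the generic point of $\Nilp^{\Kos}$. This rests on the essentially étale nature of the Hecke correspondence on the regular Higgs locus---where uniqueness of modifications up to the centre makes microlocal transport faithful---and is what the introductory outline alludes to as ``basic geometry of the global nilpotent cone.''
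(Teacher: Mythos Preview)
Your overall strategy---pick a stratum $\o{\Nilp}^{\check{\lambda}} \subset \SS(\sF)$ and transport it to $\Nilp^{\Kos}$ via a Hecke modification---matches the paper's. But the argument has a real gap precisely where you flag it, and the paper's fix is substantively different from what you sketch.

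\textbf{The gap.} You try to upgrade the Kashiwara--Schapira \emph{upper} bound $\SS(\sV^D \star \sF) \subset \pr_{1,*}\pr_2^{-1}\SS(\sF)$ to a \emph{lower} bound at the generic point of $\Nilp^{\Kos}$, appealing to the Hecke correspondence being ``essentially \'etale'' on the regular Higgs locus. This is not an argument: the Hecke correspondence itself (the map $p_2:\sH_x^{\check{\lambda}} \to \Bun_G$) has affine-Grassmannian fibers, not finite ones. What is finite is the fiber of the \emph{composed} correspondence on $T^*\Bun_G$ over a fixed regular-nilpotent target point---and even then, finiteness alone does not saturate an upper bound. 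One needs a genuine lower-bound criterion for singular support under pushforward.

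\textbf{What the paper does instead.} The paper invokes the \emph{isolated pair} criterion (\cite{agkrrv1} Theorem 20.1.3, quoted here as Theorem~\ref{t:isolated-ss}): if the preimage of $\SS(\sF)$ in the cotangent correspondence is zero-dimensional at a given point $(x,\xi)$, then $(f(x),\xi)$ lies in $\SS(f_{*,dR}\sF)$. The work then becomes a concrete \emph{local} computation (Proposition~\ref{p:spr}): for an everywhere-regular nilpotent $\vph \in \o{\sN}(O)$ and dominant $\check{\lambda}$, the intersection $\ol{\Gr}_G^{\check{\lambda}} \cap \Spr_{\ol{\check{\lambda}}}^{\vph}$ is (set-theoretically) a single point. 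This is proved by hand using the structure of the centralizer of a regular nilpotent and the Mirkovi\'c--Vilonen description of $\ol{\Gr}_G^{\check{\lambda}} \cap \check{\eta}(t)N(K)$. That single-point statement is exactly the isolated-pair hypothesis, and the theorem follows.

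\textbf{On your construction of $D$.} Your multi-point construction, matching $D$ to the discrepancy divisor of a chosen point of $\o{\Nilp}^{\check{\lambda}}$, is more elaborate than necessary and introduces a subtle issue: the discrepancy divisor varies with the point of the stratum (only its total degree is fixed by $\check{\lambda}$), so your $D$ depends on a choice you have not made explicit. The paper avoids this by working at a \emph{single} point $x$: it takes $D = \check{\lambda} \cdot x$, picks the specific Higgs bundle $(\sP_{G,1}, f_D^{\on{glob}})$ whose discrepancy is concentrated entirely at $x$, and targets the canonical base-point $f^{\on{glob}}$ of $\Nilp^{\Kos}$. This localization to one point is what makes the isolated-pair verification tractable via Proposition~\ref{p:spr}. (A minor technical wrinkle: the paper convolves with the $*$-extension $\o{\sS}^{\check{\lambda}}$ rather than the IC sheaf, then passes to a filtration by spherical sheaves to conclude for some $\sV^D$.)
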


Combined with Theorem \ref{t:at}, we obtain:

\begin{cor}\label{c:hecke-to-kostant}

Suppose $\sF \in \Shv_{\Nilp}(\Bun_G)$. Then either:

\begin{enumerate}

\item $\sF \in \Shv_{\Nilp}(\Bun_G)^{\at}$, or:

\item There exists $D$ a $\check{\Lambda}^+$-valued divisor on $X$
such that $\Nilp^{\Kos} \subset \SS(\sV^D \star \sF)$.

\end{enumerate}

\end{cor}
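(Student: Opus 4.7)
The plan is to dichotomize on whether $\SS(\sF)$ is contained in $\Nilp_{\irreg}$ or not, and deduce the two conclusions of the corollary from Theorem \ref{t:at} and Theorem \ref{t:hecke-to-kostant} respectively. Since $\sF \in \Shv_{\Nilp}(\Bun_G)$, its singular support is a closed conical subset of $T^*\Bun_G$ contained in $\Nilp$; and since $\o{\Nilp}$ is by construction the open complement of $\Nilp_{\irreg}$ inside $\Nilp$, exactly one of the two mutually exclusive conditions holds: either $\SS(\sF) \subset \Nilp_{\irreg}$, or $\SS(\sF)$ has non-empty intersection with $\o{\Nilp}$.

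In the first alternative, $\sF$ lies in $\Shv_{\Nilp_{\irreg}}(\Bun_G)$ by the very definition of a singular support condition. Theorem \ref{t:at} then asserts that any such object is anti-tempered when viewed in $D(\Bun_G)$. To land in $\Shv_{\Nilp}(\Bun_G)^{\at}$ rather than merely $D(\Bun_G)^{\at}$, I would appeal to the commutative square of fully faithful embeddings set up in \S \ref{ss:hecke-exact-nilp}, combined with the fact that the spherical Hecke action preserves $\Shv_{\Nilp}(\Bun_G)$; this forces the anti-tempered part to be computed internally to $\Shv_{\Nilp}(\Bun_G)$, giving conclusion (1).

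In the second alternative, the hypothesis of Theorem \ref{t:hecke-to-kostant} is exactly satisfied, so that theorem produces a $\check{\Lambda}^+$-valued divisor $D$ with $\Nilp^{\Kos} \subset \SS(\sV^D \star \sF)$, which is conclusion (2). Note that the two cases are not logically exclusive at the level of the stated conclusions: the zero sheaf, for instance, is vacuously anti-tempered, but the corollary only claims at least one of the two alternatives.

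Consequently the corollary itself presents no obstacle: all of the substantive content has been deposited into Theorem \ref{t:at} (which ultimately rests on Losev's theorem via Beilinson--Bernstein, and is the genuinely hard input) and Theorem \ref{t:hecke-to-kostant} (which rests on geometry of the global nilpotent cone via Proposition \ref{p:nilp-reg} and standard functoriality of singular support under Hecke convolution). The proof of the corollary is the one-line combination of these two results along the dichotomy above.
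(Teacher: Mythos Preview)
Your proof is correct and is precisely the paper's approach: the paper states the corollary immediately after Theorem \ref{t:hecke-to-kostant} with the single phrase ``Combined with Theorem \ref{t:at}, we obtain,'' and your dichotomy on $\SS(\sF) \subset \Nilp_{\irreg}$ versus $\SS(\sF) \cap \o{\Nilp} \neq \emptyset$ unpacks exactly this combination. Your additional remark about why the anti-temperedness lands in $\Shv_{\Nilp}(\Bun_G)^{\at}$ rather than merely $D(\Bun_G)^{\at}$ is a useful clarification the paper leaves implicit.
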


\subsection{A local result}

We begin with a purely local result concerning Hecke modifications 
and affine Springer fibers.

\subsubsection{}

We work around an implicit point $x \in X(k)$ with coordinate $t$.
Below, it is convenient for indexing purposes 
to consider $\Gr_G$ as the quotient $G(O)\backslash G(K)$, i.e., we
quotient on the \emph{left}. There is a residual $G(K)$-action on the
right. We let $\Gr_G^{\check{\mu}}$ denote the $G(O)$-orbit through 
$\check{\mu}(t) \in\Gr_G$, and we let $\ol{\Gr}_G^{\check{\mu}}$ denote
its closure.

Let $\xi \in \fg((t))$ be given. We define the \emph{affine Springer fiber}
$\Spr^{\xi}$ as: 
\[
\Spr^{\xi} \coloneqq
G(O)\backslash \{g \in G(K) \mid \Ad_g(\xi) \in \fg[[t]]\} \subset \Gr_G.
\]

\subsubsection{}

Below, we fix a $k$-point $\vph \in \sN(O)$, i.e., 
a nilpotent element $\vph \in \fg[[t]]$. We suppose
that $\vph$ is \emph{generically regular}, i.e., 
the induced element of $\fg((t))$ is regular.

As in \S \ref{sss:discrepancy}, there is a canonical
\emph{discrepancy} $\on{disc}(\vph) \in \check{\Lambda}_{G^{\ad}}^+$ 
attached to this element. Specifically, the perspective of
\emph{loc. cit}. attaches a $\check{\Lambda}_{G^{\ad}}^+$-valued divisor
on the formal disc to $\vph$, but we can think of this simply 
as an element of $\check{\Lambda}_{G^{\ad}}^+$ via its degree.

Specifically, if $\check{\mu} \in \check{\Lambda}_{G^{\ad}}^+$ is 
a coweight, saying $\vph$ has discrepancy $\check{\mu}$ means
that it can be $G(O)$-conjugated into:
\begin{equation}\label{eq:discrepancy-explicit}
\Ad_{T(O)\check{\mu}(t)}(e)+[\fn,\fn][[t]] \subset \fg[[t]]
\end{equation}

\noindent for $e \in \oplus_{i \in \sI_G}\fn_{\alpha_i} \subset \fn$ 
a regular nilpotent element.

\begin{rem}

For $\check{\mu} \in \check{\Lambda}_{G^{\ad}}^+$, 
the construction of \S \ref{sss:springer-desc} yields
a locally closed scheme $\sN(O)_{\check{\mu}} \subset \sN(O)$
parametrizing generically regular $\vph \in \sN(O)$ with
discrepancy $\check{\mu}$. Specifically, 
we take:
\[
\sN(O)_{\check{\mu}} \coloneqq \widetilde{\sN}(O) 
\underset{\prod_{i \in \sI_G} (\bA^1/\bG_m)(O)}{\times} \Spec(k)
\]

\noindent where $\Spec(k) \to \prod_{i \in \sI_G} (\bA^1/\bG_m)(O)$
corresponds to $\check{\mu}$ (i.e., it is the point
$(t^{(\check{\mu},\alpha_i)})_{i \in \sI_G}$).

\end{rem}

\subsubsection{}

For $\xi \in \fg((t))$ regular nilpotent and
$\check{\mu} \in \check{\Lambda}_{G^{\ad}}^+$, let 
$\Spr_{\check{\mu}}^{\xi} \subset \Spr^{\xi}$ denote the locally
closed subscheme:
\[
\Spr_{\check{\mu}}^{\xi} \coloneqq
G(O)\backslash \{g \in G(K) \mid \Ad_g(\xi) \in \fg[[t]], 
\on{disc}(\Ad_g(\xi)) = \check{\mu}
\} \subset \Gr_G.
\]

\noindent In other words, we take:
\[
\sN(O)_{\check{\mu}}/G(O) \underset{\fg((t))/G(K)}{\times} \{\xi\}.
\]

\subsubsection{Main local result}

We now have:

\begin{prop}\label{p:spr}

Let $\vph \in \o{\sN}(O)$ be an (\emph{everywhere}) regular nilpotent element
of $\fg[[t]]$.

Suppose $\check{\lambda} \in \check{\Lambda}^+$ is a dominant
coweight, and let $\ol{\check{\lambda}} \in \check{\Lambda}_{G^{\ad}}^+$ denote
the induced coweight for $G^{\ad}$.

Then:
\[
(\ol{\Gr}_G^{\check{\lambda}} 
\cap \Spr_{\ol{\check{\lambda}}}^{\vph})^{\on{red}} = \Spec(k) \in \Gr_G.
\]

\noindent That is, the displayed intersection is the point $\check{\lambda}(t)$, 
at least at the reduced level. 

\end{prop}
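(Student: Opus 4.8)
\emph{Proof sketch.}
The plan is to reduce to the case $\vph = e \coloneqq \sum_{i\in\sI_G} e_i \in \o{\fn}\subset\fg\subset\fg[[t]]$, the principal nilpotent, and then run an explicit Iwasawa-type analysis. For the reduction: $\o{\sN} = G\cdot e$ is a single orbit with smooth stabilizer $Z_e$, so $\o{\sN}\simeq G/Z_e$; since $Z_e$-torsors over $\Spec(O)$ are trivial (for a smooth group scheme over a complete local ring this reduces to the residue field, where over $\bar k$ it is automatic --- and passing to $\bar k$ is harmless since the assertion concerns $(-)^{\on{red}}$ and $k$ has characteristic $0$), any everywhere-regular $\vph\in\o{\sN}(O)$ has the form $\vph = \Ad_h(e)$ for some $h\in G(O)$. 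Right translation by $h$ carries $\Spr^{\vph}\to\Spr^e$ and $\Spr_{\ol{\check\lambda}}^{\vph}\to\Spr_{\ol{\check\lambda}}^e$ bijectively and preserves the right-$G(O)$-stable subscheme $\ol{\Gr}_G^{\check\lambda}$, hence identifies the two intersections; thus I may assume $\vph = e$, in which case the distinguished point is literally $\check\lambda(t)$ (for general $\vph=\Ad_h(e)$ one gets $\check\lambda(t)h^{-1}$, matching the statement after this normalization).

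For $\vph = e$, one inclusion is immediate: $\Ad_{\check\lambda(t)}(e) = \sum_{i} t^{(\alpha_i,\check\lambda)} e_i \in \fg[[t]]$ since $\check\lambda$ is dominant, and by \eqref{eq:discrepancy-explicit} (with trivial $T(O)$-part and zero $[\fn,\fn]$-term) its discrepancy is $\ol{\check\lambda}$, so $G(O)\check\lambda(t)\in\ol{\Gr}_G^{\check\lambda}\cap\Spr_{\ol{\check\lambda}}^e$. For the reverse inclusion I would take an arbitrary point $G(O)g$ of the intersection and write $g = k\cdot\check\mu(t)\cdot n$ with $k\in G(O)$, $n\in N(K)$ (Iwasawa decomposition), so $\check\mu\in\check\Lambda$ is well defined. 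As $\Ad_k$ preserves $\fg[[t]]$ and discrepancy is a $G(O)$-conjugation invariant, I may drop $k$. Now $\Ad_n(e)\in\fn(K)$ and $\Ad_n(e)-e\in[\fn,\fn](K)$ (spanned by the root spaces of non-simple positive roots), so the $\fn_{\alpha_i}$-component of $\Ad_n(e)$ is exactly $e_i$; applying $\Ad_{\check\mu(t)}$ multiplies the $\fn_\beta$-component by $t^{(\beta,\check\mu)}$, so the $\fn_{\alpha_i}$-component of $\Ad_{\check\mu(t)}\Ad_n(e)$ is $t^{(\alpha_i,\check\mu)}e_i$. Integrality forces $(\alpha_i,\check\mu)\geq 0$ for all $i$ (so $\check\mu$ is dominant), and then \eqref{eq:discrepancy-explicit} identifies the discrepancy of $\Ad_g(e)$ as precisely $\ol{\check\mu}$. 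Hence $G(O)g\in\Spr_{\ol{\check\lambda}}^e$ gives $\ol{\check\mu}=\ol{\check\lambda}$, while $G(O)g\in\ol{\Gr}_G^{\check\lambda}$ gives $\check\mu\leq\check\lambda$ (the Iwasawa coweight is bounded above by the Cartan coweight of $g$, which is $\leq\check\lambda$). Then $\check\lambda-\check\mu$ lies in the $\bZ_{\geq0}$-span of the positive coroots and in $\ker(\check\Lambda\to\check\Lambda_{G^{\ad}})$, which intersect only in $0$ because the coroot--root pairing is non-degenerate; so $\check\mu=\check\lambda$.

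It follows that $g$ has Iwasawa and Cartan coweight both equal to $\check\lambda$, i.e.\ $G(O)g\in\Gr_G^{\check\lambda}\cap S^{\check\lambda}$, where $S^{\check\lambda}\coloneqq G(O)\backslash G(O)\check\lambda(t)N(K)$ is the semi-infinite orbit through $\check\lambda(t)$. By the standard dimension count for intersections of $G(O)$-orbits with semi-infinite orbits in the affine Grassmannian (under the inversion $g\mapsto g^{-1}$ exchanging our left- and right-quotient conventions this is a lowest-weight Mirkovi\'c--Vilonen cycle), this intersection is $0$-dimensional and set-theoretically equals $\{\check\lambda(t)\}$. Hence $\ol{\Gr}_G^{\check\lambda}\cap\Spr_{\ol{\check\lambda}}^e$, a finite-type locally closed subscheme of the projective variety $\ol{\Gr}_G^{\check\lambda}$, is set-theoretically the single $k$-point $\check\lambda(t)$; since $k$ is perfect, its reduction is $\Spec(k)$.

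I expect the main obstacle to be bookkeeping rather than any genuine difficulty: matching the convention $\Gr_G=G(O)\backslash G(K)$ with the Iwasawa decomposition and with the classical semi-infinite-orbit dimension formula (usually phrased for the opposite convention), while simultaneously keeping the two discrepancy normalizations straight. Each individual step --- the conjugation reduction, the computation of the $\fn_{\alpha_i}$-components, the coweight comparison, and the semi-infinite orbit fact --- is routine, but the sign and convention tracking is where care is needed.
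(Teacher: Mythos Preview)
Your proof is correct and lands in the same place as the paper's, but the route to identifying the semi-infinite cell containing $g$ differs. The paper argues as follows: from the discrepancy hypothesis it produces $h\in N(K)T(O)\check{\lambda}(t)$ with $\Ad_h(e)=\Ad_g(e)$, then uses that the centralizer of a regular nilpotent $e$ in $G(K)$ is $N(K)Z_G(K)$ to deduce $g\in N(K)T(O)Z_G(K)\check{\lambda}(t)$, so $\pi(g)$ lies in $\bigcup_{\check{\zeta}\in\check{\Lambda}_{Z_G^{\circ}}}(\check{\lambda}+\check{\zeta})(t)N(K)$; the MV bound then forces $\check{\zeta}=0$. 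You instead start from the Iwasawa decomposition $g=k\check{\mu}(t)n$ and read the discrepancy directly off the simple-root components of $\Ad_{\check{\mu}(t)n}(e)$, using only that $N$ acts trivially on $\fn/[\fn,\fn]$; this yields $\ol{\check{\mu}}=\ol{\check{\lambda}}$, and then the MV bound $\check{\mu}\leq\check{\lambda}$ plus the lattice argument (coroot lattice injects into $\check{\Lambda}_{G^{\ad}}$) forces $\check{\mu}=\check{\lambda}$. Your route replaces the centralizer input by a more elementary observation about the adjoint $N$-action, at the cost of one extra line of lattice reasoning; both finish with the same single-point MV intersection $\ol{\Gr}_G^{\check{\lambda}}\cap\check{\lambda}(t)N(K)=\{\check{\lambda}(t)\}$.
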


\begin{proof}

Below, we let $\pi:G(K) \to G(O)\backslash G(K) = \Gr_G$ denote the projection.

The assertion clearly depends only on the $G(O)$-orbit of $\vph$.
Therefore, we can assume $\vph = e \in \oplus_{i \in \sI_G} \fn_{\alpha_i} \subset
\fn \subset \fn[[t]]$
(with each projection $e_i \in \fn_{\alpha_i}$ of $e$ necessarily non-zero, of course). 

Suppose $g \in G(K)$ is a $k$-point with $\pi(g) \in \ol{\Gr}_G^{\check{\lambda}}$ 
with $\Ad_g(e) \in \fg[[t]]$ having discrepancy $\ol{\check{\lambda}}$.
We will show
$g \in G(O)\check{\lambda}(t)$. Clearly this would suffice. 

By \eqref{eq:discrepancy-explicit}, we can find $\gamma \in G(O)$
and $h \in N(K)T(O)\check{\lambda}(t)$ such that:
\[
\Ad_{\gamma}\Ad_g(e) = \Ad_h(e).
\]

The assertion about $g$ depends only depends on its left $G(O)$-coset;
therefore, we may replace $g$ by $\gamma g$ to instead write:
\[
\Ad_g(e) = \Ad_h(e).
\]

By assumption, we can write $h = n \tau\check{\lambda}(t)$ for $n \in N(K)$
and $\tau \in T(O)$. 

Observe that $g h^{-1}$ centralizes $e$; by standard facts about 
regular nilpotent elements, this implies $g h^{-1} \in N(K)Z_G(K)$ (we remind
that $Z_G \subset G$ is the center of $G$).
This implies:
\[
g = gh^{-1}\cdot h \in N(K)T(O)Z_G(K) \check{\lambda}(t) = 
 \check{\lambda}(t)N(K)T(O)Z_G(K).
\]

\noindent Therefore:
\[
\pi(g) \in \coprod_{\check{\zeta} \in \check{\Lambda}_{Z(G)^{\circ}}} 
(\check{\lambda}+\check{\zeta})(t)N(K) \subset \Gr_G.
\]

\noindent (By standard convention, we have omitted a $\pi$ before 
$(\check{\lambda}+\check{\zeta})(t)$.)

It is well-known (cf. \cite{mirkovic-vilonen}) 
that for $\check{\eta} \in \check{\Lambda}^+$, we 
have:
\[
\ol{\Gr}_G^{\check{\lambda}} \cap \check{\eta}(t) N(K) \neq \emptyset
\Leftrightarrow 0 \leq \check{\eta} \leq \check{\lambda}.
\]

It follows that:
\[
\pi(g) \in \ol{\Gr}_G^{\check{\lambda}} \cap \check{\lambda}(t) N(K).
\]

\noindent In addition, it is well-known (cf. \cite{mirkovic-vilonen}) that
this intersection is the single point $\check{\lambda}(t)$.

Therefore, we see that $\pi(g) = \check{\lambda}(t)$, as desired.

\end{proof}

\subsection{Proof of Theorem \ref{t:hecke-to-kostant}}

\subsubsection{A review on bounding singular support from below}

Let $f:\sH \to \sY$ be a map of algebraic stacks with $\sY$ smooth
and let $\Lambda \subset T^*\sH$ be a closed conical substack.
An \emph{isolated pair for $\Lambda$} is a point $(x,\xi) \in \sH \times_{\sY} T^*\sY$  
with $x \in \sH$ (a field-valued point) and $\xi \in T_{f(x)}^*\sY$
such that:

\begin{itemize}

\item $df(\xi) \in \Lambda|_x$.

\item The intersection $\sH \times_{\sY} T^*\sY \cap df^{-1}(\Lambda)$
is zero-dimensional at $(x,\xi)$.

\end{itemize}

We have:

\begin{thm}[\cite{agkrrv1} Theorem Theorem 20.1.3]\label{t:isolated-ss}

Suppose $\sF \in \Shv(\sH)$ with $(x,\xi)$ an isolated pair
for $\SS(\sF)$. Then $(f(x),\xi) \in \SS(f_{*,dR}(\sF))$.

\end{thm}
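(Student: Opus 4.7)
The plan is to prove this singular-support propagation statement by a microlocal Morse--type argument: test $(f(x),\xi)$ against an explicit function on $\sY$, transport the question to the fiber of $f$ via proper base change for vanishing cycles, and use the isolated-pair hypothesis to localize the answer to a single non-vanishing microstalk of $\sF$ at $(x, df(\xi))$. The output is a converse, under the isolated-pair hypothesis, of the standard Kashiwara--Schapira upper bound $\SS(f_{*,\dR}\sF) \subseteq f_{\pi}(df^{-1}\SS(\sF))$.

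First I would reduce to a manageable situation: smooth descent for $\SS$ and for $f_{*,\dR}$ allows one to replace $\sH$ and $\sY$ by smooth schemes, and the isolated-pair hypothesis is a local condition at $(x,\xi)$, so after shrinking $\sH$ around $x$ one may arrange that the relevant portion of $\Supp(\sF)$ is proper over a Zariski neighborhood of $f(x) \in \sY$. Next, choose a regular function $\phi : (\sY, f(x)) \to (\mathbb{A}^1, 0)$ with $d\phi(f(x)) = \xi$ (possible by smoothness of $\sY$), and set $\psi = \phi \circ f : \sH \to \mathbb{A}^1$. By the standard microlocal criterion for singular support, showing $(f(x),\xi) \in \SS(f_{*,\dR}\sF)$ reduces to showing that the vanishing-cycles stalk $\Phi_\phi(f_{*,\dR}\sF)_{f(x)}$ is non-zero. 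By proper base change for vanishing cycles (valid by the properness arranged above) this stalk is identified with
$$
R\Gamma\bigl(f^{-1}(f(x)),\, \Phi_\psi(\sF)\bigr).
$$
Standard microsupport estimates bound $\Supp(\Phi_\psi(\sF))$ inside $\{x' \in \sH : d\psi(x') \in \SS(\sF)\}$, which is exactly the image in $\sH$ of $\sH \times_\sY T^*\sY \cap df^{-1}(\SS(\sF))$; intersected with the fiber $f^{-1}(f(x))$ and restricted via $\phi$ to a small neighborhood of $\xi$, the isolated-pair hypothesis ensures that this support reduces to the single point $\{x\}$. Hence the fiber integral collapses to $\Phi_\psi(\sF)_x$.

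The crux---and the expected main obstacle---is then to show $\Phi_\psi(\sF)_x \neq 0$. The isolated-pair hypothesis asserts precisely that $\phi$ is \emph{transverse} to $\SS(\sF)$ at $x$ through the covector $df(\xi)$ in the microlocal sense of Kashiwara--Schapira (\cite{ks} \S 5.4), so that $\Phi_\psi(\sF)_x$ computes, up to shift, a microstalk of $\sF$ at $(x, df(\xi))$. For regular holonomic sheaves this microstalk has Euler characteristic equal, by Kashiwara's local index formula, to the multiplicity of $\CC(\sF)$ along the irreducible component of $\SS(\sF)$ through $(x, df(\xi))$, a positive integer; hence the stalk is non-zero. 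The remaining subtlety, which I expect to be the hardest step to execute cleanly, is the case when $(x,df(\xi))$ is a singular point of $\SS(\sF)$ where the microstalk is not uniquely defined. The standard workaround is to perturb the pair $(x,\xi)$ within its connected component of the isolated intersection, landing on the smooth locus of $\SS(\sF)$---possible by constructibility of $\SS(\sF)$ and the fact that the isolated-pair hypothesis is open in the direction of $\xi$---apply the microstalk argument there, and then deduce the claim for the original $(x,\xi)$ by upper semicontinuity of $\SS(f_{*,\dR}\sF)$ as $\xi$ varies.
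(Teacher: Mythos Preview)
The paper does not give its own proof of this statement; it is quoted verbatim as \cite{agkrrv1} Theorem 20.1.3 and used as a black box in the proof of Theorem \ref{t:hecke-to-kostant}. So there is no in-paper argument to compare your proposal against.

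That said, your outline follows the standard microlocal route and is largely sound through the reduction to $\Phi_\phi(f_{*,\dR}\sF)_{f(x)} \simeq \Phi_\psi(\sF)_x$. The weak point is the final non-vanishing step, in two respects.

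First, the isolated-pair hypothesis does \emph{not} say that $\psi = \phi\circ f$ is transverse to $\SS(\sF)$ in the sense of \cite{ks} \S 5.4; it only forces the intersection $\Gamma_{d\psi}\cap\SS(\sF)$ to be \emph{isolated} at $(x,df(\xi))$ in $T^*\sH$ (since $\Gamma_{d\psi}\subset\on{im}(df)$ and $\on{im}(df)\cap\SS(\sF)$ is zero-dimensional there). That isolation is already enough for the Kashiwara--Dubson local index formula, which then gives $\chi(\Phi_\psi(\sF)_x)$ as a local intersection number of $\Gamma_{d\psi}$ with $\CC(\sF)$; over $k$ of characteristic zero this number is strictly positive whenever $\sF$ is perverse, since all multiplicities in $\CC(\sF)$ are positive and intersection multiplicities of complex subvarieties at isolated points are positive. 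So invoke the index formula directly. Your perturbation argument, by contrast, does not work: the isolated intersection in $\sH\times_\sY T^*\sY$ is literally a single point near $(x,\xi)$, so there is nothing to move along, and perturbing $\xi$ lands you in the \emph{complement} of $df^{-1}(\SS(\sF))$ rather than on a nearby smooth point of it.

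Second, for $\sF$ not perverse the Euler-characteristic argument can fail, since multiplicities in $\CC(\sF)$ may cancel. The clean fix is to reduce to the perverse case: choose $i$ with $(x,df(\xi))\in\SS({}^pH^i\sF)$ (possible since $\SS(\sF)=\bigcup_i\SS({}^pH^i\sF)$); the isolated-pair hypothesis is inherited by ${}^pH^i\sF$ because $\SS({}^pH^i\sF)\subset\SS(\sF)$; and non-vanishing of $\Phi_\psi({}^pH^i\sF)_x$ lifts to non-vanishing of $\Phi_\psi(\sF)_x$ by perverse $t$-exactness of $\Phi_\psi[-1]$ together with the fact (which you already established) that $\Phi_\psi(\sF)$ is supported at $\{x\}$ near $x$.
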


\begin{rem}

In the \'etale setting, one needs additional hypotheses;
cf. \cite{saito-cc} and \cite{agkrrv1} Remark 20.1.5.

\end{rem}

\subsubsection{Proof}

Choose a coweight $\check{\lambda} \in \check{\Lambda}$ such that:\footnote{The
funny indexing is chosen this way for later convenience.}
\[
\o{\Nilp}^{-w_0(\check{\lambda})+(2-2g)\check{\rho}} \subset \SS(\sF).
\]

\noindent We may do this as we assumed $\SS(\sF) \subset \Nilp$ and 
$\o{\Nilp} \cap \SS(\sF) \neq \emptyset$,
so some irreducible component of $\o{\Nilp}$ must lie in $\SS(\sF)$.

The relevance (cf. \S \ref{sss:springer-desc}) of 
$-w_0(\check{\lambda})+(2-2g)\check{\rho}$ exactly means that
$\check{\lambda} \in \check{\Lambda}^+$ (equivalently:
$-w_0(\check{\lambda}) \in \check{\Lambda}^+$).

Let $x \in X$ be a $k$-point and let $\check{\lambda}$ be a dominant coweight.
Let $\o{\sS}^{\check{\lambda}} \in D(\Gr_{G,x})$ denote the $*$-extension 
of the dualizing sheaf on the $\check{\lambda}$-orbit. By geometric Satake,
this object has a finite filtration in the derived category with associated
graded objects being usual spherical sheaves (up to shifts). 
Therefore, by the standard interaction of singular support
in exact triangles, it suffices to show that 
$\SS(\o{\sS}^{\check{\lambda}} \star \sF) \cap \Nilp^{\Kos} \neq \emptyset$.

Indeed, let $\Bun_G \overset{p_1}{\leftarrow} \sH_x^{\check{\lambda}} 
\xar{p_2} \Bun_G$
be the corresponding Hecke correspondence associated to $x$ and the
coweight $\check{\lambda}$, normalized so the $p_2$ has fibers 
that are twisted forms of $\Gr_G^{\check{\lambda}}$.
We obtain a correspondence:
\[
\begin{tikzcd}
&
T^*\Bun_G \underset{\Bun_G}{\times} \sH_x^{\check{\lambda}}
\arrow[dl]
\arrow[dr]
&&
T^*\Bun_G \underset{\Bun_G}{\times} \sH_x^{\check{\lambda}}
\arrow[dl]
\arrow[dr]
\\
T^*\Bun_G 
&&
T^*\sH_x^{\check{\lambda}}
&&
T^*\Bun_G
\end{tikzcd}
\]

\noindent where the first correspondence relates to $p_1$ and the
second relates to $p_2$.
As in \cite{agkrrv1} \S 20, 
the composition of these two correspondences is:
\[
\begin{tikzcd}
&
\{((\sP_{G,1},\vph_1),(\sP_{G,2},\vph_2),\tau)\}
\arrow[dl,"\alpha",swap]
\arrow[dr,"\beta"]
\\
T^*\Bun_G
&&
T^*\Bun_G
\end{tikzcd}
\]

\noindent where the top line indicates
that $(\sP_{G,i},\vph_i) \in \Higgs_G$ ($i=1,2$) 
and $\tau$ is an isomorphism of these Higgs bundles 
away from $x$ so that the underlying isomorphism
$\sP_{G,1}|_{X\setminus x}\simeq \sP_{G,2}|_{X\setminus x}$
has relative position $\check{\lambda}$.

The singular support of $p_1^!(\sF)$ is computed by the usual naive estimate because
$p_1$ is a smooth.
Applying Theorem \ref{t:isolated-ss}, we find that
it suffices to find points $(\sP_{G,1},\vph_1) \in \SS(\sF) \subset T^*\Bun_G$
and a point $(\sP_{G,2},\vph_2) \in T^*\Bun_G$ such that:
\[
(\sP_{G,1},\vph_1),(\sP_{G,2},\vph_2),\tau) \in 
(\alpha\times\beta)^{-1}\big(\SS(\sF) \times (\sP_{G,2},\vph_2)\big)
\]
and the right hand side should be zero-dimensional at this point; 
indeed, in this case, we necessarily 
have $(\sP_{G,2},\vph_2) \in \SS(f_{*,dR}\sF)$ (by Theorem \ref{t:isolated-ss}).

Let $\sP_{G,1}$ be the $G$-bundle induced from the $T$-bundle
$(\check{\rho})(\Omega_X^1)(-\check{\lambda}\cdot x)$.\footnote{For the reader's
convenience in verifying some formulae below, we note that if 
we twist by $w_0$, we see that
this $G$-bundle is also induced from the $T$-bundle 
$(-\check{\rho})(\Omega_X^1)(-w_0(\check{\lambda})\cdot x)$.} 
There is a canonical nilpotent Higgs field $f_D^{\on{glob}}$ on $\sP_{G,1}$, 
generalizing the $D = 0$ case from \S \ref{sss:kost-defin}.

Note that the point $(\sP_{G,1},f_D^{\on{glob}})$ lies in 
$\o{\Nilp}^{-w_0(\check{\lambda})+(2-2g)\check{\rho}}$, so 
lies in $\SS(\sF)$. 

We take $(\sP_{G,2},\vph_2)$ to be the base-point of $\o{\Nilp}^{\Kos}$;
i.e., we apply the above construction with $D$ replaced by $0$.
We have an evident choice of Hecke modification $\tau$.

By Proposition \ref{p:spr}, we have:
\begin{equation}\label{eq:alpha-beta-hecke}
(\alpha\times\beta)^{-1}\big(\o{\Nilp}^{-w_0(\check{\lambda})+(2-2g)\check{\rho}} 
\times (\sP_{G,2},\vph_2)\big)
\end{equation}

\noindent is the single point constructed above.

In more detail: Hecke modifications of $(\sP_{G,2},\vph_2)$ are
determined by their restrictions to the formal neighborhood of $x$.
Moreover, any point $(\widetilde{\sP}_{G,1},\widetilde{\vph}_1,\tau)$ 
of \eqref{eq:alpha-beta-hecke} must have that the discrepancy divisor
of $\widetilde{\vph}_1$ is supported only at $x$, since $(\sP_{G,2},\vph_2)$
has vanishing discrepancy divisor and the two are isomorphism away from
$x$. As we know the degree of $\widetilde{\sP}_{G,1}$ (as it is
a $\check{\lambda}$-modification of $\sP_{G,2}$) and have
assumed $(\widetilde{\sP}_{G,1},\widetilde{\vph}_1) \in 
\o{\Nilp}^{-w_0(\check{\lambda})+(2-2g)\check{\rho}}$, the
discussion of Remark \ref{r:disc-c1} determines the value
of the discrepancy divisor, which here is found to be
$\ol{-w_0(\check{\lambda})}$, i.e., the image of
$-w_0(\check{\lambda})$ in $\check{\Lambda}_{G^{\ad}}$. 
We track signs: $\sP_{G,2}$ is a
modification of $\sP_{G,1}$ of type $\check{\lambda}$,
so $\sP_{G,1}$ is a modification of $\sP_{G,2}$ of type $-w_0(\check{\lambda})$.
Choosing arbitrarily a trivialization of $\sP_{G,2}$ on the formal
disc of $x$, the proposition now applies and yields our claim.

\section{Conservativeness of Whittaker coefficients}\label{s:finale}

At this point, the proof of the main theorem 
is essentially just a matter of combining our previous results.
Specifically, this is true in the nilpotent setting; we deduce
the assertion for general $D$-modules using a straightforward
application of the method of \cite{agkrrv1} \S 21.

\subsection{Conservativeness for nilpotent sheaves}

\subsubsection{}

Let $\Nilp^{\neq\Kos} \subset \Nilp$ be the union of all
components of $\Nilp$ \emph{besides} $\Nilp^{\Kos}$;
this is a closed conical subset of $\Nilp$ because $\Nilp^{\Kos}$
is open in $\Nilp$.

We have:

\begin{lem}\label{l:cons-kos}

$\Shv_{\Nilp^{\neq \Kos}}(\Bun_G)$ is exactly the kernel
of the functor $\coeff:\Shv_{\Nilp}(\Bun_G) \to \Vect$.

\end{lem}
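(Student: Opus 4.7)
My plan is to prove the two inclusions of the equality $\Shv_{\Nilp^{\neq\Kos}}(\Bun_G) = \Ker(\coeff)$ by combining the index formula (Theorem \ref{t:index}) with the $t$-exactness of $\coeff[\dim \Bun_G]$ (Theorem \ref{t:coeff-exact}). In both directions, $t$-exactness together with the fact that $\coeff$ commutes with filtered colimits will let me reduce to the case of a constructible perverse $\sF$; then the index formula furnishes a clean numerical criterion for the vanishing of $\coeff(\sF)$.

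For the inclusion $\Shv_{\Nilp^{\neq\Kos}}(\Bun_G) \subset \Ker(\coeff)$: the subcategory $\Shv_{\Nilp^{\neq\Kos}}(\Bun_G)$ is closed under perverse truncations and under passage to subobjects in the heart (since singular support respects both operations), so I can reduce to a compact perverse $\sF$ with $c_{\Kos,\sF} = 0$. Theorem \ref{t:index} then gives $\chi(\coeff(\sF)) = 0$, and since $\coeff(\sF)$ lies in a single cohomological degree by $t$-exactness, it must already vanish. The general case follows since $\coeff$ commutes with filtered colimits and every object in $\Shv_{\Nilp^{\neq\Kos}}(\Bun_G)^{\heart}$ is a filtered colimit of its compact perverse subobjects, which themselves lie in $\Shv_{\Nilp^{\neq\Kos}}$.

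For the reverse inclusion I argue contrapositively: assume $\SS(\sF) \cap \Nilp^{\Kos} \neq \emptyset$. After the same reduction to perverse $\sF$, I write $\sF = \colim_\alpha \sF_\alpha$ as a filtered colimit of compact perverse subobjects; since $\SS(\sF) = \bigcup_\alpha \SS(\sF_\alpha)$, some $\sF_\alpha$ satisfies $\SS(\sF_\alpha) \cap \Nilp^{\Kos} \neq \emptyset$. Using that $\Nilp^{\Kos}$ is an irreducible component of $\Nilp$ and that the characteristic cycle of a compact perverse sheaf is a nonnegative integer combination of the components of its singular support, we obtain $c_{\Kos,\sF_\alpha} \geq 1$, hence $\coeff(\sF_\alpha) \neq 0$ by Theorem \ref{t:index}. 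Finally, $t$-exactness of $\coeff[\dim \Bun_G]$ implies that it preserves monomorphisms in the heart, so $\coeff(\sF_\alpha) \hookrightarrow \coeff(\sF)$ and thus $\coeff(\sF) \neq 0$. The one step deserving care is the passage from ``$\SS(\sF_\alpha) \cap \Nilp^{\Kos} \neq \emptyset$'' to ``$c_{\Kos,\sF_\alpha} \geq 1$'', which relies on $\Nilp^{\Kos}$ being a genuine irreducible component of $\Nilp$ rather than merely a locally closed piece, and on the interaction of filtered colimits with singular support on algebraic stacks.
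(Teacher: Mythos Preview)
Your proof is correct and follows essentially the same route as the paper: both directions reduce via $t$-exactness (Theorem \ref{t:coeff-exact}) to the heart, then via filtered colimits to constructible perverse objects, and finish with the index formula (Theorem \ref{t:index}) together with positivity of characteristic-cycle multiplicities. The paper's write-up differs only cosmetically, e.g.\ in the reverse direction it picks a cohomology degree $i$ and a constructible $\sG \subset H^i(\sF)$ with $\Nilp^{\Kos} \subset \SS(\sG)$ rather than first reducing $\sF$ itself to the heart, but the content is the same.
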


\begin{proof}

\step 

First, we show $\Shv_{\Nilp^{\neq \Kos}}(\Bun_G) \subset \Ker(\coeff)$.

Note that $\Shv_{\Nilp^{\neq \Kos}}(\Bun_G) \subset \Shv_{\Nilp}(\Bun_G)$
is closed under truncations and subobjects by definition of singular
support. Moreover, again by definition, this category is left complete
for its $t$-structure; in particular, an object is zero if and only
if all its cohomology groups are zero.

Therefore, by $t$-exactness (up to shift) of $\coeff$ (Theorem \ref{t:coeff-exact}),
it suffices to show $\coeff(\sF) = 0$ for 
$\sF \in \Shv_{\Nilp^{\neq \Kos}}(\Bun_G)^{\heart}$.
Any such object is the union of its constructible subobjects, so 
we may assume $\sF$ is constructible (by exactness again).

In this case, $\coeff(\sF) \in \Vect^c$ and lies in a single
cohomological degree. Therefore, it suffices to show that its
Euler characteristic is zero. This follows from the assumption 
on $\sF$ and from the index
theorem, Theorem \ref{t:index}. 

\step 

Next, we show that if $\sF \in \Shv_{\Nilp}(\Bun_G)$ with 
$\Nilp^{\Kos} \subset \SS(\sF)$,
then $\coeff(\sF) \neq 0$.

The logic is the essentially same as in the previous step.
There exists some integer $i$ and some construcible subobject
$\sG \subset H^i(\sF)$ such that $\Nilp^{\Kos} \subset \SS(\sG)$.

In this case, $\coeff(\sG)$ is concentrated in cohomological
degree $\dim \Bun_G$. Moreover, it is non-zero 
by Theorem \ref{t:index}; here we remind that for objects 
of $\Shv^{\heart}$, the characteristic
cycle assigns \emph{positive} integers to components of the singular support.

By exactness of $\coeff$, we then have:
\[
H^{\dim \Bun_G}(\coeff(\sG)) \into H^{\dim \Bun_G}(\coeff(H^i(\sF))) = 
H^{\dim \Bun_G+i}\coeff(\sF).
\]

\noindent Clearly this implies $\coeff(\sF) \neq 0$.

\end{proof}

\begin{cor}\label{c:main-nilp}

Suppose $\sF \in \Shv_{\Nilp}(\Bun_G)$ has non-zero projection
to $\Shv_{\Nilp}(\Bun_G)^{\temp}$. 

Then there is a $\check{\Lambda}^+$-valued
divisor $D$ on $X$ such that $\coeff_D(\sF) \neq 0$.

\end{cor}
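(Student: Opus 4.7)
The plan is direct: assemble Corollary \ref{c:hecke-to-kostant}, Lemma \ref{l:cons-kos}, and Theorem \ref{t:cs}, which together essentially reduce the corollary to bookkeeping about what has been proved.

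First, I would translate the hypothesis. The projection $p\colon \Shv_{\Nilp}(\Bun_G) \to \Shv_{\Nilp}(\Bun_G)^{\temp}$ has kernel precisely the anti-tempered subcategory $\Shv_{\Nilp}(\Bun_G)^{\at}$ (this is by construction of the tempered quotient, cf.\ \S\ref{ss:hecke-exact-nilp}, where $\Shv_{\Nilp}(\Bun_G)^{\temp}$ is obtained as the Verdier quotient by $\Shv_{\Nilp}(\Bun_G)^{\at}$). Therefore the hypothesis $p(\sF) \neq 0$ is equivalent to saying that $\sF \notin \Shv_{\Nilp}(\Bun_G)^{\at}$.

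Next, I would apply Corollary \ref{c:hecke-to-kostant} (which packages Theorem \ref{t:at} with Theorem \ref{t:hecke-to-kostant}). Since $\sF$ is not anti-tempered, we fall into the second alternative: there exists a $\check{\Lambda}^+$-valued divisor $D$ on $X$ such that
\[
\Nilp^{\Kos} \subset \SS(\sV^D \star \sF).
\]
Hecke functors preserve nilpotent singular support, so $\sV^D \star \sF$ still lies in $\Shv_{\Nilp}(\Bun_G)$, and Lemma \ref{l:cons-kos} then guarantees $\coeff(\sV^D \star \sF) \neq 0$ (it is precisely this implication, based on the index formula Theorem \ref{t:index} together with the $t$-exactness Theorem \ref{t:coeff-exact}, that gives non-vanishing from the presence of $\Nilp^{\Kos}$ in the singular support).

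Finally, Theorem \ref{t:cs} identifies $\coeff_D(\sF) \simeq \coeff(\sV^D \star \sF)$, so $\coeff_D(\sF) \neq 0$, concluding the argument. There is no genuine obstacle here: all of the substantive work lies upstream, in the proofs of Theorem \ref{t:at} (microlocal identification of irregular nilpotent singular support with anti-temperedness), Theorem \ref{t:hecke-to-kostant} (Hecke modifications transport generically regular support to the Kostant component), and Theorem \ref{t:index}/Theorem \ref{t:coeff-exact} (the index-theoretic control of $\coeff$ on nilpotent sheaves). The corollary is simply the synthesis.
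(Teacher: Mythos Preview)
Your proof is correct and follows essentially the same route as the paper's own argument: translate the hypothesis to $\sF \notin \Shv_{\Nilp}(\Bun_G)^{\at}$, invoke Theorem \ref{t:at} and Theorem \ref{t:hecke-to-kostant} (which you cite via their combined form, Corollary \ref{c:hecke-to-kostant}) to produce a divisor $D$ with $\Nilp^{\Kos} \subset \SS(\sV^D \star \sF)$, then use Lemma \ref{l:cons-kos} and Theorem \ref{t:cs} to conclude $\coeff_D(\sF) \neq 0$. The paper's proof is identical in substance, only slightly terser.
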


\begin{proof}

By assumption, $\sF \not\in \Shv_{\Nilp}(\Bun_G)^{\at}$.
Therefore, by Theorem \ref{t:at},
$\sF \cap \o{\Nilp} \neq \emptyset$.

Therefore, by Theorem \ref{t:hecke-to-kostant}, there is 
a $\check{\Lambda}^+$-valued divisor $D$ on $X$ such that
$\Nilp^{\Kos} \subset \SS(\sV^D \star \sF)$.
In this case, by Lemma \ref{l:cons-kos} and 
Theorem \ref{t:cs}, we obtain:
\[
\coeff_D(\sF) = \coeff(\sV^D \star \sF) \neq 0.
\]

\end{proof}

\subsection{Enhanced coefficient functors}\label{ss:coeff-enh}

It is now convenient to introduce the following functor encoding
all Whittaker coefficient functors simultaneously. 

\subsubsection{Nilpotent setting}

Recall the prestack $\LocSys_{\check{G}}^{\on{restr}}$ from \cite{agkrrv1}.
By \emph{loc. cit}. Theorem 14.3.2, there is a canonical \emph{spectral} action 
of $\QCoh(\LocSys_{\check{G}}^{\on{restr}})$ on $\Shv_{\Nilp}(\Bun_G)$
that is suitably compatible with Hecke functors.

Moreover, the category $\QCoh(\LocSys_{\check{G}}^{\on{restr}})$ is
canonically self-dual by \cite{agkrrv1} Corollary 7.8.9.
As in \emph{loc. cit}., we let:
\[
\Gamma_! = \Gamma_!(\LocSys_{\check{G}}^{\on{restr}},-):\QCoh(\LocSys_{\check{G}}^{\on{restr}})
\to \Vect
\]

\noindent denote the functor dual to the structure sheaf
$\sO_{\LocSys_{\check{G}}^{\on{restr}}} \in \QCoh(\LocSys_{\check{G}}^{\on{restr}})$.

\subsubsection{}\label{sss:shv-nilp-coeff-enh}

On formal grounds, we obtain a canonical functor:
\[
\coeff^{\enh}:\Shv_{\Nilp}(\Bun_G) \to \QCoh(\LocSys_{\check{G}}^{\on{restr}})
\]

\noindent fitting into a commutative diagram:
\begin{equation}\label{eq:coeff-enh}
\begin{tikzcd}
\Shv_{\Nilp}(\Bun_G)
\arrow[d,"\coeff^{\enh}"]
\arrow[drr,"\coeff"]
\\
\QCoh(\LocSys_{\check{G}}^{\on{restr}})
\arrow[rr,"\Gamma_!"]
&&
\Vect.
\end{tikzcd}
\end{equation}

Namely, the construction proceeds as follows.
We have an action functor:
\[
\QCoh(\LocSys_{\check{G}}^{\on{restr}}) \otimes 
\Shv_{\Nilp}(\Bun_G) \to \Shv_{\Nilp}(\Bun_G).
\]

\noindent Dualizing the first tensor factor and applying its self-duality,
we obtain a functor:
\[
\Shv_{\Nilp}(\Bun_G) \to 
\QCoh(\LocSys_{\check{G}}^{\on{restr}}) \otimes \Shv_{\Nilp}(\Bun_G).
\]

\noindent Now compose this functor with $\id \otimes \coeff$.
By construction, this functor has the desired property.

By construction, $\coeff^{\enh}$ factors through $\Shv_{\Nilp}(\Bun_G)^{\temp}$.
We abuse notation in also denoting this functor by $\coeff^{\enh}$.

\subsubsection{}\label{sss:coeffd-coeffenh}

More generally, for $\sG \in \QCoh(\LocSys_{\check{G}}^{\on{restr}})$
and $\sF \in \Shv_{\Nilp}(\Bun_G)$, if we let $\star$ denote the
action of the former category on the latter, we have:
\[
\coeff(\sG \star \sF) = 
\Gamma_!(\LocSys_{\check{G}}^{\on{restr}},\sG \otimes \coeff^{\enh}(\sF)).
\]

Recalling that Hecke functors (at points) factor through the
action of $\QCoh(\LocSys_{\check{G}}^{\on{restr}})$ and applying
Theorem \ref{t:cs}, we see that $\coeff_D(\sF)$
can be algorithmically extracted from $\coeff^{\enh}(\sF)$.

In particular, we obtain:

\begin{cor}\label{c:coeff-enh-nilp}

The functor:
\[
\coeff^{\enh}:\Shv_{\Nilp}(\Bun_G)^{\temp} \to \QCoh(\LocSys_{\check{G}}^{\on{restr}})
\]

\noindent is conservative.

\end{cor}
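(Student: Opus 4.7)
The plan is to deduce this corollary by combining Corollary \ref{c:main-nilp} with the compatibility between the coefficient functors $\coeff_D$ and the enhanced functor $\coeff^{\enh}$ developed in \S\ref{sss:coeffd-coeffenh}. Since $\coeff^{\enh}$ factors through the tempered quotient by construction, it suffices to prove the contrapositive in the following form: if $\sF \in \Shv_{\Nilp}(\Bun_G)$ has non-zero image in $\Shv_{\Nilp}(\Bun_G)^{\temp}$, then $\coeff^{\enh}(\sF) \neq 0$ in $\QCoh(\LocSys_{\check{G}}^{\on{restr}})$.

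Given such an $\sF$, I would first apply Corollary \ref{c:main-nilp} to obtain a $\check{\Lambda}^+$-valued divisor $D$ on $X$ with $\coeff_D(\sF) \neq 0$. Next, using Theorem \ref{t:cs}, rewrite $\coeff_D(\sF) = \coeff(\sV^D \star \sF)$. The crucial observation is that the Hecke action of $\sV^D$ factors through the spectral action of $\QCoh(\LocSys_{\check{G}}^{\on{restr}})$ (this compatibility is precisely what allowed the construction of $\coeff^{\enh}$ in \S\ref{sss:shv-nilp-coeff-enh}). Thus there is some quasi-coherent sheaf $\widetilde{\sV}^D \in \QCoh(\LocSys_{\check{G}}^{\on{restr}})$ whose action on $\Shv_{\Nilp}(\Bun_G)$ coincides with convolution by $\sV^D$, and the commutative diagram \eqref{eq:coeff-enh} combined with the formula displayed in \S\ref{sss:coeffd-coeffenh} gives
\[
\coeff_D(\sF) = \Gamma_!\big(\LocSys_{\check{G}}^{\on{restr}},\widetilde{\sV}^D \otimes \coeff^{\enh}(\sF)\big).
\]
Were $\coeff^{\enh}(\sF)$ to vanish, every $\coeff_D(\sF)$ would vanish as well, contradicting the choice of $D$.

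There is no real obstacle here: all the substantive work has already been accomplished in Corollary \ref{c:main-nilp}, and the present statement is essentially a repackaging in terms of the spectral action. If anything, the only point requiring mild care is bookkeeping — namely, verifying that $\coeff_D$ really is recovered from $\coeff^{\enh}$ as described, which amounts to unwinding the definition of $\coeff^{\enh}$ as the functor obtained by dualizing the spectral action and composing with $\coeff$. The conclusion then follows by contraposition.
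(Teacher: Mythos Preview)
Your argument is correct and matches the paper's approach: the paper simply says the corollary is immediate from Corollary \ref{c:main-nilp} (and Lemma \ref{l:cons-kos}), and you have spelled out exactly the intended reasoning via \S\ref{sss:coeffd-coeffenh}. The only cosmetic difference is that the paper's citation of Lemma \ref{l:cons-kos} is somewhat redundant, since it is already absorbed into the proof of Corollary \ref{c:main-nilp}; your write-up correctly identifies Corollary \ref{c:main-nilp} together with the recovery of $\coeff_D$ from $\coeff^{\enh}$ as the two inputs.
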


Indeed, this is immediate from Corollary \ref{c:main-nilp} and Lemma \ref{l:cons-kos}.

\subsubsection{Variant for $D$-modules}

Recall from \cite{generalized-vanishing} (see also \cite{dennis-laumonconf} 
\S 4.3-4.5 and \S 11.1) that there
is a canonical action of $\QCoh(\LocSys_{\check{G}})$ on $D(\Bun_G)$,
similar to the above functors.

As $\LocSys_{\check{G}}$ is a QCA stack, $\QCoh(\LocSys_{\check{G}})$
is canonically self-dual; this time, the functor dual to 
$\sO$ is simply usual global sections.

Therefore, we obtain a functor:
\[
\coeff^{\enh}:D(\Bun_G) \to \QCoh(\LocSys_{\check{G}})
\]

\noindent fitting into a commutative diagram:
\[
\begin{tikzcd}
D(\Bun_G)
\arrow[d,"\coeff^{\enh}"]
\arrow[drr,"\coeff"]
\\
\QCoh(\LocSys_{\check{G}})
\arrow[rr,"\Gamma"]
&&
\Vect.
\end{tikzcd}
\]

\subsubsection{}\label{sss:coeff-enh-compats}

We now state the natural compatibility between the above two constructions.

Let $\iota:\LocSys_{\check{G}}^{\on{restr}} \to \LocSys_{\check{G}}$
denote the natural map. The symmetric monoidal functor:
\[
\iota^*:\QCoh(\LocSys_{\check{G}}) \to \QCoh(\LocSys_{\check{G}}^{\on{restr}})
\]

\noindent admits a left adjoint $\iota_?$ (denoted $\iota_!$ in 
\cite{agkrrv1} \S 7.1.3); this functor is naturally the
dual to $\iota^*$ for the self-duality of both sides.
In particular, $\Gamma(\LocSys_{\check{G}},\iota_?(-)) = \Gamma_!$.

We obtain commutative diagrams:
\[
\begin{tikzcd}
\Shv_{\Nilp}(\Bun_G)
\arrow[rr,"\coeff^{\enh}"]
\arrow[d]
&&
\QCoh(\LocSys_{\check{G}}^{\on{restr}})
\arrow[d,"\iota_?"]
\\
D(\Bun_G)
\arrow[rr,"\coeff^{\enh}"]
&&
\QCoh(\LocSys_{\check{G}})
\end{tikzcd}
\]

\noindent and:
\[
\begin{tikzcd}
\Shv_{\Nilp}(\Bun_G)
\arrow[rr,"\coeff^{\enh}"]
&&
\QCoh(\LocSys_{\check{G}}^{\on{restr}})
\\
D(\Bun_G)
\arrow[rr,"\coeff^{\enh}"]
\arrow[u]
&&
\QCoh(\LocSys_{\check{G}})
\arrow[u,"\iota^*",swap].
\end{tikzcd}
\]

\noindent Here the left arrow in the second diagram is
the right adjoint to the embedding, and the commutativity of
this latter diagram follows from \cite{agkrrv1} Proposition 14.5.3.

\subsection{Conservativeness for general $D$-modules} 

We now conclude the proof of our main result, Theorem \ref{t:temp-cons} below.

\subsubsection{Field extensions}\label{sss:temperedness-field-extns}

We briefly digress to discuss field extensions.

Suppose $k^{\prime}/k$ is a (possibly transcendental) field extension.
For $\sY$ over $k$, we let $\sY_{k^{\prime}}$ denote the base-change
of $\sY$ to $k^{\prime}$.

For prestacks over $k^{\prime}$, we write $D_{/k^{\prime}}(-)$ to 
denote $D$-modules considered relative to the field $k^{\prime}$.
We use $\Shv_{/k^{\prime}}$ similarly: this means the ind-category
version of regular holonomic objects of $D_{/k^{\prime}}(-)$.

\subsubsection{}\label{sss:field-temp}

Let $x \in X(k)$ be fixed, and let
$x^{\prime} \in X^{\prime}(k^{\prime})$ be the induced point.

There is a natural equivalence:
\[
\sH_x^{\on{sph}} \otimes \Vect_{k^{\prime}} \simeq 
\sH_{x^{\prime}}^{\on{sph}}.
\]

\noindent Here the right hand side is taken to be defined with $D$-modules
over $k^{\prime}$, as above.
In other words, up to extending scalars, the spherical
Hecke categories are the same. This identification is compatible
in the natural sense with derived Satake.

It follows from the definitions that for $\sC \in \sH_x^{\on{sph}}\mod$ with 
$\sC^{\prime} \coloneqq \sC \otimes \Vect_{k^{\prime}}$, we have
commutative diagrams:
\[
\begin{tikzcd}
\sC^{\xat} 
\arrow[r]
\arrow[d]
&
\sC
\arrow[r]
\arrow[d]
&
\sC^{\xtemp}
\arrow[d]
\\
\sC^{\prime,\xat} 
\arrow[r]
&
\sC^{\prime}
\arrow[r]
&
\sC^{\prime,\xtemp}.
\end{tikzcd}
\]

\noindent There are similar functors if we work with the adjoints to the horizontal
arrows. Moreover, the vertical arrows induce isomorphisms
after tensoring with $\Vect_{k^{\prime}}$.

\subsubsection{General case}

Finally, we show:

\begin{thm}\label{t:temp-cons}

The functor
\[
\coeff^{\enh}:
D(\Bun_G)^{\temp} \to \QCoh(\LocSys_{\check{G}})
\]

\noindent is conservative.

\end{thm}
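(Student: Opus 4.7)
\medskip

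\noindent\textbf{Proof proposal for Theorem \ref{t:temp-cons}.} The plan is to reduce Theorem \ref{t:temp-cons} to its ``restricted'' analogue, Corollary \ref{c:coeff-enh-nilp}, by a two-step argument that mirrors the layout announced in \S 8: first translate the vanishing of $\coeff^{\enh}(\sF)$ from the $\QCoh(\LocSys_{\check{G}})$-setting into the $\QCoh(\LocSys_{\check{G}}^{\on{restr}})$-setting, and then invoke the technology of \cite{agkrrv1} \S 21 to propagate the vanishing of the image in $\Shv_{\Nilp}(\Bun_G)^{\temp}$ back to $D(\Bun_G)^{\temp}$. Throughout, we may harmlessly enlarge $k$ using \S \ref{sss:temperedness-field-extns}.

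Suppose then $\sF \in D(\Bun_G)$ has non-zero image $\overline{\sF} \in D(\Bun_G)^{\temp}$ and assume, for contradiction, that $\coeff^{\enh}(\sF)$ vanishes in $\QCoh(\LocSys_{\check{G}})$. Let $\sF_{\Nilp} \in \Shv_{\Nilp}(\Bun_G)$ be the image of $\sF$ under the right adjoint to the fully faithful embedding $\Shv_{\Nilp}(\Bun_G) \hookrightarrow D(\Bun_G)$. Since the Hecke action preserves $\Shv_{\Nilp}(\Bun_G)$, this right adjoint is compatible with the spherical actions at $x$, and hence descends to a functor on tempered quotients (cf.\ \S \ref{ss:hecke-exact-nilp}); moreover it intertwines the $\QCoh$-actions via $\iota^*$. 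Applying the second compatibility square of \S \ref{sss:coeff-enh-compats} yields
\[
\coeff^{\enh}(\sF_{\Nilp}) \;\simeq\; \iota^*\,\coeff^{\enh}(\sF) \;=\; 0
\]
in $\QCoh(\LocSys_{\check{G}}^{\on{restr}})$. Corollary \ref{c:coeff-enh-nilp} therefore forces the tempered projection of $\sF_{\Nilp}$ to vanish in $\Shv_{\Nilp}(\Bun_G)^{\temp}$, and the same holds after any base change $k'/k$ by the functoriality described in \S \ref{sss:field-temp}.

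It remains to deduce $\overline{\sF} = 0$ from the universal vanishing of $(\sF_{\Nilp})^{\temp}$; this is where the method of \cite{agkrrv1} \S 21 enters, and is the step I expect to be the main obstacle. The idea is that the $\QCoh(\LocSys_{\check{G}})$-linearity of $D(\Bun_G)$, combined with the action's factorization through $\LocSys_{\check{G}}^{\on{restr}}$ at generic $k'$-points of $\LocSys_{\check{G}}$, allows one to ``take fibers'' at sufficiently generic local systems. After suitable field extension, these fiber functors (assembled across all geometric points of $\LocSys_{\check{G}}^{\on{restr}}$) are jointly conservative on $D(\Bun_G)^{\temp}$, and each such fiber lands naturally in a quotient or localization of $\Shv_{\Nilp}(\Bun_{G,k'})^{\temp}$, which we have just shown vanishes. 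Concretely, one would choose $k'$ large enough that a generic geometric point $\sigma$ of $\LocSys_{\check{G},k'}$ is detected by $\sF$, observe that its fiber must live in the $\Nilp$-restricted category by the singular support estimates of \cite{agkrrv1} \S 20, and then apply the vanishing of $(\sF_{\Nilp})^{\temp}_{/k'}$ to obtain a contradiction. The delicate point is coordinating the three quotient/localization operations---anti-temperedness (a condition at $x$), nilpotent singular support, and restriction to $\LocSys_{\check{G}}^{\on{restr}}$---which is precisely the setting in which the independence-of-point result of \cite{independence} and the technology of \cite{agkrrv1} \S\S 20--21 must be carefully combined.
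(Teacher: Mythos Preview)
Your overall strategy---reduce to the nilpotent case, then invoke Corollary \ref{c:coeff-enh-nilp}---is the right one, and is also the paper's. But the detour through the global right adjoint $\sF \mapsto \sF_{\Nilp}$ creates a genuine gap that you do not close.

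The issue is this: you correctly deduce $(\sF_{\Nilp})^{\temp}=0$ in $\Shv_{\Nilp}(\Bun_G)^{\temp}$, but this does not obviously control the image of $\sF$ in the fiber category $D(\Bun_G)^{\temp}\otimes_{\QCoh(\LocSys_{\check{G}})}\Vect_{k'}$. For your last paragraph to work, you would need the right adjoint $D(\Bun_G)\to\Shv_{\Nilp}(\Bun_G)$ to commute with the operation $(-)\otimes_{\QCoh(\LocSys_{\check{G}})}\Vect_{k'}$, so that the image of $\sF$ in the fiber coincides with the image of $\sF_{\Nilp}$ there. You do not establish this, and it is not clear it holds. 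Your phrase ``which we have just shown vanishes'' conflates the global vanishing of $(\sF_{\Nilp})^{\temp}$ with the vanishing of the fiber of $\sF$; these are a priori different objects.

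The paper's argument bypasses $\sF_{\Nilp}$ entirely. The key input you are missing is \cite{agkrrv1} Proposition 13.5.3: the natural functor
\[
\Shv_{/k',\Nilp_{k'}}(\Bun_{G,k'})\underset{\QCoh(\LocSys_{\check{G},k'})}{\otimes}\Vect_{k'}
\;\longrightarrow\;
D(\Bun_G)\underset{\QCoh(\LocSys_{\check{G}})}{\otimes}\Vect_{k'}
\]
is an \emph{equivalence}, and the same holds for the tempered quotients. So the image of $\sF$ in the fiber is automatically an object of the left-hand side---no right adjoint needed. Since $\coeff^{\enh}$ is $\QCoh(\LocSys_{\check{G}})$-linear, the hypothesis $\coeff^{\enh}(\sF)=0$ passes directly to the fiber; then one applies Corollary \ref{c:coeff-enh-nilp} \emph{over $k'$} to conclude that the fiber of $\sF$ vanishes. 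Finally, \cite{agkrrv1} Lemma 21.4.6 (which you correctly anticipate, though the fibers are taken at \emph{all} $k'$-points of $\LocSys_{\check{G}}$, not only generic ones) assembles these vanishings to give $\overline{\sF}=0$. Your first paragraph is thus correct but unnecessary, and your second paragraph, where the real work lies, should be rewritten to argue fiberwise from the start.
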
 
 
Suppose $\sF \in D(\Bun_G)$ is given with
$\coeff^{\enh}(\sF) \in \QCoh(\LocSys_{\check{G}})$ vanishing.
We need to show that $\sF$ is anti-tempered, 
i.e., its image in $D(\Bun_G)^{\temp}$ is zero.

As in \cite{agkrrv1} Lemma 21.4.6, it suffices
to show that for any field extension $k^{\prime}/k$ and any 
$\sigma \in \LocSys_{\check{G}}(k^{\prime})$,
the image of $\sF$ in:
\[
D(\Bun_G)^{\temp}
\underset{\QCoh(\LocSys_{\check{G}})}{\otimes} \Vect_{k^{\prime}}
\]

\noindent is zero.

The functor: 
\[
\Shv_{/k^{\prime},\Nilp_{k^{\prime}}}(\Bun_{G,k^{\prime}}) 
\underset{\QCoh(\LocSys_{\check{G},k^{\prime}})}{\otimes} \Vect_{k^{\prime}}
\to 
D(\Bun_G)
\underset{\QCoh(\LocSys_{\check{G}})}{\otimes} \Vect_{k^{\prime}}
\]

\noindent is an equivalence by \cite{agkrrv1} Proposition 13.5.3.
The same applies for tempered variants by 
\S \ref{sss:field-temp}.

From \S \ref{sss:coeff-enh-compats}, we have a commutative diagram:
\begin{equation}\label{eq:finale}
\begin{tikzcd}
\Shv_{/k^{\prime},\Nilp_{k^{\prime}}}(\Bun_{G,k^{\prime}})^{\temp} 
\underset{\QCoh(\LocSys_{\check{G},k^{\prime}})}{\otimes} \Vect_{k^{\prime}}
\arrow[d,"\simeq"]
\arrow[dr]
\\ 
D(\Bun_G)^{\temp}
\underset{\QCoh(\LocSys_{\check{G}})}{\otimes} \Vect_{k^{\prime}}
\arrow[r] 
& 
\Vect_{k^{\prime}}.
\end{tikzcd}
\end{equation}

\noindent Here the rightward arrows are induced by extension of scalars 
from the functors $\coeff^{\enh}$.
The left arrow in this diagram is conservative by Corollary \ref{c:coeff-enh-nilp},
observing that we have a commutative diagram:
\[
\begin{tikzcd}
\Shv_{/k^{\prime},\Nilp_{k^{\prime}}}(\Bun_{G,k^{\prime}})^{\temp}
\underset{\QCoh(\LocSys_{\check{G},k^{\prime}})}{\otimes} \Vect_{k^{\prime}}
\arrow[d] 
\arrow[rr]
&&
\Shv_{/k^{\prime},\Nilp_{k^{\prime}}}(\Bun_{G,k^{\prime}})^{\temp}
\arrow[d,"\coeff^{\enh}"]
\\
\Vect_{k^{\prime}}
\arrow[rr]
&&
\QCoh(\LocSys_{\check{G},k^{\prime}})
\end{tikzcd}
\]

\noindent with horizontal and right arrows conservative.
Therefore, the right arrow in \eqref{eq:finale} is conservative,
implying $\sF$ maps to zero in this category, yielding the claim.

\section{The structure of Hecke eigensheaves}\label{s:eigensheaves}

In this section, we use our earlier results to deduce structural
properties of Hecke eigensheaves. The main result is Theorem \ref{t:eigensheaf}.

Throughout this section, to simplify the discussion a bit,
we assume the ground field $k$ to be 
algebraically closed (in addition to being of characteristic $0$).

\subsection{Setup}

\subsubsection{Notation for local systems}

Throughout this section, fix 
$\sigma \in \LocSys_{\check{G}}(k)$ an \emph{irreducible} $\check{G}$-local system
on $X$; i.e., $\sigma$ does not admit a reduction to any parabolic 
$\check{P} \subsetneq \check{G}$.

\subsubsection{}\label{sss:very-irred}

Our discussion will be nicest when $\sigma$ is \emph{very irreducible}
in the sense described below.  

Let $\Aut(\sigma)$ denote the algebraic\footnote{In
particular, $\Aut(\sigma)$ is something classical -- we ignore
the finer derived structures on $\Aut(\sigma)$.} 
group of automorphisms of $\sigma$ as a local system. 
Note that a choice of point $x \in X(k)$ induces an embedding
$\Aut(\sigma) \into \check{G}$. 

\begin{defin}

We say that $\sigma$ is \emph{very irreducible} if the natural
map $Z_{\check{G}} \to \Aut(\sigma)$ is an isomorphism;
here $Z_{\check{G}}$ is the center of $\check{G}$.

\end{defin}

\begin{rem}

Of course, for $GL_n$, an irreducible local system is very irreducible.

\end{rem}

\begin{rem}\label{r:oper-very-irred}

Very irreducible local systems form an open substack
of $\LocSys_{\check{G}}$ that is non-empty when the genus 
of the curve is greater than $1$. 
This follows from the
existence of opers (without singularity) 
on $X$, which are always very irreducible, 
cf. \cite{hitchin} \S 3.1.

Probably very irreducible local systems are dense in $\LocSys_{\check{G}}$
(for genus $>1$), but we are not sure.

\end{rem}

\begin{rem}\label{r:finite-gps}

It is not hard to see that $\Aut(\sigma)/Z_{\check{G}}$ is zero-dimensional.
Therefore, the gap between irreducible and very irreducible local
systems concerns finite groups.

\end{rem}

\begin{example}

For completeness, we provide an explicit (quite elementary)
example of an irreducible local system that is
not very irreducible. The group is $PGL_2$.

Suppose $k = \bC$ and $X$ has genus $2$; we freely use Riemann-Hilbert. 
It is simple to see
that the topological fundamental
group $\pi_1^{\on{top}}(X)$ surjects onto the symmetric group $S_3$.
Indeed, the former has 
standard generators $a_1,b_1,a_2,b_2$ with defining relation
$[a_1,b_1][a_2,b_2] = 1$, while the latter
can be generated by elements $r$ and $s$ with 
$r$ an element of order $3$ and
$s$ a transposition with defining relation $srs = r^{-1}$.
Then the map:
\[
a_1 \mapsto r, \hspace{5pt}
b_1 \mapsto s, \hspace{5pt}
a_2 \mapsto r^2, \hspace{5pt}
b_2 \mapsto s
\]

\noindent defines our desired surjective homomorphism.

Then $S_3$ has a unique (up to isomorphism) irreducible 
$2$-dimensional representation, which via the homomorphism
above induces an irreducible
$GL_2$-local system on $X$. The induced $PGL_2$-local system
is also irreducible, but is easily seen not to be 
\emph{very} irreducible. 

\end{example}

\subsubsection{}

Let $k_{\sigma} \in \QCoh(\LocSys_{\check{G}})^{\heart}$ denote the structure sheaf
of $\sigma$, i.e., the $*$-pushforward of $k \in \Vect \simeq \QCoh(\Spec(k))$ along the map:
\[
\Spec(k) \xar{\sigma} \LocSys_{\check{G}}
\]

\subsubsection{The main result}

The goal of this section is to outline a proof of the following result.

\begin{thm}\label{t:eigensheaf}

There exists $\sF_{\sigma} \in D(\Bun_G)$ an eigensheaf for
$\sigma$ such that:

\begin{itemize}

\item $\sF_{\sigma}$ is perverse up to shifts 
(i.e., locally compact, concentrated in cohomological degree $0$, and 
with regular singularities).

\item If $\sigma$ is very irreducible, 
the restriction of $\sF_{\sigma}$ to every connected component of $\Bun_G$
is an irreducible perverse sheaf.

\end{itemize}

In addition, one has:

\begin{itemize}

\item $\sF_{\sigma}$ is cuspidal.

\item $\coeff^{\enh}(\sF_{\sigma}) \simeq k_{\sigma}[-\dim \Bun_G] 
\in \QCoh(\LocSys_{\check{G}})$.

\end{itemize}

\end{thm}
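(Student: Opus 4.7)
The plan is to take the existence of some Hecke eigensheaf $\sF_{\sigma}$ for granted from the oper construction in Appendix \ref{a:localization}, so the real work lies in verifying its structural properties. First, irreducibility of $\sigma$ forbids any reduction to a Levi of a proper parabolic, forcing any $\sigma$-eigensheaf to have vanishing constant terms (hence to be cuspidal), and thus by \cite{dario-ramanujan} to be tempered. I would then compute $\coeff^{\enh}(\sF_{\sigma}) \in \QCoh(\LocSys_{\check{G}})$: the $\sigma$-eigensheaf property together with the $\QCoh(\LocSys_{\check{G}})$-linearity of $\coeff^{\enh}$ force its output to be supported at $\sigma$, and composing with $\Gamma$ via \eqref{eq:coeff-enh} recovers the primary Whittaker coefficient $\coeff(\sF_{\sigma})$, which the Whittaker normalization pins to $k[-\dim \Bun_G]$. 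Because $\sigma$ is a smooth point of $\LocSys_{\check{G}}$ (irreducibility ensuring no infinitesimal parabolic reductions) and Hecke operations act through finite-dimensional fibers of universal bundles at $\sigma$, this forces $\coeff^{\enh}(\sF_{\sigma}) \simeq k_{\sigma}[-\dim \Bun_G]$.

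For perversity: for each $V \in \Rep(\check{G})^{\heart}$ and $x \in X(k)$, the eigensheaf identity gives $\coeff(H_{V,x} \star \sF_{\sigma}) \simeq V_{\sigma_{x}} \otimes \coeff(\sF_{\sigma})$, which is concentrated in a single cohomological degree. Combining Theorem \ref{t:coeff-exact} ($\coeff[\dim \Bun_G]$ is $t$-exact on $\Shv_{\Nilp}(\Bun_G)$) with Theorem \ref{t:hecke-exact} (Hecke functors for $V \in \Rep(\check{G})^{\heart}$ are $t$-exact on $D(\Bun_G)^{\temp}$) and Theorem \ref{t:temp-cons} (conservativeness of $\coeff^{\enh}$ on the tempered quotient), I would deduce that the tempered cohomology sheaves of $\sF_{\sigma}[\dim \Bun_G]$ vanish outside degree $0$. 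A smooth-local check on $\Bun_G$ then promotes this to perversity of $\sF_{\sigma}[\dim \Bun_G]$ in the usual $t$-structure.

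For the irreducibility claim under the assumption $\Aut(\sigma) = Z_{\check{G}}$: the category of $\sigma$-eigensheaves in $D(\Bun_G)^{\temp}$ is naturally a module category over $\Rep(\Aut(\sigma)) = \Rep(Z_{\check{G}})$, while $\pi_0(\Bun_G) = \pi_1^{\on{alg}}(G)$ is Pontryagin-dual to $Z_{\check{G}}$. Thus the restriction of $\sF_{\sigma}$ to each component selects a single character-isotypic piece of the $\Rep(Z_{\check{G}})$-module structure; since that piece is forced to be one-dimensional by the combination of the Whittaker-normalization computation and the conservativeness/exactness results of Part II, the restriction must be a simple perverse sheaf.

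The step I expect to be the main obstacle is the perversity argument: passing from tempered-$t$-structure information to usual-$t$-structure perversity requires carefully tracking shifts, reconciling the local-on-$\Bun_G$ nature of perversity with the globally-flavored Whittaker functors, and verifying that no essential information about $\sF_{\sigma}$ is lost in the tempered quotient. It is precisely here that the three main exactness and conservativeness theorems (\ref{t:coeff-exact}, \ref{t:hecke-exact}, \ref{t:temp-cons}) play essential and interlocking roles.
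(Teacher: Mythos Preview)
Your overall strategy matches the paper's proof closely: existence from the oper construction, cuspidality from irreducibility of $\sigma$, temperedness via \cite{dario-ramanujan}, perversity from the $t$-exactness and conservativeness of $\coeff^{\enh}[\dim\Bun_G]$ on $D_{\widehat{\sigma}}(\Bun_G)$, and irreducibility by counting. A few points where your outline diverges or is imprecise:

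\textbf{Whittaker normalization.} The identity $\coeff^{\enh}(\sF_{\sigma})\simeq k_{\sigma}[-\dim\Bun_G]$ is not something you derive after the fact; it is part of the output of Theorem~\ref{t:aut-exists}, coming directly from the commutative diagram \eqref{eq:coeff-opers} in Appendix~\ref{a:localization}. Your attempt to deduce it from the eigensheaf property plus knowledge of $\coeff(\sF_{\sigma})$ alone is workable (since $\coeff^{\enh}$ is $\QCoh(\LocSys_{\check{G}})$-linear and $\Gamma\circ\sigma_*=\id$), but is unnecessary and the paper does not do it that way.

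\textbf{Cuspidality.} Your sentence ``irreducibility of $\sigma$ forces vanishing constant terms'' is not immediate: it requires the Braverman--Gaitsgory Hecke property of Eisenstein series (Theorem~\ref{t:bg-eis}), which says $\on{Eis}_!$ lands in the $\QCoh(\LocSys_{\check{G}})_{\check{P}}$-supported part. Only then does Corollary~\ref{c:irred->cusp} give cuspidality.

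\textbf{Perversity and the $t$-structure worry.} Your anticipated obstacle dissolves more simply than you suggest. The paper uses Lemma~\ref{l:shv-sigma} to realize $D_{\widehat{\sigma}}(\Bun_G)$ as a direct summand of $\Shv_{\Nilp}(\Bun_G)$, so it inherits the \emph{usual} $t$-structure. Since $p:\Shv_{\Nilp}(\Bun_G)\to\Shv_{\Nilp}(\Bun_G)^{\temp}$ is $t$-exact by construction, and since every object of $D_{\widehat{\sigma}}(\Bun_G)$ is cuspidal hence tempered, the functor $\coeff^{\enh}[\dim\Bun_G]$ is $t$-exact and conservative on $D_{\widehat{\sigma}}(\Bun_G)$ for the \emph{usual} $t$-structure. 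One then reads off $H^i(\sF_{\sigma})=0$ for $i\neq 0$ directly, with no ``smooth-local check'' needed.

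\textbf{Irreducibility.} The paper's argument (Lemma~\ref{l:length}) is a clean length count rather than your $\Rep(Z_{\check{G}})$-module framing: the upper bound $\ell(\sF_{\sigma})\leq\ell(k_{\sigma})=|Z_{\check{G}}|=|\pi_1^{\on{alg}}(G)|$ comes from $t$-exactness and conservativeness of $\coeff^{\enh}[\dim\Bun_G]$, the lower bound from nonvanishing on each component, and equality forces each $\sF_{\sigma}^c$ to be simple. For non-semisimple $G$ there is an extra step (\S\ref{sss:red}) using the $\check{\Lambda}_{Z_G^{\circ}}$-action to reduce to a Deligne--Mumford locus; your proposal does not address this.
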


We emphasize that the argument combines work of many other authors, and some portion of what follows is simply a matter
of proving a folklore result by stringing together results
of other authors.
In particular, the existence of a non-zero \emph{complex} of sheaves $\sF_{\sigma}$ (Theorem \ref{t:aut-exists} below)
is the culmination of work of many authors, notably Arinkin, Gaitsgory, Frenkel-Gaitsgory,
Beilinson-Drinfeld, and not us. 

We consider our contribution to be to the questions of perversity and 
irreducibility of $\sF_{\sigma}$. This is a classical question: the above
result proves \cite{frenkel-survey} Conjecture 1.\footnote{Technically,
\emph{loc. cit}. ignores the discrepancy between irreducible
and very irreducible local systems. For irreducible $\sigma$ that
are not very irreducible, the conjecture from 
\emph{loc. cit}. is not reasonable; cf. Remark \ref{r:jh-genl}.}
We highlight that it has previously been unknown how to deduce
any result of this type from the categorical properties
of the geometric Langlands conjecture.

\begin{rem}

At some points, complete proofs of some key assertions are missing in 
the literature. Most glaringly: we need 
a generalization of \cite{bg-deformations}; \emph{loc. cit}. is
written for the Borel only, and we need the (folklore) generalization
to a general parabolic subgroup.\footnote{We also remark that the 
literature has at other times appealed
to this same folklore generalization. See e.g. \cite{potential} Appendix A.} 

\end{rem}

\begin{rem}\label{r:jh-genl}

In the case of general irreducible $\sigma$, $\sF_{\sigma}$ ought to be a semi-simple 
perverse sheaf with irreducible factors indexed by isomorphism
classes irreducible representations $W_i \in \Rep(\Aut(\sigma))^{\heart}$,
with each simple factor $\sF_{\sigma,W_i}$ appearing with multiplicity
$\dim(W_i)$. We are unable to unconditonally prove this assertion at the moment,
but our methods combined with the categorical geometric Langlands
conjecture yield this conclusion, which seems not to have been previously
contemplated.

\end{rem}

\begin{rem}\label{r:whit-norm}

The condition $\coeff^{\enh}(\sF_{\sigma}) \simeq k_{\sigma}[-\dim \Bun_G] $
is often referred to as the \emph{Whittaker normalization} of
a Hecke eigensheaf (at least, up to conventions regarding the shift).
It plays a key role for us in what follows, cf. Theorem \ref{t:eigen->perv}.

\end{rem}

\begin{rem}

The assumption that $k$ is algebraically closed is used in the reference
to \cite{dima-opers}, and specifically to the existence of Whittaker
normalized $\sF_{\sigma}$; otherwise, we may a priori need to extend the ground field
to obtain an oper structure (conjecturally, this should not be necessary).

\end{rem}

\subsection{Formulation of intermediate results}

We now formulate a series of results from which we deduce
Theorem \ref{t:eigensheaf}.

\subsubsection{Existence of eigensheaves}

Crucially, we have:

\begin{thm}[Folklore]\label{t:aut-exists}

There exists an object $\sF_{\sigma} \in D(\Bun_G)$
such that:

\begin{itemize}

\item $\sF_{\sigma}$ is a Hecke eigensheaf with eigenvalue $\sigma$
(see \S \ref{sss:eigensheaf-defin} for the definition).

\item $\coeff^{\enh}(\sF_{\sigma}) \simeq k_{\sigma}[-\dim \Bun_G] \in 
\QCoh(\LocSys_{\check{G}})$.

\end{itemize}

\end{thm}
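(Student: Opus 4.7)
The strategy is to combine the spectral action of $\QCoh(\LocSys_{\check{G}})$ on $D(\Bun_G)$ from \cite{generalized-vanishing} with the Beilinson--Drinfeld construction of Hecke eigensheaves from opers. I would start by defining the candidate $\sF_{\sigma} := k_{\sigma} \star \Poinc_{!}$ (up to a cohomological shift to be fixed by the normalization conventions of \S\ref{s:coeff-backround}), where $\star$ denotes the spectral action. Of the two required properties, the Hecke eigensheaf property is essentially formal: every Hecke functor $H_{V,x}$ factors through the spectral action by convolution with the tautological bundle $\sE_{V,x}$ on $\LocSys_{\check{G}}$, so
\[
H_{V,x}(\sF_{\sigma}) = (\sE_{V,x} \otimes k_{\sigma}) \star \Poinc_{!}
\simeq \sigma(V)_x \otimes \sF_{\sigma},
\]
using that $\sE_{V,x}|_{\sigma} = \sigma(V)_x$. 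The Whittaker normalization is likewise formal from the $\QCoh(\LocSys_{\check{G}})$-linearity of $\coeff^{\enh}$ built into the construction in \S\ref{ss:coeff-enh}, together with the identification $\coeff^{\enh}(\Poinc_{!}) \simeq \sO_{\LocSys_{\check{G}}}$ (up to shift), which follows from the corepresentability of $\coeff$ by $\Poinc_{!}$ transported across the spectral self-duality of $\QCoh(\LocSys_{\check{G}})$. This yields $\coeff^{\enh}(\sF_{\sigma}) \simeq k_{\sigma}[-\dim \Bun_G]$ after pinning down the correct shift.

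The only genuine content is therefore the non-vanishing $\sF_{\sigma} \neq 0$, which is the classical existence problem for Hecke eigensheaves. Here the argument goes through opers. I would invoke \cite{dima-opers} (whose use of algebraic closedness of $k$ is the sole reason for our standing assumption) to reduce any irreducible $\sigma$ to an oper, i.e., to realize it in the image of some map $\Op_{\check{G}}(X) \to \LocSys_{\check{G}}$. For opers, Beilinson--Drinfeld construct a non-zero Hecke eigensheaf $\sF_{\sigma}^{BD}$ via global localization from critical-level affine Kac--Moody representations at the point $\sigma$; the work of Frenkel--Gaitsgory and Arinkin--Gaitsgory upgrades this and verifies that $\sF_{\sigma}^{BD}$ has the required Whittaker normalization. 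Once a non-zero Whittaker-normalized eigensheaf exists, it must agree with our $\sF_{\sigma}$: both objects are Hecke eigensheaves with eigenvalue $\sigma$ and both have $\coeff^{\enh}$ computing $k_{\sigma}[-\dim \Bun_G]$, and the subcategory of such objects is one-dimensional because the spectral action decomposes $D(\Bun_G)$ into factors indexed by the support on $\LocSys_{\check{G}}$.

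The main obstacle will be the technical bookkeeping in importing the oper/Kac--Moody side. Even though the existence of $\sF_{\sigma}^{BD}$ is well-established folklore, carefully matching the normalization conventions of \S\ref{s:coeff-backround} against those in the Beilinson--Drinfeld literature, and supplying the (also folklore) parabolic generalization of \cite{bg-deformations} which the Whittaker comparison implicitly requires, are the delicate steps. No new conceptual input is needed beyond what is already in the literature, but stringing together the relevant results into a clean statement is the real work.
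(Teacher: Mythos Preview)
Your proposal contains a genuine gap: the claim that $\coeff^{\enh}(\Poinc_!) \simeq \sO_{\LocSys_{\check{G}}}$ does \emph{not} follow from corepresentability of $\coeff$ by $\Poinc_!$ together with self-duality of $\QCoh(\LocSys_{\check{G}})$. Unwinding the construction of $\coeff^{\enh}$, the object $\coeff^{\enh}(\Poinc_!)$ corresponds under self-duality to the functional $\sG \mapsto \coeff(\sG \star \Poinc_!) = \Hom_{D(\Bun_G)}(\Poinc_!, \sG \star \Poinc_!)$, whereas $\sO_{\LocSys_{\check{G}}}$ corresponds to $\sG \mapsto \Gamma(\LocSys_{\check{G}}, \sG)$. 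That these coincide is essentially the assertion that $\coeff^{\enh}$ is fully faithful on the $\QCoh(\LocSys_{\check{G}})$-submodule generated by $\Poinc_!$; this is a form of the tempered geometric Langlands conjecture and remains open beyond $GL_n$ (cf.\ the ``Struktursatz'' row of the table in the introduction). Your subsequent uniqueness claim (``the subcategory of such objects is one-dimensional'') is likewise unavailable without Langlands, so you cannot identify your $\sF_\sigma$ with $\sF_\sigma^{BD}$ either.

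The fix is to jettison the $k_\sigma \star \Poinc_!$ wrapper and work directly with what you call $\sF_\sigma^{BD}$, which already furnishes everything the theorem asks for. The paper does exactly this: $\sF_\sigma$ is defined as the global Kac--Moody localization of the skyscraper sheaf at an oper lift $\chi_\sigma$ of $\sigma$ (existence of such a lift, with singularities along a divisor, is Arinkin's theorem \cite{dima-opers}). The Hecke eigensheaf property then follows from $\QCoh(\LocSys_{\check{G}})$-linearity of the localization functor, and the Whittaker normalization from a commutative diagram identifying $\coeff^{\enh}$ composed with localization with the pushforward $\pi_*$ along $\pi:\Op_{\check{G}}^{\on{glob},\sum \lambda_i x_i} \to \LocSys_{\check{G}}$; this comparison is where Frenkel--Gaitsgory (Weyl modules and quantum Drinfeld--Sokolov reduction) and \cite{dennis-opers} enter. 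Note also that the parabolic extension of \cite{bg-deformations} plays no role in this theorem --- that ingredient is used for cuspidality, which is a separate step.
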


Briefly: the proof is via the Kac-Moody localization method pioneered
by Beilinson-Drinfeld, appealing to later developments
due to Frenkel-Gaitsgory, independent ideas of Gaitsgory, and Arinkin.
We review the relevant results in Appendix \ref{a:localization}.

In \S \ref{ss:perv-pf}, we will show the following result.

\begin{thm}\label{t:eigen->perv}

Any object $\sF_{\sigma}$ satisfying the conclusion of 
Theorem \ref{t:aut-exists} also satisfies the conclusion of
Theorem \ref{t:eigensheaf}.

\end{thm}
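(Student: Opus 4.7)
The plan is to use the $t$-exactness results (Theorems \ref{t:coeff-exact}, \ref{t:hecke-exact}) and conservativeness (Theorem \ref{t:temp-cons}, Corollary \ref{c:coeff-enh-nilp}) to upgrade the categorical data given by Theorem \ref{t:aut-exists} to the structural conclusion of Theorem \ref{t:eigensheaf}. The first step is to reduce to the tempered nilpotent setting: Hecke eigensheaves for irreducible $\check G$-local systems have nilpotent singular support (a standard consequence of the spectral decomposition of \cite{agkrrv1}, since the spectral support of $\sF_\sigma$ is the single point $\sigma$), so $\sF_\sigma \in \Shv_\Nilp(\Bun_G)$; and since $\coeff^{\enh}$ vanishes on anti-tempered sheaves while $\coeff^\enh(\sF_\sigma) \neq 0$, the tempered projection of $\sF_\sigma$ is nonzero, so we may work with $\sF_\sigma$ inside $\Shv_\Nilp(\Bun_G)^{\temp}$. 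Cuspidality is then automatic: for any proper parabolic $P \subsetneq G$ with Levi $M$, the constant term $\on{CT}_P(\sF_\sigma) \in D(\Bun_M)$ decomposes as a direct sum of Hecke eigensheaves indexed by reductions of $\sigma$ to $P$, of which there are none since $\sigma$ is irreducible.

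The core assertion is perversity. I would begin with the general observation that for any nonzero tempered Hecke eigensheaf $\sF$ with eigenvalue $\sigma$, the $\QCoh(\LocSys_{\check G}^{\on{restr}})$-linearity of $\coeff^\enh$ forces $\coeff^\enh(\sF) \simeq V_\sF \otimes k_\sigma$ for a complex $V_\sF$ of vector spaces, with $V_\sF \neq 0$ by conservativeness (Corollary \ref{c:coeff-enh-nilp}); applying $\Gamma_!$ gives $\coeff(\sF) \simeq V_\sF \otimes \Gamma_!(k_\sigma)$, where $\Gamma_!(k_\sigma)$ is nonzero. Now consider the cohomology sheaves $H^n(\sF_\sigma)$ in the $t$-structure on $\Shv_\Nilp(\Bun_G)^{\temp}$ from \S\ref{ss:hecke-exact-nilp}; Hecke $t$-exactness (Theorem \ref{t:hecke-exact-loc} via its nilpotent variant) shows each $H^n(\sF_\sigma)$ is again a Hecke eigensheaf for $\sigma$. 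By $t$-exactness of $\coeff[\dim\Bun_G]$ (Theorem \ref{t:coeff-exact}), the cohomology of $\coeff(\sF_\sigma)$ is computed degree-by-degree from $\coeff(H^n(\sF_\sigma))$, while the Whittaker normalization $\coeff(\sF_\sigma) \simeq \Gamma_!(k_\sigma)[-\dim\Bun_G]$ concentrates $\coeff(\sF_\sigma)$ in a single cohomological degree; it follows that $\coeff(H^n(\sF_\sigma)) = 0$ for all $n$ but one, which by the first observation forces $V_{H^n(\sF_\sigma)} = 0$ and hence $H^n(\sF_\sigma) = 0$ for all $n \neq 0$. Local compactness and regular singularities are inherited from the membership $\sF_\sigma \in \Shv_\Nilp(\Bun_G)^{\on{constr}}$ once boundedness is established, and $\sF_\sigma$ is therefore perverse up to a shift.

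For irreducibility under the very irreducibility hypothesis, I would restrict $\sF_\sigma$ to each connected component $\Bun_G^\theta$ and apply the same logic to any simple constituent $\sG \subset \sF_\sigma|_{\Bun_G^\theta}$: such a $\sG$ is a tempered Hecke eigensheaf with eigenvalue $\sigma$, hence has nonzero spectral support at $\sigma$. The classification of simple tempered eigensheaves at $\sigma$ is governed by irreducible representations of $\Aut(\sigma)$, and very irreducibility ($\Aut(\sigma) = Z_{\check G}$) together with the canonical identification $\pi_0(\Bun_G) = \pi_1^{\on{alg}}(G) = \Hom(Z_{\check G}, \bG_m)$ gives at most one simple eigensheaf per component, forcing the restriction to $\Bun_G^\theta$ to be irreducible. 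The main obstacle I expect is precisely this last classification: a careful verification that simple tempered $\sigma$-eigensheaves are in bijection with characters of $\Aut(\sigma)$ and are distributed one-per-component under the $Z_{\check G} = \Aut(\sigma)$ identification; this is the point at which mere irreducibility of $\sigma$ is insufficient and where the very irreducibility hypothesis rules out the extra finite-group multiplicity flagged in Remarks \ref{r:finite-gps} and \ref{r:jh-genl}.
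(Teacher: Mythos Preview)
Your argument for concentration in a single cohomological degree is essentially the paper's: both use $t$-exactness of $\coeff^{\enh}[\dim\Bun_G]$ on $\Shv_{\Nilp}(\Bun_G)$ together with conservativeness on the tempered part to kill $H^n(\sF_\sigma)$ for $n\neq 0$. Your cuspidality argument via constant terms is dual to the paper's, which instead invokes Theorem \ref{t:bg-eis} (the spectral support of Eisenstein series lands over $\LocSys_{\check P}$) and Corollary \ref{c:irred->cusp}; either route is fine, though your phrasing ``decomposes as a direct sum of eigensheaves indexed by reductions'' is loose and should be replaced by the statement that the spectral support of $\on{CT}_P(\sF_\sigma)$ lies over the (empty) fiber of $\LocSys_{\check P}\to\LocSys_{\check G}$ at $\sigma$.

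There is, however, a genuine gap in your treatment of both finite length and irreducibility. You write that local compactness follows ``once boundedness is established,'' but you do not establish it; a priori $\sF_\sigma$ is only ind-perverse. More seriously, your irreducibility argument rests on a classification of simple tempered $\sigma$-eigensheaves by irreducible representations of $\Aut(\sigma)$. The paper does \emph{not} prove such a classification, and Remark \ref{r:jh-genl} explicitly notes that the expected Jordan--H\"older structure is only known conditionally on categorical geometric Langlands. So as written your proof of irreducibility does not close.

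The paper's replacement is a short counting argument (Lemma \ref{l:length}) that handles finite length and irreducibility simultaneously, and which you are missing. Since $\coeff^{\enh}_{\widehat\sigma}[\dim\Bun_G]$ is conservative and $t$-exact, it cannot decrease length, so
\[
\ell(\sF_\sigma)\ \le\ \ell\big(\coeff^{\enh}(\sF_\sigma)[\dim\Bun_G]\big)\ =\ \ell(k_\sigma).
\]
For $G$ semisimple and $\sigma$ very irreducible, $k_\sigma$ is the pushforward of $k$ along $\Spec(k)\to\bB Z_{\check G}\hookrightarrow\LocSys_{\check G}$, so $\ell(k_\sigma)=|Z_{\check G}|=|\pi_1^{\on{alg}}(G)|$. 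On the other hand the eigensheaf property (or the Whittaker normalization) forces $\sF_\sigma|_{\Bun_G^c}\neq 0$ for every $c\in\pi_1^{\on{alg}}(G)$, giving $\ell(\sF_\sigma)\ge|\pi_1^{\on{alg}}(G)|$. Equality then pins each component to length one. For reductive $G$ one passes to $\Bun_G/\check\Lambda_{Z_G^\circ}$ to make the relevant length finite (\S \ref{sss:red}); the same inequality, with $\Aut(\sigma)$ in place of $Z_{\check G}$, also supplies the finite-length bound for merely irreducible $\sigma$ (Remarks \ref{r:not-very-1}, \ref{r:not-very-2}).
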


\begin{rem}[Application to \cite{hitchin}]\label{r:bd-irred}

Suppose $\sigma$ admits an oper structure (without singularities).
In this case, $\sigma$ is necessarily very irreducible 
(cf. Remark \ref{r:oper-very-irred}).
We can take $\sF_{\sigma}$ to be the $D$-module constructed in 
\cite{hitchin} \S 5.1.1.;\footnote{In \cite{hitchin}, our $\sigma$ is denoted
$\sF$ and our $\sF_{\sigma}$ is denoted $M_{\sF}$.} that
$\sF_{\sigma}$ is Whittaker-normalized is a special case of 
the discussion in Appendix \ref{a:localization}.
Therefore, Theorem \ref{t:eigen->perv} 
implies that the eigensheaves constructed in \cite{hitchin} 
are perverse sheaves with irreducible restrictions to each connected component
of $\Bun_G$, answering in the
affirmative a question of Beilinson-Drinfeld (see \cite{hitchin} \S 5.2.7).

\end{rem}

\subsubsection{Cuspidality}

We have the following result:

\begin{thm}[Braverman-Gaitsgory]\label{t:bg-eis}

Let $P \subset G$ be a parabolic subgroup with Levi quotient $M$.
Let $\check{P} \subset \check{G}$ be the dual parabolic.

Let $\QCoh(\LocSys_{\check{G}})_{\check{P}} \subset 
\QCoh(\LocSys_{\check{G}})$ 
denote the full subcategory of objects supported (set theoretically) 
on the image of the (proper) morphism 
$\LocSys_{\check{P}} \to \LocSys_{\check{G}}$.

Then the composition:
\[
D(\Bun_M) \xar{\on{Eis}_!} D(\Bun_G)
\]

\noindent maps into the full subcategory:
\[
D(\Bun_G) 
\underset{\QCoh(\LocSys_{\check{G}})}{\otimes} 
\QCoh(\LocSys_{\check{G}})_{\check{P}} \subset
D(\Bun_G) 
\underset{\QCoh(\LocSys_{\check{G}})}{\otimes} 
\QCoh(\LocSys_{\check{G}}) = D(\Bun_G).
\]

\end{thm}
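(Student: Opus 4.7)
The plan is to prove that $\on{Eis}_!$ intertwines the $\QCoh$-actions via pull-push along the correspondence
\[
\LocSys_{\check{G}} \xleftarrow{\; p\;} \LocSys_{\check{P}} \xrightarrow{\; q\;} \LocSys_{\check{M}};
\]
concretely, I aim to produce a canonical isomorphism
\[
\sG \star \on{Eis}_!(\sF) \simeq \on{Eis}_!\bigl(q_*(p^*\sG) \star \sF\bigr)
\]
for $\sG \in \QCoh(\LocSys_{\check{G}})$ and $\sF \in D(\Bun_M)$. Given such a formula, the theorem follows from a short support calculation: let $U \subset \LocSys_{\check{G}}$ be the open complement of the (closed, since $p$ is proper) image of $p$, and let $j:U \into \LocSys_{\check{G}}$. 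For any $\sH \in \QCoh(U)$, the pullback $p^*j_*\sH$ vanishes because $p$ factors through the closed complement of $U$, so $j_*\sH \star \on{Eis}_!(\sF) \simeq \on{Eis}_!(0) = 0$. This is precisely the assertion that $\on{Eis}_!(\sF)$ lies in the full subcategory generated by objects supported on the image of $p$.

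The technical input is the parabolic generalization of the Braverman--Gaitsgory compatibility \cite{bg-deformations}. For $V \in \Rep(\check{G})^{\heart}$ and $x \in X$, I would first establish a canonical filtration on $H_{V,x}^G \circ \on{Eis}_!$ indexed by the $Z(\check{M})^{\circ}$-weights of $V$, with associated graded naturally identified with $\on{Eis}_! \circ H_{\on{gr} V|_{\check{M}},\,x}^M$. This is the main result of \cite{bg-deformations} in the case $P=B$; the same method, based on the geometry of Drinfeld's compactification $\overline{\Bun}_P$ and the behavior of Hecke correspondences on it, gives the parabolic case. I would then upgrade this Hecke-level compatibility to the desired $\QCoh$-level formula using the spectral decomposition of \cite{generalized-vanishing}: one checks the compatibility on generators coming from the Ran-space version of $\Rep(\check{G})$, and extends by continuity and colimits to all of $\QCoh(\LocSys_{\check{G}})$.

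The main obstacle is the parabolic extension of \cite{bg-deformations}, which has been folklore for some time but is not fully documented. A clean alternative, avoiding a wholesale reworking of \emph{loc. cit.}, is to factor $\on{Eis}_!^P$ as the composition
\[
D(\Bun_M) \xrightarrow{\on{Eis}_!^{B_M}} D(\Bun_B) \xrightarrow{\on{Eis}_!^B} D(\Bun_G)
\]
(where $B_M$ is the Borel of $M$), reducing the parabolic statement to two applications of the Borel case, composed along $\LocSys_{\check{B}} \to \LocSys_{\check{P}} \to \LocSys_{\check{G}}$. Making this reduction rigorous amounts to verifying that the intermediate support statement --- essentially, that $\on{Eis}_!^B(\sF')$ for $\sF'$ constant-term-supported along $\check{P}$ is itself $\check{P}$-supported --- combines correctly with the Borel version; this is a more combinatorial variant of the main argument above, but appears more tractable than redoing the Zastava-space input of \cite{bg-deformations} in the parabolic generality.
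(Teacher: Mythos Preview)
Your main approach matches the paper's: both appeal to the Hecke/spectral compatibility of Eisenstein series from \cite{bg-deformations} in the Borel case, and to its folklore parabolic extension (the paper cites \cite{dennis-laumonconf} Proposition 11.1.3 for the latter). Your write-up is more explicit than the paper's about how the intertwining formula $\sG \star \on{Eis}_!(\sF) \simeq \on{Eis}_!\bigl(q_*p^*(\sG) \star \sF\bigr)$ yields the support conclusion via vanishing on the open complement, which is a welcome clarification; the paper simply records the citations.

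Your alternative reduction to the Borel case, however, does not work as stated. The functor you label $\on{Eis}_!^{B_M}$ has source $D(\Bun_T)$, not $D(\Bun_M)$; there is no Eisenstein-type functor $D(\Bun_M) \to D(\Bun_B)$. The actual transitivity identity runs the other way,
\[
\on{Eis}_!^{B \to G} \;\simeq\; \on{Eis}_!^{P \to G} \circ \on{Eis}_!^{B_M \to M},
\]
expressing the Borel Eisenstein functor as a composite through the parabolic one. From this and the Borel case you can conclude that $\on{Eis}_!^{P}(\sF)$ has $\check{P}$-support whenever $\sF$ lies in the essential image of $\on{Eis}_!^{B_M \to M}$, but that image does not generate $D(\Bun_M)$ (indeed, its orthogonal is the cuspidal subcategory of $D(\Bun_M)$), so the reduction does not close. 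You are left needing exactly the parabolic input you were hoping to avoid.
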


\begin{proof}

For $\check{P} = \check{B}$, this follows from the
Hecke property of Eisenstein series shown in 
\cite{bg-deformations} Theorem 8.8 (see also \emph{loc. cit}.
Theorem 1.11). 

In general, it is expected that the results of 
\cite{bg-deformations} generalize without major changes
to parabolics. In particular, the assertion of the present
theorem is asserted (and refined) by Gaitsgory in
\cite{dennis-laumonconf} Proposition 11.1.3.
 
\end{proof}

Let $\LocSys_{\check{G}}^{\on{irred}} \subset \LocSys_{\check{G}}$
denote the open parametrizing irreducible $\check{G}$-local systems;
i.e., $\LocSys_{\check{G}}^{\on{irred}}$ is the complement to the
images of the maps $\LocSys_{\check{P}} \to \LocSys_{\check{G}}$ for
$\check{P} \neq \check{G}$. Let $j$ denote the relevant open embedding.

By adjunction, we have adjoint functors:
\[
j^*:D(\Bun_G) \rightleftarrows 
D(\Bun_G) 
\underset{\QCoh(\LocSys_{\check{G}})}{\otimes}
\QCoh(\LocSys_{\check{G}}^{\on{irred}})
:j_*
\]

\noindent with $j_*$ being fully faithful.

\begin{cor}

For $M$ a Levi besides $G$, the composition:
\[
D(\Bun_M) \xar{\on{Eis}_!} D(\Bun_G) \xar{j^*} 
D(\Bun_G) 
\underset{\QCoh(\LocSys_{\check{G}})}{\otimes}
\QCoh(\LocSys_{\check{G}}^{\on{irred}})
\]

\noindent is zero.

\end{cor}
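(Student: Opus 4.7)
The plan is to deduce this corollary as an immediate formal consequence of Theorem \ref{t:bg-eis} together with the fact that $\LocSys_{\check{G}}^{\on{irred}}$ is, by construction, disjoint from the support loci appearing in that theorem.

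First, I would apply Theorem \ref{t:bg-eis} with $P$ the parabolic corresponding to the given Levi $M$. Since $M \neq G$, we may choose $P$ proper in $G$, hence $\check{P}$ proper in $\check{G}$. The theorem then gives a factorization of $\on{Eis}_!$ through the subcategory
\[
D(\Bun_G) \underset{\QCoh(\LocSys_{\check{G}})}{\otimes} \QCoh(\LocSys_{\check{G}})_{\check{P}}.
\]
Next, I would observe that by the very definition of $\LocSys_{\check{G}}^{\on{irred}}$, the closed substack $\on{Image}(\LocSys_{\check{P}} \to \LocSys_{\check{G}})$ is contained in the closed complement of the open $j \colon \LocSys_{\check{G}}^{\on{irred}} \hookrightarrow \LocSys_{\check{G}}$. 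Consequently, any object of $\QCoh(\LocSys_{\check{G}})_{\check{P}}$ pulls back to zero under $j^*$, i.e., the composition
\[
\QCoh(\LocSys_{\check{G}})_{\check{P}} \hookrightarrow \QCoh(\LocSys_{\check{G}}) \xar{j^*} \QCoh(\LocSys_{\check{G}}^{\on{irred}})
\]
is zero.

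Tensoring this vanishing over $\QCoh(\LocSys_{\check{G}})$ with $D(\Bun_G)$ (using the spectral action) then gives that the functor
\[
D(\Bun_G) \underset{\QCoh(\LocSys_{\check{G}})}{\otimes} \QCoh(\LocSys_{\check{G}})_{\check{P}} \to D(\Bun_G) \underset{\QCoh(\LocSys_{\check{G}})}{\otimes} \QCoh(\LocSys_{\check{G}}^{\on{irred}})
\]
induced by $j^*$ is zero, so composing with the factorization from Theorem \ref{t:bg-eis} yields the desired vanishing. There is no real obstacle here: the content of the corollary is entirely in Theorem \ref{t:bg-eis}, and the remainder is a formal manipulation of the spectral action together with the definition of $\LocSys_{\check{G}}^{\on{irred}}$ as the complement of proper parabolic reductions.
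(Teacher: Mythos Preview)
Your proposal is correct and is precisely the intended argument: the paper states this corollary without proof, treating it as an immediate formal consequence of Theorem \ref{t:bg-eis} together with the definition of $\LocSys_{\check{G}}^{\on{irred}}$ as the complement of the images of the $\LocSys_{\check{P}}$.
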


As cuspidal objects of $\Bun_G$ are exactly those objects
in the right orthogonal to Eisenstein series along proper parabolics
(cf. \cite{geometric-ct} \S 1.4), we obtain:

\begin{cor}\label{c:irred->cusp}

Any object of $D(\Bun_G)$ in the essential image
of the functor:

\[ 
j_*:D(\Bun_G) 
\underset{\QCoh(\LocSys_{\check{G}})}{\otimes}
\QCoh(\LocSys_{\check{G}}^{\on{irred}})
\to D(\Bun_G)
\]
 
\noindent is cuspidal.

\end{cor}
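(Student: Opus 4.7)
The plan is a short adjunction argument built on top of the corollary immediately preceding \ref{c:irred->cusp}. First I would recall the defining property of cuspidal $D$-modules as stated in \cite{geometric-ct} \S 1.4: an object $\sF \in D(\Bun_G)$ is cuspidal if and only if $\sF$ lies in the right orthogonal to the essential image of $\on{Eis}_!:D(\Bun_M) \to D(\Bun_G)$ for every proper Levi $M \subsetneq G$. So the task reduces to producing such orthogonality whenever $\sF$ is of the form $j_*(\sG)$.

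Next I would simply invoke the $(j^*, j_*)$-adjunction. Given $\sF = j_*(\sG)$, a proper parabolic $P \subset G$ with Levi quotient $M$, and any $\sH \in D(\Bun_M)$, the adjunction yields
\[
\Hom_{D(\Bun_G)}\big(\on{Eis}_!(\sH),\sF\big) \;=\; \Hom\big(\on{Eis}_!(\sH),\,j_*(\sG)\big) \;\simeq\; \Hom\big(j^*\on{Eis}_!(\sH),\,\sG\big).
\]
By the corollary preceding \ref{c:irred->cusp} (the vanishing $j^* \circ \on{Eis}_! = 0$ for $M \neq G$, which itself is extracted from Theorem \ref{t:bg-eis} via the fact that $\LocSys_{\check G}^{\on{irred}}$ is disjoint from the image of $\LocSys_{\check P}$ for $\check P \neq \check G$), the right-hand side vanishes. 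Thus $\sF$ is right-orthogonal to $\on{Eis}_!(D(\Bun_M))$ for every proper Levi $M$, so $\sF$ is cuspidal.

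In short, once Theorem \ref{t:bg-eis} is in hand, Corollary \ref{c:irred->cusp} is a two-line formal consequence, and there is no genuine obstacle in its derivation. The only real content lies upstream in the Hecke property of Eisenstein series (where, as the excerpt notes, one has to invoke the still-folklore parabolic generalization of \cite{bg-deformations}); once that is accepted, the present corollary is essentially automatic from the characterization of the essential image of a fully faithful right adjoint as the right orthogonal to the kernel of its left adjoint.
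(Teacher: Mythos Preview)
Your proposal is correct and matches the paper's argument exactly: the paper simply observes that cuspidal objects are the right orthogonal to Eisenstein series along proper parabolics (citing \cite{geometric-ct} \S 1.4), and then the corollary follows from the preceding vanishing $j^*\circ\on{Eis}_! = 0$ via the $(j^*,j_*)$ adjunction, precisely as you wrote.
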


\begin{rem}

The geometric Langlands conjectures predict that conversely,
any cuspidal object of $D(\Bun_G)$ lies in the essential image
of the above functor. This converse appears to be out of reach using 
present methods. 

\end{rem}

\subsection{Proof of Theorem \ref{t:eigen->perv}}\label{ss:perv-pf}

\subsubsection{}\label{sss:eigensheaf-defin}

Let $D_{\sigma}(\Bun_G)$ denote the category:
\[
D_{\sigma}(\Bun_G) \coloneqq 
\Hom_{\QCoh(\LocSys_{\check{G}})\mod}(\Vect,D(\Bun_G)) \simeq  
D(\Bun_G) \underset{\QCoh(\LocSys_{\check{G}})}{\otimes} 
\Vect
\]

\noindent where $\QCoh(\LocSys_{\check{G}})$ acts on $\Vect$ 
via the symmetric monoidal functor of pullback along
$\sigma:\Spec(k) \to \LocSys_{\check{G}}$ and we are using
self-duality of $\Vect$ in the above identification. 

By definition, a \emph{Hecke eigensheaf} with eigenvalue $\sigma$ 
is an object of this category $D_{\sigma}(\Bun_G)$.

Let $D_{\widehat{\sigma}}(\Bun_G) \subset D(\Bun_G)$ denote the
full subcategory generated under colimits by the essential image
of $D_{\sigma}(\Bun_G) \to D(\Bun_G)$.

We now have:

\begin{lem}[\cite{agkrrv1}]\label{l:shv-sigma}

\begin{enumerate}

\item\label{i:shv-sigma-1} 
Every object of $D_{\widehat{\sigma}}(\Bun_G)$ lies in 
$\Shv_{\Nilp}(\Bun_G)$ and has regular singularities.

\item\label{i:shv-sigma-2} 
The embedding $D_{\widehat{\sigma}}(\Bun_G) \into \Shv_{\Nilp}(\Bun_G)$
extends to a decomposition:
\[
\Shv_{\Nilp}(\Bun_G) = 
D_{\widehat{\sigma}}(\Bun_G) \times \Shv_{\Nilp,\neq \sigma}(\Bun_G).
\]

\end{enumerate}

\end{lem}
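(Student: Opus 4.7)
The plan is to deduce both parts of the lemma from the spectral decomposition results of \cite{agkrrv1}, which are already cited and used throughout the paper (see e.g. \S \ref{sss:shv-nilp-coeff-enh}).

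For part \eqref{i:shv-sigma-1}, I would argue as follows. An object in the essential image of $D_\sigma(\Bun_G) \to D(\Bun_G)$ is, by definition, the value at $k \in \Vect$ of a $\QCoh(\LocSys_{\check{G}})$-linear functor $\Vect \to D(\Bun_G)$, where $\Vect$ acquires its $\QCoh(\LocSys_{\check{G}})$-action via the symmetric monoidal pullback $\sigma^*: \QCoh(\LocSys_{\check{G}}) \to \Vect$. The point $\sigma: \Spec(k) \to \LocSys_{\check{G}}$ factors canonically through $\LocSys_{\check{G}}^{\on{restr}}$, so this $\QCoh(\LocSys_{\check{G}})$-linear structure on $\Vect$ is the restriction of a $\QCoh(\LocSys_{\check{G}}^{\on{restr}})$-linear one. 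By the characterization of $\Shv_{\Nilp}(\Bun_G)$ in \cite{agkrrv1} as the subcategory on which the spectral action factors through $\QCoh(\LocSys_{\check{G}}^{\on{restr}})$, the functor $\Vect \to D(\Bun_G)$ in fact factors through the inclusion $\Shv_{\Nilp}(\Bun_G) \hookrightarrow D(\Bun_G)$. Since $\Shv_{\Nilp}(\Bun_G)$ is closed under colimits, this inclusion extends to $D_{\widehat{\sigma}}(\Bun_G)$. The regular singularities assertion is then automatic from the very definition of $\Shv_{\Nilp}(\Bun_G)$ as the ind-completion of a category of regular holonomic objects.

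For part \eqref{i:shv-sigma-2}, I would invoke the disjoint union decomposition of $\LocSys_{\check{G}}^{\on{restr}}$ indexed by semisimplification classes, established in \cite{agkrrv1}. Because $\sigma$ is irreducible (hence semisimple, and in particular isolated among its deformations in the restricted world), it determines an open-and-closed substack $(\LocSys_{\check{G}}^{\on{restr}})_\sigma$, giving a direct product decomposition
\[
\QCoh(\LocSys_{\check{G}}^{\on{restr}}) = \QCoh((\LocSys_{\check{G}}^{\on{restr}})_\sigma) \times \QCoh((\LocSys_{\check{G}}^{\on{restr}})_{\neq \sigma}).
\]
Tensoring with $\Shv_{\Nilp}(\Bun_G)$ over $\QCoh(\LocSys_{\check{G}}^{\on{restr}})$ yields a corresponding product decomposition $\Shv_{\Nilp}(\Bun_G) = \sC_\sigma \times \Shv_{\Nilp,\neq\sigma}(\Bun_G)$. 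It then remains to identify the first factor $\sC_\sigma$ with $D_{\widehat{\sigma}}(\Bun_G)$. The inclusion $D_{\widehat{\sigma}}(\Bun_G) \subset \sC_\sigma$ is by construction (eigensheaves with eigenvalue $\sigma$ are in particular acted on trivially by sheaves vanishing near $\sigma$, and $\sC_\sigma$ is closed under colimits). For the reverse inclusion, I would use that $(\LocSys_{\check{G}}^{\on{restr}})_\sigma$ --- the clump containing $\sigma$ --- is, per \cite{agkrrv1} Theorem 1.4.5, an ind-affine formal stack built from the action of $\Aut(\sigma) \simeq Z_{\check{G}}$ (or its enhancement), so $\QCoh$ on it is generated under colimits by $k_\sigma$. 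Applying the spectral action of $k_\sigma$ to any object of $\sC_\sigma$ produces a Hecke eigensheaf of eigenvalue $\sigma$, and the whole object is recoverable from such data as a colimit.

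The main obstacle I expect is the identification $\sC_\sigma = D_{\widehat{\sigma}}(\Bun_G)$: while the outline is clean, one must handle the derived/formal structure of $(\LocSys_{\check{G}}^{\on{restr}})_\sigma$ carefully, in particular to ensure that the generation of $\QCoh$ by $k_\sigma$ yields the desired conclusion at the level of $\Shv_{\Nilp}$-modules. This is a standard type of argument (cf. generation statements for formal completions in \cite{agkrrv1}), but needs care to carry out correctly in the presence of the potential nontrivial action of $Z_{\check{G}}$; everything else is a direct unwinding of the spectral decomposition.
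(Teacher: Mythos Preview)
Your proposal is correct and takes essentially the same approach as the paper: both reduce to structural results from \cite{agkrrv1}. The paper's proof is terser, giving only bare citations (Proposition 14.5.3 and Main Corollary 16.5.6 for part \eqref{i:shv-sigma-1}, Corollary 14.3.5 for part \eqref{i:shv-sigma-2}), whereas you unpack what those results say and how they combine.

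Two minor imprecisions worth noting. First, your ``characterization of $\Shv_{\Nilp}(\Bun_G)$ as the subcategory on which the spectral action factors through $\QCoh(\LocSys_{\check{G}}^{\on{restr}})$'' is morally right but not quite how \cite{agkrrv1} phrases it; the cleaner route is to invoke directly that Hecke eigensheaves have nilpotent singular support (this is the content of the Main Corollary cited in the paper), then close under colimits. Second, you write $\Aut(\sigma) \simeq Z_{\check{G}}$, but at this point in the argument $\sigma$ is only assumed irreducible, not very irreducible (cf. Remark \ref{r:finite-gps}); this does not affect your argument, since all you actually need is that $\QCoh$ of the formal neighborhood is generated under colimits by $k_\sigma$, which holds regardless.
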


\begin{proof}

These are all structural results from \cite{agkrrv1}. 

That every object of $D_{\widehat{\sigma}}(\Bun_G)$ lies in 
$\Shv_{\Nilp}(\Bun_G)$ (and in particular, has regular singularities) 
is \cite{agkrrv1} Proposition 14.5.3 combined with 
\emph{loc. cit}. Main Corollary 16.5.6.

The asserted product decomposition follows from \cite{agkrrv1} Corollary 14.3.5.

\end{proof}

\begin{rem}

Only \eqref{i:shv-sigma-2} above uses irreducibility of $\sigma$.

\end{rem}

\subsubsection{}

By Lemma \ref{l:shv-sigma} \eqref{i:shv-sigma-2}, the subcategory: 
\[
D_{\widehat{\sigma}}(\Bun_G) \subset \Shv_{\Nilp}(\Bun_G)
\]

\noindent is closed under truncation functors for the natural $t$-structure
on the right hand side; therefore, this subcategory inherits a canonical $t$-structure
(uniquely characterized by $t$-exactness of the above embedding).

\subsubsection{}

We now consider the functor:
\[
\coeff_{\widehat{\sigma}}^{\enh}:D_{\widehat{\sigma}}(\Bun_G) \to 
\QCoh(\LocSys_{\check{G}}^{\on{restr}})
\]

\noindent given as the restriction of $\coeff^{\enh}$
(in its $\Shv_{\Nilp}$ incarnation, cf. \S \ref{sss:shv-nilp-coeff-enh}).

By the argument of \S \ref{sss:temp-gl-exact-intro}, combining
Theorem \ref{t:coeff-exact}, Theorem \ref{t:hecke-exact}, and 
\cite{agkrrv1} Theorem 1.4.5, we find that
the functor: 
\[
\coeff_{\widehat{\sigma}}^{\enh}[\dim \Bun_G]:\Shv_{\Nilp}(\Bun_G) \to 
\QCoh(\LocSys_{\check{G}}^{\on{restr}})
\]

\noindent is $t$-exact. The same applies for $\coeff_{\widehat{\sigma}}^{\enh}$.

Moreover, by Corollary \ref{c:irred->cusp}, any object of 
$D_{\widehat{\sigma}}(\Bun_G)$ is cuspidal, hence, by \cite{dario-ramanujan},
tempered. Therefore, Theorem \ref{t:temp-cons} implies that
$\coeff_{\widehat{\sigma}}^{\enh}$ is conservative, as well as $t$-exact
(up to shift).

\subsubsection{Irreducible case}

We now clearly obtain the assertion of Theorem \ref{t:eigen->perv} in
the case of (possibly not very) irreducible $\sigma$.

Namely, for $\sF_{\sigma} \in D_{\sigma}(\Bun_G)$ as in the statement of the 
theorem. We abuse notation in also letting $\sF_{\sigma}$ denote
the corresponding object of $D_{\widehat{\sigma}}(\Bun_G)$. 

We have already noted that any object of $D_{\widehat{\sigma}}(\Bun_G)$
has regular singularities and is cuspidal.

For $i \in \bZ$, $t$-exactness of $\coeff^{\enh}$ yields:
\[
\coeff^{\enh}(H^i(\sF_{\sigma})) = 
H^{i+\dim \Bun_G}(\sF_{\sigma})[-\dim \Bun_G].
\]

\noindent By assumption on $\sF_{\sigma}$, the right hand side vanishes for 
$i \neq 0$. As $\coeff^{\enh}$ is conservative, this means
$H^i(\sF_{\sigma}) = 0$ for $i \neq 0$, so $\sF_{\sigma}$ is ind-perverse.

We prove that $\sF_{\sigma}$ is perverse in Remarks \ref{r:not-very-1}
and \ref{r:not-very-2} below.

\subsubsection{Very irreducible case; $G$ is semi-simple}\label{sss:ss}

We now prove Theorem \ref{t:eigen->perv} for $\sigma$ very irreducible. 
The argument is more direct for $G$ semi-simple, so we impose this assumption
at first. 

Recall that the connected components of $\Bun_G$ are labeled by 
elements $c \in \pi_1^{\on{alg}}(G)$. For $c \in \pi_1^{\on{alg}}(G)$,
let $\Bun_G^c$ denote the corresponding connected component and let
$\sF_{\sigma} = \oplus_{c \in \pi_1^{\on{alg}}(G)} \sF_{\sigma}^c$ denote
the decomposition of $\sF_{\sigma}$ by connected components 
(so $\sF_{\sigma}^c$ is the restriction of $\sF_{\sigma}$ to 
$\Bun_G^c$). 

It suffices to show:

\begin{lem}\label{l:length}

\begin{enumerate}

\item\label{i:length-lower} For each $c \in \pi_1^{\on{alg}}(G)$, 
$\sF_{\sigma}^c$ is non-zero.

\item\label{i:length-upper} The length of $\sF_{\sigma} \in D(\Bun_G)^{\heart}$ 
as a perverse\footnote{A priori,
$\sF$ is ind-perverse. The finiteness in this bound amounts to perversity.} sheaf
is $\leq |\pi_1^{\on{alg}}(G)|$.

\end{enumerate}

\end{lem}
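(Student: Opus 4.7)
The plan is to prove the two parts in sequence: \eqref{i:length-lower} via Whittaker coefficients indexed by suitably chosen divisors, and \eqref{i:length-upper} via $t$-exactness plus conservativeness of $\coeff^{\enh}$ on $D_{\widehat\sigma}(\Bun_G)$.

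For \eqref{i:length-lower}, I fix $c \in \pi_1^{\on{alg}}(G)$. Since $G$ is semi-simple, $\pi_1^{\on{alg}}(G)$ is finite and every coset admits a dominant representative (as we may freely add large multiples of $2\check{\rho}$), so there exists $\check{\lambda} \in \check{\Lambda}^+$ such that, setting $D = \check{\lambda}\cdot x$ for a chosen $x \in X(k)$, the projection $\fp_D : \Bun_N^{\Omega(-D)} \to \Bun_G$ factors through $\Bun_G^c$. Consequently $\coeff_D(\sF_\sigma)$ depends only on $\sF_\sigma^c$. Theorem~\ref{t:cs} combined with the Hecke eigensheaf property of $\sF_\sigma$ yields
\[
\coeff_D(\sF_\sigma) \simeq \coeff(\sV^D \star \sF_\sigma) \simeq V^{\check{\lambda}}_{\sigma,x} \otimes \coeff(\sF_\sigma),
\]
where $V^{\check{\lambda}}_{\sigma,x}$ denotes the (non-zero) fiber at $x$ of the local system $\sigma(V^{\check{\lambda}})$. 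The Whittaker normalization of $\sF_\sigma$ gives $\coeff(\sF_\sigma) \simeq \Gamma(\LocSys_{\check{G}}, k_\sigma)[-\dim\Bun_G] \simeq k[-\dim\Bun_G]$; hence the tensor product above is non-zero, so $\sF_\sigma^c \neq 0$.

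For \eqref{i:length-upper}, I rely on $\sF_\sigma \in D_{\widehat\sigma}(\Bun_G) \subset \Shv_\Nilp(\Bun_G)$ being cuspidal (Corollary~\ref{c:irred->cusp}), hence tempered. By Theorem~\ref{t:coeff-exact} together with Corollary~\ref{c:coeff-enh-nilp}, the restricted functor $\coeff^{\enh}[\dim\Bun_G] : D_{\widehat\sigma}(\Bun_G) \to \QCoh(\LocSys_{\check{G}}^{\on{restr}})$ is $t$-exact and conservative. Applied to a composition series of $\sF_\sigma$ in the abelian category, exactness produces a filtration of $\coeff^{\enh}(\sF_\sigma) = k_\sigma[-\dim\Bun_G]$ in $\QCoh(\LocSys_{\check{G}}^{\on{restr}})^{\heart}$ whose subquotients $\coeff^{\enh}(\sF_i)$ each have length $\geq 1$ by conservativeness. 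Therefore
\[
\mathrm{length}(\sF_\sigma) \;\leq\; \mathrm{length}_{\QCoh(\LocSys_{\check{G}}^{\on{restr}})^{\heart}}(k_\sigma).
\]
Because $\sigma$ is very irreducible, $\Aut(\sigma) = Z_{\check{G}}$, and $k_\sigma$ is the pushforward of $k$ along $\Spec(k) \to B Z_{\check{G}} \hookrightarrow \LocSys_{\check{G}}^{\on{restr}}$; it thus corresponds to the regular representation of the finite abelian group $Z_{\check{G}}$, which has length $|Z_{\check{G}}^*|$. Under Langlands duality $|Z_{\check{G}}^*| = |\pi_1^{\on{alg}}(G)|$, yielding the claim.

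The main obstacle is the local structural assertion that $k_\sigma$ has length exactly $|\pi_1^{\on{alg}}(G)|$ in $\QCoh(\LocSys_{\check{G}}^{\on{restr}})^{\heart}$: concretely, that the formal neighborhood of $\sigma$ in $\LocSys_{\check{G}}^{\on{restr}}$ is modeled on $B Z_{\check{G}} \times (\text{smooth formal disc})$, so that the length computation reduces to a semisimple computation in $\Rep(Z_{\check{G}})^{\heart}$. Very irreducibility of $\sigma$ and the smoothness of irreducible local systems in $\LocSys_{\check{G}}$ should make this tractable, although it requires care at the junction between $\LocSys_{\check{G}}$ and the restricted version. Once this identification is in hand, the length bound is a formal consequence of exactness and conservativeness.
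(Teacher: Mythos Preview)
Your proof is correct and follows essentially the same approach as the paper's, including the Whittaker-coefficient argument for \eqref{i:length-lower} (the paper also records a one-line alternative: the Hecke eigensheaf property alone forces all $\sF_\sigma^c$ to be non-zero once one is) and the $t$-exact-plus-conservative argument for \eqref{i:length-upper}. Your concern in the final paragraph is overstated: very irreducibility makes $\bB Z_{\check{G}} \hookrightarrow \LocSys_{\check{G}}$ a closed embedding at $\sigma$, and since pushforward along a closed embedding is $t$-exact and fully faithful on hearts, $\ell(k_\sigma)$ equals the length of the regular representation of the finite abelian group $Z_{\check{G}}$, namely $|Z_{\check{G}}| = |\pi_1^{\on{alg}}(G)|$---no formal smoothness or modeling of the neighborhood is required.
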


Indeed, this suffices as we then have (for $\ell$ denoting length):
\[
|\pi_1^{\on{alg}}(G)| \leq \sum_{c \in \pi_1^{\on{alg}}(G)} \ell(\sF_{\sigma}^c)
= \ell(\sF_{\sigma}) \leq |\pi_1^{\on{alg}}(G)|
\]

\noindent with the first inequality being \eqref{i:length-lower} and the
second being \eqref{i:length-upper}; this forces the inequalities to be
equalities, which then forces each summand to be exactly $1$, as desired.

\begin{proof}[Proof of Lemma \ref{l:length}]

First, \eqref{i:length-lower} is immediate from the eigensheaf property,
since $\sF_{\sigma}$ is non-zero. 

Alternatively, we can see this from the Whittaker normalization. Calculating
$\coeff_D(\sF_{\sigma})$ from $\coeff^{\enh}(\sF_{\sigma}) = 
k_{\sigma}[-\dim \Bun_G]$
(as in \S \ref{sss:coeffd-coeffenh}), we find that $\coeff_D(\sF_{\sigma})$
is non-zero for \emph{every} $D$; however, by definition, we have:
\[
\coeff_D(\sF_{\sigma}) = \coeff_D(\sF_{\sigma}^c)
\]

\noindent where $c \in \pi_1^{\on{alg}}(G)$ 
is the class represented by the cocharacter
$(\check{\rho}\cdot\deg(\Omega_X^1))-\deg(D)$. This clearly implies that $\sF_{\sigma}^c$
must be non-zero for each $c$.

We now prove \eqref{i:length-upper}. Note that any subquotient of
$\sF_{\sigma} \in D(\Bun_G)$ also lies in $D_{\widehat{\sigma}}(\Bun_G)^{\heart}$. 
As $\coeff_{\widehat{\sigma}}^{\enh}[\dim\Bun_G]$ 
is conservative and $t$-exact,
it follows that:
\begin{equation}\label{eq:length-ineq}
\ell(\sF_{\sigma}) \leq 
\ell(\coeff_{\widehat{\sigma}}^{\enh}(\sF_{\sigma})[\dim\Bun_G]) = 
\ell(k_{\sigma}).
\end{equation}

\noindent But $k_{\sigma} \in \QCoh(\LocSys_{\check{G}})^{\heart}$ 
is calculated as a pushforward along the composition:
\[
\Spec(k) \to \bB Z_{\check{G}} \into \LocSys_{\check{G}}
\]

\noindent where the second map is a closed embedding by very irreducibility of
$\sigma$. Therefore, the length of $k_{\sigma}$ is the same as the
length of the regular representation of $Z_{\check{G}}$. As 
$Z_{\check{G}}$ is finite abelian, we have:
\[
\ell(k_{\sigma}) = |Z_{\check{G}}| = |\pi_1^{\on{alg}}(G)|
\]

\noindent as desired.

\end{proof}

\begin{rem}\label{r:not-very-1}

In the case of (not necessarily very) irreducible $\sigma$, 
the statement of Lemma \ref{l:length} \eqref{i:length-upper}
should instead say $\ell(\sF_{\sigma}) \leq \sum \dim(W_i)$ for
notation as in Remark \ref{r:jh-genl}, i.e., the $W_i$ are isomorphism
classes of irreducible representations of $\Aut(\sigma)$, 
the automorphism group of $\sigma$. We note that the 
same argument as in Lemma \ref{l:length} \eqref{i:length-upper} yields
this bound. As in Remark \ref{r:finite-gps}, 
$\Aut(\sigma)$ is a finite group (when $G$ is semi-simple), so 
this upper bound is finite. We again reiterate: the categorical geometric Langlands
conjecture predicts that this upper bound is an equality; however, 
it is not clear how to a priori obtain the lower bound in this
setting without the categorical conjecture.

In particular, this estimate implies that $\sF_{\sigma}$ is perverse (not ind-perverse)
under this relaxed hypothesis, resolving a leftover point from 
\S \ref{sss:ss} (for $G$ semi-simple).

\end{rem}

\subsubsection{Very irreducible case; general reductive $G$}\label{sss:red}

In the previous section, it was important that $\sF_{\sigma}$ ultimately
had finite length, so that the inequality \eqref{eq:length-ineq} was 
forced to be an equality. In turn, this corresponded to the geometric fact that
$\LocSys_{\check{G}}$ is Deligne-Mumford in a neighborhood of $\sigma$.
This will not be the case when  $\pi_1^{\on{alg}}(G)$ is infinite; 
we describe the remedy for below.

Let $Z_G^\circ$ be the connected component of the identity in the 
center of $G$. Let $\check{\Lambda}_{Z_G^\circ} 
\subset \check{\Lambda}$ be the sublattice of coweights of the torus
$Z_G^\circ$. Note that the torus dual to $Z_G^\circ$ is $\check{G}^{\on{ab}}$, 
the abelianization of $\check{G}$. 

Fix $x \in X(k)$ a $k$-point. This choice yields\footnote{Via 
the natural map $\check{\Lambda}_{Z_G^\circ} = 
(\Gr_{Z_G^\circ,x})^{\on{red}} \to \Bun_{Z_G^{\circ}}$ and the evident
action of $\Bun_{Z_G^{\circ}}$ on $\Bun_G$.}
an action of the lattice $\check{\Lambda}_{Z_G^\circ}$ on $\Bun_G$.
It also yields a map $\LocSys_{\check{G}} \to \bB\check{G}$ by
restriction of a local system to $x \in X$, and then by 
composition, a map $\LocSys_{\check{G}} \to \bB\check{G}^{\on{ab}}$.

These are compatible in the following sense. The action of
$\QCoh(\check{\Lambda}_{Z_G^\circ})$ (with its convolution monoidal structure) 
on $D(\Bun_G)$ is the same as the one obtained from:
\[
\QCoh(\check{\Lambda}_{Z_G^\circ}) \simeq 
\Rep(\check{G}^{\on{ab}}) = \QCoh(\bB \check{G}^{\on{ab}})
\to \QCoh(\LocSys_{\check{G}}) \actson D(\Bun_G).
\]

\noindent Indeed, this is an immediate consequence of the construction 
of the spectral action (via Satake) and a basic compatibility of geometric Satake.

Therefore, up to trivializing\footnote{This has the effect of simplifying the
notation below by removing certain twists involving this restriction. In other
words, this trivialization is innocuous and chosen out of laziness.}
the restriction of $\sigma$ at $x$, we obtain a commutative diagram:
\[
\begin{tikzcd}
D_{\sigma}(\Bun_G)
\arrow[rr]
\arrow[d]
&&
\Vect
\arrow[d]
\\
D_{\widehat{\sigma}}(\Bun_G/\check{\Lambda}_{Z_G^\circ}) = 
D_{\widehat{\sigma}}(\Bun_G)^{\check{\Lambda}_{Z_G^\circ}}
\arrow[rr]
\arrow[d]
&&
\QCoh(\LocSys_{\check{G}}^{\on{restr}} \underset{\bB \check{G}^{\on{ab}}}{\times} \Spec(k))
\arrow[d]
\\
D_{\widehat{\sigma}}(\Bun_G) 
\arrow[rr,"\coeff_{\widehat{\sigma}}^{\enh}"]
&&
\QCoh(\LocSys_{\check{G}}^{\on{restr}}).
\end{tikzcd}
\]

\noindent Here the left arrows are forgetful functors, and 
the right arrows are the evident pushforwards.

Observe that 
$\LocSys_{\check{G}}^{\on{restr}} \underset{\bB \check{G}^{\on{ab}}}{\times} \Spec(k)$
is Deligne-Mumford in an open neighborhood of the point $\sigma$. 
Therefore, the analysis of \S \ref{sss:ss} goes through when considering the
middle arrow, up to replacing $\pi_1^{\on{alg}}(G)$ with the finite
group $\pi_1^{\on{alg}}(G/Z_G^{\circ})$. 
Noting that for each $c \in \pi_1^{\on{alg}}(G) = \pi_0(\Bun_G)$, the 
map $\Bun_G^c \to \Bun_G/\check{\Lambda}_{Z_G^\circ}$ is the embedding
of a connected component, this clearly yields the result in the case
of general $G$ and very irreducible $\sigma$.

\begin{rem}\label{r:not-very-2}

Combining the above method with that of Remark \ref{r:not-very-1} yields
the perversity (i.e., local compactness) of $\sF_{\sigma}$ for 
general $G$ and irreducible $\sigma$; specifically, this shows that the restriction of
$\sF_{\sigma}$ to each connected component of $\Bun_G$ has finite length.
This finally resolves the leftover point from \S \ref{sss:ss}.

\end{rem}

\appendix

\section{Existence of Hecke eigensheaves via localization}\label{a:localization}

In this appendix, we prove Theorem \ref{t:aut-exists}, the existence
of Whittaker-normailzed Hecke eigensheaves (at least 
when $k$ is algebraically closed).
The construction of $\sF_{\sigma}$ from \emph{loc. cit}. 
is via the localization construction
of Beilinson-Drinfeld developed in \cite{hitchin}, using refinements of the
critical level Kac-Moody theory due to Frenkel-Gaitsgory and Arinkin's
existence of oper structures on irreducible local systems.

At times, we use mild extensions of existing results that are not 
well recorded in the
literature. In general, we point to \cite{jerusalem2014} and \cite{charles-lin}
for an introduction to the relevant circle of ideas.

We reiterate that we do not claim originality for the material here,
which we generally consider to be folklore consequences of the
work of others.

\subsection{Background on opers and localization}\label{ss:opers}

\subsubsection{Opers with singularities: local aspects}

Let $x \in X(k)$ be a marked point with coordinate $t$. 
We let $\sD_x = \Spec(k[[t]])$ be a disc and let $\lambda \in \Lambda^+$
be a dominant weight. Recall that there is a scheme
$\Op_{\check{G},x}^{\lambda}$ of \emph{opers with singularity $\lambda$} (at $x$).
These are defined in \cite{fg2} \S 2.9, where
they are denoted $\Op_{\check{\fg}}^{\lambda,\on{reg}}$. We remind that
these opers are $\check{G}$-local systems on the disc $\sD_x$ 
(i.e., points of $\bB \check{G}$) equipped with extra structure.

\begin{rem}\label{r:mf,red->reg}

We remind that any $k$-point of $\Op_{\check{G}}(\o{\sD})$, the indscheme of
opers on the punctured disc, underlying a local system on $\sD_x$ (rather
than $\o{\sD}_x$) lies in $\Op_{\check{G},x}^{\lambda}$ for some 
$\lambda$.\footnote{See for example the second equation of \cite{fg-spherical}
\S 2.2, where this is stated explicitly. We remark that, as in \emph{loc. cit}., 
the assertion holds for $k$ replaced by any reduced $k$-algebra, but not
for non-reduced algebras.} 

\end{rem}

\subsubsection{}

We use Kac-Moody notation as in \cite{localization}.
In particular, we let 
$\widehat{\fg}_{\crit}\mod^{G(O)}$ be the Kazhdan-Lusztig category at
critical level, which we also denote by $\KL_{\crit,x}$.
Let $\bV_{\crit}^{\lambda} \in \widehat{\fg}_{\crit}\mod^{G(O),\heartsuit}$ be
the Weyl module, i.e., the module $\ind_{\fg[[t]]}^{\widehat{\fg}_{\crit}}(V^{\lambda})$
where $V^{\lambda}$ is the $G$-representation corresponding
to $\lambda$, acted on by $\fg[[t]]$ via the evaluation homomorphism $\fg[[t]] \to \fg$. 

By \cite{fg-weyl} Theorem 1, there is a natural action of
$\Fun(\Op_{\check{G},x}^{\lambda})$ by endomorphisms on $\bV_{\crit}^{\lambda}$.
Therefore, we obtain a functor:
\begin{equation}\label{eq:weyl-x}
\QCoh(\Op_{\check{G},x}^{\lambda}) \to \widehat{\fg}_{\crit}\mod^{G(O)}
\end{equation}

\noindent sending the structure sheaf in the left hand side to the Weyl module
$\bV_{\crit}^{\lambda}$.

Moreover, by \cite{fg-fusion} Theorem 1.10,
the above is compatible with the $\Rep(\check{G})$ actions on both 
sides. Specifically, the map $\Op_{\check{G},x}^{\lambda} \to \bB \check{G}$
induces a symmetric monoidal functor $\Rep(\check{G}) \to 
\QCoh(\Op_{\check{G},x}^{\lambda})$, while $\Rep(\check{G})$ acts on the
right hand side via geometric Satake. For us, \cite{fg-fusion} Theorem 1.10
amounts to the assertion that the above functor is 
naturally $\Rep(\check{G})$-linear. We refer to \cite{localization} 
Corollary 7.10.1 for homotopical details regarding a similar situation.

\begin{notation}

Later, we will wish to explicitly note the dependence on the point $x$.
We will write $\bV_{\crit}^{\lambda\cdot x} \in \KL_{\crit,x}$
in this case.

\end{notation}

\subsubsection{}

Suppose $x \in X(k)$ is a marked point, which the reader should imagine
was implicitly equipped with the coordinate $t$ in the previous discussion.

There is a localization functor:
\[
\Loc_x:\widehat{\fg}_{\crit}\mod^{G(O)} \to D(\Bun_G)
\]

\noindent where we implicitly choose a square root of the canonical
bundle on $\Bun_G$, as in \cite{hitchin} \S 4, to identify $D_{\crit}(\Bun_G)$
with $D(\Bun_G)$.

This functor is equivariant for the action of the spherical Hecke category
at $x$ acting on both
sides, and in particular, $\Rep(\check{G})$-linear for Hecke functors (again: at $x$).

We recall that $\Loc_x(\bV_{\crit}^{\lambda})$ is the critically twisted 
$D$-module:
\[
\ind_{\crit}(\sV^{\lambda\cdot x}) \in D_{\crit}(\Bun_G) \simeq D(\Bun_G)
\] 

\noindent induced from induced from the vector bundle $\sV^{\lambda\cdot x}
\coloneqq \ev_x^*(V^{\lambda})$ where
where the map  $\ev_x:\Bun_G \to \bB G$ takes the fiber at $x$ of a $G$-bundle on $X$.

\subsubsection{}

By composition, we obtain a $\Rep(\check{G})$-linear functor:
\[
\EuScript{L}\text{oc}_x:\QCoh(\Op_{\check{G},x}^{\lambda}) \to D(\Bun_G)
\]

\noindent sending the structure sheaf to $\ind_{\crit}(\sV^{\lambda\cdot x})$.

\subsubsection{Ran space extension}

We now use a variant of the above with multiple points, working over Ran space.

Fix points $x_1,\ldots,x_n \in X$. We let $S \coloneqq \{x_1,\ldots,x_n\} \subset X$.
We also fix dominant weights $\lambda_1,\ldots,\lambda_n$.

Let $\Ran_{X,S}$ denote the marked Ran space, as in \cite{contractibility}. 
As in \cite{jerusalem2014} and \cite{charles-lin}, 
there is a natural category $\KL_{\crit,\Ran_X}$ over $\Ran_{X,\dR}$ with fiber 
$\widehat{\fg}_{\crit}\mod^{G(O)}$ at a marked point $x \in X(k)$.
We can pull it back to $\Ran_{X,S,\dR}$ to obtain a similar category
$\KL_{\crit,\Ran_{X,S}}$. 

The category $\Rep(\check{G})_{\Ran_X}$ acts by Hecke functors on 
$\KL_{\crit,\Ran_X}$. We can pull back to $\Ran_{X,S,\dR}$ to obtain 
an action of $\Rep(\check{G})_{\Ran_{X,S}}$ on $\KL_{\crit,\Ran_{X,S}}$.

We also have relative affine scheme 
$\Op_{\check{G},\Ran_{X,S}}^{\sum \lambda_i x_i} \to \Ran_{X,S,\dR}$ whose fibers
parametrize a finite set $\Sigma \subset X$ containing our marked points $S$,
plus a $\check{G}$-bundle with connection on the formal completion to $X$ at $\Sigma$,
which is equipped with an oper structure of type $\lambda_i$ at each $x_i \in S$
and type $0$ (i.e., regular) at each $y \in \Sigma \setminus S$.

\subsubsection{}

We let:
\[
\bV_{\crit}^{\sum \lambda_i\cdot x_i} \in \KL_{\crit,S} = \otimes_{i=1}^n \KL_{\crit,x_i}
\]

\noindent denote the object $\boxtimes_{i=1}^n \bV_{\crit}^{\lambda\cdot x_i}$.

By unital chiral algebra techniques, there is an induced object:
\[
\bV_{\crit,\on{unit}}^{\sum \lambda_i\cdot x_i} \in \KL_{\crit,\Ran_{X,S}}
\]

\noindent obtained by inserting the vacuum representation at points away from $S$.

Using standard (non-derived!) chiral algebra techniques, we readily 
obtain an extension of \cite{fg-weyl} that yields a ($D(\Ran_{X,S})$-linear) functor
(generalizing \eqref{eq:weyl-x}):
\begin{equation}\label{eq:weyl-ran}
\QCoh(\Op_{\check{G},\Ran_{X,S}}^{\sum \lambda_i x_i}) \to \KL_{\crit,\Ran_{X,S}}
\end{equation}

\noindent sending the structure sheaf on the left hand side to 
$\bV_{\crit,\on{unit}}^{\sum \lambda_i\cdot x_i}$. Moreover, this
functor is naturally $\Rep(\check{G})_{\Ran_{X,S}}$-linear.

\subsubsection{}

We can then compose with the $\Rep(\check{G})_{\Ran_{X,S}}$-linear localization
functor:
\[
\Loc_{\Ran_{X,S}}:\KL_{\crit,\Ran_{X,S}} \to D(\Bun_G \times \Ran_{X,S})
\]

\noindent to obtain a $\Rep(\check{G})_{\Ran_{X,S}}$-linear functor:
\[
\QCoh(\Op_{\check{G},\Ran_{X,S}}^{\sum \lambda_i x_i}) \to D(\Bun_G \times \Ran_{X,S}).
\]

\noindent Taking cohomology along $\Ran_{X,S}$ then yields 
a\footnote{Here and elsewhere, the subscript $(-)_{\on{indep}}$ is taken
to mean the \emph{independent category}, see \cite{extended-whit} for 
detailed discussion.} 
$\Rep(\check{G})_{\Ran_{X,S},\on{indep}} = \Ran(\check{G})_{\Ran_X,\on{indep}}$-linear functor:
\begin{equation}\label{eq:sloc}
\sL\text{oc}_{\Ran_{X,S}}:\QCoh(\Op_{\check{G},\Ran_{X,S}}^{\sum \lambda_i x_i})_{\on{indep}}
\to D(\Bun_G).
\end{equation}

\begin{rem}

We abuse notation in omitting the divisor $\sum \lambda_i x_i$ from the notation
$\sL\text{oc}$.

\end{rem}

\subsubsection{Globalization}

Now let $\Op_{\check{G}}^{\on{glob},\sum \lambda_i x_i}$ denote the
scheme of \emph{global} opers with singularity, i.e., $\check{G}$-local systems
on $X$ with $\check{B}$-reductions as a $\check{G}$-bundle satisfying the usual oper
condition away from $S = \{x_1,\ldots,x_n\}$ and satisfying the version with 
singularity $\lambda_i$ at $x_i$.

There is a natural symmetric monoidal functor:
\[
\Loc_{\Op_{\check{G}}}:
\QCoh(\Op_{\check{G},\Ran_{X,S}}^{\sum \lambda_i x_i})_{\on{indep}} \to 
\QCoh(\Op_{\check{G}}^{\on{glob},\sum \lambda_i x_i})
\]

\noindent admitting a fully faithful right adjoint. In particular,
$\Loc_{\Op_{\check{G}}}$ is a quotient functor.

It follows from the constructions that \eqref{eq:sloc} factors as:
\[
\begin{tikzcd}
\QCoh(\Op_{\check{G},\Ran_{X,S}}^{\sum \lambda_i x_i})_{\on{indep}}
\arrow[dr,"\sL\text{oc}_{\Ran_{X,S}}"] 
\arrow[d,swap,"\Loc_{\Op_{\check{G}}}"]
& \\
\QCoh(\Op_{\check{G}}^{\on{glob},\sum \lambda_i x_i})
\arrow[r,dotted,"\sL\text{oc}^{\on{glob}}"]
&
D(\Bun_G).
\end{tikzcd}
\]

The functor $\sL\text{oc}^{\on{glob}}$ is a priori 
$\Rep(\check{G})_{\Ran_{X,S},\on{indep}}$-linear; but as 
$\Rep(\check{G})_{\Ran_{X,S},\on{indep}}$ acts through its quotient 
$\QCoh(\LocSys_{\check{G}})$, this functor is actually 
$\QCoh(\LocSys_{\check{G}})$-linear.

\subsubsection{}

Next, we claim that there is a commutative diagram:
\begin{equation}\label{eq:coeff-opers}
\begin{tikzcd}
\QCoh(\Op_{\check{G}}^{\on{glob},\sum \lambda_i x_i})
\arrow[d,swap,"\sL\text{oc}^{\on{glob}}"]
\arrow[drrr,bend left = 10,"\pi_*"]
&
\\
D(\Bun_G)
\arrow[rrr,"{\coeff^{\enh}(-)}"]
&&&
\QCoh(\LocSys_{\check{G}})
\end{tikzcd}
\end{equation}

\noindent where $\pi$ is the natural map 
$\Op_{\check{G}}^{\on{glob},\sum \lambda_i x_i} \to \LocSys_{\check{G}}$.

Indeed, since these two functors 
$\QCoh(\Op_{\check{G}}^{\on{glob},\sum \lambda_i x_i}) \to 
\QCoh(\LocSys_{\check{G}})$ are $\QCoh(\LocSys_{\check{G}})$-linear,
it suffices to produce a commutative diagram:
\[
\begin{tikzcd}
\QCoh(\Op_{\check{G}}^{\on{glob},\sum \lambda_i x_i})
\arrow[d,swap,"\sL\text{oc}^{\on{glob}}"]
\arrow[drrrr, bend left = 14, pos = .6, "{\Gamma(\LocSys_{\check{G}},\pi_*(-))}"]
\\
D(\Bun_G)
\arrow[r,"{\coeff^{\enh}(-)}"]
&
\QCoh(\LocSys_{\check{G}})
\arrow[rrr,pos = .4,"{\Gamma(\LocSys_{\check{G}},\pi_*(-))}"]
&&&
\Vect.
\end{tikzcd}
\]

\noindent Equivalently, it suffices to produce a commutative diagram:
\[
\begin{tikzcd}
\QCoh(\Op_{\check{G}}^{\on{glob},\sum \lambda_i x_i})
\arrow[d,swap,"\sL\text{oc}^{\on{glob}}"]
\arrow[drr, bend left = 10, "{\Gamma(\Op_{\check{G}}^{\on{glob},\sum \lambda_i x_i},-)}"]
\\
D(\Bun_G)
\arrow[rr,"{\coeff}"]
&&
\Vect.
\end{tikzcd}
\]

By construction of $\sL\text{oc}^{\on{glob}}$ and 
\cite{fg-weyl} Theorem 2,
it suffices to construct a commutative diagram:
\[
\begin{tikzcd}
\KL_{\crit,\Ran_{X,S}}
\arrow[d,swap,"\Loc_{\Ran_{X,S}}"]
\arrow[r, "\Psi"]
&
D(\Ran_{X,S})
\arrow[d,"{C_{\dR}(\Ran_{X,S},-)}"]
\\
D(\Bun_G)
\arrow[r,"{\coeff}"]
&
\Vect.
\end{tikzcd}
\]

\noindent Here the functor $\Psi$ displayed above is the quantum Drinfeld-Sokolov
functor (at a point: this functor is BRST for $\fn((t))$ twisted by the standard 
character). 

The necessary commutative diagram now follows from 
\cite{dennis-opers} Corollary 6.4.4 and Proposition 7.3.2.\footnote{In fact,
Theorem 5.1.5 from \cite{dennis-opers} immediately yields a stronger
statement than we are using here. However, it references a certain
functor denoted in 
\cite{dennis-opers} by $\on{D-S}^{\on{KL}}$, and whose construction is not given
there. There is folklore knowledge about how to construct this functor,
so this point can be overcome; still, we prefer to circumvent it using 
the above simplification.}\footnote{Note that \cite{dennis-opers}
does not account for the shift $[-\dim \Bun_N^{\Omega}]$ in the 
definition of our functor $\coeff$. This shift is explained 
in \cite{charles-lin} Theorem 4.0.5 (see also \cite{charles-lin} Example 4.0.4
and the appearance of $\on{CT}_*^{\on{shifted}}$ in \S 2.4). }

\subsection{Proof of Theorem \ref{t:aut-exists}}\label{ss:pf-aut}

Let us return to our fixed, irreducible local system 
$\sigma \in \LocSys_{\check{G}}(k)$. According to \cite{dima-opers} 
Theorem A (and more precisely, its Corollary 1.1), there
exists an open $U \subset X$ such that $\sigma|_U$ admits a
(genuine, without defect) oper structure. By Remark \ref{r:mf,red->reg},
this means that there exists a $\check{\Lambda}^+$-valued 
divisor $\sum \check{\lambda}_i x_i$ on $X$ such that $\sigma$
lifts to a $k$-point 
$\chi_{\sigma} \in \Op_{\check{G}}^{\on{glob},\sum \lambda_i x_i}$.

Finally, take: 
\[
\sF_{\sigma} \coloneqq \sL\text{oc}^{\on{glob}}(k_{\chi_{\sigma}})[-\dim \Bun_G]
\]

\noindent for $k_{\chi_{\sigma}} \in \QCoh(\Op_{\check{G}}^{\on{glob},\sum \lambda_i x_i})$
the skyscraper sheaf at the point $\chi_{\sigma}$.

That $\sF_{\sigma}$ is a Hecke eigensheaf follows from 
$\QCoh(\LocSys_{\check{G}})$-linearity of $\sL\text{oc}^{\on{glob}}$.
The Whittaker normalization: 
\[
\coeff^{\enh}(\sF_{\sigma}) \simeq k_{\sigma}[-\dim \Bun_G]
\]

\noindent follows from \eqref{eq:coeff-opers}.

\bibliography{bibtex.bib}{}
\bibliographystyle{alphanum}

\end{document}